\numberwithin{equation}{section}
\begin{document}

\title{$n$-abelian and $n$-exact categories}

\date{\today}

\keywords{Abelian category; exact category; triangulated category;
  $n$-angulated category; homological algebra; cluster-tilting}

\author[G.~Jasso]{Gustavo Jasso} \email{gjasso@math.uni-bonn.de}
\urladdr{https://gustavo.jasso.info}

\address{Graduate School of Mathematics, Nagoya University. Furo-cho,
  Chikusa-ku. 464-8602 Nagoya, Japan.}

\curraddr{Mathematik Zentrum, Universit\"at Bonn, Endenicher Allee 60,
  53115 Bonn, Germany}

\subjclass[2010]{Primary 18E99; Secondary 18E10, 18E30}

\thanks{The author wishes to thank Erik Darp\"o, Laurent Demonet,
  Martin Herschend, Martin Kalck, Julian K\"ulshammer, Boris Lerner,
  Yann Palu and Pierre-Guy Plamondon for motivating conversations
  regarding the contents of this article. This acknowledgment is
  extended to Prof. Osamu Iyama for his encouragement, helpful and
  interesting discussions, his comments on previous versions of this
  article, and especially for his generosity in sharing his ideas
  regarding a first definition of $n$-abelian category. Finally, the
  author wishes to express his sincere thanks to the anonymous referee
  for her/his detailed comments on a previous version of this article;
  in particular, for pointing out an error in earlier formulations
  of \thref{simple-embedding-thm,embedding-thm}.}

\begin{abstract}
  We introduce $n$-abelian and $n$-exact categories, these are analogs
  of abelian and exact categories from the point of view of higher
  homological algebra.  We show that $n$-cluster-tilting subcategories
  of abelian (resp. exact) categories are $n$-abelian
  (resp. $n$-exact). These results allow to construct several examples
  of $n$-abelian and $n$-exact categories. Conversely, we prove that $n$-abelian
  categories satisfying certain mild assumptions can be realized as
  $n$-cluster-tilting subcategories of abelian categories. In analogy
  with a classical result of Happel, we show that the stable category
  of a Frobenius $n$-exact category has a natural $(n+2)$-angulated
  structure in the sense of Gei\ss-Keller-Oppermann. We give several
  examples of $n$-abelian and $n$-exact categories which have appeared
  in representation theory, commutative algebra, commutative and
  non-commutative algebraic geometry.  
\end{abstract}


\maketitle

\tableofcontents

\section{Introduction}
\label{sec:introduction}

Let $n$ be a positive integer. In this article we introduce
$n$-abelian and $n$-exact categories, these are higher analogs of
abelian and exact categories from the viewpoint of higher homological
algebra. Throughout we use the comparative adjective ``higher'' in
relation to the length of exact sequences and \emph{not} in the sense
of higher category theory.

Abelian categories were introduced by Grothendieck in
\cite{grothendieck_sur_1957} to axiomatize the properties of the
category of modules over a ring and of the category of sheaves over a
scheme. It is often the case that interesting additive categories are
not abelian but still have good homological properties with respect
to a restricted class of short exact sequences.  Exact categories
were introduced by Quillen in \cite{quillen_higher_1973} from this
perspective to axiomatize extension-closed subcategories of abelian
categories.

Derived categories play an important role in the study of the
homological properties of abelian and exact categories. Their
properties are captured by the notion of triangulated categories,
introduced by Grothendieck-Verdier in
\cite{verdier_categories_1996}. By a result of Happel, the stable
category of a Frobenius exact category has a natural structure of a
triangulated category, see
\cite[Thm. I.2.6]{happel_triangulated_1988}. Triangulated categories
arising in this way have been called \emph{algebraic} by Keller in
\cite{keller_differential_2006}. Algebraic triangulated categories
have a natural $dg$-enhancement in the sense of Bondal-Kapranov
\cite{bondal_framed_1990}, thus are often considered as a more
reasonable class than that of general triangulated categories.

Recently, a new class of additive categories appeared in
representation theory. The 2-cluster-tilting subcategories were
introduced by Buan-Marsh-Reiten-Reineke-Todorov in
\cite{buan_tilting_2006} as the key concept involved in the additive
categorification of the mutation combinatorics of Fomin-Zelevinsky's
cluster algebras \cite{fomin_cluster_2001} via 2-Calabi-Yau
triangulated categories. It was then observed by Iyama-Yoshino
\cite{iyama_mutation_2008} that the notion of mutation can be extended
to the class of $n$-cluster-tilting subcategories of triangulated
categories.

From a different perspective, $n$-cluster-tilting subcategories of
certain exact categories were introduced by Iyama in
\cite{iyama_higher-dimensional_2007} and further investigated in
\cite{iyama_cluster_2011,iyama_auslander_2007} from the viewpoint of
higher Auslander-Reiten theory. In this theory, the notion of
$n$-almost-split sequence, which are certain exact sequences with
$n+2$ terms, plays an important role.

With motivation coming from these examples in representation theory,
the class of $(n+2)$-angulated
categories was introduced by Gei\ss-Keller-Oppermann as categories
``naturally inhabited by the shadows of exact sequences with $n+2$
terms'', to paraphrase the authors.  We note that the case $n=1$
corresponds to triangulated categories.  Their main source of examples
of $(n+2)$-angulated categories are $n$-cluster-tilting subcategories
of triangulated categories which are closed under the $n$-th power of
the shift functor \cite[Thm. 1]{geiss_n-angulated_2013}. The
properties of $(n+2)$-angulated categories have been investigated by
Bergh-Thaule in
\cite{bergh_axioms_2013,bergh_higher_2013,bergh_grothendieck_2014}.

The aim of this article is to introduce $n$-abelian categories which
are categories inhabited by certain exact sequences with $n+2$ terms,
called $n$-exact sequences. The case $n=1$ corresponds to the
classical concepts of abelian categories. We do so by modifying the
axioms of abelian categories in a suitable manner. We prove several
basic properties of $n$-abelian categories, including the existence of
$n$-pushout (resp. $n$-pullback) diagrams which are analogs of
classical pushout (resp. pullback) diagrams, see \th\ref{n-abelian-cats-have-n-pushout-diagrams}.

An important source of examples of $n$-abelian categories are
$n$-cluster-tilting subcategories. This is made precise by the
following theorem.

\begin{thmu}[see
  \th\ref{recognition-thm-abelian} for details]
  Let $\M$ be an $n$-cluster-tilting subcategory of an abelian
  category. Then $\M$ is an $n$-abelian category.
\end{thmu}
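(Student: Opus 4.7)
The plan is to reduce everything to the ambient abelian category $\A$ by exploiting the two defining features of an $n$-cluster-tilting subcategory: $\M$ is functorially finite in $\A$, and $\Ext_{\A}^{i}(M,M')=0$ for all $M,M'\in\M$ and $1\le i\le n-1$. Iterating left and right $\M$-approximations, these combine to produce, for every object $A\in\A$, exact sequences
\[
0\to A\to M^{0}\to\cdots\to M^{n-1}\to 0
\quad\text{and}\quad
0\to M_{n-1}\to\cdots\to M_{0}\to A\to 0
\]
with all $M^{i},M_{i}\in\M$. These $\M$-coresolutions and $\M$-resolutions will be the workhorse of the argument.

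I would then verify that every morphism in $\M$ admits an $n$-cokernel (dually, an $n$-kernel). Given $f\colon X^{0}\to X^{1}$ in $\M$, set $C:=\Coker_{\A}f$ and choose an $\M$-coresolution $0\to C\to X^{2}\to\cdots\to X^{n+1}\to 0$. Splicing with $X^{0}\xrightarrow{f}X^{1}\to C\to 0$ yields a complex $X^{0}\to X^{1}\to\cdots\to X^{n+1}\to 0$ in $\M$ that is exact in $\A$. Applying $\Hom_{\A}(-,Y)$ for arbitrary $Y\in\M$, breaking the long sequence into short exact pieces, and invoking the $\Ext$-vanishing shows that the resulting sequence of abelian groups is exact; hence the tail $X^{1}\to\cdots\to X^{n+1}$ is an $n$-cokernel of $f$ in $\M$. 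The construction of $n$-kernels is dual. Additivity and idempotent-completeness of $\M$ are immediate from the corresponding properties of $\A$.

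It remains to verify the axioms specific to $n$-exact sequences: an $n$-monomorphism $f\colon X^{0}\to X^{1}$ in $\M$ (a morphism whose $n$-kernel is zero) must extend to an $n$-exact sequence with $n+2$ terms, and dually for $n$-epimorphisms. The key observation is that such an $f$ is automatically a monomorphism in $\A$, obtained by applying $\Hom_{\A}(-,Y)$ to the $n$-kernel diagram and using $\Ext$-vanishing. Under this identification the splicing construction above yields an exact sequence $0\to X^{0}\to X^{1}\to\cdots\to X^{n+1}\to 0$ in $\A$ with all terms in $\M$, which should match the paper's definition of an $n$-exact sequence.

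The main obstacle I anticipate is the careful bookkeeping needed to check that at each intermediate position $X^{i-1}\to X^{i}\to X^{i+1}$ the spliced sequence genuinely exhibits $X^{i-1}\to X^{i}$ as an $n$-kernel in $\M$ of $X^{i}\to X^{i+1}$, rather than merely as an exact stretch in $\A$. This is a diagram chase that once again reduces to the $\Ext$-vanishing, and is the point at which the $n$-cluster-tilting hypothesis is used at its fullest strength.
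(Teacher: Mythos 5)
Your proposal follows essentially the same route as the paper: build length-$n$ $\M$-coresolutions of arbitrary objects of $\A$ by iterating left $\M$-approximations (the paper's \th\ref{n-ct_resolution}), splice them onto cokernels taken in $\A$ to obtain $n$-cokernels, and verify axiom \ref{ax-ab:monos-are-admissible} by identifying the cohomology of the induced $\Hom$-complexes with $\Ext_\A$-groups that vanish by the cluster-tilting hypothesis (the paper's \th\ref{lemma-exts}). The only points left implicit in your sketch — that monomorphisms in $\M$ are monomorphisms in $\A$ (via the generating property), and that the $n$-th cosyzygy of the coresolution actually lands in $\M$ (the dimension-shifting induction in \th\ref{n-ct_resolution}) — are exactly the details the paper supplies, so the outline is sound.
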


We introduce the notion of projective object in an $n$-abelian
category, and study their properties. Remarkably, projective objects
satisfy the following strong property which is obvious in the case of
abelian categories.

\begin{thmu}
  [see \th\ref{projectives-are-strongly-projective} for details] Let $\M$ be an
  $n$-abelian category and $P\in\M$ a projective object. Then, for
  every morphism $f\colon L\to M$ and every weak cokernel $g\colon
  M\to N$ of $f$, the following sequence is exact:
  \[
    \begin{tikzcd}
      \M(P,L)\rar{?\cdot f}&\M(P,M)\rar{?\cdot g}&\M(P,N).
    \end{tikzcd}  
  \]
\end{thmu}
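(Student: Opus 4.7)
The plan is to extend the weak cokernel $g$ to an $n$-exact sequence and then exploit the projectivity of $P$. The inclusion $\im(?\cdot f)\subseteq\ker(?\cdot g)$ is immediate from $g\cdot f=0$, which is part of the definition of weak cokernel; the substantive content is the reverse inclusion. So fix $h\in\M(P,M)$ with $h\cdot g=0$; the task is to exhibit $h'\in\M(P,L)$ with $h'\cdot f=h$.

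First I iteratively take weak cokernels to extend $g$ to an $n$-cokernel $(g,d_2,\ldots,d_n)$ of $f$, producing
\[
  M\xrightarrow{g} M_2\xrightarrow{d_2} M_3\to\cdots\xrightarrow{d_n} M_{n+1}.
\]
Next, invoking an earlier structural result (the $n$-abelian analog of the image factorization of a morphism in an abelian category), I would produce an $n$-exact sequence
\[
  L'\xrightarrow{\iota} M\xrightarrow{g} M_2\to\cdots\to M_{n+1}
\]
together with a factorization $f=\pi\cdot\iota$ in which $\pi\colon L\to L'$ is an admissible epic. Applying $\M(P,-)$ to this $n$-exact sequence and using the hypothesis that $P$ is projective --- so that $\M(P,-)$ carries $n$-exact sequences to exact sequences of abelian groups --- gives exactness at $\M(P,M)$, producing $\bar h\colon P\to L'$ with $\bar h\cdot\iota=h$. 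A second application of projectivity, to an $n$-exact sequence whose last arrow is $\pi$, yields a lift $h'\colon P\to L$ with $h'\cdot\pi=\bar h$. Then
\[
  h'\cdot f=h'\cdot\pi\cdot\iota=\bar h\cdot\iota=h,
\]
as desired.

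I expect the main obstacle to be the construction and verification of the factorization $f=\pi\cdot\iota$: one must simultaneously produce the $n$-exact sequence $L'\to M\to\cdots\to M_{n+1}$ extending the $n$-cokernel data and exhibit $\pi$ as an admissible epic. In the abelian case this is the classical epi-mono factorization through the image; for general $n$ this likely requires the $n$-pushout diagrams of \th\ref{n-abelian-cats-have-n-pushout-diagrams} together with a careful analysis of how $n$-cokernels fit into $n$-exact sequences. Once such a factorization is available, the remainder of the argument is the clean two-step diagram chase above.
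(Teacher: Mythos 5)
There is a genuine gap, and it sits exactly where you suspected it would: the factorization $f=\pi\cdot\iota$ with $\pi$ an epimorphism and $\iota$ a monomorphism beginning an $n$-exact sequence through $g$ does not exist in general when $n\geq2$. Such a factorization is precisely an epi--mono (image) factorization of $f$ inside $\M$, and $n$-abelian categories do not admit these. Concretely, if $\M$ is an $n$-cluster-tilting subcategory of an abelian category $\A$ (the main source of examples, by \th\ref{recognition-thm-abelian}), a morphism of $\M$ is a mono (resp.\ epi) in $\M$ if and only if it is one in $\A$, so an epi--mono factorization in $\M$ would force $\im_{\A}f\in\M$; this fails already for $\Lambda=K\vec{A}_3/J^2$ with the $2$-cluster-tilting module $\Lambda\oplus S_2$ and the composite $P_1\to S_1\to P_2$, whose image $S_1$ does not lie in the subcategory. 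So the ``main obstacle'' you defer is not a technical verification but a false statement, and the argument cannot be completed in the form you propose. (Two smaller points: exactness of $\M(P,L')\to\M(P,M)\to\M(P,M_2)$ at $\M(P,M)$ needs no projectivity at all --- it is the weak-kernel property of the first map of an $n$-exact sequence, valid for every test object; and the assertion that projectivity of $P$ makes $\M(P,-)$ send $n$-exact sequences to exact sequences is essentially the theorem being proved, so it cannot be invoked as a known fact.)

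The paper's proof replaces the image factorization by the weaker but correct statement of \th\ref{inductive-construction-of-n-kernel}: one obtains a weak kernel $i\colon K_1\to M$ of $g$ with $f=pi$, together with an auxiliary morphism $q\colon K_2\to K_1$ satisfying $qi=0$ such that the \emph{combined} morphism $[p\ \ q]\colon L\oplus K_2\to K_1$ is an epimorphism --- $p$ alone need not be one. Then $h$ with $hg=0$ factors as $h=ji$ through the weak kernel, projectivity of $P$ (used only in its literal form, against the honest epimorphism $[p\ \ q]$) lifts $j$ to $j'p+j''q$, and the unwanted summand vanishes upon composing with $i$ because $qi=0$. Producing this diagram is the real work of the proof; it is done by an inductive construction of compatible $n$-pullback/good $n$-pushout diagrams along the whole $n$-cokernel of $f$, not by a single application of \th\ref{n-abelian-cats-have-n-pushout-diagrams}. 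Your overall two-step shape (factor through a weak kernel of $g$, then lift using projectivity) is the right one, but the intermediate object must be reached in this ``stabilized'' form rather than via an image.
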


Using this result, we show that certain $n$-abelian categories can be
realized as $n$-cluster-tilting subcategories of abelian
categories. More precisely, we prove the following theorem.

\begin{thmu}
  [see \th\ref{simple-embedding-thm} for details] Let $\M$ be a small projectively
  generated $n$-abelian category, and $\P$ the category of
  projective objects in $\M$. If $\mod\P$ is injectively cogenerated,
  then $\M$ is equivalent to an $n$-cluster-tilting subcategory of $\mod\P$.
\end{thmu}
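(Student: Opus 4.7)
The plan is to use the restricted Yoneda functor $y\colon\M\to\mod\P$ defined by $y(M)=\M(-,M)|_\P$; this is well-defined since $\M$, hence $\P$, is small. The first step is to verify that $y$ is fully faithful. Because $\M$ is projectively generated, every $M\in\M$ sits in a presentation $P^1\to P^0\to M$ with the $P^i$ projective. The strong projectivity theorem cited above ensures that $\M(P,P^1)\to\M(P,P^0)\to\M(P,M)$ is exact for every projective $P$, so $y(P^1)\to y(P^0)\to y(M)\to 0$ is a projective presentation of $y(M)$ in $\mod\P$. Fully faithfulness of $y$ then follows from a standard Yoneda plus five lemma argument applied to such presentations.

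Next I would compute $\Ext^i_{\mod\P}(y(M),y(M'))$ for $M,M'\in\M$ and $1\le i\le n-1$. Iterating projective covers and weak kernels in the $n$-abelian category $\M$ produces a long projective resolution $\cdots\to P^k\to\cdots\to P^0\to M$, whose successive blocks of $n+2$ terms come from $n$-exact sequences. By the strong projectivity theorem, applying $y$ yields a genuine projective resolution of $y(M)$ in $\mod\P$, so $\Ext^i_{\mod\P}(y(M),y(M'))$ is the $i$th cohomology of $\M(P^\bullet,M')$; vanishing in the range $1\le i\le n-1$ follows from the $n$-exactness of each block combined with strong projectivity.

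The main obstacle is the $n$-cluster-tilting characterization of $y(\M)\subset\mod\P$, namely that $y(\M)$ coincides with both $\set{X\in\mod\P\mid\Ext^i_{\mod\P}(X,y(N))=0\text{ for all }N\in\M,\ 1\le i\le n-1}$ and its symmetric variant. The forward inclusions come from the Ext computation above. For the reverse inclusion in the first equality, lift a projective presentation $y(P^1)\to y(P^0)\to X\to 0$ in $\mod\P$ to a morphism $f\colon P^1\to P^0$ in $\M$ via fully faithfulness, form its $n$-cokernel inside $\M$ to obtain an $n$-exact sequence whose terms lie in $\M$, and apply $y$; the resulting diagram identifies $X$ with the $y$-image of the first cokernel term, and the Ext-vanishing hypothesis is then used to upgrade this comparison to an isomorphism. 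The symmetric reverse inclusion requires an injective copresentation of $X$ in $\mod\P$ whose terms come from $y(\M)$, and this is where the hypothesis that $\mod\P$ is injectively cogenerated enters decisively, allowing one to mirror the projective argument. Functorial finiteness of $y(\M)$ then follows formally from the existence of these approximations.
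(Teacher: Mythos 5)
Your architecture --- restricted Yoneda functor, full faithfulness via projective presentations, Ext-vanishing, the two Ext-characterizations, then functorial finiteness --- is the same as that of \th\ref{embedding-thm}, and your first step is essentially the paper's. The Ext computation, however, has a genuine gap. You want a projective resolution of $M$ inside $\M$ ``whose successive blocks of $n+2$ terms come from $n$-exact sequences''. Projective generation supplies only a single epimorphism from a projective for each object; it does \emph{not} supply $n$-exact sequences $P^n\to\cdots\to P^0\to M$ with every $P^i$ projective, and for $n\geq 2$ this is a strictly stronger condition (dually, this is the distinction noted in Section \ref{sec:frobeinus-n-exact-categorties} between an injectively cogenerated category and one with enough injectives). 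Iterating weak kernels and projective covers does produce a projective resolution of $FM$ in $\mod\P$, because weak kernels control the functors $\M(P,-)$ for $P$ projective; but the vanishing of the cohomology of $\M(P^\bullet,M')$ in degrees $1\leq i\leq n-1$ is a statement about the \emph{contravariant} Hom applied to $P^\bullet$, and neither weak kernels nor \th\ref{projectives-are-strongly-projective} (again a statement about $\M(P,-)$) yields it. The paper instead dimension-shifts along a single $n$-exact sequence $K_n\to\cdots\to K_1\to K_0\to M$ in which only $K_0$ is projective: the contravariant exactness it needs is built into the definition of $n$-exact sequence, and the non-projective terms $K_1,\dots,K_n$ are handled by an induction on the Ext-degree using the already established vanishing in lower degrees.

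There are two further problems in your last step. For the first reverse inclusion, forming the $n$-cokernel of the lifted map $f\colon P^1\to P^0$ does not ``identify $X$ with the $y$-image of the first cokernel term'': if $g\colon P^0\to C$ is merely a weak cokernel of $f$, then \th\ref{projectives-are-strongly-projective} makes $\coker(Ff)\to FC$ a monomorphism, but not an isomorphism. The paper takes the $n$-\emph{kernel} of $f$, uses the hypothesis $\Ext_\P^i(X,F\M)=0$ to prove exactness of the associated contravariant Hom-sequence, and then applies \th\ref{weak-cokernel-can-be-extended-to-n-cokernel} --- this is where idempotent completeness enters --- to obtain an honest cokernel of $f$ in $\M$. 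Finally, you have misplaced the hypothesis that $\mod\P$ is injectively cogenerated: the second Ext-characterization is proved in \th\ref{embedding-thm} \emph{without} it (via the $n$-cokernel of $f$, \th\ref{projectives-are-strongly-projective}, and splitting of idempotents), whereas that hypothesis is indispensable exactly for the parts you dismiss as formal, namely that the injectives of $\mod\P$ lie in $F\M$, so that $F\M$ is cogenerating and covariantly finite in $\mod\P$ --- conditions required by \th\ref{def:n-cluster-tilting-abelian}.
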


After introducing $n$-abelian categories, it is natural to introduce
$n$-exact categories as higher analogs of exact categories. For this,
we modify Keller-Quillen's axioms of exact categories.
We prove that the class of
$n$-exact categories contains that of $n$-abelian categories, see \th\ref{n-abelian-cats-are-n-exact}.
Similarly to the case of $n$-abelian categories, we prove the
following theorem.

\begin{thmu}[see
  \th\ref{recognition-thm-exact}
  for details]
  Let $\M$ be an $n$-cluster-tilting subcategory of an exact
  category. Then $\M$ is an $n$-exact category.
\end{thmu}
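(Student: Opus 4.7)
The plan is to mimic the proof strategy of \th\ref{recognition-thm-abelian} (the abelian analog cited earlier), replacing the ambient abelian category by the ambient exact category $\E$ and using the machinery of $n$-cluster-tilting in the exact setting. First I would specify the candidate class of admissible $n$-exact sequences in $\M$: namely those complexes $X^0\to X^1\to\cdots\to X^{n+1}$ with all terms in $\M$ which arise from splicing $n$ conflations $Y^{i-1}\mono X^i \epi Y^i$ in $\E$ (with $Y^0=X^0$ and $Y^n=X^{n+1}$), so that the complex is acyclic in $\E$. Since $\M$ is $n$-cluster-tilting, the intermediate cycle objects $Y^i$ need not lie in $\M$, but they are controlled by the vanishing of $\Ext^j_\E(\M,\M)$ for $1\le j\le n-1$; this is what will let us transfer constructions from $\E$ back into $\M$.

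Next I would check the axioms of an $n$-exact category one by one. The closure under isomorphism, the statement that identity maps give trivial $n$-exact sequences, and the fact that split sequences in $\M$ are $n$-exact are immediate from the exact-category axioms on $\E$. The substantive axioms are the $n$-pushout and $n$-pullback axioms, and the ``obscure axiom'' governing when a morphism appearing in an $n$-exact sequence is itself an admissible mono/epi. For the $n$-pushout axiom, given an $n$-exact sequence $X^\bullet$ in $\M$ and a map $f\colon X^0\to M^0$ in $\M$, the strategy is to form the ordinary pushout of $X^0\mono X^1$ along $f$ inside $\E$, yielding a conflation $M^0\mono Z^1\epi Y^1$; then take an $\M$-approximation of $Z^1$ to move back into $\M$ and iterate this together with the remaining conflations $Y^{i-1}\mono X^i\epi Y^i$. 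The vanishing of $\Ext^j_\E(\M,\M)$ for $1\le j\le n-1$ will ensure that the approximations can be chosen so that the splice remains acyclic and the pushout square has the required universal property inside $\M$.

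The main obstacle, as in the abelian version, is producing these $n$-pushout diagrams, because one must simultaneously (a) stay inside $\M$ at every step, (b) preserve acyclicity in $\E$ after replacing intermediate objects by $\M$-approximations, and (c) verify the weak universal property against arbitrary parallel $n$-cochain maps into another $n$-exact sequence. The key technical input will be an $n$-cluster-tilting induction lemma of the following flavor: any short exact sequence $A\mono B\epi C$ in $\E$ with $A,C\in\M$ can be resolved against $\M$ via a finite zig-zag of conflations controlled by right (or left) $\M$-approximations, and at each step the induced maps on $\Ext^\ast_\E(-,\M)$ behave as in the abelian case. Once this lemma is in place, the remaining axioms, including the $n$-pullback axiom (formally dual) and the obscure axiom (deduced by the same splicing argument applied to suitable weak cokernels), follow by essentially formal manipulation.

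Finally, I would record the result and indicate that the two earlier recognition theorems, \th\ref{recognition-thm-abelian} and \th\ref{n-abelian-cats-are-n-exact}, together with this statement, provide a uniform picture: whenever $\M$ is $n$-cluster-tilting in an exact category $\E$, the induced class of $n$-exact sequences in $\M$ satisfies all the axioms introduced earlier in the paper, and in the special case where $\E$ is abelian one recovers an $n$-abelian structure on $\M$ whose underlying $n$-exact structure coincides with the one just constructed.
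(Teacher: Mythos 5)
Your proposal matches the paper's proof in all essentials: the admissible class is the same ($\X$-acyclic complexes with $n+2$ terms in $\M$), the key lemma is the same (an acyclic coresolution of any object of $\E$ by left $\M$-approximations whose $n$-th cosyzygy lands back in $\M$, which also handles composability of admissible monomorphisms), and the $n$-pushout axiom is verified by the same iteration of pushout-in-$\E$ followed by a left $\M$-approximation of the intermediate object, with the $\Ext$-vanishing guaranteeing the resulting acyclic complex is genuinely $n$-exact. The only cosmetic difference is that you list the obscure axiom among the axioms to verify, whereas in the paper it is a consequence of the axioms rather than one of them.
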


We also introduce Frobenius $n$-exact
categories. These are $n$-exact categories with enough projectives and
enough injectives, and such that these two classes of objects
coincide. Frobenius $n$-exact categories are related to $(n+2)$-angulated categories as shown
by the following theorem.

\begin{thmu}[see \th\ref{n-happel-theorem} for details]
  Let $\M$ be a Frobenius $n$-exact category. Then, the stable
  category $\underline{\M}$ has a natural structure of an
  $(n+2)$-angulated category.
\end{thmu}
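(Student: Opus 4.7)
The plan is to imitate Happel's original proof in the case $n=1$, systematically replacing short exact sequences and their manipulations by $n$-exact sequences.

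The first step is to construct the suspension $\Sigma : \underline{\M} \to \underline{\M}$. Since $\M$ is Frobenius it has enough injectives, so every object $M$ fits into an $n$-exact sequence
\[
  M \to I^0 \to I^1 \to \cdots \to I^{n-1} \to \Sigma M
\]
with each $I^i$ injective (equivalently, projective). Because morphisms factoring through injectives vanish in $\underline{\M}$ and because of the lifting property of the $I^i$, standard diagram chases should show that $\Sigma M$ is independent of all choices up to canonical isomorphism in $\underline{\M}$, and that $\Sigma$ extends to an additive endofunctor on the stable category. A dual construction using projective coresolutions provides a quasi-inverse, so $\Sigma$ is an autoequivalence.

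Next I would define the distinguished $(n+2)$-angles. Given an $n$-exact sequence $X^0 \to X^1 \to \cdots \to X^{n+1}$ in $\M$, pick an injective coresolution of $X^0$ as above; by injectivity of the $I^i$ and $n$-exactness of the first sequence, the identity on $X^0$ lifts to a morphism of $n$-exact sequences whose last component produces a morphism $X^{n+1} \to \Sigma X^0$ well-defined in $\underline{\M}$. I would call the resulting sequence
\[
  X^0 \to X^1 \to \cdots \to X^{n+1} \to \Sigma X^0
\]
a \emph{standard} $(n+2)$-angle, and declare the distinguished $(n+2)$-angles to be those sequences isomorphic in $\underline{\M}$ to standard ones. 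One has to verify that this definition is well-posed; in particular, that the connecting morphism $X^{n+1} \to \Sigma X^0$ is independent, modulo factorizations through injectives, of the chosen lift.

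Then I would verify the Gei\ss-Keller-Oppermann axioms. Closure under isomorphism and the zero angle are formal; existence of an $(n+2)$-angle completing a given morphism $f \colon X^0 \to X^1$ is proved by lifting $f$ to $\M$, embedding its source into an injective and forming an $n$-pushout, which produces an $n$-exact sequence starting with $f$. Rotation invariance follows by reshuffling the chosen injective coresolution to identify the rotation of a standard angle with another standard angle. The morphism-extension axiom is proved by lifting morphisms inductively along the distinguished monomorphisms, using injectivity of the $I^i$ on one side and $n$-exactness on the other.

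The main obstacle, as in the classical case, will be the \emph{mapping cone axiom}, the $(n+2)$-angulated analog of the octahedral axiom. Given composable morphisms $X^0 \to X^1 \to X^2$ together with standard $(n+2)$-angles built on $X^0 \to X^1$, on $X^1 \to X^2$, and on their composite, one has to construct a distinguished $(n+2)$-angle whose terms form a ``mapping cone'' linking the three, and verify a long list of commutativities in $\underline{\M}$. I expect this will require assembling a large commutative diagram of $n$-exact sequences built row by row via iterated $n$-pushouts, and tracking carefully the way the injective lifts used to define the connecting morphisms $X^i \to \Sigma X^j$ interact with the diagram. This careful bookkeeping is where the bulk of the technical work should lie.
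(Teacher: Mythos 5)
Your overall strategy is the paper's: the suspension is built from chosen admissible $n$-exact sequences $M\mono I^1(M)\to\cdots\to I^n(M)\epi SM$ with injective middle terms, well-definedness on objects and morphisms follows from the Comparison Lemma, the standard $(n+2)$-angles on a morphism $\alpha^0\colon X^0\to X^1$ are produced by forming an $n$-pushout of $I(X^0)$ along $\alpha^0$, and one then checks the Gei\ss--Keller--Oppermann axioms. The verifications of (F1) and (F3) go essentially as you describe.

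The concrete problem is your formulation of the last axiom. In the Gei\ss--Keller--Oppermann definition, axiom (F4) is not the octahedral statement you sketch (three composable morphisms together with angles on each and on the composite); it is the strengthening of (F3) asserting that the completing morphisms $\phi^2,\dots,\phi^{n+1}$ can be chosen so that the mapping cone $C(\phi)$ of the resulting morphism of $(n+2)$-angles is itself an $(n+2)$-angle. Executing your plan as written would verify a different statement, and for $n\geq2$ the equivalence of the higher octahedral axiom with (F4) is itself a nontrivial theorem of Bergh--Thaule that cannot be invoked silently. What actually makes (F4) (and also the rotation axiom (F2)) work in the paper is a pair of tools absent from your outline: first, a morphism of admissible $n$-exact sequences whose last (or first) component is an isomorphism has an admissible $n$-exact mapping cone (\th\ref{props-of-n-pushout-diagrams} and its dual); second, every admissible $n$-exact sequence $X$ induces a standard $(n+2)$-angle whose connecting morphism is $(-1)^n\overline{f^{n+1}}$, where $f\colon X\to I(X^0)$ is a comparison morphism (\th\ref{n-exact-sequence-induce-n+2-angle}) --- the sign being forced by the mapping-cone differentials. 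For (F4) one assembles an explicit morphism $r$ of admissible $n$-exact sequences with invertible last component whose cone realizes $C(\phi)$, and then applies these two facts; for (F2) the same mechanism is what produces the sign $(-1)^n\Sigma\overline{\alpha^0}$ on the rotated connecting morphism, which your ``reshuffling the injective coresolution'' does not account for. Until the class of distinguished angles is pinned down with these signs and (F4) is verified in its mapping-cone form, the proof is not complete.
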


Finally, we prove the following result also in the direction of
Frobenius $n$-exact categories.

\begin{thmu}[see
  \th\ref{standard-construction-n-exact} for details]
  Let $\M$ be an $n$-cluster-tilting subcategory of a Frobenius exact
  category $\E$, and suppose that $\M$ is closed under taking $n$-th
  cosyzygies. Then, $\M$ is a Frobenius $n$-exact category.
\end{thmu}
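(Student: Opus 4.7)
The plan is to verify in turn the three defining ingredients of a Frobenius $n$-exact category: an $n$-exact structure on $\M$, a common class of projective and injective objects, and enough of each.

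The $n$-exact structure is immediate from \th\ref{recognition-thm-exact}: the $n$-exact sequences in $\M$ are, by construction, the $(n+2)$-term complexes in $\M$ obtained from exact sequences in $\E$ with all terms in $\M$. Let $\P\subseteq\E$ denote the common class of projective and injective objects of the Frobenius exact category $\E$. Since $\Ext_\E^i(P,-)=0=\Ext_\E^i(-,P)$ for every $P\in\P$ and every $i\ge 1$, the $\Ext$-vanishing required of $\M$ as an $n$-cluster-tilting subcategory yields $\P\subseteq\M$. Moreover, every $P\in\P$ is both projective and injective in $\M$ in the $n$-exact sense: since $\Hom_\E(P,-)$ and $\Hom_\E(-,P)$ are exact on short exact sequences of $\E$, they send $n$-exact sequences in $\M$ to exact sequences of abelian groups.

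Enough injectives is the direct payoff of the hypothesis. For $M\in\M$, iterating the formation of cosyzygies in $\E$ produces an exact sequence
\[
0\to M\to I_0\to I_1\to\cdots\to I_{n-1}\to M^n\to 0
\]
in $\E$ with $I_k\in\P$. By hypothesis $M^n\in\M$, and $\P\subseteq\M$ places the remaining terms in $\M$, so this is an $n$-exact sequence in $\M$ witnessing that $M$ admits an injective coresolution of length $n$.

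Enough projectives requires, dually, that $\M$ be closed under $n$-th syzygies, and this is the one nontrivial step. Using that $\E$ is Frobenius one has, for $j\ge 1$, the isomorphisms $\Ext_\E^j(L,M^n)\cong\Ext_\E^{j+n}(L,M)$ and $\Ext_\E^j(\Omega^n M,L)\cong\Ext_\E^{j+n}(M,L)$. Combined with the two symmetric characterisations $N\in\M$ iff $\Ext_\E^i(\M,N)=0$ for $1\le i\le n-1$ iff $\Ext_\E^i(N,\M)=0$ for $1\le i\le n-1$, the hypothesis that $M^n\in\M$ for every $M\in\M$ becomes the vanishing $\Ext_\E^k(\M,\M)=0$ for $n+1\le k\le 2n-1$. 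This condition is symmetric in its two arguments, hence equally expresses the statement $\Omega^n M\in\M$ for every $M$. Dualising the cosyzygy construction then yields $n$-exact sequences $0\to\Omega^n M\to P_{n-1}\to\cdots\to P_0\to M\to 0$ in $\M$ with $P_k\in\P$, witnessing enough projectives. The main obstacle is exactly this derivation of $n$-th-syzygy closure from $n$-th-cosyzygy closure; the remainder — including the verification that $\P$ exhausts both the projective and the injective classes of $\M$ by the standard summand-splitting argument applied to the (co)resolutions above — is bookkeeping.
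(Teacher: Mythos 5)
Your proof is correct and follows essentially the same route as the paper's proof of \th\ref{standard-construction-n-exact}(i): invoke \th\ref{recognition-thm-exact} for the $n$-exact structure, observe that the projective-injective objects of $\E$ lie in $\M$ and serve as the $\Y$-injectives and $\Y$-projectives, and use closure under cosyzygies to produce enough injectives. The one point where you go beyond the paper is welcome: the paper obtains enough projectives simply ``by duality'', which tacitly requires that closure under $n$-th cosyzygies implies closure under $n$-th syzygies, and your dimension-shift argument reducing both closure conditions to the symmetric vanishing $\Ext^{k}(\M,\M)=0$ for $n+1\leq k\leq 2n-1$ supplies exactly the justification that step needs.
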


This theorem is closely related to the results of
Gei\ss-Keller-Oppermann. The relation
between both approaches to construct $(n+2)$-angulated categories is
explained in \th\ref{standard-construction-n-exact}.

\medskip

Now we explain the notion of 2-exact category with concrete examples. The
first example is due to Herschend-Iyama-Minamoto-Oppermann
\cite{herschend_representation_2014} (see also
\th\ref{n-exact-cat-U} and the example after it).  
Let $K$ be an algebraically closed field, $\coh\PP_K^2$ the
category of coherent sheaves over the projective plane over $K$, and
denote the category of vector bundles over $\PP_K^2$ by
$\vect\PP_K^2$. Note that $\vect\PP_K^2$ is closed under extensions in
$\coh\PP_K^2$, and hence is an exact category. Then, the category 
\[
\U:=\add\setP{\OO(i)}{i\in\ZZ}
\]
of finite direct sums of degree shifts of the structure sheaf on
$\PP_K^2$ is a 2-cluster-tilting subcategory of $\vect\PP_K^2$. In
view of the previous theorem, the category $\U$ is a 2-exact category.
An interesting consequence of the 2-cluster-tilting property is that
for every exact sequence
\begin{equation}
  \label{eq:star}
  \begin{tikzcd}
    0\rar&A\rar&B\rar&C\rar&D\rar&0
  \end{tikzcd}
\end{equation}
with terms in $\U$ the sequences of functors
\[
\begin{tikzcd}[column sep=small]
  0\rar&\Hom(-,A)|_{\U}\rar&\Hom(-,B)|_{\U}\rar&\Hom(-,C)|_{\U}\rar&\Hom(-,D)|_{\U},
\end{tikzcd}
\]
\[
\begin{tikzcd}[column sep=small]
  0\rar&\Hom(D,-)|_{\U}\rar&\Hom(C,-)|_{\U}\rar&\Hom(B,-)|_{\U}\rar&\Hom(A,-)|_{\U}
\end{tikzcd}
\]
are exact. In general, we call a sequence of the form \eqref{eq:star} satisfying these properties a \emph{2-exact
  sequence}.  In this case, the Koszul complexes
\[
\begin{tikzcd}
  0\rar&\OO(i-3)\rar&\OO(i-2)^3\rar&\OO(i-1)^3\rar&\OO(i)\rar&0\quad(i\in\ZZ)
\end{tikzcd}
\]
gives a special class of 2-exact sequences called 2-almost-split
sequences in higher Auslander-Reiten
theory.

Let us provide the reader with another example of a 2-exact
category, following Iyama
\cite[Sec. 2.5]{iyama_higher-dimensional_2007}. 
Let $K$ be an
algebraically closed field and $S:=K\llbracket x_0,x_1,x_2\rrbracket$
be the ring of power series three commuting variables. Also, let $G$
be a finite subgroup of $\SL_{3}(K)$ and $R:=S^G$ the associated
invariant subring of $S$. Finally, we denote the category of
Cohen-Macaulay $R$-modules by $\CM R$, see Section
\ref{sec:isolated-singularities} for details and definitions. Note
that $\CM R$ is a Frobenius exact category. Then, the category
\[
\S:=\add S=\setP{M\in\CM R}{M\text{ is a direct summand of }S^m\text{
    for some }m}
\]
is a 2-cluster-tilting subcategory of $\CM R$, \ie we have
\[
\S=\setP{M\in\CM R}{\Ext_R^1(\S,M)=0}=\setP{M\in\CM
  R}{\Ext_R^1(M,\S)=0}.
\]
An important example of a 2-exact sequence in this case is given by
the Koszul complex of $S$:
\[
\begin{tikzcd}
  K(S)\colon 0\rar&S\rar&S^{3}\rar&S^{3}\rar&S\rar&K\rar&0.
\end{tikzcd}
\]
As a complex of $R$-modules, $K(S)$ is the direct sum of finitely many
2-almost-split sequences and a 2-fundamental sequence in the sense of
\cite[Sec. 3]{iyama_higher-dimensional_2007}.

Let us mention other examples of $n$-cluster-tilting subcategories,
which give us examples of $n$-abelian and $n$-exact categories.

Finite dimensional algebras of finite
global dimension whose module category contains an $n$-cluster-tilting
subcategory are one of the central objects of study of higher
Auslander-Reiten theory. A distinguished class of such algebras, the
so-called $n$-representation-finite algebras, were introduced by
Iyama-Oppermann in \cite{iyama_n-representation-finite_2011} and have
been studied in greater detail by Herschend-Iyama in the case $n=2$,
see \cite{herschend_selfinjective_2011}.

In a parallel direction, 2-cluster-tilting subcategories of the module
category of a preprojective algebra of Dynkin type, which has infinite
global dimension, are central in Gei\ss-Leclerc-Schr\"oer's
categorification of cluster algebras arising in Lie theory, see
\cite{geiss_preprojective_2008} and the references therein.

Further examples of $n$-cluster-tilting subcategories of abelian and
exact categories have been constructed by Amiot-Iyama-Reiten in the
category of Cohen-Macaulay modules over an isolated singularity
\cite{amiot_stable_2011}.

\medskip

Finally, let us give a brief description of the contents of this
article. In Section \ref{sec:preliminary_concepts} we introduce the
basic concepts behind the definitions of $n$-abelian and $n$-exact
categories: $n$-cokernels, $n$-kernels, $n$-exact sequences, and
$n$-pushout and $n$-pullback diagrams (the reader will forgive the
author for his lack of inventiveness in naming these concepts). The
class of $n$-abelian categories is introduced in Section
\ref{sec:n-abelian-categories}, where we also give a characterization
of semisimple categories in terms of $n$-abelian categories. In
\th\ref{recognition-thm-abelian,simple-embedding-thm} we explore the
connection between $n$-abelian categories and $n$-cluster-tilting
subcategories of abelian categories. Later, in Section
\ref{sec:n-exact-categories} we introduce $n$-exact categories and
establish a connection with $n$-cluster-tilting subcategories of exact
categories in \th\ref{recognition-thm-exact}. Frobenius $n$-exact
categories and their main properties are introduced in Section
\ref{sec:frobeinus-n-exact-categorties}. At last, in Section
\ref{sec:examples} we provide several examples to illustrate our
results.

\section{Preliminary concepts}
\label{sec:preliminary_concepts}

We begin by fixing our conventions and notation, and by reminding the
reader of basic concepts in homological algebra that we use freely in
the remainder.

\subsection{Conventions and notation}
\label{sec:conventions}

Throughout this article $n$ always denotes a fixed positive integer.
Let $\C$ be a category; all subcategories considered are supposed to
be full. If $A,B\in\C$, then we denote the set of morphisms $A\to B$
in $\C$ by $\C(A,B)$.  We denote the identity morphism of an object
$C\in\C$ by $1=1_C$.  We denote composition of morphisms by
concatenation: if $f\in\C(A,B)$ and $g\in\C(B,C)$, then
$fg\in\C(A,C)$. If $F\colon\C\to\D$ is a functor, then the
\emph{essential image of $F$} is the full subcategory of $\D$ given by
\[
  F\C:=\setP{D\in\D}{\exists C\in\C\text{ such that }FC\cong D}.
\]

A morphism $e\in\C(A,A)$ is \emph{idempotent} if $e^2=e$. We say that
$\C$ is \emph{idempotent complete} if for every idempotent
$e\in\C(A,A)$ there exist an object $B$ and morphisms $r\in\C(A,B)$
and $s\in\C(B,A)$ such that $rs=e$ and $sr=1_B$.

Let $\C$ be an additive category in the sense of
\cite[Sec. A.4.1]{weibel_introduction_1994}.  If $\X$ is a class of
objects in $\C$, then we denote by $\add\X$ the full subcategory whose
objects are direct summands of direct sums of objects in $\X$.

We denote the category of (cochain) complexes in $\C$ by
$\CC(\C)$. Also, we denote the full subcategory of $\CC(\C)$ given by
all complexes concentrated in non-negative (resp. non-positive)
cohomological degrees by $\CC^{\geq0}(\C)$ (resp. $\CC^{\leq0}(\C)$). For
convenience, we denote by $\CC^n(\C)$ the full subcategory of
$\CC(\C)$ given by all complexes
\[
\begin{tikzcd}
  X^0\rar{d^0}&X^1\rar{d^1}&\cdots\rar{d^{n-1}}&X^n\rar{d^n}&X^{n+1}
\end{tikzcd}
\]
which are concentrated in degrees $0,1,\dots,n+1$. A morphism of
complexes
\[
\begin{tikzcd}
  X\dar{f}&\cdots\rar{d_X^{-1}}&X^0\rar{d_X^0}\dar{f^0}&X^1\rar{d_X^1}\dar{f^1}&X^2\rar{d_X^2}\dar{f^2}&\cdots\\
  Y&\cdots\rar{d_Y^{-1}}&Y^0\rar{d_Y^0}&Y^1\rar{d_Y^1}&Y^2\rar{d_Y^2}&\cdots
\end{tikzcd}
\]
is \emph{null-homotopic} if for all $k\in\ZZ$ there exists a morphism
$h^k\colon X^k\to Y^{k-1}$ such that
\[
f^k=h^kd_Y^{k-1}+d_X^kh^{k+1}.
\]
In this case we say that $h=\tupleP{h^k}{k\in\ZZ}$ is a
\emph{null-homotopy}.  We say that two morphisms of complexes $f\colon
X\to Y$ and $g\colon X\to Y$ are \emph{homotopic} if their difference
is null-homotopic.  A \emph{homotopy} between $f$ and $g$ is a
null-homotopy $h$ of $f-g$ and we write $h\colon f\to g$.  It is
easily verified that being homotopic induces an equivalence relation
on $\CC(\C)(X,Y)$. The \emph{homotopy category of $\C$}, denoted by
$\KK(\C)$, is the category with the same objects as $\CC(\C)$ and in
which morphisms are given by morphisms of complexes modulo homotopy.
For further information on chain complexes and the homotopy category
we refer the reader to \cite[Ch. 1]{weibel_introduction_1994}.

We remind the reader of the notion of functorially finite subcategory
of an additive category.  Let $\C$ be an additive category and $\D$ a (full)
subcategory of $\C$.  We say that $\D$ is \emph{covariantly finite in
  $\C$} if for every $C\in\C$ there exists an object $D\in\D$ and a
morphism $f:C\to D$ such that, for all $D'\in\D$, the sequence of
abelian groups
\[
\begin{tikzcd}
  \C(D,D')\rar{f\cdot?}&\C(C,D')\rar&0
\end{tikzcd}
\]
is exact.  Such a morphism $f$ is called a \emph{left
  $\D$-approximation of $C$}.  The notions of \emph{contravariantly
  finite subcategory of $\C$} and \emph{right $\D$-approximation} are
defined dually.  A \emph{functorially finite subcategory of $\C$} is a
subcategory which is both covariantly and contravariantly finite in
$\C$.  For further information on functorially finite subcategories we
refer the reader to
\cite{auslander_almost_1981,auslander_applications_1991}.

\subsection{$n$-cokernels, $n$-kernels, and $n$-exact sequences}

Let $\C$ be an additive category and $f\colon A\to B$ a morphism in
$\C$.  A \emph{weak cokernel of $f$} is a morphism $g\colon B\to C$
such that for all $C'\in\C$ the sequence of abelian groups
\[
\begin{tikzcd}
  \C(C,C')\rar{g\cdot?}&\C(B,C')\rar{f\cdot?}&\C(A,C')
\end{tikzcd}
\]
is exact.  Equivalently, $g$ is a weak cokernel of $f$ if $fg=0$ and
for each morphism $h\colon B\to C'$ such that $fh=0$ there exists a
(not necessarily unique) morphism $p\colon C\to C'$ such that $h=gp$.
These properties are subsumed in the following commutative diagram:
\[
\begin{tikzcd}
  A\rar{f}\drar[swap]{0}&B\rar{g}\dar{\forall h}
  &C\dlar[dotted]{\exists p}\\&C'
\end{tikzcd}
\]
Clearly, a weak cokernel $g$ of $f$ is a cokernel of $f$ if and only
if $g$ is an epimorphism.  The concept of \emph{weak kernel} is
defined dually.

The following general result, together with its dual, plays a central
role in the sequel.

\begin{comparison-lemma}
  \th\label{comparison-lemma} Let $\C$ be an additive category and
  $X\in\CC^{\geq0}(\C)$ a complex such that for all $k\geq0$ the
  morphism $d_X^{k+1}$ is a weak cokernel of $d_X^k$.  If $f\colon
  X\to Y$ and $g\colon X\to Y$ are morphisms in $\CC^{\geq0}(\C)$ such
  that $f^0=g^0$, then there exists a homotopy $h\colon f\to g$ such
  that $h^1$ is the zero morphism.
\end{comparison-lemma}
\begin{proof}
  Let $u:=f-g$ and for all $k\leq 1$ let $h^k\colon X^k\to Y^{k-1}$ be
  the zero morphism. Note that $u^0=0$ by hypothesis. We proceed by
  induction on $k$. Let $k\geq1$ and suppose that for all $\ell\leq k$
  we have constructed a morphism
  \[
  h^{\ell}\colon X^{\ell}\to Y^{\ell-1}
  \]
  such that
  \[
  u^{\ell-1}=h^{\ell-1}d_Y^{\ell-2}+d_X^{\ell-1}h^{\ell}.
  \]
  Since $u$ is a morphism of complexes, we have
  \begin{align*}
    d_X^{k-1}(u^k-h^kd_Y^{k-1})=&d_X^{k-1}u^k+(h^{k-1}d_Y^{k-2}-u^{k-1})d_Y^{k-1}\\
    =&d_X^{k-1}u^k-u^{k-1}d_Y^{k-1}\\
    =&0.
  \end{align*}
  Hence, given that $d_X^k$ is a weak cokernel of $d_X^{k-1}$, there
  exists a morphism
  \[
  h^{k+1}\colon X^{k+1}\to Y^k
  \]
  such that $u^k-h^kd_Y^{k-1}=d_X^kh^{k+1}$ or, equivalently,
  \[
  u^k=h^kd_Y^{k-1}+d_X^kh^{k+1}.
  \]
  This finishes the construction of the required null-homotopy
  $h\colon f-g\to0$.
\end{proof}

The following terminology will prove convenient in the sequel.

\begin{definition}
  \th\label{def:n-cokernel} Let $\C$ be an additive category and
  $d^0\colon X^0\to X^1$ a morphism in $\C$.  An \emph{$n$-cokernel of
    $d^0$} is a sequence
  \[
  \begin{tikzcd}
    \tuple{d^1,\dots,d^n}\colon
    X^1\rar{d^1}&X^2\rar{d^2}&\cdots\rar{d^n}&X^{n+1}
  \end{tikzcd}
  \]
  such that for all $Y\in\C$ the induced sequence of abelian groups
  \[
  \begin{tikzcd}[column sep=normal]
    0\rar&\C(X^{n+1},Y)\rar{d^{n}\cdot ?}&\C(X^n,Y)\rar{d^{n-1}\cdot
      ?}&\cdots\rar{d^{1}\cdot ?}&\C(X^1,Y)\rar{d^{0}\cdot
      ?}&\C(X^0,Y)
  \end{tikzcd}
  \]
  is exact.  Equivalently, the sequence $\tuple{d^1,\dots,d^n}$ is an
  $n$-cokernel of $d^0$ if for all $1\leq k\leq n-1$ the morphism
  $d^k$ is a weak cokernel of $d^{k-1}$, and $d^n$ is moreover a
  cokernel of $d^{n-1}$.  The concept of \emph{$n$-kernel} of a
  morphism is defined dually.
\end{definition}

\begin{remark}
  If $n\geq2$, then $n$-cokernels are not unique in general. Indeed,
  for each object $C\in\C$ the sequence $0\to C\xto{1}C$ is a
  2-cokernel of the morphism $0\to 0$. This shortcoming can be
  resolved if one considers $n$-cokernels up to isomorphism in
  $\KK(\C)$, see \th\ref{weak-isos-induce-homotopy-equivalences}.
\end{remark}

As explained in the Introduction, $n$-exact sequences, defined below,
are the object of study of higher homological algebra. The
investigation of their properties is our main concern for the rest of
this article.

\begin{definition}
  \th\label{def:n-exact-sequence} Let $\C$ be an additive category. An
  \emph{$n$-exact sequence in $\C$} is a complex
  \[
  \begin{tikzcd}
    X^0\rar{d^0}&X^1\rar{d^1}&\cdots\rar{d^{n-1}}&X^n\rar{d^n}&X^{n+1}
  \end{tikzcd}
  \]
  in $\CC^n(\C)$ such that $\tuple{d^0,\dots,d^{n-1}}$ is an
  $n$-kernel of $d^n$, and $\tuple{d^1,\dots,d^n}$ is an $n$-cokernel
  of $d^0$.
\end{definition}

Let $\C$ be an additive category. We remind the reader that a complex
$X\in\CC(\C)$ is \emph{contractible} if the identity morphism of $X$
is null-homotopic or, equivalently, $X$ is isomorphic to the zero
complex in $\KK(\C)$.  As a first analogy with the classical theory,
let us show that the class of $n$-exact sequences is closed under
isomorphisms in $\KK(\C)$.

\begin{proposition}
  \th\label{n-exact-sequences-are-closed-under-homotopy-equivalence}
  Let $\C$ be an additive category and $X$ and $Y$ be complexes in
  $\CC^n(\C)$ which are isomorphic in $\KK(\C)$.  Then the following
  statements hold.
  \begin{enumerate}
  \item The complex $X$ is an $n$-exact sequence if and only if $Y$ is
    an $n$-exact sequence.
  \item Every contractible complex with $n+2$ terms is an $n$-exact
    sequence.
  \end{enumerate}
\end{proposition}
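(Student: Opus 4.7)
My plan is to reduce both statements to the general principle that exactness of the chain complex of abelian groups obtained by applying a representable functor $\C(-,Z)$ or $\C(Z,-)$ to a complex in $\C$ depends only on the isomorphism class of that complex in $\KK(\C)$. By \th\ref{def:n-cokernel} and \th\ref{def:n-exact-sequence}, $n$-exactness of a complex $X\in\CC^n(\C)$ is precisely a collection of such exactness conditions, one for each test object $Z\in\C$, so the needed homotopy-invariance follows once the principle is established.

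For (i) I would first invoke the standard fact that additive functors preserve null-homotopies: given a null-homotopy $h=\tupleP{h^k}{k\in\ZZ}$ of $f\in\CC(\C)(X,Y)$, applying $\C(-,Z)$ termwise and using additivity yields a null-homotopy of $\C(-,Z)(f)$ by a direct computation from the defining identity $f^k = h^k d_Y^{k-1} + d_X^k h^{k+1}$, and similarly for $\C(Z,-)$. Consequently an isomorphism $X\cong Y$ in $\KK(\C)$ induces isomorphisms $\C(Y,Z)\cong\C(X,Z)$ and $\C(Z,X)\cong\C(Z,Y)$ in the homotopy category of chain complexes of abelian groups. Since homology is a homotopy invariant, the exactness conditions characterizing an $n$-kernel of $d^n$ and an $n$-cokernel of $d^0$ as in \th\ref{def:n-cokernel} hold for $X$ if and only if they hold for $Y$, proving (i).

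Statement (ii) is then an immediate corollary: a contractible complex is, by definition, isomorphic to the zero complex in $\KK(\C)$; the zero complex is trivially $n$-exact since both representable functors send it to the sequence $0\to 0\to\cdots\to 0$, which is exact at every position; so (i) applies.

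The only subtlety is bookkeeping: one must verify that the exactness conditions in \th\ref{def:n-cokernel} correspond position-by-position to vanishing of the homology of the representable complexes, including the leading zero in each displayed sequence, which encodes injectivity of the first map as exactness at the leftmost position. This is a routine verification and I do not anticipate any substantive obstacle.
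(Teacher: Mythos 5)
Your proof is correct, but it takes a genuinely different route from the paper's. You reformulate the conditions of \th\ref{def:n-cokernel} and \th\ref{def:n-exact-sequence} as the vanishing, for every test object $Z\in\C$, of the homology of the complexes of abelian groups $\C(X,Z)$ and $\C(Z,X)$ at every position except one end (position $0$ for $\C(-,Z)$, position $n+1$ for $\C(Z,-)$); since additive functors preserve null-homotopies, an isomorphism $X\cong Y$ in $\KK(\C)$ induces homotopy equivalences of these complexes in $\Mod\ZZ$, and homotopy invariance of homology does the rest. Your bookkeeping remark is exactly the right point to check: the leading zero in the displayed sequence of \th\ref{def:n-cokernel} is the vanishing of homology at the outermost nonzero position of the extended-by-zero complex, so it is covered by the same invariance, and since both $X$ and $Y$ lie in $\CC^n(\C)$ the positions match up with nothing hiding outside degrees $0,\dots,n+1$. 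The paper instead argues entirely inside $\C$: it takes the homotopy $h\colon fg\to 1_Y$ and, for each $k$, explicitly manufactures a factorization $u=d_Y^k(f^{k+1}v-h^{k+1}u)$ witnessing that $d_Y^k$ is a weak cokernel of $d_Y^{k-1}$, together with a separate argument that $d_Y^n$ is an epimorphism. Your argument is shorter and more conceptual, exhibiting the statement as a formal consequence of the representable-functor definition; the paper's argument stays inside $\C$ and produces the explicit factoring morphisms, in the same hands-on style as the \th\ref{comparison-lemma} and the results that follow. Part (ii) is handled identically in both: a contractible complex is isomorphic to the zero complex in $\KK(\C)$, which is trivially an $n$-exact sequence.
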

\begin{proof}
  Note that the second claim follows immediately from the first one
  since the zero complex in $\CC^{n}(\C)$ is clearly an $n$-exact
  sequence.  Suppose that $X$ is an $n$-exact sequence. By hypothesis,
  there exist morphisms of complexes
  \[
  \begin{tikzcd}
    Y\dar{f}&Y^0\rar\dar &Y^1\rar\dar&\cdots\rar&Y^n\rar\dar&Y^{n+1}\dar\\
    X\dar{g}&X^0\rar\dar &X^1\rar\dar&\cdots\rar&X^n\rar\dar&X^{n+1}\dar\\
    Y&Y^0\rar&Y^1\rar&\cdots\rar&Y^n\rar&Y^{n+1}
  \end{tikzcd}
  \]
  together with a homotopy $h\colon fg \to 1_{Y}$. Hence, for all
  $k\in\set{1,\dots,n}$ we have
  \begin{equation}
    \label{eq:in-proof-1-fg}
    1_{Y^k}=f^kg^k-h^kd_Y^{k-1}-d_Y^kh^{k+1}.
  \end{equation}
  In particular, we have
  \begin{equation}
    \label{eq:in-proof-epi-1-fgx}
    1_{Y^{n+1}}=f^{n+1}g^{n+1}-h^{n+1}d_Y^n.    
  \end{equation}
  
  We claim that for all $k\in\set{1,\dots,n}$ the morphism $d_Y^k$ is
  a weak cokernel of $d_Y^{k-1}$. Indeed, let $k\in\set{1,\dots,n}$
  and $u\colon Y^k\to C$ be a morphism such that $d_Y^{k-1}u=0$.  It
  follows that
  \[
  (d_X^{k-1}g^k)u=g^{k-1}(d_Y^{k-1}u)=0.
  \]
  Since $d_X^k$ is a weak cokernel of $d_X^{k-1}$ there exists a
  morphism $v\colon X^{k+1}\to C$ such that $g^ku=d_X^kv$.  Therefore,
  \begin{equation}
    \label{eq:in-proof-fgu}
    f^k(g^ku)=(f^kd_X^k)v=d_Y^kf^{k+1}v.
  \end{equation}
  By composing the identity \eqref{eq:in-proof-1-fg} on the right with
  $u$ and substituting the identity \eqref{eq:in-proof-fgu} we obtain
  \[
  u=(f^kg^k)u-(d_Y^kh^{k+1})u=d_Y^k(f^{k+1}v-h^{k+1}u).
  \]
  Therefore $u$ factors through $d_Y^k$.  This shows that $d_Y^k$ is a
  weak kernel of $d_Y^{k-1}$.

  We need to show that $d_Y^n$ is moreover a cokernel of $d_Y^{n-1}$.
  For this it is enough to show that $d_Y^n$ is an epimorphism for we
  already know that it is a weak cokernel of $d_Y^{n-1}$.  Let
  $w\colon Y^{n+1}\to C$ be a morphism such that $d_Y^nw=0$.  It
  follows that
  \[
  d_X^n(g^{n+1}w)=g^n(d_Y^nw)=0.
  \]
  Given that $d_X^n$ is an epimorphism we deduce that $g^{n+1}w=0$. By
  composing \eqref{eq:in-proof-epi-1-fgx} on the right with $w$, we
  obtain
  \[
  w=f^{n+1}(g^{n+1}w)-h^{n+1}(d_Y^nw)=0.
  \]
  Therefore $d_Y^n$ is an epimorphism.  This shows that
  $\tuple{d_Y^1,\dots,d_Y^n}$ is an $n$-cokernel of $d_Y^0$.  By
  duality, the sequence $\tuple{d_Y^0,\dots,d_Y^{n-1}}$ is an
  $n$-kernel of $d_Y^n$.  Hence $Y$ is an $n$-exact sequence. The
  converse implication is analogous.
\end{proof}

We have the following useful characterization of contractible
$n$-exact sequences.

\begin{proposition}
  \th\label{split-mono-implies-sequence-contracts} Let $\C$ be an
  additive category and $X$ a complex in $\CC^n(\C)$ such that
  $\tuple{d^1,\dots,d^n}$ is an $n$-cokernel of $d^0$.  Then, $d^0$ is
  a split monomorphism if and only if $X$ is a contractible $n$-exact
  sequence.
\end{proposition}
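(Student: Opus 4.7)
The plan is to prove both implications; the non-trivial content is the direction that $d^0$ being a split monomorphism implies $X$ is a contractible $n$-exact sequence.

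For the converse direction, if $X$ is contractible then the identity $1_X$ is null-homotopic. Evaluating the relation $1_{X^k}=h^kd^{k-1}+d^kh^{k+1}$ at $k=0$ (with $d^{-1}=0$) gives $1_{X^0}=d^0h^1$, exhibiting $h^1$ as a retraction of $d^0$.

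For the forward direction, fix a retraction $s\colon X^1\to X^0$ of $d^0$, so $d^0s=1_{X^0}$. My plan is to construct a contracting homotopy $h=(h^1,\dots,h^{n+1})$ of $1_X$ by induction on the degree. Set $h^1:=s$, which handles the null-homotopy relation at degree $0$. Suppose $h^1,\dots,h^k$ have been defined for some $1\leq k\leq n$ in such a way that the null-homotopy relations at degrees $\leq k-1$ hold. A short computation using these relations together with $d^{k-2}d^{k-1}=0$ shows that $d^{k-1}\bigl(1_{X^k}-h^kd^{k-1}\bigr)=0$. Since $d^k$ is a weak cokernel of $d^{k-1}$ for $k\leq n-1$, and a cokernel when $k=n$, one obtains $h^{k+1}\colon X^{k+1}\to X^k$ with $d^kh^{k+1}=1_{X^k}-h^kd^{k-1}$, which is precisely the null-homotopy relation at degree $k$.

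The main obstacle is that after the induction reaches $k=n$, the null-homotopy relation at the top degree reads $1_{X^{n+1}}=h^{n+1}d^n$, a condition on $h^{n+1}$ distinct from the one used to construct it. To see it holds automatically, I would compose the defining relation $d^nh^{n+1}=1_{X^n}-h^nd^{n-1}$ on the right by $d^n$; using $d^{n-1}d^n=0$ this yields $d^n\bigl(h^{n+1}d^n\bigr)=d^n=d^n\cdot 1_{X^{n+1}}$. Since $d^n$ is an epimorphism (being a cokernel of $d^{n-1}$), cancelling it yields $h^{n+1}d^n=1_{X^{n+1}}$. Therefore $h$ is a contracting homotopy of $1_X$, so $X$ is contractible, and \th\ref{n-exact-sequences-are-closed-under-homotopy-equivalence}(ii) concludes that $X$ is an $n$-exact sequence.
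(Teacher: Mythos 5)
Your proof is correct and follows essentially the same route as the paper: build the contracting homotopy inductively from the retraction of $d^0$ using the weak cokernel property at each stage, then obtain the top-degree relation $h^{n+1}d^n=1_{X^{n+1}}$ by cancelling the epimorphism $d^n$. The only difference is cosmetic — you spell out the converse direction, which the paper dismisses as obvious.
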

\begin{proof}
  Suppose that $d^0$ is a split monomorphism. Hence there exists a
  morphism $h^1\colon X^1\to X^0$ such that $d^0h^1=1_{X^0}$.  We
  shall extend $h^1$ to a null-homotopy of $1_{X}$.  Inductively, let
  $k\in\set{0,1,\dots,n}$ and suppose that for all $\ell\leq k$ we
  have constructed a morphism $h^\ell\colon X^\ell\to X^{\ell-1}$ such
  that
  \[
  1_{X^{\ell-1}}=h^{\ell-1}d^{\ell-2}+d^{\ell-1} h^\ell.
  \]
  Composing this identity, for $\ell=k$, on the left with $d^{k-1}$ we
  obtain
  \[
  d^{k-1}=(h^{k-1}d^{k-2}+d^{k-1}h^k)d^{k-1}=d^{k-1}(h^kd^{k-1}).
  \]
  Since $d^k$ is a weak cokernel of $d^{k-1}$, there exists a morphism
  $h^{k+1}\colon X^{k+1}\to X^{k}$ such that
  $d^kh^{k+1}=1_{X^k}-h^kd^{k-1}$ or, equivalently,
  \[
  1_{X^k}=h^kd^{k-1}+d^kh^{k+1}.
  \]
  This finishes the induction step.  It remains to show that
  $1_{X^{n+1}}=h^nd^n$.  For this, let $k=n$ and note that composing
  the previous equality on the right by $d^n$ yields
  \[
  d^n=(h^nd^{n-1}+d^nh^{n+1})d^n=d^n(h^{n+1}d^n).
  \]
  Since $d^n$ is an epimorphism, we have $1_{X^{n+1}}=h^nd^n$, which
  is what we needed to show. This shows that $X$ is a contractible
  complex, and so it is also an $n$-exact sequence.  The converse
  implication is obvious.
\end{proof}

The following result implies that $n$-cokernels and $n$-kernels are
unique up to isomorphism in $\KK(\C)$.

\begin{proposition}
  \th\label{weak-isos-induce-homotopy-equivalences} Let $\C$ be an
  additive category and $f\colon X\to Y$ a morphism of $n$-exact
  sequences in $\C$ such that $f^k$ and $f^{k+1}$ are isomorphisms for
  some $k\in\set{1,\dots,n}$. Then, $f$ induces an isomorphism in
  $\KK(\C)$.
\end{proposition}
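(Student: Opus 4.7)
The plan is to exhibit a homotopy inverse $g\colon Y\to X$ to $f$. Begin by setting $g^k := (f^k)^{-1}$ and $g^{k+1} := (f^{k+1})^{-1}$; the chain-map identity $g^k d_X^k = d_Y^k g^{k+1}$ is a direct rewriting of the chain-map identity for $f$ at index $k$. Extend $g$ forward by induction: if $g^m$ has been constructed consistently with the chain-map identity through index $m-1$, then $g^m d_X^m$ is annihilated on the left by $d_Y^{m-1}$ (using the chain-map identity at $m-1$ together with $d_X^{m-1}d_X^m = 0$), so the fact that $d_Y^m$ is a weak cokernel of $d_Y^{m-1}$ (part of $Y$ being an $n$-exact sequence) produces $g^{m+1}\colon Y^{m+1}\to X^{m+1}$ with $g^m d_X^m = d_Y^m g^{m+1}$. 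Iterate up to $m=n$. Dually, extend $g$ backward to index $0$ using the $n$-kernel part of $X$.

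Next, I would show $fg\simeq 1_X$ in $\KK(\C)$. Let $u := fg - 1_X$; by construction $u^k = 0 = u^{k+1}$. Build a null-homotopy $h$ of $u$ by setting $h^i = 0$ for $i\in\set{k,k+1}$ and propagating in both directions, mirroring the proof of the Comparison Lemma. Forward: given $h^j$ for $j\leq m$ so the homotopy identity holds through index $m-1$, one verifies that $u^m - h^m d_X^{m-1}$ is annihilated on the left by $d_X^{m-1}$ (a direct computation from the chain-map identity for $u$ and $dd=0$), so the weak-cokernel property of $d_X^m$ produces the required $h^{m+1}$. Backward is dual, using the $n$-kernel property of $X$. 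The symmetric argument applied to $u' := gf - 1_Y$, using the $n$-exactness of $Y$, yields $gf\simeq 1_Y$. Hence $f$ is invertible in $\KK(\C)$.

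The delicate point is checking that the two inductions close correctly at the endpoints of the complex. At index $n+1$ the homotopy identity $u^{n+1} = h^{n+1}d_X^n$ is never produced directly by the weak-cokernel step; instead the chain-map identity together with the homotopy equation one index lower gives $d_X^n(u^{n+1} - h^{n+1}d_X^n) = 0$, and since $d_X^n$ is a genuine cokernel (hence an epimorphism), the conclusion follows. Dually, at index $0$, the identity $u^0 = d_X^0 h^1$ is forced because $d_X^0$ is a genuine kernel (hence a monomorphism). This is precisely where the full strength of $n$-exactness---that the first and last differentials are genuine kernels and cokernels, not merely weak ones---enters the argument; once those endpoint cases are handled, the rest is routine diagram chasing within the weak-(co)kernel framework.
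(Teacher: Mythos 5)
Your proof is correct and takes essentially the same route as the paper: construct $g$ by inverting $f^k$ and $f^{k+1}$ and extending in both directions via the weak-(co)kernel factorization properties, then build null-homotopies of $fg-1_X$ and $gf-1_Y$ by two-sided induction anchored at degrees $k$ and $k+1$. The only difference is presentational: you re-derive the homotopy induction inline, including the explicit endpoint arguments via the epimorphism/monomorphism properties of $d^n$ and $d^0$, whereas the paper obtains the same homotopies by applying the \th\ref{comparison-lemma} and its dual to the two truncations of the complex at degrees $k$ and $k+1$ (the endpoint cases being absorbed there into the weak-cokernel hypothesis for the zero-padded complex).
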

\begin{proof}
  Using the factorization property of weak cokernels and weak kernels
  we can construct a morphism of $n$-exact sequences $g\colon Y\to X$
  where $g^k$ and $g^{k+1}$ are the inverses of $f^k$ and $f^{k+1}$
  respectively:
  \[
  \begin{tikzcd}[column sep=small]
    X\dar{f}&X^0\rar\dar&\cdots\rar&X^{k-1}\rar\dar&X^k\rar\dar{f^k}&X^{k+1}\rar\dar{f^{k+1}}&X^{k+2}\rar\dar&\cdots\rar&X^{n+1}\dar\\
    Y\dar{g}&Y^0\rar\dar[dotted]&\cdots\rar&Y^{k-1}\rar\dar[dotted]&Y^k\rar\dar{g^k}&Y^{k+1}\rar\dar{g^{k+1}}&Y^{k+2}\rar\dar[dotted]&\cdots\rar&Y^{n+1}\dar[dotted]\\
    X&X^0\rar&\cdots\rar&X^{k-1}\rar&X^k\rar&X^{k+1}\rar&X^{k+2}\rar&\cdots\rar&X^{n+1}
  \end{tikzcd}
  \]
  Then, the \th\ref{comparison-lemma} and its dual applied to diagrams
  \[
  \begin{tikzcd}[column sep=small]
    X^0\rar\dar&\cdots\rar&X^{k-1}\rar\dar&X^k\dar{f^{k+1}}\\
    Y^0\rar\dar[dotted]&\cdots\rar&Y^{k-1}\rar\dar[dotted]&Y^k\dar{g^{k+1}}\\
    X^0\rar&\cdots\rar&X^{k-1}\rar&X^k
  \end{tikzcd}
  \quad\text{and}\quad
  \begin{tikzcd}[column sep=small]
    X^{k+1}\rar\dar{f^{k+1}}&X^{k+2}\rar\dar&\cdots\rar&X^{n+1}\dar\\
    Y^{k+1}\rar\dar{g^{k+1}}&Y^{k+2}\rar\dar[dotted]&\cdots\rar&Y^{n+1}\dar[dotted]\\
    X^{k+1}\rar&X^{k+2}\rar&\cdots\rar&X^{n+1}
  \end{tikzcd}
  \]
  respectively imply that $f$ and $g$ induce mutually inverse
  isomorphisms in the homotopy category $\KK(\C)$.
\end{proof}

\begin{remark}
  The statement of \th\ref{weak-isos-induce-homotopy-equivalences} can
  be interpreted as saying that each morphism in an $n$-exact sequence
  determines the others ``up to homotopy''.  To prove that
  equivalences of $n$-exact sequences also induce isomorphisms in
  $\KK(\C)$ we need to impose a richer structure on the category $\C$,
  see \th\ref{X-closed-under-equivalences}.
\end{remark}

\begin{definition}
  Let $\C$ be an additive category. A \emph{morphism of $n$-exact
    sequences in $\C$} is a morphism of complexes
  \[
  \begin{tikzcd}
    X\dar{f}&X^0\rar\dar{f^0} &X^1\rar\dar{f^1}&\cdots\rar
    &X^n\rar\dar{f^n}&X^{n+1}\dar{f^{n+1}}\\
    Y&Y^0\rar&Y^1\rar&\cdots\rar&Y^n\rar&Y^{n+1}
  \end{tikzcd}
  \]
  in which each row is an $n$-exact sequence. We say that $f$ is an
  \emph{equivalence} if $f^0=1_{X^0}$ and $f^{n+1}=1_{X^{n+1}}$.
\end{definition}

\begin{remark}
  In \th\ref{X-closed-under-equivalences} (in which we consider
  morphisms of complexes up to homotopy) we show that, in the case of
  $n$-exact categories, equivalences of $n$-exact sequences are in
  fact an equivalence relation on the class of $n$-exact sequences.
\end{remark}

\subsection{$n$-pushout diagrams and $n$-pullback diagrams}

Let $\C$ be an additive category.  A pushout diagram of a pair of
morphisms
\[
\begin{tikzcd}
  X\rar{g}\dar{f}&Z\\
  Y
\end{tikzcd}
\]
in $\C$ can be identified with a cokernel of the morphism $[-g\
f]^\top\colon X\to Z\oplus Y$.  This motivates us to introduce the
following concept.

\begin{definition}
  \th\label{def:n-pushout-diagram} Let $\C$ be an additive category,
  $X$ a complex in $\CC^{n-1}(\C)$, and $f^0\colon{X^0}\to Y^0$ a
  morphism in $\C$.  An \emph{$n$-pushout diagram of $X$ along $f^0$}
  is a morphism of complexes
  \[
  \begin{tikzcd}
    X\dar{f}&X^0\rar\dar{f^0}&X^1\rar\dar&\cdots\rar&X^{n-1}\rar\dar&X^n\dar\\
    Y&Y^0\rar&Y_1\rar&\cdots\rar&Y^{n-1}\rar&Y^n
  \end{tikzcd}
  \]
  such that in the \emph{mapping cone} $C=C(f)$
  \[
  \begin{tikzcd}
    X^0\rar{d_C^{-1}}&X^1\oplus
    Y^0\rar{d_C^{0}}&\cdots\rar{d_C^{n-2}}&X^n\oplus
    Y^{n-1}\rar{d_C^{n-1}}&Y^n.
  \end{tikzcd}
  \]
  the sequence $\tuple{d_C^0,\dots,d_C^{n-1}}$ is an $n$-cokernel of
  $d_C^{-1}$, where we define
  \begin{equation}
    \label{differential_mapping_cone}
    \begin{tikzcd}
      d_C^k:=\begin{bmatrix}
        -d_X^{k+1}&0\\
        f^{k+1}&d_Y^{k}
      \end{bmatrix}\colon X^{k+1}\oplus Y^{k}\rar& X^{k+2}\oplus
      Y^{k+1}
    \end{tikzcd}
  \end{equation}
  for each $k\in\set{-1,0,1,\dots,n-1}$.
  In particular,
  \[
  d_C^{-1}=
  \begin{bmatrix}
    -d_X^0\\f^0
  \end{bmatrix}\quad\text{and}\quad d_C^{n-1}=
  \begin{bmatrix}
    f^n&d_Y^{n-1}
  \end{bmatrix}.
  \]
  Note that the fact that $C(f)$ is a complex encodes precisely that
  $X$ and $Y$ are complexes and that $f$ is a morphism of complexes.
  The concept of \emph{$n$-pullback diagram} is defined dually.
\end{definition}

We now state some of general properties of $n$-pushout diagrams.

\begin{proposition}
  \th\label{n-pushouts-and-weak-cokernels} Let $\C$ be an additive
  category. Suppose that we are given an $n$-pushout diagram
  \[
  \begin{tikzcd}
    X\dar{f}&X^0\rar\dar{g^0} &X^1\rar\dar&\cdots\rar&X^{n-1}\rar\dar&X^n\dar\\
    Y&Y^0\rar&Y^1\rar &\cdots\rar&Y^{n-1}\rar&Y^n
  \end{tikzcd}
  \]
  and let $k\in\set{0,1,\dots,n-2}$. If $d_Y^{k+1}$ is a weak cokernel
  of $d_Y^k$, then $d_X^{k+1}$ is a weak cokernel of $d_X^k$.
\end{proposition}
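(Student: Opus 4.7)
My plan is to translate the weak cokernel condition on $d_X^{k+1}$ into a concrete lifting problem, and then exploit the $n$-cokernel property built into the definition of an $n$-pushout together with the extra weak cokernel hypothesis on $d_Y^{k+1}$.

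Concretely, I would start from a morphism $u\colon X^{k+1}\to Z$ satisfying $d_X^k u = 0$ and try to produce $v\colon X^{k+2}\to Z$ with $u = d_X^{k+1} v$. To this end I form the morphism $\tilde u \colon X^{k+1}\oplus Y^k \to Z$ whose restriction to $X^{k+1}$ is $u$ and whose restriction to $Y^k$ is zero. The explicit shape of $d_C^{k-1}$ given by \eqref{differential_mapping_cone} together with $d_X^k u = 0$ immediately yields $d_C^{k-1}\tilde u = 0$. Because $k \in \{0, 1, \dots, n-2\}$, the $n$-cokernel condition built into the $n$-pushout hypothesis makes $d_C^k$ a weak cokernel of $d_C^{k-1}$, so there exists a morphism $w \colon X^{k+2}\oplus Y^{k+1}\to Z$ with components $w_1\colon X^{k+2}\to Z$ and $w_2\colon Y^{k+1}\to Z$ such that $d_C^k w = \tilde u$. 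Decomposing this identity along the two summands of the source gives
\[
u = -d_X^{k+1}w_1 + f^{k+1}w_2 \qquad \text{and} \qquad d_Y^k w_2 = 0.
\]

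At this point the hypothesis on $d_Y^{k+1}$ would be invoked: since $d_Y^{k+1}$ is a weak cokernel of $d_Y^k$, the second identity produces $w_3\colon Y^{k+2}\to Z$ with $w_2 = d_Y^{k+1} w_3$. Combining this with the chain map relation $f^{k+1} d_Y^{k+1} = d_X^{k+1} f^{k+2}$ rewrites $f^{k+1}w_2 = d_X^{k+1}(f^{k+2}w_3)$, and substituting into the first identity yields
\[
u = d_X^{k+1}\bigl(f^{k+2}w_3 - w_1\bigr),
\]
so that $v := f^{k+2}w_3 - w_1$ is the desired factorization of $u$ through $d_X^{k+1}$. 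I expect the main obstacle to be conceptual rather than computational: factoring through $d_C^k$ alone yields a pair $(w_1, w_2)$ rather than a single morphism out of $X^{k+2}$, and one must recognise that the extra hypothesis on $d_Y^{k+1}$, together with the naturality built into the chain map $f$, is exactly what is needed to absorb the parasitic $f^{k+1}w_2$ term into the desired factorization through $d_X^{k+1}$.
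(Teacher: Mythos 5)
Your proposal is correct and follows essentially the same route as the paper's proof: extend $u$ by zero to a morphism $X^{k+1}\oplus Y^k\to Z$ killed by $d_C^{k-1}$, factor it through $d_C^k$ using the $n$-cokernel property of the mapping cone, then use the hypothesis that $d_Y^{k+1}$ is a weak cokernel of $d_Y^k$ together with the chain-map identity $f^{k+1}d_Y^{k+1}=d_X^{k+1}f^{k+2}$ to absorb the spurious component. The computations and the range checks on the indices all go through exactly as you describe.
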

\begin{proof}
  Put $C:=C(f)$ and let $u\colon X^{k+1}\to M$ be a morphism such that
  $d_X^ku=0$. Consider the solid part of the following commutative
  diagram:
  \[
  \begin{tikzcd}
    {}&X^k\rar\dar{f^k}&X^{k+1}\rar\dar[swap]{f^{k+1}}\ar[bend left, near start]{dd}{u}&X^{k+2}\dar{f^{k+2}}\ar[dotted]{ddl}[swap, near start]{h}\\
    Y^{k-1}\rar&Y^k\rar\drar[swap]{0}&Y^{k+1}\rar\dar[dotted,swap]{v}&Y^{k+2}\dlar[dotted]{w}\\
    &&M
  \end{tikzcd}
  \]
  Given that $d_C^{k}\colon X^{k+1}\oplus Y^k\to X^{k+2}\oplus
  Y^{k+1}$ is a weak cokernel of $d_C^{-1}\colon X^k\oplus Y^{k-1}\to
  X^{k+1}\oplus Y^k$, there exist morphisms $v\colon Y^{k+1}\to M$ and
  $h\colon X^{k+2}\to M$ such that $d_Y^kv=0$ and
  $u-f^{k+1}v=d_X^{k+1}h$. Since $d_Y^{k+1}$ is a weak cokernel of
  $d_Y^k$ there exists a morphism $w\colon Y^{k+2}\to M$ such that
  $v=d_Y^{k+1}w$. Therefore we have
  \begin{align*}
    u=&d_X^{k+1}h+f^{k+1}v\\
    =&d_X^{k+1}h+f^{k+1}(d_Y^{k+1}w)\\
    =&d_X^{k+1}(h+f^{k+2}w).
  \end{align*}
  This shows that $d_X^{k+1}$ is a weak cokernel of $d_X^k$.
\end{proof}

Our choice of terminology in \th\ref{def:n-pushout-diagram} is
justified by the following property.

\begin{proposition}
  \th\label{universal-property-of-n-pushout-diagrams} Let $\C$ be an
  additive category, $g\colon X\to Z$ a morphism of complexes in
  $\CC^{n-1}(\C)$ and suppose there exists an $n$-pushout diagram of
  $X$ along $g^0$
  \[
  \begin{tikzcd}
    X\dar{f}&X^0\rar\dar{g^0} &X^1\rar\dar&\cdots\rar&X^{n-1}\rar\dar&X^n\dar\\
    Y&Y^0=Z^0\rar&Y^1\rar &\cdots\rar&Y^{n-1}\rar&Y^n
  \end{tikzcd}
  \]
  Then, there exists a morphism of complexes $p\colon Y\to Z$ such
  that $p^0=1_{Z^0}$ and a homotopy $h\colon fp\to g$ with
  $h^1=0$. Moreover, these properties determine $p$ uniquely up to
  homotopy.
\end{proposition}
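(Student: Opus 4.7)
The strategy is to build $(p,h)$ inductively by exploiting the defining $n$-cokernel property of the mapping cone $C := C(f)$. I would initialize $p^0 := 1_{Z^0}$ and $h^1 := 0$, noting that $f^0 p^0 = g^0$ holds by hypothesis. At inductive step $k$ (for $k = 0, 1, \ldots, n-1$), given $p^0, \ldots, p^k$ and $h^1, \ldots, h^{k+1}$ satisfying the required chain-map and homotopy relations, form the morphism $\alpha^k \colon X^{k+1}\oplus Y^k \to Z^{k+1}$ whose components on $X^{k+1}$ and $Y^k$ are $g^{k+1} + h^{k+1}d_Z^k$ and $p^k d_Z^k$, respectively. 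A direct computation shows that $d_C^{k-1}\alpha^k = 0$: the restriction to $X^k$ vanishes by the chain-map identity $d_X^k g^{k+1} = g^k d_Z^k$, the inductive homotopy relation $f^k p^k = g^k + h^k d_Z^{k-1} + d_X^k h^{k+1}$, and $d_Z^{k-1}d_Z^k = 0$; the restriction to $Y^{k-1}$ is $d_Y^{k-1}p^k d_Z^k = p^{k-1} d_Z^{k-1} d_Z^k = 0$. The weak-cokernel property of $d_C^k$ over $d_C^{k-1}$ (or, at $k = n-1$, the cokernel property) then produces a lift whose components yield $h^{k+2}$ and $p^{k+1}$ satisfying the relations at position $k+1$. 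At the final step the codomain $Y^n$ of $d_C^{n-1}$ is not a direct sum, so the lift is just $p^n$, with no new $h$.

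For uniqueness, suppose $(p,h)$ and $(p',h')$ are two solutions and set $q := p-p'$, $\ell := h-h'$. Then $q\colon Y \to Z$ is a chain map with $q^0 = 0$, and $\ell$ is a null-homotopy of $fq$ with $\ell^1 = 0$. It suffices to exhibit a null-homotopy $s\colon Y \to Z$ of $q$. I would construct $s$ by a parallel induction, carrying along auxiliary morphisms $t^{k+1}\colon X^{k+1} \to Z^{k-1}$ (with $t^1 := 0$) so as to maintain the invariant
\[
\ell^{k+1} - f^{k+1} s^{k+1} = -d_X^{k+1} t^{k+2} + t^{k+1} d_Z^{k-1}.
\]
At step $k$ I would form $\delta^k \colon X^{k+1}\oplus Y^k \to Z^k$ whose components on $X^{k+1}$ and $Y^k$ are $\ell^{k+1} - t^{k+1}d_Z^{k-1}$ and $q^k - s^k d_Z^{k-1}$, verify $d_C^{k-1}\delta^k = 0$ using the invariant together with the identity $f^k q^k = \ell^k d_Z^{k-1} + d_X^k \ell^{k+1}$, and lift through $d_C^k$ to produce $s^{k+1}$ and $t^{k+2}$. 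At the last step the remaining identity $q^n = s^n d_Z^{n-1}$ follows because $q^n - s^n d_Z^{n-1}$ is annihilated by the epimorphism $d_C^{n-1}$ on both summands.

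The main obstacle is the uniqueness argument. A naive induction that attempts to construct $s$ alone breaks down because $d_C^{k-1}\delta^k$ then acquires an obstruction of the form $(\ell^k - f^k s^k) d_Z^{k-1}$, which need not vanish. Tracking the auxiliary morphism $t^{k+1}$, which witnesses how $\ell^k - f^k s^k$ factors through $d_X^k$ (up to a residual from the previous step), is precisely what absorbs this obstruction into the next $\delta$ and lets the induction close.
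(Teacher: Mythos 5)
Your proof is correct and follows essentially the same route as the paper: the existence part is the identical induction, lifting the morphism with components $g^{k+1}+h^{k+1}d_Z^k$ and $p^kd_Z^k$ through the weak cokernel $d_C^{k}$ of $d_C^{k-1}$ in the mapping cone. For uniqueness the paper simply invokes the Comparison Lemma (applied to the cone $C(f)$, whose differentials are successive weak cokernels by the definition of $n$-pushout diagram); your explicit induction is exactly that argument unrolled, with your auxiliary morphisms $t^{k+1}$ playing the role of the $X$-components of the null-homotopy the Comparison Lemma produces on $C(f)$.
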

\begin{proof}
  Let $h^1\colon X^1\to Z^0$ be the zero morphism, $p^0=1_{Z^0}$ and
  $C:=C(f)$. Inductively, suppose that $0\leq k\leq n$ and that for
  all $\ell\leq k$ we have constructed a morphism $p^\ell\colon
  Y^\ell\to Z^\ell$ such that
  \[
  d_Y^{\ell-1}p^\ell=p^{\ell-1}d_Z^{\ell-1}
  \]
  and a morphism $h^{\ell+1}\colon X^{\ell+1}\to Z^\ell$ such that
  \[
  f^\ell p^\ell-g^\ell=h^\ell d_Z^{\ell-1}+d_X^\ell h^{\ell+1}.
  \]
  We claim that the composition
  \[
  \begin{tikzcd}[column sep=toooooohuge, ampersand replacement=\&]
    X^k\oplus Y^{k-1}\rar{
      \begin{bmatrix}
        -d_X^k&0\\
        f^k&d_Y^{k-1}
      \end{bmatrix}
    }\&X^{k+1}\oplus Y^k\rar{
      \begin{bmatrix}
        g^{k+1}-h^{k+1}d_Z^k&p^k d_Z^k
      \end{bmatrix}
    }\&Z^{k+1}
  \end{tikzcd}
  \]
  vanishes.  Indeed, on one hand we have
  \[
  f^k(p^k d_Z^k)=(g^k+d_X^k h^{k+1})d_Z^k=d_X^k(g^{k+1}-h^{k+1}d_Z^k).
  \]
  On the other hand, we have
  \[
  d_Y^{k-1}(p^kd_Z^k)=p^{k-1}d_Z^{k-1}d_Z^k=0.
  \]
  The claim follows.

  Next, since $d_C^{k}$ is a weak cokernel of $d_C^{k-1}$, there
  exists a morphism $p^{k+1}\colon Y^{k+1}\to Z^{k+1}$ such that
  \[
  d_Y^k p^{k+1}=p^k d_Z^k
  \]
  and a morphism $h^{k+2}\colon X^{k+2}\to Y^{k+1}$ such that
  \[
  g^{k+1}+h^{k+1}d_Z^k=-d_X^{k+1}h^{k+2}+f^{k+1}p^{k+1}.
  \]
  This finishes the induction step, and the construction of the
  required morphism $p\colon Y\to Z$.  Moreover, $h\colon fp\to g$ is
  a homotopy (note that $h^{n+1}=0$).  The last claim follows
  immediately from the \th\ref{comparison-lemma}.
\end{proof}

\begin{defprop}
  \th\label{existence-of-good-n-pushout-diagrams} Let $\C$ be an
  additive category and $g^0\colon X^0\to Z^0$ a morphism in $\C$.
  Suppose that there exists an $n$-pushout diagram of
  $X\in\Ch^{n-1}(\C)$ along $g^0$
  \[
  \begin{tikzcd}
    X\dar{f}&X^0\rar\dar{g^0} &X^1\rar\dar&\cdots\rar&X^{n-1}\rar\dar&X^n\dar\\
    Y&Y^0=Z^0\rar&Y^1\rar &\cdots\rar&Y^{n-1}\rar&Y^n
  \end{tikzcd}
  \]
  Then, the following statements hold:
  \begin{enumerate}
  \item There exists an $n$-pushout diagram
    $\tilde{f}\colon X\to \tilde{Y}$ of $X$ along $g^0$ such that for
    every morphism $g\colon X\to Z$ of complexes lifting $g^0$ and
    such that $Z\in\Ch^{n-1}(\C)$ there exists a morphism of complexes
    $p\colon Y\to Z$ such that $p^0=1_{Z^0}$ and $\tilde{f}p=g$.
  \item For each $2\leq k\leq n$ the morphism $\tilde{f}^k$ is a split
    monomorphism.
  \item We have $\tilde{Y}=Y\oplus X'$ for a contractible complex
    $X'\in\CC^{n-1}(\C)$.
  \end{enumerate}
  We call the morphism $\tilde{f}\colon X\to\tilde{Y}$ a \emph{good
    $n$-pushout diagram of $X$ along $g^0$}.
\end{defprop}
\begin{proof}
  If $n=1$ the result is trivial, so we may assume that $n\geq2$.  For
  $C\in\C$ and $k\in\ZZ$, let $i_k(C)$ be the complex with $d^k=1_C$
  and which is 0 in each degree different from $k$ and $k+1$. We
  define
  \[
  X':= \bigoplus_{k=2}^n i_{k-1}(X^k)
  \]
  and $\tilde{Y}:=Y\oplus X'$.  Note that $\tilde{Y}^0=Y^0$ and that
  $X'$ is a contractible complex. It readily follows that the diagram
  \[
  \begin{tikzcd}[ampersand replacement=\&, row sep=large]
    X\dar{\tilde{f}}\&X^0\rar\dar{f^0=g^0} \&X^1\rar\dar{
      \begin{bmatrix}
	f^1\\d_{X^1}
      \end{bmatrix}}\&X^2\rar\dar{
      \begin{bmatrix}
	f^2\\1\\d_X^2
      \end{bmatrix}}\&\cdots\rar \&X^k\dar{
      \begin{bmatrix}
	f^k\\1\\d_X^k
      \end{bmatrix}}\rar\&\cdots\rar\&X^n\dar{
      \begin{bmatrix}
	f^n\\1
      \end{bmatrix}}\\
    \tilde{Y}\&Y^0\rar\&\tilde{Y}^1\rar\&\tilde{Y}^2\rar\&\cdots\rar\&\tilde{Y}^k\rar\&\cdots\rar\&\tilde{Y}^n
  \end{tikzcd}
  \]
  commutes. Observe that for each $2\leq k\leq n$ the morphism
  $\tilde{f}^k$ is a split monomorphism. Using
  \th\ref{universal-property-of-n-pushout-diagrams}, it is easy to
  show that $\tilde{f}$ has the required factorization property; the
  details are left to the reader.
\end{proof}

\section{$n$-abelian categories}
\label{sec:n-abelian-categories}

In this section we introduce $n$-abelian categories and establish
their basic properties; we give a characterization of semisimple
categories in terms of $n$-abelian categories. We also introduce
projective objects in $n$-abelian categories and study their basic
properties. Finally, we show that $n$-cluster-tilting subcategories of
abelian categories are $n$-abelian; we give a partial converse in the
case of $n$-abelian categories with enough projectives.

\subsection{Definition and basic properties}

The following definition is motivated by the axioms of abelian
categories given in \cite[Def. A.4.2]{weibel_introduction_1994}.

\begin{definition}
  \th\label{def:n-abelian-category} Let $n$ be a positive integer.  An
  \emph{$n$-abelian category} is an additive category $\M$ which
  satisfies the following axioms:
  \begin{axioms}
  \item[A0]{The category $\M$ is idempotent
      complete. \label{ax-ab:split-idempotents}}
  \item[A1]{Every morphism in $\M$ has an $n$-kernel and an
      $n$-cokernel. \label{ax-ab:n-co-kernel-exists}}
  \item[A2]{For every monomorphism $f^0\colon X^0\to X^1$ in $\M$ and
      for every $n$-cokernel $\tuple{f^1,\dots,f^n}$ of $f^0$, the
      following sequence is
      $n$-exact: \label{ax-ab:monos-are-admissible}}
    \[
    \begin{tikzcd}
      X^0\rar{f^0}&X^1\rar{f^1}&\cdots\rar{f^{n-1}}&X^n\rar{f^n}&X^{n+1}.
    \end{tikzcd}
    \]
  \item[A2$^\op$]{For every epimorphism $g^n\colon X^n\to X^{n+1}$ in
      $\M$ and for every $n$-kernel $\tuple{g^0,\dots,g^{n-1}}$ of
      $g^n$, the following sequence is
      $n$-exact: \label{ax-ab:epis-are-admissible}}
    \[
    \begin{tikzcd}
      X^0\rar{g^0}&X^1\rar{g^1}&\cdots\rar{g^{n-1}}&X^n\rar{g^n}&X^{n+1}.
    \end{tikzcd}
    \]
  \end{axioms}
\end{definition}

Let us give some important remarks regarding
\th\ref{def:n-abelian-category}.

\begin{remark}
  \th\label{weaker_A2}
  By \th\ref{n-exact-sequences-are-closed-under-homotopy-equivalence}
  and \th~\ref{weak-isos-induce-homotopy-equivalences} we can replace
  axiom \ref{ax-ab:monos-are-admissible} by the following weaker
  version:
  \begin{enumerate}
  \item[(A2')] For every monomorphism $f^0\colon X^0\to X^1$ in $\M$
    there exists an $n$-exact sequence:
    \[
    \begin{tikzcd}
      X^0\rar{f^0}&X^1\rar{f^1}&\cdots\rar{f^{n-1}}&X^n\rar{f^n}&X^{n+1}.
    \end{tikzcd}
    \]
  \end{enumerate}
  Naturally, we can weaken axiom \ref{ax-ab:epis-are-admissible} in a
  dual manner.
\end{remark}

\begin{remark}
  \th\label{n-abelian-monos-are-admissible} Let $\M$ be an $n$-abelian
  category. An immediate consequence of axioms
  \ref{ax-ab:n-co-kernel-exists} and \ref{ax-ab:monos-are-admissible}
  (resp. \ref{ax-ab:epis-are-admissible}) is that every monomorphism
  (resp. epimorphism) in $\M$ appears as the leftmost
  (resp. rightmost) morphism in some $n$-exact sequence.
\end{remark}

\begin{remark}
  Let $m$ and $n$ be distinct positive integers. Note that the only
  categories which are both $n$-abelian and $m$-abelian are the
  semisimple categories, see
  \th\ref{m-n-abelian-categories-are-contractible}.
\end{remark}

Note that 1-abelian categories are precisely abelian categories in the
usual sense. It is easy to see that abelian categories are idempotent
complete; thus, if $n=1$, then axiom \ref{ax-ab:split-idempotents} in
\th\ref{def:n-abelian-category} is redundant. However, if $n\geq2$,
then axiom \ref{ax-ab:split-idempotents} is independent from the
remaining axioms as shown by the following example.

\begin{example}
  Let $n\geq2$ and $K$ be a field. Consider the full subcategory $\V$
  of $\mod K$ given by the finite dimensional $K$-vector spaces of
  dimension different from 1. Then, $\V$ is not idempotent complete
  but it satisfies axioms \ref{ax-ab:n-co-kernel-exists},
  \ref{ax-ab:monos-are-admissible} and
  \ref{ax-ab:epis-are-admissible}.
\end{example}
\begin{proof}
  Firstly, note that $\V$ is an additive subcategory of $\mod K$. The
  fact that $\V$ is not idempotent complete is obvious (consider the
  idempotent $0\oplus 1_K\colon K^2\to K^2$ whose kernel is
  one-dimensional, for example).  Let us show that $\V$ satisfies
  axiom \ref{ax-ab:n-co-kernel-exists}.  Indeed, let $f\colon V\to W$
  be a morphism in $V$.  If $\coker f$ has dimension different from 1,
  then
  \[
  \begin{tikzcd}
    V\rar&W\rar&\coker f\rar&0\rar&\cdots\rar&0
  \end{tikzcd}
  \]
  gives an $n$-cokernel of $f$ in $\V$.  If $\coker f$ has dimension
  1, then we can construct an $n$-cokernel of $f$ in $\V$ by a
  commutative diagram
  \[
  \begin{tikzcd}
    V\rar{f}&W\dar\rar&K^3\rar&K^2\rar&0\rar&\cdots\rar&0\\
    &\coker f\urar
  \end{tikzcd}
  \]
  where $\coker f\to K^3\to K^2$ is a kernel-cokernel pair.  We can
  construct an $n$-kernel of $f$ in a dual manner.  This shows that
  $\V$ satisfies axiom \ref{ax-ab:n-co-kernel-exists}.  That $\V$
  satisfies axioms \ref{ax-ab:monos-are-admissible} and
  \ref{ax-ab:epis-are-admissible} follows from
  \th\ref{split-mono-implies-sequence-contracts} since contractible
  complexes with $n+2$ terms are in particular $n$-exact sequences by
  \th\ref{n-exact-sequences-are-closed-under-homotopy-equivalence}.
\end{proof}

\begin{lemma}
  \th\label{split-idempotents-weak-cokernel-to-cokernel} Let $\C$ be
  an idempotent complete additive category and suppose that we are
  given a sequence of morphisms in $\C$ of the form
  \[
  \begin{tikzcd}
    A\rar{f}&B\rar{g}&C\rar{h}&D
  \end{tikzcd}
  \]
  If $g$ is a weak cokernel of $f$, and $h$ is both a split
  epimorphism and a cokernel of $g$, then $f$ admits a cokernel in
  $\C$.
\end{lemma}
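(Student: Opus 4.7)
The plan is to exploit the hypotheses that $h$ is a split epimorphism and $\C$ is idempotent complete in order to write $C$ as a biproduct $D \oplus K$, and then realise a cokernel of $f$ as the composition of $g$ with the projection onto $K$.

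First, I would pick a section $s \colon D \to C$ of $h$ (so $sh = 1_D$) and consider the idempotent $e := hs \in \End(C)$. By idempotent completeness, I split the complementary idempotent $1_C - e$ through an object $K$, obtaining morphisms $p \colon C \to K$ and $q \colon K \to C$ with $qp = 1_K$ and $pq = 1_C - e$. The identity $hs + pq = 1_C$ then realises $C$ as a biproduct of $D$ and $K$, with $s, q$ as injections and $h, p$ as projections; the cross-identities $sp = 0$ and $qh = 0$ follow by cancelling the split monomorphism $q$ and the split epimorphism $p$ in the identities $s = s(hs + pq)$ and $h = (hs + pq)h$, respectively.

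Setting $c := gp \colon B \to K$, I would then check that $c$ is a cokernel of $f$. The relation $fc = 0$ is immediate from $fg = 0$. For the factorisation of a morphism $u \colon B \to X$ with $fu = 0$, the weak cokernel property of $g$ provides $v \colon C \to X$ with $u = gv$; combining the identity $g = g(hs + pq) = gpq$ (which uses $gh = 0$, since $h$ is a cokernel of $g$) with this gives $u = c \cdot (qv)$.

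The main obstacle will be verifying uniqueness, \ie showing that $c$ is an epimorphism, and this is precisely where I expect the full cokernel property of $h$ (rather than merely a weak cokernel property) to enter. Given $w \colon K \to X$ with $cw = gpw = 0$, the morphism $pw \colon C \to X$ satisfies $g \cdot (pw) = 0$, and so the cokernel property of $h$ yields $w' \colon D \to X$ with $pw = hw'$. Precomposing with $q$ and using the identities $qp = 1_K$ and $qh = 0$ then forces $w = qpw = qhw' = 0$, as required.
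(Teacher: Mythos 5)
Your proof is correct and follows essentially the same route as the paper: both split the idempotent on $C$ complementary to $hs$ (the paper's $1_C-hi$), identify the resulting retract $K$ as the complement of $D$ in $C$, and verify that $g$ followed by the projection $C\to K$ is a weak cokernel of $f$ which is moreover an epimorphism, using $gh=0$ for the factorisation step and the full cokernel property of $h$ for the epimorphism step. The only cosmetic difference is that you package the splitting data as an explicit biproduct decomposition $C\cong D\oplus K$, whereas the paper works directly with the retraction pair.
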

\begin{proof}
  Since $h$ is a split epimorphism there exists a morphism
  $i\colon{D}\to{C}$ such that $ih=1_D$. It follows that the morphism
  $e:=1_C-hi$ is idempotent. Since the category $\C$ is idempotent
  complete, there exists an object $E\in\C$ and morphisms $r\colon
  C\to E$ and $s\colon E\to C$ such that $sr=1_E$ and $rs=e$. Note
  that this implies that $r$ is an epimorphism and $sh=0$ for we have
  \[
  r(sh)=(1-hi)h=h-h=0.
  \]
  
  We claim that $gr$ is a cokernel of $f$. Indeed, let $u\colon B\to
  B'$ be a morphism such that $fu=0$. Since $g$ is a weak cokernel of
  $f$ there exists a morphism $v\colon C\to B'$ such that $u=gv$. It
  follows that
  \[
  u=gv=g(1-hi)v=(gr)(sv).
  \]
  This shows that $gr$ is a weak cokernel of $f$. It remains to show
  that $gr$ is an epimorphism. For this, let $w\colon E\to E'$ be a
  morphism such that $(gr)w=0$. Since $h$ is a cokernel of $g$ there
  exists a morphism $x\colon D\to E'$ such that $rw=hx$. It follows
  that
  \[
  w=(sr)w=s(hx)=0.
  \]
  This shows that $gr$ is an epimorphism. Therefore $gr$ is a cokernel
  of $f$.
\end{proof}

\begin{proposition}
  \th\label{weak-cokernel-can-be-extended-to-n-cokernel} Let $\M$ be
  an additive category which satisfies axioms
  \ref{ax-ab:split-idempotents} and \ref{ax-ab:n-co-kernel-exists},
  and let $X$ a complex in $\CC^{n-1}(\C)$. If for all $1\leq k\leq
  n-1$ the morphism $d^k$ is a weak cokernel of $d^{k-1}$, then
  $d^{n-1}$ admits a cokernel in $\M$.
\end{proposition}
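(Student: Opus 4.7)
The plan is to derive the cokernel of $d^{n-1}$ by comparing $X$ with an $n$-cokernel of $d^0$, whose last morphism is a genuine cokernel.

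First, I would apply axiom \ref{ax-ab:n-co-kernel-exists} to produce an $n$-cokernel $(g^1,\dots,g^n)$ of $d^0$, so we have a sequence
\[
X^0\xrightarrow{d^0}X^1\xrightarrow{g^1}Z^2\xrightarrow{g^2}\cdots\xrightarrow{g^{n-1}}Z^n\xrightarrow{g^n}Z^{n+1}
\]
in which $g^n$ is a cokernel of $g^{n-1}$. Truncating at degree $n$ yields a complex $Z'\in\CC^{n-1}(\M)$ that shares the terms $X^0$ and $X^1$ with $X$, and both $X$ and $Z'$ enjoy the weak cokernel property at every interior position.

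Next, the weak cokernel property of $X$ (respectively of $Z'$) allows me to inductively construct a morphism of complexes $\phi\colon X\to Z'$ (respectively $\psi\colon Z'\to X$) extending $\phi^0=1_{X^0}$, $\phi^1=1_{X^1}$ (respectively $\psi^0=1_{X^0}$, $\psi^1=1_{X^1}$), by essentially the factorization argument used in the proof of the \th\ref{comparison-lemma}. Applying the \th\ref{comparison-lemma} to the pairs $(\phi\psi,1_X)$ and $(\psi\phi,1_{Z'})$ yields null-homotopies with $h^1=h'^1=0$; evaluated in top degree $n$ and using $X^{n+1}=Z'^{n+1}=0$, these collapse to the key identities
\[
\phi^n\psi^n=1_{X^n}+h^n d^{n-1},\qquad \psi^n\phi^n=1_{Z^n}+h'^n g^{n-1}.
\]

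The main step is then to verify that $\phi^n g^n\colon X^n\to Z^{n+1}$ is a cokernel of $d^{n-1}$. The weak cokernel property is a short chase: given $u\colon X^n\to M$ with $d^{n-1}u=0$, the morphism $\psi^n u$ annihilates $g^{n-1}$, so the cokernel property of $g^n$ provides $v$ with $\psi^n u=g^n v$, and post-composing with $\phi^n$ and invoking the first identity above yields $u=\phi^n g^n v$. The epimorphism property is the more delicate half, where the second identity is essential: if $(\phi^n g^n)u=0$, then pre-composing with $\psi^n$ and using $g^{n-1}g^n=0$ reduces $\psi^n\phi^n g^n u$ to $g^n u$, forcing $g^n u=0$, whence $u=0$ because $g^n$ is an epimorphism.

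The main obstacle I anticipate is choosing the correct cokernel candidate: once $\phi^n g^n$ is singled out and the two top-degree homotopy identities are recorded, the rest reduces to routine manipulation. Curiously, this approach does not appear to require axiom \ref{ax-ab:split-idempotents}, although \th\ref{split-idempotents-weak-cokernel-to-cokernel} offers an alternative route that explicitly uses idempotent completeness.
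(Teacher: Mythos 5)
There is a genuine gap at the point where you extract the two ``key identities''. The \th\ref{comparison-lemma} requires that in the \emph{source} complex the morphism $d^{k+1}$ be a weak cokernel of $d^{k}$ for \emph{all} $k\geq0$; once $X$ (resp.\ the truncation $Z'$) is padded with zeros, this forces the top differential $X^n\to 0$ (resp.\ $Z^n\to 0$) to be a weak cokernel of $d^{n-1}$ (resp.\ of $g^{n-1}$), which is precisely the assertion that $d^{n-1}$ (resp.\ $g^{n-1}$) is an epimorphism---essentially what is at stake. Running the induction of the \th\ref{comparison-lemma} one only obtains the homotopy identities up to degree $n-1$; in degree $n$ one merely gets $d^{n-1}\bigl(\phi^n\psi^n-1_{X^n}-h^nd^{n-1}\bigr)=0$, and there is no way to cancel $d^{n-1}$. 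The identities really do fail: for $n=2$, in the paper's category $\V$ of vector spaces of dimension $\neq1$ (which satisfies \ref{ax-ab:n-co-kernel-exists} but not \ref{ax-ab:split-idempotents}), take $X=(0\to K^2\xto{g}K^3)$ with $g$ a split monomorphism. A $2$-cokernel of $d^0=0$ is $K^2\xto{1}K^2\to0$, so $Z'=(0\to K^2\xto{1}K^2)$ and $\psi^2=g$; the identity $\phi^2\psi^2=1_{K^3}+h^2g$ would give $(\phi^2-h^2)g=1_{K^3}$, exhibiting $1_{K^3}$ as factoring through $K^2$, which is absurd. Moreover your candidate cokernel $\phi^2g^2=0\colon K^3\to0$ is not a cokernel of $g$; in fact $g$ has no cokernel in $\V$ at all, so the proposition itself is false without axiom \ref{ax-ab:split-idempotents}. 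Your closing remark that idempotent completeness seems unnecessary is the tell-tale sign that something has gone wrong.

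The underlying issue is that a weak-cokernel sequence truncated before it reaches a genuine cokernel is not determined up to homotopy by its first morphism, so $X$ and $Z'$ need not be homotopy equivalent. The paper circumvents this by going \emph{up} rather than truncating \emph{down}: it prolongs $X$ by an $n$-cokernel of $d^{n-1}$, so that the extended complex ends in a genuine cokernel (hence an epimorphism) and the \th\ref{comparison-lemma} does apply to it; comparing with the zero-padded $n$-cokernel of $d^0$ then shows that the last differential $d^{2n-1}$ is a split epimorphism, and \th\ref{split-idempotents-weak-cokernel-to-cokernel}---this is where axiom \ref{ax-ab:split-idempotents} enters---peels off one term at a time until a cokernel of $d^{n-1}$ is produced.
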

\begin{proof}
  If $n=1$, then the result follows trivially from axiom
  \ref{ax-ab:n-co-kernel-exists}.  Hence we may assume that $n\geq 2$.
  By axiom \ref{ax-ab:n-co-kernel-exists} there exists an $n$-cokernel
  \[
  \tupleP{d^k\colon X^k\to X^{k+1}}{n\leq k\leq 2n-1}
  \]
  of $d^{n-1}$. Using axiom \ref{ax-ab:n-co-kernel-exists} again
  together with the factorization property of weak cokernels we obtain
  a commutative diagram
  \[
  \begin{tikzcd}[column sep=small]
    X^0\rar{d^0}\dar[equals]&X^1\rar{d^1}\dar[equals]&X^2\rar{d^2}\dar[dotted]{f^2}&\cdots\rar{d^{n-1}}&X^n\rar{d^n}\dar[dotted]{f^n}&X^{n+1}\rar{d^{n+1}}\dar[dotted]{f^{n+1}}&X^{n+2}\rar{d^{n+2}}\dar[dotted]&\cdots\rar{d^{2n-1}}&X^{2n}\dar[dotted]\\
    Y^0\rar\dar[equals]&Y^1\rar\dar[equals]&Y^2\rar\dar[dotted]{g^2}&\cdots\rar&Y^n\rar\dar[dotted]{g^n}&Y^{n+1}\rar\dar[dotted]{g^{n+1}}&0\rar\dar[dotted]&\cdots\rar&0\dar[dotted]\\
    X^0\rar{d^0}&X^1\rar{d^1}&X^2\rar{d^2}&\cdots\rar{d^{n-1}}&X^n\rar{d^n}&X^{n+1}\rar{d^{n+1}}&X^{n+2}\rar{d^{n+2}}&\cdots\rar{d^{2n-1}}&X^{2n}
  \end{tikzcd}
  \]
  in which the middle row gives an $n$-cokernel of $d^0$. The
  \th\ref{comparison-lemma} implies that there exists a morphism
  $h\colon X^{2n}\to X^{2n-1}$ such that $hd^{2n-1}=1$. Hence we may
  apply \th\ref{split-idempotents-weak-cokernel-to-cokernel} and
  reduce the length of the $n$-cokernel of $d^{n-1}$ by one
  morphism. Proceeding inductively, we deduce that $d^{n-1}$ has a
  cokernel in $\M$.
\end{proof}

The importance of axiom \ref{ax-ab:split-idempotents} becomes apparent
in the following result, which asserts that $n$-abelian categories
have $n$-pushout diagrams and $n$-pullback diagrams.

\begin{theorem}[Existence of $n$-pushout diagrams]
  \th\label{n-abelian-cats-have-n-pushout-diagrams} Let $\M$ be an
  additive category which satisfies axioms
  \ref{ax-ab:split-idempotents} and
  \ref{ax-ab:n-co-kernel-exists}. Let $X$ be a complex in
  $\CC^{n-1}(\C)$, and a morphism $f\colon X^0\to Y^0$. Then, the
  following statements hold:
  \begin{enumerate}
  \item\label{it:existences} Then, there exists an $n$-pushout diagram
    \[
    \begin{tikzcd}
      X^0\rar\dar{f}&X^1\rar\dar&\cdots\rar&X^{n-1}\rar\dar&X^n\dar\\
      Y^0\rar&Y^1\rar&\cdots\rar&Y^{n-1}\rar&Y^n
    \end{tikzcd}
    \]
  \item\label{it:is-mono} Suppose, moreover, that $\M$ is an
    $n$-abelian category. If $d_X^0$ is a monomorphism, then $d_Y^0$
    is a monomorphism.
  \end{enumerate}
\end{theorem}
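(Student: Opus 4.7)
The plan is to construct the $n$-pushout recursively using A1 and then to deduce (ii) by applying A2 to the resulting mapping cone. For (i), observe that $C(f)$ being an $n$-cokernel of $d_C^{-1}$ breaks up, level by level, into the requirement that each $d_C^k$ is a weak cokernel of $d_C^{k-1}$ for $k\leq n-2$ and that $d_C^{n-1}$ is an actual cokernel. I would proceed by induction. Starting from $d_C^{-1}=\left[\begin{smallmatrix}-d_X^0\\ f^0\end{smallmatrix}\right]\colon X^0\to X^1\oplus Y^0$, invoke A1 to obtain a weak cokernel $h\colon X^1\oplus Y^0\to V$, \emph{define} $Y^1:=V$, and let $f^1\colon X^1\to Y^1$ and $d_Y^0\colon Y^0\to Y^1$ be the restrictions of $h$ to the two summands, so that $h=\left[\begin{smallmatrix}f^1 & d_Y^0\end{smallmatrix}\right]$. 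Setting $d_C^0:=\left[\begin{smallmatrix}-d_X^1 & 0\\ f^1 & d_Y^0\end{smallmatrix}\right]$, the equation $d_C^{-1}\cdot h=0$ automatically records the commutativity $d_X^0f^1=f^0d_Y^0$ needed to make $f$ a chain map. Iterating the procedure produces successively $Y^2,\dots,Y^{n-1}$; at the terminal step $k=n-1$ one takes an honest cokernel of $d_C^{n-2}$ (available from A1 since the last map of any $n$-cokernel is a cokernel), obtaining $Y^n$ together with $d_C^{n-1}=\left[\begin{smallmatrix}f^n & d_Y^{n-1}\end{smallmatrix}\right]$.

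The one nontrivial verification at each inductive stage is that the block matrix $d_C^k$ just defined is itself a weak cokernel of $d_C^{k-1}$. Given $q=\left[\begin{smallmatrix}q_1 & q_2\end{smallmatrix}\right]\colon X^{k+1}\oplus Y^k\to C$ with $d_C^{k-1}\cdot q=0$, the weak cokernel property of $h$ supplies $p_2\colon Y^{k+1}\to C$ with $hp_2=q$, i.e.\ $f^{k+1}p_2=q_1$ and $d_Y^kp_2=q_2$. Then $p:=\left[\begin{smallmatrix}0 & p_2\end{smallmatrix}\right]\colon X^{k+2}\oplus Y^{k+1}\to C$ satisfies $d_C^k\cdot p=q$ by direct block multiplication. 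At the terminal step, the cokernel $h$ is in particular an epimorphism, so $d_C^{n-1}=h$ is both a weak cokernel and epic, hence a cokernel of $d_C^{n-2}$; this finishes (i).

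For (ii), assume $\M$ is $n$-abelian and $d_X^0$ is a monomorphism. Then $d_C^{-1}=\left[\begin{smallmatrix}-d_X^0\\ f^0\end{smallmatrix}\right]$ is itself a monomorphism: any $\phi\colon Z\to X^0$ with $\phi\cdot d_C^{-1}=0$ has first component $\phi d_X^0=0$, forcing $\phi=0$. Applying A2 to $d_C^{-1}$ together with the $n$-cokernel $C(f)$ constructed in (i) upgrades $C(f)$ to an $n$-exact sequence, so in particular $d_C^{-1}$ is a weak kernel of $d_C^0$. To show $d_Y^0$ is monic, take $\psi\colon Z\to Y^0$ with $\psi d_Y^0=0$; the morphism $\left[\begin{smallmatrix}0\\ \psi\end{smallmatrix}\right]\colon Z\to X^1\oplus Y^0$ composes to zero with $d_C^0$, so the weak kernel property supplies $\rho\colon Z\to X^0$ with $\rho\cdot d_C^{-1}=\left[\begin{smallmatrix}0\\ \psi\end{smallmatrix}\right]$. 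Reading components yields $\rho d_X^0=0$ and $\rho f^0=\psi$; the first identity forces $\rho=0$ by monicness of $d_X^0$, and hence $\psi=0$. The main obstacle in the whole argument is the inductive step of (i), where the factorization trick through $h$ is needed to turn an abstract weak cokernel into one of the prescribed block shape; everything else is a direct consequence of the definitions and cited axioms.
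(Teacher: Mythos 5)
Your construction of the inductive step in (i) and your argument for (ii) coincide with the paper's proof. The gap is at the terminal step of (i): you claim an honest cokernel of $d_C^{n-2}$ is ``available from \ref{ax-ab:n-co-kernel-exists} since the last map of any $n$-cokernel is a cokernel.'' This is a misreading of \th\ref{def:n-cokernel}. If $(e^1,\dots,e^n)$ is an $n$-cokernel of $d_C^{n-2}$, then $e^n$ is a cokernel of $e^{n-1}$, not of $d_C^{n-2}$; the only map in that $n$-cokernel related to $d_C^{n-2}$ is $e^1$, which is merely a \emph{weak} cokernel. For $n\geq 2$, axiom \ref{ax-ab:n-co-kernel-exists} does not provide genuine cokernels of arbitrary morphisms (think of an $n$-cluster-tilting subcategory $\M\subseteq\mod\Lambda$: the cokernel of a morphism of $\M$ computed in $\mod\Lambda$ need not lie in $\M$), and the definition of $n$-pushout diagram genuinely requires $d_C^{n-1}$ to be a cokernel, not just a weak cokernel, of $d_C^{n-2}$.

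The fact that your argument never invokes axiom \ref{ax-ab:split-idempotents} is the telltale sign: that hypothesis is in the statement precisely to close this gap. The paper's route is \th\ref{weak-cokernel-can-be-extended-to-n-cokernel}, which uses the \emph{entire} chain $d_C^{-1},d_C^0,\dots,d_C^{n-2}$ of successive weak cokernels you have built (not just the single morphism $d_C^{n-2}$): one splices an $n$-cokernel of $d_C^{n-2}$ onto this chain, applies the \th\ref{comparison-lemma} to produce a split epimorphism at the far end, and then uses idempotent completeness via \th\ref{split-idempotents-weak-cokernel-to-cokernel} to peel off one term at a time until a genuine cokernel of $d_C^{n-2}$ remains. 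Without some such argument your complex $C(f)$ only satisfies the weak-cokernel conditions and need not be an $n$-cokernel of $d_C^{-1}$, so (i) is not established as stated.
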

\begin{proof}
  \eqref{it:existences} We shall construct the complex $Y$
  inductively.  Set $f^0:=f$ and
  \[
  \begin{tikzcd}[ampersand replacement=\&]
    d_C^{-1}=
    \begin{bmatrix}
      -d_X^0\\f^0
    \end{bmatrix}\colon X^0\rar\&X^1\oplus Y^0.
  \end{tikzcd}
  \]
  Let $0\leq k\leq n-2$ and suppose that for each $\ell\leq k$ we have
  constructed an object $Y^\ell$ and morphisms $f^\ell\colon X^\ell\to
  Y^\ell$ and $d_Y^{\ell-1}\colon Y^{\ell-1}\to Y^\ell$ such that
  $d_C^{\ell-2}d_C^{\ell-1}=0$ where
  \[
  \begin{tikzcd}[ampersand replacement=\&]
    d_C^{\ell-1}:=\begin{bmatrix}
      -d_X^{\ell}&0\\
      f^{\ell}&d_Y^{\ell-1}
    \end{bmatrix}\colon X^{\ell}\oplus Y^{\ell-1}\rar\&
    X^{\ell+1}\oplus Y^{\ell}
  \end{tikzcd}
  \]
  (compare with \eqref{differential_mapping_cone}).  Then, by axiom
  \ref{ax-ab:n-co-kernel-exists}, the morphism $d_C^{k-1}$ has a weak
  cokernel $g^k:=[f^{k+1}\ d_Y^k]\colon X^{k+1}\oplus Y^k\to Y^{k+1}$.
  We claim that
  \[
  \begin{tikzcd}[ampersand replacement=\&]
    d_C^{k}:=\begin{bmatrix}
      -d_X^{k+1}&0\\
      f^{k+1}&d_Y^k
    \end{bmatrix}\colon X^{k+1}\oplus Y^k\rar\& X^{k+2}\oplus
    Y^{k+1}
  \end{tikzcd}
  \]
  is also a weak cokernel of $d_C^{k-1}$.  Indeed, it is readily
  verified that $d_C^{k-1}d_C^{k}=0$.  Let $u\colon X^{k+1}\oplus
  Y^k\to M$ be a morphism such that $d_C^{k-1}u=0$.  Since $g^k$ is a
  weak cokernel of $d_C^{k-1}$, there exists a morphism $v\colon
  Y^{k+1}\to M$ such that $u=g^kv$.  It follows that the following
  diagram is commutative:
  \[
  \begin{tikzcd}[ampersand replacement=\&]
    X^{k+1}\oplus Y^k\rar{d_C^{k}}\drar{u}\&X^{k+2}\oplus Y^{k+1}
    \dar{
      \begin{bmatrix}
        0&v
      \end{bmatrix}
    }\\
    \&M
  \end{tikzcd}
  \]
  This shows that $d_C^{k}$ is a weak cokernel of $d_C^{k-1}$.
  Finally, \th\ref{weak-cokernel-can-be-extended-to-n-cokernel}
  implies that the morphism $d_C^{n-2}$ admits a cokernel
  \[
  d_C^{n-1}\colon X^n\oplus Y^{n-1}\to Y^n.
  \]
  This shows that the tuple $\tuple{d_C^0,d_C^1,\dots,d_C^{n-1}}$ is a
  weak cokernel of $d_C^{-1}$.  The existence of the required
  commutative diagram follows from the fact that $C$ is a complex.

  \eqref{it:is-mono} Finally, suppose that $\M$ is $n$-abelian and
  $d_X^0$ is a monomorphism. Note that this implies that $d_C^{-1}$ is
  also a monomorphism. Then, axiom \ref{ax-ab:monos-are-admissible}
  implies that the $C$ is an $n$-exact sequence.  In order to show
  that $d_Y^0$ is a monomorphism, let $u\colon M\to Y^0$ be a morphism
  such that $ud_Y^0=0$. It follows that the composition
  \[
  \begin{tikzcd}[ampersand replacement=\&, column sep=large]
    M\rar{
      \begin{bmatrix}
        0\\u
      \end{bmatrix}}\&X^1\oplus Y^0\rar{
      \begin{bmatrix}
        -d_X^1&0\\
        f^1&d_Y^0\\
      \end{bmatrix}}\&X^2\oplus Y^1
  \end{tikzcd}
  \]
  vanishes. Given that $d_C^{-1}$ is a kernel of $d_C^0$, there exists
  a morphism $v\colon M\to X^0$ such that $vd_X^0=0$ and
  $vf^0=u$. Since $d_X^0$ is a monomorphism, we have $u=0$.  This
  shows that $d_Y^0$ is a monomorphism.
\end{proof}

We remind the reader that an additive category $\C$ is
\emph{semisimple} if every morphism $f\colon A\to B$ in $\C$ factors
as $f=pi$, where $p$ is a split epimorphism and $i$ is a split
monomorphism.  The following result characterizes semisimple
categories in terms of $n$-abelian categories.

\begin{theorem}
  \th\label{characterization-of-contractible-n-abelian-categories} Let
  $\C$ be an additive category and $n$ a positive integer. Then, the
  $n$-abelian categories in which every $n$-exact sequence is
  contractible are precisely the semisimple categories.
\end{theorem}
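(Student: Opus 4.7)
I would prove the equivalence by handling the two implications separately.

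For the implication from semisimple: Suppose $\C$ is semisimple. I would first verify idempotent completeness: given any idempotent $e\colon A\to A$, factor $e=pi$ with $p$ a split epi and $i$ a split mono. From $e^2=e$ I get $p(ipi)=pi$; cancelling $p$ (epi) on the left and $i$ (mono) on the right yields $ip=1$, so $(p,i)$ splits $e$, verifying axiom \ref{ax-ab:split-idempotents}. For axiom \ref{ax-ab:n-co-kernel-exists} I would use the factorization $f=pi$ to produce a strict cokernel of $f$ as the complementary projection of $i$, padding with zeros to obtain an $n$-cokernel (and dually for $n$-kernels). Both axioms \ref{ax-ab:monos-are-admissible} and \ref{ax-ab:epis-are-admissible} then follow directly from \th\ref{split-mono-implies-sequence-contracts} because every monomorphism in $\C$ is split; the same proposition also shows that every $n$-exact sequence in $\C$ (which must start with a mono) is contractible.

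For the converse: Let $\M$ be $n$-abelian with every $n$-exact sequence contractible. I would first show that every monomorphism in $\M$ is split: a mono $m$ extends by axioms \ref{ax-ab:n-co-kernel-exists} and \ref{ax-ab:monos-are-admissible} to an $n$-exact sequence, which must be contractible by hypothesis, so \th\ref{split-mono-implies-sequence-contracts} makes $m$ split. Dually, every epimorphism splits. It then remains to show that every morphism $f\colon A\to B$ factors as a split epi followed by a split mono. By idempotent completeness (axiom \ref{ax-ab:split-idempotents}) this is equivalent to von Neumann regularity of $f$, i.e.\ the existence of $h\colon B\to A$ with $fhf=f$, since then the idempotents $fh$ and $hf$ split and their splittings produce the factorization. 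The key observation is that for every contractible complex, composing the null-homotopy relation $1=hd+dh$ appropriately with $d$ yields $d=dhd$; hence every differential of every $n$-exact sequence in $\M$ is von Neumann regular.

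My plan for the final step is to realize the von Neumann regularity of $f$ from an explicitly constructed $n$-exact sequence. The split monomorphism $\iota=\binom{1_A}{f}\colon A\to A\oplus B$ extends by axioms \ref{ax-ab:n-co-kernel-exists} and \ref{ax-ab:monos-are-admissible} to an $n$-exact sequence, contractible by hypothesis. From the null-homotopy, the degree-$0$ relation $\iota\cdot h^1=1_A$ gives, writing $h^1=(r,s)$ against the decomposition, the equation $r+fs=1_A$. Continuing to degree $1$ and expanding the identity $1_{A\oplus B}=h^1\cdot\iota+d^1\cdot h^2$ as a block matrix should, together with the analogous null-homotopy identities in higher degrees, produce a morphism $h\colon B\to A$ with $fhf=f$. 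I expect this matrix chase, which requires isolating the regularity of $f$ itself rather than of the auxiliary differentials $\iota$ and $d^1$, to be the main technical obstacle in completing the argument.
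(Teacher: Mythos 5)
Your forward direction is sound and essentially the paper's: the idempotent-splitting computation is a minor variant of the one given there, the construction of $n$-(co)kernels by padding the complementary projection with zeros is fine, and axioms \ref{ax-ab:monos-are-admissible}, \ref{ax-ab:epis-are-admissible} are handled exactly as in the paper via \th\ref{split-mono-implies-sequence-contracts}. In the converse your opening moves are also correct: every monomorphism and epimorphism splits, and the reduction of semisimplicity to von Neumann regularity of each morphism (given axiom \ref{ax-ab:split-idempotents}) is a valid reformulation.

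The gap is the final step, and it is not merely an unfinished computation --- the proposed mechanism cannot produce the quasi-inverse. Any complex extending the split monomorphism $\iota=[1_A\ f]^\top\colon A\to A\oplus B$ by an $n$-cokernel is contractible in \emph{every} additive category by \th\ref{split-mono-implies-sequence-contracts}, and by the \th\ref{comparison-lemma} it is homotopy equivalent to the trivial model in which $X^2=B$, the second differential has components $-f$ and $1_B$, and all higher terms vanish. So the hypothesis ``every $n$-exact sequence is contractible'' carries no information when applied to this particular sequence, and the contraction data need not see $f$ at all: the trivial model admits the contraction with $h^1$ having components $r=1_A$ and $s=0$, for which $r+fs=1_A$ holds vacuously while $fsf=0\neq f$ in general. (Indeed, in the additive category of finitely generated free $\ZZ$-modules the analogous complex built from $f=2\colon\ZZ\to\ZZ$ is contractible even though $2$ is not von Neumann regular, so no diagrammatic identity extractable from such a contraction can yield $fhf=f$.) The information about $f$ must be injected from an $n$-exact sequence whose contractibility is a genuine constraint. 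The paper does this by applying the hypothesis to the terminal epimorphism $f^n$ of an $n$-cokernel of $f$, which therefore splits, and then descending inductively with \th\ref{split-idempotents-weak-cokernel-to-cokernel} to manufacture an honest cokernel (dually, a kernel) of $f$; the classical coimage--image comparison then finishes, the comparison map being shown epi by an $n$-pushout argument (\th\ref{n-abelian-cats-have-n-pushout-diagrams}) and hence invertible because monomorphisms and epimorphisms split. If you wish to keep the von Neumann regularity formulation, you still need that kernel/cokernel construction first, after which regularity of $f$ reduces to invertibility of the same comparison map.
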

\begin{proof}
  Suppose that $\C$ is a semisimple category. We only show that $\C$
  is idempotent complete. It is straightforward to verify that $\C$
  satisfies the remaining axioms of $n$-abelian categories, the fact
  that every $n$-exact sequence in $\C$ is contractible follows
  immediately from \th\ref{split-mono-implies-sequence-contracts}.
  Let us show then that $\C$ is idempotent complete. Let $e\colon A\to
  A$ be an idempotent in $\C$. Since $\C$ is semisimple, $e$ factors
  as $e=pi$ where $p\colon A\to B$ is a split epimorphism and $i\colon
  B\to A$ is a split monomorphism. We claim that $ip=1_B$. Indeed, let
  $h\colon A\to B$ be a morphism such that $ih=1_B$. Given that
  $e^2=e$ we have
  \[
  p=p(ih)=eh=e^2h=(pipi)h=p(ip).
  \]
  Since $p$ is an epimorphism we have $ip=1_B$ as claimed. This shows
  that $\C$ is idempotent complete.

  Conversely, suppose that $\C$ is an $n$-abelian category in which
  every $n$-exact sequence is contractible and let $f\colon A\to B$ be
  a morphism in $\C$. We claim that $f$ admits both a kernel and a
  cokernel in $\C$. Indeed, by axiom \ref{ax-ab:n-co-kernel-exists}
  there exists an $n$-cokernel $\tuple{f^1,\cdots,f^n}$ of $f$. By
  hypothesis, the epimorphism $f^n$ must split. Then,
  \th\ref{split-idempotents-weak-cokernel-to-cokernel} implies that
  $f^{n-2}$ has a cokernel in $\C$. By applying this argument
  inductively we deduce that $f$ admits a cokernel in $\C$. By
  duality, $f$ also admits a kernel in $\C$. The remaining part of the
  proof is classical, compare for example with the proof of
  \cite[Prop. 4.8]{buhler_exact_2010}.
  
  We need to show that $f$ factors as $f=pi$ where $p$ is a split
  epimorphism and $i$ is a split monomorphism. Given that $f$ has both
  a kernel and a cokernel in $\C$, is easy to construct a commutative
  diagram
  \[
  \begin{tikzcd}
    K\rar{i}&A\rar{f}\dar{p'}&B\rar{p}&C\\
    &J\rar[dotted]{g}&I\uar{i'}
  \end{tikzcd}
  \]
  where $i$ is a kernel of $f$, and $p$ is a cokernel of $f$, and the
  sequences $K\to A\to J$ and $I\to B\to C$ are kernel-cokernel pairs.
  We claim that $g$ is an isomorphism, for which it is enough to show
  that is both a monomorphism and an epimorphism as all such morphisms
  split by hypothesis. By duality we only need to show that $g$ is an
  epimorphism.

  Let $h\colon I\to I'$ be a morphism such that $gh=0$. Firstly, by
  \th\ref{n-abelian-cats-have-n-pushout-diagrams} there exists a
  commutative diagram
  \[
  \begin{tikzcd}
    I\rar{i'}\dar{h}&B\rar{p}\dar{h'}&C\\
    I'\rar{i''}&B'
  \end{tikzcd}
  \]
  where $i''$ is a monomorphism.  Secondly, we claim that
  $fh'=0$. Indeed, we have
  \[
  fh'=(p'gi')h'=p'(gh)i''=0.
  \]
  Therefore, since $p$ is a cokernel of $f$, there exists a morphism
  $j\colon C\to B'$ such that $pj=h'$. It follows that
  \[
  hi''=i'h'=i'(pj)=0.
  \]
  Finally, since $i''$ is a monomorphism, we have $h=0$. This shows
  that $g$ is an epimorphism.
\end{proof}

\begin{corollary}
  \th\label{m-n-abelian-categories-are-contractible} Let $m$ and $n$
  be two distinct positive integers and $\C$ an additive category. If
  $\C$ is both $m$-abelian and $n$-abelian, then $\C$ is a semisimple
  category.
\end{corollary}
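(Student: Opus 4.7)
The idea is to apply \th\ref{characterization-of-contractible-n-abelian-categories}, so it suffices to show that every $n$-exact sequence in $\C$ is contractible; by \th\ref{split-mono-implies-sequence-contracts} this will follow once I know every monomorphism in $\C$ is a split monomorphism. Assume without loss of generality that $m<n$.

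Let $f\colon A\to B$ be a monomorphism in $\C$. Invoking \th\ref{n-abelian-monos-are-admissible} for the $m$-abelian structure, I extend $f$ to an $m$-exact sequence
\[
A\xrightarrow{f}B\xrightarrow{g^1}Y^2\to\cdots\to Y^m\xrightarrow{g^m}Y^{m+1}.
\]
I then pad on the right with zero objects to form the complex
\[
Y'\colon\quad A\to B\to Y^2\to\cdots\to Y^{m+1}\to 0\to\cdots\to 0
\]
in $\CC^n(\C)$. A direct check, using that $g^m$ is already an epimorphism (being the cokernel of $g^{m-1}$) and that composition with any zero map vanishes, shows that $(g^1,\dots,g^m,0\colon Y^{m+1}\to 0,0,\dots,0)$ is an $n$-cokernel of $f$. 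Axiom \ref{ax-ab:monos-are-admissible} for the $n$-abelian structure then asserts that $Y'$ is $n$-exact.

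The payoff comes from the $n$-kernel part of this new $n$-exactness: the morphism $g^m$ must be a weak kernel of the zero map $Y^{m+1}\to 0$, which forces every morphism with target $Y^{m+1}$ to factor through $g^m$. Applying this to $1_{Y^{m+1}}$ shows that $g^m$ is a split epimorphism, and hence by the dual of \th\ref{split-mono-implies-sequence-contracts} the $m$-exact sequence displayed above is contractible; in particular $f$ is a split monomorphism. The only genuine obstacle is the bookkeeping needed to verify that the padded sequence really is an $n$-cokernel of $f$ (the transition position $Y^{m+1}\to 0$ and the subsequent tail of $0\to 0$'s each need separate attention, and the verification divides mildly according to whether $n=m+1$ or $n>m+1$); everything else is automatic once this is in place.
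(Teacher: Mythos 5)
Your proof is correct, and it takes a genuinely different (and arguably slicker) route than the paper's. Both arguments reduce, via \th\ref{characterization-of-contractible-n-abelian-categories} and \th\ref{split-mono-implies-sequence-contracts}, to showing that every monomorphism splits, and both start from an $m$-cokernel and an $n$-cokernel of the given monomorphism with the $m$-cokernel padded by zeros. The divergence is in how the splitting is extracted. The paper builds comparison morphisms in both directions between the chosen $n$-cokernel and the zero-padded $m$-cokernel, applies the \th\ref{comparison-lemma} to the round-trip composite, and reads off at the top degree that the \emph{last map of the $n$-cokernel} is a split epimorphism; it then applies the dual of \th\ref{split-mono-implies-sequence-contracts} to the $n$-exact sequence. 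You instead observe that the zero-padded $m$-cokernel is \emph{itself} an $n$-cokernel of $f$ (your bookkeeping is right: the key point is that $g^m$, being a cokernel, is an epimorphism, so $Y^{m+1}\to 0$ is a cokernel of $g^m$, and the tail of zeros is harmless), invoke axiom \ref{ax-ab:monos-are-admissible} of the $n$-abelian structure in its full strength (``for \emph{every} $n$-cokernel''), and then use the resulting $n$-kernel condition at position $m+1$ to split $g^m$, finishing with the dual of \th\ref{split-mono-implies-sequence-contracts} applied to the $m$-exact sequence. Your version avoids the \th\ref{comparison-lemma} and the construction of comparison morphisms entirely; the price is that you lean on the strong form of axiom \ref{ax-ab:monos-are-admissible} applied to a non-obvious choice of $n$-cokernel, whereas the paper's argument would go through verbatim from the weaker axiom (A2'). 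Since the paper defines $n$-abelian categories with the strong form (and notes the two are equivalent), this is a legitimate and complete proof.
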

\begin{proof}
  Without loss of generality we may assume that $m<n$. By
  \th\ref{characterization-of-contractible-n-abelian-categories,split-mono-implies-sequence-contracts}
  it is enough to show that every monomorphism in $\C$ splits.  Let
  $f^0\colon X^0\to X^1$ be a monomorphism in $\C$ and let
  $(f^1,\dots,f^n)$ be an $n$-cokernel of $f^0$, and $(g^1,\dots,g^m)$
  be an $m$-cokernel of $f^0$. It follows that there exists a
  commutative diagram
  \[
  \begin{tikzcd}[column sep=almostnormal]
    X^0\rar{f^0}\dar[equals]&X^1\rar{f^1}\dar[equals]&X^2\rar{f^2}\dar[dotted]&\cdots\rar{f^{m}}&X^{m+1}\rar{f^{m+1}}\dar[dotted]&X^{m+2}\rar{f^{m+2}}\dar[dotted]&\cdots\rar{f^n}&X^{n+1}\dar[dotted]\\
    Y^0\rar\dar[equals]&Y^1\rar{g^1}\dar[equals]&Y^2\rar{g^2}\dar[dotted]&\cdots\rar{g^m}&Y^{m+1}\rar\dar[dotted]&0\rar\dar[dotted]&0\rar&0\dar[dotted]\\
    X^0\rar{f^0}&X^1\rar{f^1}&X^2\rar{f^2}&\cdots\rar{f^{m}}&X^{m+1}\rar{f^{m+1}}&X^{m+2}\rar{f^{m+2}}&\cdots\rar{f^n}&X^{n+1}
  \end{tikzcd}
  \]
  By the \th\ref{comparison-lemma} there exists a morphism $h\colon
  X^{n+1}\to X^n$ such that $hf^n=1$. Thus $f^n$ is a split
  epimorphism. Then, as $(f^0,f^1,\dots,f^n)$ is an $n$-exact sequence
  by axiom \ref{ax-ab:monos-are-admissible}, the dual of
  \th\ref{split-mono-implies-sequence-contracts} implies that $f^0$ is
  a split monomorphism.
\end{proof}

\subsection{Projective objects in $n$-abelian categories}

We remind the reader of the following classical definition.

\begin{definition}
  Let $\C$ be an additive category. We say that $P\in\C$ is
  \emph{projective} if for every epimorphism $f\colon A\to B$ the
  sequence of abelian groups
  \[
  \begin{tikzcd}
    \C(P,A)\rar{?\cdot f}&\C(P,B)\rar&0
  \end{tikzcd}
  \]
  is exact. The concept of \emph{injective object in $\C$} is defined
  dually.
\end{definition}

Our aim is to prove the following important property of projective
objects in an $n$-abelian category.

\begin{theorem}
  \th\label{projectives-are-strongly-projective} Let $\M$ be an
  $n$-abelian category and $P\in\M$ a projective object. Then, for
  every morphism $f\colon L\to M$ and every weak cokernel $g\colon
  M\to N$ of $f$, the sequence of abelian groups
  \begin{equation}
    \label{eq:strong-projectivity}
    \begin{tikzcd}
      \M(P,L)\rar{?\cdot f}&\M(P,M)\rar{?\cdot g}&\M(P,N)
    \end{tikzcd}
  \end{equation} 
  is exact.
\end{theorem}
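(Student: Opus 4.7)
My plan is to translate the problem into one about a genuine $n$-exact sequence, to which the projectivity of $P$ can be applied directly. First, I would enlarge $g$ into the first morphism of an $n$-cokernel of $f$. By \ref{ax-ab:n-co-kernel-exists} applied to $g$, there exists an $n$-cokernel $N\to H^2\to\cdots\to H^{n+1}$ of $g$. Prepending $f$ yields the complex $L\to M\to N\to H^2\to\cdots\to H^n$, in which each morphism beyond $f$ is a weak cokernel of the previous one, so \th\ref{weak-cokernel-can-be-extended-to-n-cokernel} produces a cokernel for the last morphism, giving an $n$-cokernel $(f^1=g,f^2,\dots,f^n)$ of $f$ in which $f^1=g$. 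After this identification, the task becomes to show that $\M(P,L)\xrightarrow{?\cdot f}\M(P,M)\xrightarrow{?\cdot f^1}\M(P,X^2)$ is exact at the middle term.

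Second, since $f^n$ is an epimorphism, \ref{ax-ab:n-co-kernel-exists} supplies an $n$-kernel of $f^n$, and \ref{ax-ab:epis-are-admissible} assembles that $n$-kernel together with $f^n$ into an $n$-exact sequence $Y^0\xrightarrow{y^0}Y^1\to\cdots\to Y^{n-1}\to X^n\xrightarrow{f^n}X^{n+1}$, in which $y^0$ is a monomorphism. I would then construct a morphism of complexes $\phi$ from the $n$-cokernel of $f$ into this $n$-exact sequence with $\phi^n=1_{X^n}$ and $\phi^{n+1}=1_{X^{n+1}}$ by descending induction: at each stage the composite $f^k\phi^{k+1}$ is annihilated by $y^{k+1}$ (using the chain-map relation at the next higher index together with $f^kf^{k+1}=0$), so the weak-kernel property of $y^k$ within the $n$-kernel structure supplies the next component $\phi^k$ making $\phi^ky^k=f^k\phi^{k+1}$.

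Finally, given $h\colon P\to M$ with $hg=0$, I would apply $\M(P,-)$ to the $n$-exact sequence: projectivity of $P$ combined with $f^n$ being an epimorphism makes the resulting complex of abelian groups exact. The morphism $h\phi^1\colon P\to Y^1$ satisfies $(h\phi^1)y^1=hf^1\phi^2=0$, so by exactness there is $\eta\colon P\to Y^0$ with $\eta y^0=h\phi^1$. The main obstacle is now to convert $\eta$ into the desired $h'\colon P\to L$ with $h'f=h$. I expect to complete this by arranging $\phi$ to be a homotopy equivalence — for instance by first passing to a good $n$-pushout of the $n$-cokernel as in \th\ref{existence-of-good-n-pushout-diagrams} so that contractible summands are absorbed and $\phi^0$ becomes a split epimorphism — whereupon projectivity of $P$ lifts $\eta$ through $\phi^0$ to obtain $h'\colon P\to L$ with $h'\phi^0=\eta$, and the chain-map identity $\phi^0 y^0=f\phi^1$ together with the monomorphism $y^0$ lets one deduce $h'f=h$ from $(h'f)\phi^1=(h'\phi^0)y^0=\eta y^0=h\phi^1$.
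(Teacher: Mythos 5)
Your preliminary reductions are sound: extending $g$ to an $n$-cokernel $(f^1=g,f^2,\dots,f^n)$ of $f$ via \ref{ax-ab:n-co-kernel-exists} and \th\ref{weak-cokernel-can-be-extended-to-n-cokernel}, forming an $n$-exact sequence $Y$ on an $n$-kernel of the epimorphism $f^n$ via \ref{ax-ab:epis-are-admissible}, building $\phi$ by descending induction, and producing $\eta$ with $\eta y^0=h\phi^1$ (for this last point you need neither projectivity of $P$ nor the epimorphism $f^n$: exactness at $\M(P,Y^1)$ is part of the definition of $n$-kernel). The argument breaks at the final step, for two independent reasons. First, from $(h'f)\phi^1=h\phi^1$ you can only conclude $h'f=h$ if $\phi^1$ is a monomorphism; the monomorphism $y^0$ is irrelevant to cancelling $\phi^1$ on the right, and $\phi^1$ fails to be monic in general. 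Concretely, in the $2$-abelian category $\add(\Lambda\oplus S_2)\subseteq\mod K\vec{A}_3/J^2$ of the paper's first example, take $f\colon P_1\to P_2$ nonzero: its $2$-cokernel is $P_2\epi S_2\to 0$, the minimal $2$-kernel of $S_2\to 0$ is $0\to S_2\xrightarrow{1}S_2$, and $\phi^1$ is forced to be the non-monic projection $P_2\epi S_2$, so the identity $(h'f)\phi^1=h\phi^1$ carries no information about $h$. Second, $\phi^0$ cannot be ``arranged'' to be a split epimorphism: \th\ref{existence-of-good-n-pushout-diagrams} adds contractible summands to the \emph{target} of an $n$-pushout so that the components $\tilde f^k$ become split \emph{mono}morphisms; it does not apply to the comparison map $\phi$, and $\phi$ is not a homotopy equivalence in general (already for $n=1$ and $f=0$ the source complex has the non-contractible summand $L\to 0\to\cdots\to 0$). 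In the example above one can make $\phi^1$ a split monomorphism by fattening the $2$-kernel to $P_2\to S_2\oplus P_2\to S_2$, but then $\phi^0$ becomes $f$ itself, which is not an epimorphism; the two requirements cannot be met simultaneously, and this tension is exactly the content of the theorem.

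The missing ingredient is what the paper's \th\ref{inductive-construction-of-n-kernel} supplies, at the cost of a substantial induction. Instead of comparing the $n$-cokernel of $f$ with the $n$-kernel of its \emph{last} morphism $f^n$, one inductively constructs $n$-kernels of each $f^k$, linked by $n$-pullback and good $n$-pushout diagrams, so as to obtain a weak kernel $i\colon K_1\to M$ of $g$ mapping \emph{into} $M$, a morphism $p\colon L\to K_1$ with $pi=f$, and a morphism $q\colon K_2\to K_1$ from the next term of that $n$-kernel with $qi=0$, such that $[p\ q]\colon L\oplus K_2\to K_1$ is an epimorphism. Given $h$ with $hg=0$ one factors $h=ji$ through the weak kernel, uses projectivity of $P$ to lift $j$ through the epimorphism as $j=j'p+j''q$, and concludes $h=j'pi+j''qi=j'f$ with no cancellation required. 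The correction summand $K_2$ is precisely what your argument lacks: for $n\geq2$ the map $L\to K_1$ alone need not be an epimorphism, and producing the extra term $q$ with $qi=0$ is the real work of the proof.
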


For the proof of \th\ref{projectives-are-strongly-projective} we need
the following result which, albeit technical, is interesting in its
own right.

\begin{proposition}
  \th\label{inductive-construction-of-n-kernel} Let $\M$ be an
  $n$-abelian category, $f^0\colon X^0\to X^1$ a morphism in $\M$ and
  $\tupleP{f^k\colon X^k\to X^{k+1}}{1\leq k\leq n}$ an $n$-cokernel
  of $f^0$. Then, for every $k\in\set{0,1,\dots,n}$ and every
  $\ell\in\set{1,\dots,n}$ there exists morphisms
  $g_k^\ell\colon Y_k^\ell\to Y_k^{\ell-1}$ (with $Y_k^0:=X^k$) and
  $p_k^{\ell-1}\colon Y_k^{\ell-1}\to Y_{k+1}^{\ell}$ satisfying the
  following properties:
  \begin{enumerate}
  \item For every $k\in\set{0,1,\dots,n}$ the diagram
    \begin{equation}
      \label{dia:strong-projectivity}
      \begin{tikzcd}
        Y_k^n\rar{g_k^n}\dar&Y_k^{n-1}\rar{g_k^{n-1}}\dar{p_k^{n-1}}&\cdots\rar{g_k^2}&Y_k^1\rar{g_k^1}\dar{p_k^1}&X^k\rar{f^k}\dar{p_k^0}&X^{k+1}\\
        0\rar&Y_{k+1}^n\rar{g_{k+1}^n}&\cdots\rar{g_{k+1}^3}&Y_{k+1}^2\rar{g_{k+1}^2}&Y_{k+1}^1\urar[swap]{g_{k+1}^1}
      \end{tikzcd}
    \end{equation}
    commutes.
  \item The sequence $\tuple{g_k^n,\dots,g_k^1}$ is an $n$-kernel of
    $f^k$.
  \item The diagram
    \[
    \begin{tikzcd}
      Y_k^n\rar{g_k^n}\dar&Y_k^{n-1}\rar{g_k^{n-1}}\dar{p_k^{n-1}}&\cdots\rar{g_k^2}&Y_k^1\rar{g_k^1}\dar{p_k^1}&X^k\dar{p_k^0}\\
      0\rar&Y_{k+1}^n\rar{g_{k+1}^n}&\cdots\rar{g_{k+1}^3}&Y_{k+1}^2\rar{g_{k+1}^2}&Y_{k+1}^1
    \end{tikzcd}
    \]
    is both an $n$-pullback diagram and a good $n$-pushout diagram,
    see \th\ref{existence-of-good-n-pushout-diagrams}. In particular,
    the morphism
    \[
    \begin{tikzcd}[ampersand replacement=\&]
      \begin{bmatrix}
	p_k^0&g_{k+1}^2
      \end{bmatrix}\colon X^k\oplus Y_{k+1}^2\rar\&Y_{k+1}^1
    \end{tikzcd}
    \]
    is an epimorphism.
  \item If $k\neq0$, then the sequence
    $\tuple{g_k^{k-1},\dots,g_k^1,f^k,\dots,f^n}$ is an $n$-cokernel
    of the morphism $g_k^k$.
  \end{enumerate}
\end{proposition}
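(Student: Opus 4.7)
The plan is to prove the proposition by induction on $k$, constructing at each step a new $n$-kernel together with the comparison diagram \eqref{dia:strong-projectivity} for level $k$. The base case $k=0$ is initiated by invoking axiom \ref{ax-ab:n-co-kernel-exists} to choose an $n$-kernel $(g_0^n,\dots,g_0^1)$ of $f^0$.

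For the inductive step, assume $(g_k^n,\dots,g_k^1)$ is an $n$-kernel of $f^k$ and form the complex
\[
A\colon Y_k^n\xto{g_k^n}Y_k^{n-1}\to\cdots\xto{g_k^1}X^k
\]
in $\CC^{n-1}(\M)$. Applying \th\ref{n-abelian-cats-have-n-pushout-diagrams} together with \th\ref{existence-of-good-n-pushout-diagrams}, I take a good $n$-pushout of $A$ along the zero morphism $Y_k^n\to 0$; its target is read off as the bottom row $0\to Y_{k+1}^n\to\cdots\to Y_{k+1}^1$ of the diagram, together with the vertical maps $p_k^{n-1},\dots,p_k^0$. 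The leftmost differential of the associated mapping cone is, up to sign, equal to $g_k^n$, which is a monomorphism because $g_k^n$ is the kernel of $g_k^{n-1}$; therefore axiom \ref{ax-ab:monos-are-admissible} upgrades the mapping cone, which is an $n$-cokernel sequence by construction, to a full $n$-exact sequence. This yields property (ii): the diagram is simultaneously an $n$-pushout and an $n$-pullback, and its terminal differential $[p_k^0,g_{k+1}^2]\colon X^k\oplus Y_{k+1}^2\to Y_{k+1}^1$ is in particular an epimorphism.

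The morphism $g_{k+1}^1$ is then defined through the universal property of that cokernel: the map $[f^k,0]\colon X^k\oplus Y_{k+1}^2\to X^{k+1}$ is annihilated by the penultimate mapping-cone differential (since $g_k^1 f^k=0$), hence factors uniquely through $[p_k^0,g_{k+1}^2]$, yielding $g_{k+1}^1$ with $p_k^0 g_{k+1}^1=f^k$ and $g_{k+1}^2 g_{k+1}^1=0$. Postcomposing $g_{k+1}^1 f^{k+1}$ with the epimorphism $[p_k^0,g_{k+1}^2]$ produces $[f^k f^{k+1},0]=0$, which forces $g_{k+1}^1 f^{k+1}=0$. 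The remaining content of property (i), that $(g_{k+1}^n,\dots,g_{k+1}^1)$ is an $n$-kernel of $f^{k+1}$, is obtained by reinterpreting the $n$-exact mapping cone in the dual language of $n$-kernels, invoking axiom \ref{ax-ab:epis-are-admissible} to promote the weak-kernel property of the leftmost morphism $g_{k+1}^n$ into that of a genuine kernel.

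The main obstacle is property (iii), which for $k\geq1$ asserts that $(g_k^{k-1},\dots,g_k^1,f^k,\dots,f^n)$ is an $n$-cokernel of $g_k^k$. The plan here is to exploit the commutativity relations produced at the previous inductive step: the identity $p_{k-1}^0 g_k^1=f^{k-1}$, combined with the analogous intertwinings $g_{k-1}^j p_{k-1}^{j-1}=p_{k-1}^j g_k^{j+1}$, lets one convert a weak-cokernel check for the sequence at level $k$ into an already established weak-cokernel check for the sequence at level $k-1$, available by the inductive hypothesis (or, at the lowest level $k=1$, directly from the given $n$-cokernel property of $(f^1,\dots,f^n)$). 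The fact that $f^n$ is a genuine cokernel is inherited directly from the original hypothesis. The delicate bookkeeping required to align these weak cokernels across two consecutive levels is the technically demanding part of the argument, but it introduces no new ideas beyond the iterated application of \th\ref{n-abelian-cats-have-n-pushout-diagrams} and the axioms of an $n$-abelian category.
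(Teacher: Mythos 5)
Your induction runs in the wrong direction for the property you need to propagate, and this creates a genuine gap at property (i). You define $Y_{k+1}^\bullet$ as an $n$-pushout of the $n$-kernel of $f^k$ along $Y_k^n\to 0$; a pushout carries a universal property for morphisms \emph{out of} its terms, and indeed \th\ref{n-pushouts-and-weak-cokernels} only transports weak \emph{cokernels} along $n$-pushout diagrams. But property (i) at level $k+1$ asserts that $\tuple{g_{k+1}^n,\dots,g_{k+1}^1}$ is an $n$-\emph{kernel} of $f^{k+1}$, a statement about morphisms \emph{into} the $Y_{k+1}^j$ and into $X^{k+1}$. The $n$-exactness of the mapping cone (which you correctly obtain from axiom \ref{ax-ab:monos-are-admissible}, since $d_C^{-1}=-g_k^n$ is a monomorphism) gives weak-kernel information only for the cone differentials, whose targets are the sums $Y_k^{j}\oplus Y_{k+1}^{j'}$; this does not decouple into a weak-kernel statement for the $g_{k+1}^j$ alone, and it says nothing whatsoever about a morphism $v\colon M\to X^{k+1}$ with $vf^{k+1}=0$ factoring through $g_{k+1}^1$ --- the hypothesis that $\tuple{f^1,\dots,f^n}$ is an $n$-cokernel of $f^0$ controls maps out of the $X^i$, not maps into them. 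Your phrase ``reinterpreting the $n$-exact mapping cone in the dual language of $n$-kernels, invoking axiom \ref{ax-ab:epis-are-admissible}'' names no mechanism that bridges this. For $n=1$ the missing step is exactly the first isomorphism theorem (coimage $=$ image); for general $n$ it is precisely the hard content this proposition is designed to substitute for, so assuming it is essentially circular.

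The paper avoids this by running the induction \emph{downward} from $k=n$: the base case is anchored by axiom \ref{ax-ab:epis-are-admissible} applied to the epimorphism $f^n$ (which simultaneously delivers property (iii) at the top level --- something your upward construction never has access to, since you only reach level $n$ at the end with an $n$-kernel of $f^n$ produced by $n$ iterated pushouts, about which nothing is known); each new top row $Y_{k-1}^\bullet$ is then an $n$-\emph{pullback} of the already-certified $n$-kernel of $f^k$ along $p_{k-1}^0$, and pullbacks do transport the maps-into universal properties that constitute property (i). The price the paper pays is that the epimorphism $[p_k^0\ g_{k+1}^2]$ (which you get for free from the pushout) must be proved, and at the final step $k=0$ this requires chaining through all the previously arranged good $n$-pushout diagrams. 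Your proposal trades that difficulty for the coimage-equals-image problem, which is strictly harder; as written, the inductive step does not close.
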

\begin{proof}
  We proceed by induction on $k$, beginning with the case $1\neq k=n$;
  the case $n=1$ of $n$-abelian categories is straightforward. By
  axiom \ref{ax-ab:n-co-kernel-exists} there exists an $n$-kernel
  $\tuple{g_n^n,\dots,g_1^n}$ of $f^n$ and by axiom
  \ref{ax-ab:epis-are-admissible} the sequence
  $\tuple{g_n^n,\dots,g_1^1,f^n}$ is an $n$-exact sequence. Note that
  this implies that the diagram
  \[
  \begin{tikzcd}
    Y_n^n\rar{g_n^n}\dar&Y_n^{n-1}\rar{g_n^{n-1}}\dar&\cdots\rar{g_n^2}&Y_n^1\rar{g_n^1}\dar&X^n\rar{f^n}\dar{f^n}&X^{n+1}\rar&0\\
    0\rar&0\rar&\cdots\rar&0\rar&X^{n+1}\urar[swap]{1}
  \end{tikzcd}
  \]
  is a both an $n$-pullback diagram and an $n$-pushout diagram. By
  \th\ref{existence-of-good-n-pushout-diagrams} we can replace this
  diagram by a good $n$-pushout diagram
  \[
  \begin{tikzcd}[row sep=large,ampersand replacement=\&]
    Y_n^n\rar{g_n^n}\dar\&Y_n^{n-1}\rar{g_n^{n-1}}\dar\&\cdots\rar{g_n^2}\&Y_n^1\rar{g_n^1}\dar\&X^n\rar{f^n}\dar{
      \begin{bmatrix}
	f^n\\1
      \end{bmatrix}}\&X^{n+1}\rar\&0\\
    0\rar\&Y_n^{n-2}\rar\&\cdots\rar\&Y_n^1\oplus X^{n+1}\rar\&X^{n+1}\oplus
    X^n\urar[swap]{
      \begin{bmatrix}
	1&0
      \end{bmatrix}}
  \end{tikzcd}
  \]
  Note that this passage does not change the fact that the top row
  gives an $n$-kernel of $f^n$.  This shows that the result is holds
  in this case.

  Let $2\leq k\leq n$ and suppose that we have constructed a
  commutative diagram of the form \eqref{dia:strong-projectivity} with
  the required properties. Since $f^{k-1}f^k=0$ and $g_k^1$ is a weak
  kernel of $f^k$ there exists a morphism $p_{k-1}^0\colon X^{k-1}\to
  Y_k^1$ such that $f^{k-1}=p_{k-1}^0g_k^1$.  We claim that the
  morphism
  \[
  \begin{tikzcd}[ampersand replacement=\&]
    \begin{bmatrix}
      p_{k-1}^0&g_{k}^2
    \end{bmatrix}\colon X^{k-1}\oplus Y_{k}^2\rar\&Y_{k}^1
  \end{tikzcd}
  \]
  is an epimorphism. Indeed, let $u\colon Y_k^1\to M$ be a morphism
  such that $p_{k-1}^0u=0$ and $g_{k}^2u=0$. Given that $g_k^1$ is a
  weak cokernel of $g_k^2$ there exists a morphism $v\colon X^k\to M$
  such that $u=g_k^1v$. It follows that
  \[
  f^{k-1}v=(p_{k-1}^0g_k^1)v=p_{k-1}^0u=0.
  \]
  Then, since $f^k$ is a weak cokernel of $f^{k-1}$, there exists a
  morphism $w\colon X^{k+1}\to M$ such that $v=f^kw$. Thus, we have
  \[
  u=g_k^1v=g_k^1(f^kw)=0.
  \]
  The claim follows. By
  \th\ref{n-abelian-cats-have-n-pushout-diagrams} there exists an
  $n$-pullback diagram
  \[
  \begin{tikzcd}
    Y_{k-1}^n\rar{g_{k-1}^n}\dar&Y_{k-1}^{n-1}\rar{g_{k-1}}\dar{p_{k-1}^{n-1}}&\cdots\rar{g_{k-1}^2}&Y_{k-1}^1\rar{g_{k-1}^1}\dar{p_{k-1}^1}&X^{k-1}\dar{p_{k-1}^0}\\
    0\rar&Y_{k}^n\rar{g_{k}^n}&\cdots\rar{g_{k}^3}&Y_{k}^2\rar{g_{k}^2}&Y_{k}^1
  \end{tikzcd}
  \]
  Note that axiom \ref{ax-ab:epis-are-admissible} implies that it this
  diagram is also an $n$-pushout diagram, and that
  \th\ref{n-pushouts-and-weak-cokernels} implies that it has the
  required properties, except that it need not be a good $n$-pushout
  diagram. Using \th\ref{existence-of-good-n-pushout-diagrams} we may
  replace this diagram by a good $n$-pushout diagram. Note that since
  the passage to a good $n$-pushout diagram amounts to adding a
  contractible $n$-exact sequence it does not alter the properties of
  the previously constructed diagrams.

  Finally, let $k=1$. We have a commutative diagram
  \[
  \begin{tikzcd}
    Y_0^n\rar{g_0^n}\dar&Y_0^{n-1}\rar{g_0^{n-1}}\dar{p_0^{n-1}}&\cdots\rar{g_0^2}&Y_0^1\rar{g_0^1}\dar{p_0^1}&X^0\rar{f^0}\dar{p_0^0}&X^{1}\\
    0\rar&Y_{1}^n\rar{g_{1}^n}&\cdots\rar{g_{1}^3}&Y_{1}^2\rar{g_{1}^2}&Y_{1}^1\urar[swap]{g_{1}^1}
  \end{tikzcd}
  \]
  where the leftmost $n$ squares form an $n$-pullback diagram; we
  claim that they form moreover an $n$-pushout diagram. To show this,
  by axiom \ref{ax-ab:epis-are-admissible} it is enough to show that
  the morphism
  \[
  \begin{tikzcd}[ampersand replacement=\&]
    \begin{bmatrix}
      p_{0}^0&g_{1}^2
    \end{bmatrix}\colon X^{0}\oplus Y_{1}^2\rar\&Y_{1}^1
  \end{tikzcd}
  \]
  is an epimorphism. Let $u\colon Y_1^1\to M$ be a morphism such that
  $p_0^0u=0$ and $g_1^2=0$. Put $Y_0^0:=X^0$ and $u^0:=u$. Proceeding
  inductively, since the diagrams we constructed are good $n$-pushout
  diagrams, for each $k\in\set{1,\dots,n-1}$ we obtain a commutative
  diagram
  \[
  \begin{tikzcd}
    {}&Y_k^{k+1}\rar\dar&Y_k^k\ar[bend left]{ddr}{u^k}\dar{p_k^k}\\
    Y_{k+1}^{k+2}\rar&Y_{k+1}^{k+2}\rar\ar[swap]{drr}{0}&Y_{k+1}^{k+1}\drar[dotted]{u^{k+1}}\\
    &&&M
  \end{tikzcd}
  \]
  Moreover, by \th\ref{n-abelian-cats-have-n-pushout-diagrams} there
  exists a commutative diagram
  \[
  \begin{tikzcd}
    Y_n^n\rar\dar{u^n}&Y_n^{n-1}\dar{p_n^{n-1}}\\
    M\rar{v}&N
  \end{tikzcd}
  \]
  where $v$ is a monomorphism.  It readily follows that
  \[
  u=p_1^1\cdots p_{n-1}^{n-1} u^n\quad\text{and}\quad
  uv=g_1^1p_1^0\cdots p_n^{n-1}.
  \]
  Next, observe that
  \[
  f^0(p_1^0\cdots p_n^{n-1})=p_0^0g_1^1p_1^0\cdots
  p_n^{n-1}=p_0^0uv=0.
  \]
  Given that $f^1$ is a weak cokernel of $f^0$, there exists a
  morphism $w\colon X^1\to N$ such that $f^1w=p_1^0\cdots p_n^{n-1}$.
  Finally, we have
  \[
  uv=g_1^1(p_1^0\cdots p_n^{n-1})=g_1^1f^1w=0.
  \]
  Since $v$ is a monomorphism, we have $u=0$ which is what we needed
  to show.
\end{proof}

We are ready to give the proof of
\th\ref{projectives-are-strongly-projective}.

\begin{proof}[Proof of \th\ref{projectives-are-strongly-projective}]
  By \th\ref{inductive-construction-of-n-kernel} there exists a
  commutative diagram
  \[
  \begin{tikzcd}
    {}&L\rar{f}\dar{p}&M\rar{g}&N\\
    K_2\rar{q}&K_1\urar{i}
  \end{tikzcd}
  \]
  where $i$ is a weak kernel of $g$, we have $qi=0$, and
  \[
  \begin{tikzcd}[ampersand replacement=\&]
    \begin{bmatrix}
      p&q
    \end{bmatrix}\colon L\oplus K_2\rar\&K_1
  \end{tikzcd}
  \]
  is an epimorphism.

  Let $h\colon P\to M$ be a morphism such that $hg=0$.  Since $i$ is a
  weak kernel of $g$, there exists a morphism $j\colon P\to K_1$ such
  that $h=ji$. Given that $P$ is projective and $L\oplus K_2\to K_1$
  is an epimorphism, there exist morphisms $j'\colon P\to L$ and
  $j''\colon P\to K_2$ such that $j=j'p+j''q$. It follows that
  \[
  h=ji=(j'p+j''q)i=j'pi=j'f.
  \]
  Hence $h$ factors through $f$. This shows that the sequence
  \eqref{eq:strong-projectivity} is exact.
\end{proof}

\subsection{$n$-abelian categories and cluster-tilting}

Recall that a subcategory $\C$ of an abelian category $\A$ is
\emph{cogenerating} if for every object $X\in\A$ there exists an
object $Y\in\C$ and a monomorphism $X\to Y$.  The concept of
\emph{generating} subcategory is defined dually.  We use the following
variant of the definition of a cluster-tilting subcategory of an
abelian category.

\begin{definition}
  \th\label{def:n-cluster-tilting-abelian} Let $\A$ be an abelian
  category and $\M$ a generating-cogenerating subcategory of $\A$.  We
  say that $\M$ is an \emph{$n$-cluster-tilting subcategory of $\A$}
  if $\M$ is functorially finite (see Subsection
  \ref{sec:conventions}) in $\A$ and
  \begin{align*}
    \M=&\setP{X\in\A}{\forall i\in\set{1,\dots,n-1}\
      \Ext_\A^i(X,\M)=0}\\
    =&\setP{X\in\A}{\forall i\in\set{1,\dots,n-1}\ \Ext_\A^i(\M,X)=0}.
  \end{align*}
  Note that $\A$ itself is the unique {1-cluster-tilting} subcategory
  of $\A$.
\end{definition}

\begin{remark}
  \th\label{n-ct-abelian-approximations-mono} Let $\A$ be an abelian
  category and $\M$ an $n$-cluster-tilting subcategory of $\A$. Since
  $\M$ is a cogenerating subcategory of $\A$, for all $A\in\A$ each
  left $\M$-approximation of $A$ is a monomorphism.
\end{remark}

The following result provides us with a way of recognizing $n$-abelian
categories.

\begin{theorem}
  \th\label{recognition-thm-abelian} Let $\A$ be an abelian category
  and $\M$ an $n$-cluster-tilting subcategory of $\A$. Then, $\M$ is
  an $n$-abelian category.
\end{theorem}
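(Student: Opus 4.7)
The plan is to verify the four axioms \ref{ax-ab:split-idempotents}--\ref{ax-ab:epis-are-admissible} in turn. Axiom \ref{ax-ab:split-idempotents} is immediate: $\M$ is cut out of $\A$ by $\Ext$-vanishing conditions, hence is closed under direct summands, and $\A$ itself is idempotent complete. Two preliminary observations are useful: because $\M$ is generating in $\A$, a morphism in $\M$ is monic in $\M$ if and only if it is monic in $\A$ (and dually, epi in $\M$ iff epi in $\A$); and because $\M$ is cogenerating, every left $\M$-approximation is a monomorphism in $\A$ (\th\ref{n-ct-abelian-approximations-mono}).

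The core construction, needed for axiom \ref{ax-ab:n-co-kernel-exists}, builds an $n$-cokernel of a given $f\colon M^0\to M^1$ step by step. Put $C^1:=\coker_\A f$ and choose a left $\M$-approximation $\pi^1\colon C^1\to M^2$; let $d^1$ be the composite $M^1\twoheadrightarrow C^1\xrightarrow{\pi^1}M^2$. Iteratively, for $k=2,\ldots,n-1$, let $C^k:=\coker_\A\pi^{k-1}$ and pick a left $\M$-approximation $\pi^k\colon C^k\to M^{k+1}$, with $d^k$ the composite $M^k\twoheadrightarrow C^k\xrightarrow{\pi^k}M^{k+1}$; finally set $M^{n+1}:=\coker_\A\pi^{n-1}$ with $d^n\colon M^n\twoheadrightarrow M^{n+1}$ the canonical projection. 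The main verifications are that $M^{n+1}\in\M$ and that $M^1\to M^2\to\cdots\to M^{n+1}$ is an $n$-cokernel of $f$ in $\M$. Both stem from the short exact sequences $0\to C^{k-1}\to M^k\to C^k\to 0$: applying $\Hom_\A(-,Y)$ with $Y\in\M$, the approximation property makes $\Hom_\A(M^k,Y)\to\Hom_\A(C^{k-1},Y)$ surjective, so $\Ext^1_\A(C^k,Y)=0$ for $k\geq 2$; dimension-shifting along these sequences using $\Ext^i_\A(M^k,Y)=0$ for $1\leq i\leq n-1$ then propagates the vanishing to $\Ext^i_\A(M^{n+1},Y)=0$ throughout the same range, hence $M^{n+1}\in\M$ by \th\ref{def:n-cluster-tilting-abelian}. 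The $n$-cokernel property follows from the same observations: the kernel of $\Hom_\A(M^k,Y)\to\Hom_\A(M^{k-1},Y)$ is $\Hom_\A(C^k,Y)$, and by approximation each map $C^k\to Y$ extends to $M^{k+1}\to Y$, producing the image. The existence of $n$-kernels is strictly dual, using right $\M$-approximations and the generating hypothesis.

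Finally, for axiom \ref{ax-ab:monos-are-admissible}, by \th\ref{n-exact-sequences-are-closed-under-homotopy-equivalence} and \th\ref{weak-isos-induce-homotopy-equivalences} it suffices to exhibit one $n$-exact sequence beginning with a given monomorphism $f^0$. Since $f^0$ is then also monic in $\A$, the iterative construction above yields an honest exact sequence $0\to M^0\to M^1\to\cdots\to M^{n+1}\to 0$ in $\A$. To show that $(f^0,\ldots,f^{n-1})$ is an $n$-kernel of $f^n$ in $\M$, split this into short exact sequences $0\to K^k\to M^k\to K^{k+1}\to 0$ with $K^k:=\im(M^{k-1}\to M^k)$, so $K^0=0$ and $K^1\cong M^0$; applying $\Hom_\A(Y,-)$ for $Y\in\M$ and dimension-shifting via $\Ext^i_\A(Y,M^k)=0$ for $1\leq i\leq n-1$, the required exactness at the $k$-th spot ($1\leq k\leq n$) reduces to the vanishing of $\Ext^{k-1}_\A(Y,M^0)$, which holds because $M^0\in\M$. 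Axiom \ref{ax-ab:epis-are-admissible} is entirely dual. The main obstacle is bookkeeping: tracking precisely which $\Ext$-degrees the $n$-cluster-tilting hypothesis controls and handling the dimension shifts carefully. The subtlest point is that the construction must terminate inside $\M$ rather than merely inside $\A$, and this is where the approximation property---beyond mere $\Ext$-vanishing---plays its essential role.
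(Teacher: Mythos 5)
Your proposal is correct and follows essentially the same route as the paper: you reconstruct Proposition \th\ref{n-ct_resolution} (iterated cokernels in $\A$ followed by left $\M$-approximations, with the $\Ext$-vanishing induction showing the last term lands in $\M$) to get axiom \ref{ax-ab:n-co-kernel-exists}, and you reconstruct the dimension-shifting of Lemma \th\ref{lemma-exts} together with the reduction of \ref{ax-ab:monos-are-admissible} to exhibiting a single $n$-exact sequence. The only cosmetic difference is that you make explicit the identification of monomorphisms in $\M$ with monomorphisms in $\A$ via the generating hypothesis, which the paper leaves implicit.
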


To prove \th\ref{recognition-thm-abelian} we need the following
technical results.

\begin{proposition}
  \th\label{n-ct_resolution} Let $\A$ be an abelian category and $\M$
  an $n$-cluster-tilting subcategory of $\A$. Then, for all $A\in\A$
  there exists an exact sequence
  \[
  \begin{tikzcd}[column sep=small, row sep=small]
    0\ar{rr}&&A\ar{rr}{f^0}\drar[equals]&&M^1\ar{rr}{f^1}&&\cdots\ar{rr}{f^{n-2}}\drar[swap]{g^{n-2}}&&M^{n-1}\ar{rr}{f^{n-1}}\drar[swap]{g^{n-1}}&&M^n\ar{rr}&&0\\
    &&&C^1\urar[swap]{h^1}&&&&C^{n-1}\urar[swap]{h^{n-1}}&&C^n\urar[equals]
  \end{tikzcd}
  \]
  satisfying the following properties:
  \begin{enumerate}
  \item For each $k\in\set{1,\dots,n}$ we have $M^k\in\M$.
  \item For each $k\in\set{1,\dots,n-1}$ the morphism $h^k\colon
    C^k\to M^k$ is a left $\M$-approximation.
  \item For each $k\in\set{1,\dots,n-1}$ the morphism $g^k\colon
    M^k\to C^{k+1}$ is a cokernel of $h^k\colon C^k\to M^k$.
  \item For all $M\in\M$ the induced sequence of abelian groups
    \[
    \begin{tikzcd}
      0\rar&\A(M^n,M)\rar&\cdots\rar&\A(M^1,M)\rar&\A(A,M)\rar&0
    \end{tikzcd}
    \]
    is exact.
  \end{enumerate}
\end{proposition}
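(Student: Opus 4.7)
The plan is to construct the sequence iteratively by taking left $\M$-approximations, verify that the terminal object lies in $\M$ via an Ext dimension-shifting argument, and finally deduce the Hom-exactness.

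First, I set $C^1 := A$ and, inductively for $k = 1, \ldots, n-1$, I choose a left $\M$-approximation $h^k \colon C^k \to M^k$ of $C^k$; by \th\ref{n-ct-abelian-approximations-mono} (which applies since $\M$ is cogenerating) each $h^k$ is a monomorphism. Letting $g^k \colon M^k \to C^{k+1}$ be its cokernel yields a short exact sequence $0 \to C^k \to M^k \to C^{k+1} \to 0$ in $\A$. I then set $f^0 := h^1$ (under the identification $A = C^1$), $f^{k-1} := g^{k-1} h^k$ for $2 \leq k \leq n-1$, and finally $M^n := C^n$, $h^n := 1_{M^n}$, $f^{n-1} := g^{n-1}$. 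Once I verify that $C^n \in \M$, the whole sequence $0 \to A \to M^1 \to \cdots \to M^n \to 0$ is exact and realizes the factorizations drawn in the statement, so properties (i), (ii) and (iii) are immediate from the construction.

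The main work is to prove $C^n \in \M$ using the Ext characterization of $n$-cluster-tilting subcategories. Fix $M \in \M$. Applying $\Hom_\A(-, M)$ to each short exact sequence $0 \to C^k \to M^k \to C^{k+1} \to 0$, and using that $h^k$ is a left $\M$-approximation (so $\Hom_\A(M^k, M) \to \Hom_\A(C^k, M)$ is surjective) together with $\Ext_\A^1(M^k, M) = 0$, I obtain $\Ext_\A^1(C^{k+1}, M) = 0$ for each $k$. Since $\Ext_\A^i(M^k, M) = 0$ for all $1 \leq i \leq n-1$, the long exact Ext-sequence furnishes the dimension shift
\[
\Ext_\A^{i+1}(C^{k+1}, M) \cong \Ext_\A^i(C^k, M), \qquad 1 \leq i \leq n-2.
\]
Iterating this $i-1$ times starting with $C^n$, I conclude $\Ext_\A^i(C^n, M) \cong \Ext_\A^1(C^{n-i+1}, M) = 0$ for every $1 \leq i \leq n-1$, whence $C^n \in \M$ by \th\ref{def:n-cluster-tilting-abelian}.

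For property (iv), the vanishing of $\Ext_\A^1(C^{k+1}, M)$ promotes each short exact sequence $0 \to C^k \to M^k \to C^{k+1} \to 0$ to a short exact sequence $0 \to \Hom_\A(C^{k+1}, M) \to \Hom_\A(M^k, M) \to \Hom_\A(C^k, M) \to 0$. Splicing these for $k = 1, \ldots, n-1$, combined with $C^1 = A$ and $C^n = M^n$, yields the asserted long exact sequence. The only delicate point is the index bookkeeping in the dimension-shifting step: both Ext-degrees arising in each iteration must lie in $\{1, \ldots, n-1\}$ so that the $\Ext_\A^i(M^k, M)$-terms vanish, which is exactly the range dictated by the $n$-cluster-tilting hypothesis.
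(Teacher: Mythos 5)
Your proof is correct and follows essentially the same route as the paper's: build the sequence from left $\M$-approximations (monomorphisms since $\M$ is cogenerating) and their cokernels, kill $\Ext_\A^1(C^{k+1},\M)$ using surjectivity of $\A(M^k,M)\to\A(C^k,M)$, and dimension-shift via $\Ext_\A^i(M^k,\M)=0$ to conclude $C^n\in\M$. The only difference is organizational --- the paper runs a forward induction on $k$ proving $\Ext_\A^i(C^k,\M)=0$ for $1\leq i\leq k-1$, whereas you establish all the $\Ext^1$ vanishings first and then shift downward from $C^n$ --- which changes nothing of substance.
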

\begin{proof}
  This proof is an adaptation of the proof of
  \cite[Thm. 2.2.3]{iyama_higher-dimensional_2007}.  Given that for
  all $k\in\set{1,\dots,n-1}$ the morphism $h^k\colon C^k\to M^k$ is a
  left $\M$-approximation, it readily follows that the sequence
  \[
  \begin{tikzcd}
    0\rar&\A(M^n,M)\rar&\cdots\rar&\A(M^1,M)\rar&\A(A,M)\rar&0
  \end{tikzcd}
  \]
  is exact. It remains to show that $C^n\in\M$.
  
  We claim that for each $M\in\M$ and each $k\in\set{2,\dots,n}$ we
  have $\Ext_\A^i(C^k,M)=0$ for all $1\leq i\leq k-1$.  We proceed by
  induction on $k$. First, note that for all $M\in\M$ applying the
  contravariant functor $\A(-,M)$ to the exact sequence $0\to
  A\xto{f^0} M^2\to C^2$ we have an exact sequence
  \[
  \begin{tikzcd}
    \A(M^1,M)\rar{f^0\cdot?}&\A(A,M)\rar&\Ext_\A^1(C^2,M)\rar&\Ext_\A^1(M^1,M)=0.
  \end{tikzcd}
  \]
  Moreover, the morphism $\A(M^1,M)\to \A(A,M)$ is an epimorphism for
  $f^0$ is a left $\M$-approximation of $\A$. Thus we have
  $\Ext_\A^1(C^2,M)=0$ as required.

  Let $2\leq k\leq n-1$ and suppose that the claim holds for all
  $\ell\leq k$.  Note that since $\M$ is a cogenerating subcategory of
  $\A$, the morphism $h^k$ is a monomorphism. In particular, we have
  that $h^k$ is a kernel of $g^k$.  For all $M\in\M$ and for each
  $2\leq i\leq k$, applying the contravariant functor $\A(-,M)$ to the
  exact sequence $0\to C^k\to M^k\to C^{k+1}\to 0$ yields an exact
  sequence of the form
  \[
  \begin{tikzcd}
    0=\Ext_\A^{i-1}(C^k,M)\rar&\Ext_\A^i(C^{k+1},M)\rar&\Ext_\A^i(M^k,M)=0.
  \end{tikzcd}
  \]
  Therefore $\Ext_\A^i(C^{k+1},M)=0$ for all $2\leq i\leq k$.
  Moreover, since $h^k$ is a left $\M$-approximation of $Y^k$, the
  induced morphism $\A(M^k,M)\to\A(C^k,M)$ is an epimorphism.  Hence,
  applying the contravariant functor $\A(-,M)$ to the exact sequence
  $0\to C^k\to M^k\to C^{k+1}\to 0$ yields an exact sequence
  \[
  \begin{tikzcd}
    \A(M^k,M)\rar&\A(C^k,M)\rar&\Ext_\A^1(C^{k+1},M)\rar&\Ext_\A^1(M^k,M)=0
  \end{tikzcd}
  \]
  where the left morphism is an epimorphism.  Thus
  $\Ext_\A^1(C^{k+1},M)=0$.  This finishes the induction step.  We
  have shown that for all $M\in\M$ we have $\Ext_\A^i(C^n,M)=0$ for
  all $i\in\set{1,\dots,n-1}$.  Since $\M$ is an $n$-cluster-tilting
  subcategory of $\M$, this implies $C^n=M^n\in\M$ as required.
\end{proof}

\begin{proposition}
  \th\label{lemma-exts} Let $\A$ be an abelian category, $B\in\A$, and
  $\M$ a subcategory of $\A$ such that $\Ext_\A^k(\M,B)=0$ for all
  $k\in\set{0,1,\dots,n-1}$. Consider an exact sequence
  \[
  \begin{tikzcd}
    M_n\rar&M_{n-1}\rar&\cdots\rar&M_1\rar&M_0\rar&A\rar&0
  \end{tikzcd}
  \]
  in $\A$ such that $M_k\in\M$ for all $k\in\set{0,1,\dots,n-1}$. Then, for each
  $k\in\set{1,\dots,n-1}$ there is an isomorphism between
  $\Ext_\A^k(A,B)$ and the cohomology of the induced complex
  \begin{equation}
    \label{eq:lemma-exts}
    \begin{tikzcd}[column sep=small]
      \A(M_0,B)\rar&\A(M_1,B)\rar&\cdots\rar&\A(M_{n-1},B)\rar&\A(M_n,B)
    \end{tikzcd}
  \end{equation}
  at $\A(M_k,B)$.
\end{proposition}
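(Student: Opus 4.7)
The plan is to proceed by dimension shifting. First, I would introduce the syzygies $K_0 := A$ and, for $i \in \set{1,\ldots,n}$, $K_i := \im(M_i \to M_{i-1})$ (with the convention $M_{-1} := A$). Exactness of the given sequence at $M_{i-1}$ identifies $K_i$ with $\ker(M_{i-1} \to M_{i-2})$, producing short exact sequences
\[
0 \to K_{i+1} \to M_i \to K_i \to 0
\]
for each $i \in \set{0, 1, \ldots, n-1}$, together with a right-exact sequence $M_n \to M_{n-1} \to K_{n-1} \to 0$ coming from exactness at $M_{n-1}$.

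Next, I would apply $\A(-, B)$ and invoke the long exact sequence of Ext attached to each such short exact sequence. Since $M_i \in \M$ for $i \leq n-1$, the hypothesis gives $\Ext_\A^j(M_i, B) = 0$ for every $j \in \set{1, \ldots, n-1}$, so the connecting homomorphisms yield isomorphisms $\Ext_\A^{j+1}(K_i, B) \cong \Ext_\A^j(K_{i+1}, B)$ whenever $\set{j, j+1} \subseteq \set{1, \ldots, n-1}$. Iterating from $K_0 = A$, one obtains
\[
\Ext_\A^k(A, B) \cong \Ext_\A^1(K_{k-1}, B)
\]
for every $k \in \set{1, \ldots, n-1}$; the edge case $k = 1$ requires no shift.

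To conclude, I would exploit the long exact sequence attached to $0 \to K_k \to M_{k-1} \to K_{k-1} \to 0$, together with $\Ext_\A^1(M_{k-1}, B) = 0$, to identify
\[
\Ext_\A^1(K_{k-1}, B) \cong \coker\bigl(\A(M_{k-1}, B) \to \A(K_k, B)\bigr),
\]
while applying $\A(-, B)$ to the right-exact sequence $M_{k+1} \to M_k \to K_k \to 0$ gives the identification $\A(K_k, B) = \ker(\A(M_k, B) \to \A(M_{k+1}, B))$. Since the differential $d^k \colon M_k \to M_{k-1}$ factors as $M_k \twoheadrightarrow K_k \hookrightarrow M_{k-1}$, the differential of \eqref{eq:lemma-exts} at $\A(M_k, B)$ agrees with the composite $\A(M_{k-1}, B) \to \A(K_k, B) \hookrightarrow \A(M_k, B)$. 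Combining these identifications shows that $\Ext_\A^k(A, B)$ is precisely the cohomology of \eqref{eq:lemma-exts} at $\A(M_k, B)$.

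The only real subtlety is index bookkeeping: one must verify that every invoked vanishing $\Ext_\A^j(M_i, B) = 0$ during the dimension shifting lies within the permitted range $j \in \set{1, \ldots, n-1}$, and that the boundary case $k = 1$ (which requires only the final cokernel computation, with no preceding shift) is treated consistently with the general formula. Otherwise the argument is a routine application of the long exact sequence of Ext.
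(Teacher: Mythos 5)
Your argument is correct and follows essentially the same route as the paper: the paper sets $A_k:=\coker(M_{k+1}\to M_k)$, which by exactness coincides with your $K_k=\im(M_k\to M_{k-1})$, performs the same dimension shift $\Ext_\A^k(A,B)\cong\Ext_\A^1(A_{k-1},B)$ using the vanishing hypotheses, and then glues the same two exact sequences to identify $\Ext_\A^1(A_{k-1},B)$ with the cohomology at $\A(M_k,B)$. The index bookkeeping you flag does check out in the stated ranges.
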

\begin{proof}
  Let $A_k:=\coker(M_{k+1}\to M_k)$. Note that $A_0=A$. Firstly, let
  us show that for each $k\in\set{1,\dots,n-1}$ (the case $k=0$ being
  clear) there exist isomorphisms
  \[
  \Ext_\A^k(A_0,B)\cong\Ext_\A^{k-1}(A_1,B)\cong\cdots\cong\Ext_\A^1(A_{k-1},B).
  \]
  The case $k=1$ is obvious. If $2\leq k\leq n-1$, then for
  each $2\leq\ell\leq k$ applying the functor $\A(-,B)$ to the exact
  sequence $0\to A_{k-\ell+1}\to M_{k-\ell}\to A_{k-\ell}\to0$ yields
  an exact sequence
  \[
  \begin{tikzcd}[column sep=tiny]
    0={}_\A^{\ell-1}(M_{k-\ell},B)\rar&{}_\A^{\ell-1}(A_{k-\ell+1},B)\rar&{}_\A^{\ell}(A_{k-\ell},B)\rar&{}_\A^{\ell}(M_{k-\ell},B)=0
  \end{tikzcd}
  \]
  where we omitted $\Ext_\A$ because of lack of space.  The claim
  follows.

  Secondly, let us show that $\Ext_\A^1(A_{k-1},B)$ is isomorphic to the
  cohomology of the complex \eqref{eq:lemma-exts} at $\A(M_k,B)$. This
  follows by definition from the commutative diagram
  \[
  \begin{tikzcd}[column sep=tiny, row sep=tiny]
    \A(M_{k-1},B)\ar{rr}\drar&&\A(M_k,B)\ar{r}&\A(M_{k+1},B)\\
    &\A(A_k,B)\drar\urar\\
    0\urar&&\Ext_\A^1(A_{k-1},B)\drar\\
    &&&\Ext_\A^1(M_{k-1},B)=0
  \end{tikzcd}
  \]
  which is the glueing of two exact sequences.
  This concludes the proof.
\end{proof}

Now we give the proof of \th\ref{recognition-thm-abelian}.

\begin{proof}[Proof of \th\ref{recognition-thm-abelian}]
  We shall show that $\M$ satisfies the axioms of
  \th\ref{def:n-abelian-category}.
  
  \ref{ax-ab:split-idempotents} Since the abelian category $\A$ is
  idempotent complete, it follows immediately from the definition of
  $n$-cluster-tilting subcategory that $\M$ also is idempotent
  complete.
    
  \ref{ax-ab:n-co-kernel-exists} Let $f\colon A\to B$ be a morphism in
  $\M$.  Let $B\to C$ be a cokernel of $f$, applying
  \th\ref{n-ct_resolution} to $C$ gives the desired $n$-cokernel of
  $f$. By duality, $f$ has an $n$-cokernel.

  \ref{ax-ab:monos-are-admissible} Let $f^0\colon X^0\to X^1$ be a
  monomorphism in $\A$ such that $X^0,X^1\in\M$ and let
  $\tupleP{f^k\colon X^k\to X^{k+1}}{1\leq k\leq n}$ be an
  $n$-cokernel of $f^0$ in $\M$ obtained as in the previous paragraph
  (we remind the reader of \th\ref{weaker_A2}). Applying the dual of
  \th\ref{lemma-exts} to the exact sequence
  \[
  \begin{tikzcd}
    0\rar&X^0\rar{f^0}&X^1\rar{f^1}&\cdots\rar{f^{n-1}}&X^n\rar{f^n}&X^{n+1},
  \end{tikzcd}
  \]
  we obtain that for all $Y\in\M$ and for all $k\in\set{1,\dots,n-1}$
  the cohomology of the induced complex
  \[
  \begin{tikzcd}
    \A(Y,X^1)\rar&\cdots\rar&\A(Y,X^n)\rar&\A(Y,X^{n+1})
  \end{tikzcd}
  \]
  at $\A(Y,X^{k+1})$ is isomorphic to $\Ext_\A^k(Y,X^0)$ which
  vanishes since $\M$ is an $n$-cluster-tilting subcategory of $\A$.
  This shows that $(f^0,\dots,f^{n-1})$ is an $n$-kernel of $f^n$ in
  $\M$.  That $\M$ also satisfies axiom
  \ref{ax-ab:epis-are-admissible} now follows by duality.  This
  concludes the proof of the theorem.
\end{proof}

\begin{definition}
  Let $\M$ be an $n$-abelian category. We say that \emph{$\M$ is
    projectively generated} if for every object $M\in\M$ there exists
  a projective object $P\in\M$ and an epimorphism $P\to M$.  The
  notion of \emph{injectively cogenerated $n$-abelian category} is
  defined dually.
\end{definition}

Our next aim is to show that a partial converse of
\th\ref{recognition-thm-abelian} holds for projectively generated
$n$-abelian categories. For this, we remind the reader of the notion
of coherent modules on an additive category.

Let $\C$ be a \emph{small} additive category. A \emph{$\C$-module} is
a contravariant functor $F\colon \C\to \Mod\ZZ$. The category $\Mod\C$
of $\C$-modules is an abelian category. Morphisms in $\Mod\C$ are
natural transformations of contravariant functors.  If $M,N\in\Mod\C$,
then we denote the set of natural transformations $M\to N$ by
$\Hom_\C(M,N)$. As a consequence of Yoneda's lemma, representable
functors are projective objects in $\Mod\C$. The \emph{category of
  coherent $\C$-modules}, denoted by $\mod\C$, is the full subcategory
of $\Mod\C$ whose objects are the $\C$-modules $F$ such that there
exists a morphism $f\colon X \to Y$ in $\C$ and an exact sequence of
functors
\[
\begin{tikzcd}
  \C(-,X)\rar{?\cdot f}&\C(-,Y)\rar&F\rar&0.
\end{tikzcd}
\]
Note that $\mod\C$ is closed under cokernels and extensions in
$\Mod\C$. Moreover, $\mod\C$ is closed under kernels in $\Mod\C$ if
and only if $\C$ has weak kernels, in which case $\mod\C$ is an
abelian category, see \cite[page 41]{auslander_representation_1971}.
For further information on coherent $\C$-modules we refer the reader
to \cite{auslander_coherent_1966}.

Our aim is to prove the following theorem.

\begin{theorem}
  \th\label{simple-embedding-thm} Let $\M$ be a small projectively generated $n$-abelian
  category. Let $\P$ be the category of projective objects in $\M$
  and $F\colon \M\to\mod\P$ the functor defined by
  $FX:=\M(-,X)|_{\P}$.  Also, let
  \[
    F\M:=\setP{M\in\mod\P}{\exists X\in\M\text{ such that } M\cong FX}
  \]
  be the essential image of $F$. If there exist an exact duality
  $D\colon\mod\P\to\mod\P^\op$, then $F$ is a fully faithful functor
  such that $F\M$ is an $n$-cluster-tilting subcategory of $\mod\P$.
\end{theorem}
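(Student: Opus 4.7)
The plan is to verify the defining properties of an $n$-cluster-tilting subcategory for $F\M$: it should be functorially finite, generating-cogenerating, and characterized by the bi-sided $\Ext$ vanishing in degrees $1,\dots,n-1$ (see \th\ref{def:n-cluster-tilting-abelian}).

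I would begin with the basic properties of the restricted Yoneda functor $F$. For $M\in\M$, projective generation yields an epimorphism $P^0 \to M$; combining this with an epimorphism $P^1 \to K$ from a weak kernel $K$ of that map (obtained from an $n$-kernel via axiom \ref{ax-ab:n-co-kernel-exists}) produces a presentation $\P(-,P^1) \to \P(-,P^0) \to FM \to 0$ in $\mod\P$, so $F\M \subseteq \mod\P$. Full faithfulness of $F$ follows by combining the Yoneda lemma (which handles $F$ on $\P$) with \th\ref{projectives-are-strongly-projective}: strong projectivity makes $F$ exact on weak-cokernel sequences in $\M$, so composing with a presentation by projectives identifies $\Hom_\P(FM,FN)$ with $\M(M,N)$. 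Closure of $F\M$ under direct summands follows from idempotent completeness of $\M$ (axiom \ref{ax-ab:split-idempotents}).

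Since every coherent $\P$-module admits a presentation by representables $\P(-,P) = FP$ with $P \in \P \subseteq \M$, $F\M$ is a generating subcategory of $\mod\P$, and each such presentation also furnishes a left $F\M$-approximation. For cogeneration, the injectively cogenerated hypothesis lets us embed any $Y \in \mod\P$ into an injective, so it suffices to show every injective of $\mod\P$ lies in $F\M$; this will be obtained as a consequence of the bi-$\Ext$ characterization proven below. For the Ext-vanishing in one direction, I would iterate projective generation and the existence of $n$-kernels to construct, for every $M \in \M$, a long complex $\cdots \to P^2 \to P^1 \to P^0 \to M \to 0$ of projectives whose consecutive $n$-step segments are $n$-exact sequences. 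Applying $F$ and invoking \th\ref{projectives-are-strongly-projective} at each step turns this into a genuine projective resolution of $FM$ by the projectives $FP^i$ in $\mod\P$. Dimension shifting then gives $\Ext_\P^i(FM,FN)=0$ for $1\leq i\leq n-1$, and dually via injective cogeneration.

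The main obstacle is the reverse direction of the Ext-characterization: given $X \in \mod\P$ with $\Ext^i(F\M,X)=0$ (or $\Ext^i(X,F\M)=0$) for $1\leq i\leq n-1$, one must realize $X$ as some $FM$. The strategy is to take a projective resolution of $X$ of length $n$ by representables, $FP^n \to \cdots \to FP^0 \to X \to 0$, lift the complex $P^n \to \cdots \to P^0$ to $\M$ by full faithfulness, extend it by an $n$-cokernel of $P^n \to P^{n-1}$ using axiom \ref{ax-ab:n-co-kernel-exists} to produce a candidate object $M \in \M$, and then show $FM \cong X$ by comparing the long exact sequence produced by \th\ref{projectives-are-strongly-projective} with the resolution of $X$, where the $\Ext$-vanishing hypothesis forces the comparison to be an isomorphism. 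The opposite-side vanishing is handled analogously after replacing projective resolutions with injective coresolutions in $\mod\P$, which is exactly where the injectively cogenerated hypothesis intervenes essentially. Once this identification is established, $F\M$ is automatically functorially finite, completing the verification.
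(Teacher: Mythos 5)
Your outline matches the paper's (full faithfulness of $F$, the two-sided $\Ext$-vanishing and its converse, functorial finiteness, generation/cogeneration), but the central step --- $\Ext_\P^i(FM,FN)=0$ for $1\le i\le n-1$ --- has a genuine gap. You propose a complex $\cdots\to P^2\to P^1\to P^0\to M\to 0$ of projectives ``whose consecutive $n$-step segments are $n$-exact sequences'' and then dimension shifting along the resulting projective resolution of $FM$. Such a complex cannot exist for $M\neq 0$: if $P^{n+1}\to\cdots\to P^0$ is an $n$-exact sequence then $P^1\to P^0$ is an epimorphism, while if $P^n\to\cdots\to P^0\to M$ is an $n$-exact sequence then $P^0\to M$ is a cokernel of $P^1\to P^0$, forcing $M=0$; and even a single $n$-exact sequence $P^n\to\cdots\to P^0\to M$ with all terms projective fails to exist in general (e.g.\ for $n$-cluster-tilting subcategories of $\mod\Lambda$ with $\Lambda$ selfinjective, where non-projective objects have infinite projective dimension). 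A projective resolution $\cdots\to FP^1\to FP^0\to FM\to 0$ by representables does exist (cover weak kernels by projectives), but computing $\Ext_\P^i(FM,FN)$ from it requires exactness of $\M(P^0,N)\to\cdots\to\M(P^n,N)$ at the middle terms, i.e.\ \emph{contravariant} exactness, whereas \th\ref{projectives-are-strongly-projective} only yields the covariant exactness $\M(Q,P^{k+1})\to\M(Q,P^k)\to\M(Q,P^{k-1})$. The paper sidesteps this by resolving $FM$ not by projectives but by the terms $FK_i$ of an $n$-exact sequence $K_n\to\cdots\to K_0\epi M$ in which only $K_0$ is projective: the $n$-exact sequence axiom supplies precisely the needed acyclicity of $\Hom_\P(F(-),FN)$, and the dimension shift is then run using only the projectivity of $FK_0$ at the last stage.

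Two further problems. For the converse inclusions you invoke ``a projective resolution of $X$ of length $n$ by representables'', which need not exist; the paper instead takes a projective presentation $FP_1\to FP_0\to X\to 0$, lifts $P_1\to P_0$ to $\M$, and extends it by an $n$-kernel (for the condition $\Ext^i(X,F\M)=0$, concluding via \th\ref{weak-cokernel-can-be-extended-to-n-cokernel} and \th\ref{projectives-are-strongly-projective}) or by an $n$-cokernel (for $\Ext^i(F\M,X)=0$, concluding via a splitting-off argument that uses idempotent completeness); in particular, neither direction is proved with injective coresolutions, and your plan to use them risks circularity, since knowing that the injectives of $\mod\P$ lie in $F\M$ is itself a consequence of the $\Ext$-characterization. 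Finally, functorial finiteness is not ``automatic'': one approximation is built from a projective presentation by passing to a weak cokernel in $\M$ (the presentation alone is not an approximation), and the other is obtained only after showing that injectives of $\mod\P$ belong to $F\M$ and dualizing --- this is exactly where the injectively cogenerated hypothesis enters.
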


\begin{remark}
  In \th\ref{simple-embedding-thm}, the condition on existence of an
  exact duality $D\colon\mod\P\to\mod\P^\op$ is satisfied, for
  example, if $\P$ is a dualizing $k$-variety over a commutative
  artinian ring in the sense of \cite{auslander_stable_1974}.
\end{remark}

In fact, instead of proving \th\ref{simple-embedding-thm}, we prove
the following more general statement.

\begin{lemma}
  \th\label{embedding-thm} Let $\M$ be a small projectively generated
  $n$-abelian category.  Let $\P$ be the category of projective
  objects in $\M$ and $F\colon \M\to\mod\P$ the functor defined by
  $FX:=\M(-,X)|_{\P}$.  Also, let
  \[
  F\M:=\setP{M\in\mod\P}{\exists X\in\M\text{ such that } M\cong FX}
  \]
  be the essential image of $F$.  Then, the following statements hold:
  \begin{enumerate}
  \item\label{it:modP-abelian} The category $\mod\P$ is abelian.
  \item\label{it:F-embedding} The functor $F\colon \M\to\mod\P$ is
    fully faithful.
  \item\label{it:FM-ct-property} For all $k\in\set{1,\dots,n-1}$ we
    have $\Ext_\P^k(F\M,F\M)=0$.
  \item\label{it:FM-ct-A}
    We have
    \[
    F\M=\setP{X\in\mod\P}{\forall
      k\in\set{1,\dots,n-1}\ \Ext_\P^k(X,F\M)=0}
    \]
    where $\Ext_\P^k$ is the bifunctor of degree $k$ extensions in the
    abelian category $\mod\P$.
  \item\label{it:FM-ct-B} We have
    \[
    F\M=\setP{X\in\mod\P}{\forall k\in\set{1,\dots,n-1}\
      \Ext_\P^k(F\M,X)=0}.
    \]
  \item\label{it:contra-finite} The subcategory $F\M$ is
    contravariantly finite in $\mod\P$.
  \item\label{it:cova-finite} If there exist an exact duality
    $D\colon\mod\P\to\mod\P^\op$, then $F\M$ is covariantly finite in
    $\mod\P$. Hence, $F\M$ is a functorially finite subcategory of
    $\mod\P$ in this case.
  \item\label{it:generating-cogenerating} If there exist an exact
    duality $D\colon\mod\P\to\mod\P^\op$, then $F\M$ is a
    generating-cogenerating subcategory of $\mod\P$.
  \end{enumerate}
\end{lemma}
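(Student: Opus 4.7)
The plan is to prove the eight claims in sequence, using strong projectivity of projectives in $\M$ (\th\ref{projectives-are-strongly-projective}) and the $n$-cokernel/$n$-kernel structure of $\M$ as the main tools. For (i), I would verify that $\P$ has weak kernels: given $f\colon P\to Q$ in $\P$, take an $n$-kernel $X^0\to\cdots\to X^{n-1}\to P$ of $f$ in $\M$ via axiom \ref{ax-ab:n-co-kernel-exists}, pick an epimorphism $P'\epi X^{n-1}$ with $P'\in\P$ (using that $\M$ is projectively generated), and check via strong projectivity that $P'\to P$ is a weak kernel of $f$ in $\P$. For (ii), faithfulness of $F$ follows from projective generation: any $f$ with $Ff=0$ is annihilated by an epimorphism from a projective. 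For fullness, given $\alpha\colon FX\to FY$, pick $p\colon P_0\epi X$ with $P_0\in\P$, set $q:=\alpha_{P_0}(p)$, take a weak kernel $K\to P_0$ of $p$ and a projective cover $P_1\epi K$; naturality of $\alpha$ forces $(K\to P_0)\cdot q=0$, and since $p$ is a cokernel of $K\to P_0$ by axiom \ref{ax-ab:epis-are-admissible}, $q$ factors uniquely as $q=pf$, with $Ff=\alpha$ verified pointwise by projective lifting.

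For (iii), the technical heart, I take $X\in\M$, choose an epimorphism $P_0\epi X$ with $P_0\in\P$, and form an $n$-exact sequence $Z^0\to\cdots\to Z^{n-1}\to P_0\to X$ in $\M$ via axiom \ref{ax-ab:epis-are-admissible}. Strong projectivity ensures that $FZ^0\to\cdots\to FZ^{n-1}\to FP_0\to FX\to 0$ is exact in $\mod\P$ (checked pointwise on $Q\in\P$). Iteratively picking projective covers $Q^k\epi Z^k$ and lifting morphisms through projectivity (using (i) so that the relevant kernels in $\mod\P$ are coherent) produces a projective resolution $\P(-,Q^\bullet)\to FX\to 0$; by Yoneda and full faithfulness, $\Hom_\P(\P(-,Q^k),FY)=\M(Q^k,Y)$. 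Invoking \th\ref{lemma-exts} with the representable $\P$-modules as the subcategory identifies $\Ext_\P^k(FX,FY)$ for $1\le k\le n-1$ with the cohomology of $\M(Q^\bullet,Y)$, which vanishes because the Hom-complex from the original $n$-exact sequence in $\M$ applied to $Y$ is exact by the $n$-kernel axiom. The main obstacle is the coherent bookkeeping of the projective lifts, which can be handled using good $n$-pushout diagrams (\th\ref{existence-of-good-n-pushout-diagrams}).

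For (iv) and (v), the $\subseteq$ inclusions are immediate from (iii). For the reverse inclusions, given $N\in\mod\P$ with the vanishing Ext hypothesis, take a projective presentation $\P(-,P_1)\to\P(-,P_0)\to N\to 0$ and form the $n$-cokernel $P_0\to Y^1\to\cdots\to Y^n$ of $P_1\to P_0$ in $\M$. Strong projectivity and the weak-cokernel property show that $FP_1\to FP_0\to FY^1\to\cdots\to FY^n\to 0$ is exact in $\mod\P$, with $N$ embedding as $\ker(FY^1\to FY^2)$. Breaking into short exact sequences $0\to C_j\to FY^{j+1}\to C_{j+1}\to 0$ (with $C_0=N$ and $C_{n-1}=FY^n$) and dimension-shifting using (iii) together with the hypothesis yields $\Ext_\P^1(C_1,N)=0$ in case (v), so the sequence $0\to N\to FY^1\to C_1\to 0$ splits and $N$ is a direct summand of $FY^1\in F\M$; closure of $F\M$ under direct summands (axiom \ref{ax-ab:split-idempotents} together with full faithfulness of $F$) then gives $N\in F\M$. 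Case (iv) is analogous with the dual dimension-shift.

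Finally, for (vi)--(viii), the natural monomorphism $N\hookrightarrow FY^1$ constructed above serves as an $F\M$-approximation of $N$: any morphism $N\to FY'$ with $Y'\in\M$ pulls back through $FP_0\epi N$ to a morphism $P_0\to Y'$ killing $P_1\to P_0$, which factors through the weak cokernel $P_0\to Y^1$ by definition. This yields one direction of the functorial finiteness claim together with the cogenerating property. The complementary direction — the other approximation and the remaining generating property — is obtained dually from an injective copresentation $0\to N\to I^0\to I^1$ of $N$ in $\mod\P$, whose existence is ensured by the injective cogenerator hypothesis, via the analogous $n$-kernel construction in $\M$.
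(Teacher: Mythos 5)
Your overall architecture is right, and parts (i), (ii), (v) and the approximation arguments in (vi)--(viii) are essentially sound and close to the paper's. But the two technical cores, (iii) and (iv), each contain a genuine gap.

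For (iii): you propose to manufacture a projective resolution of $FX$ in $\mod\P$ by taking projective covers $Q^k\epi Z^k$ of the terms of an $n$-exact sequence $Z^0\to\cdots\to Z^{n-1}\to P_0\to X$ and lifting the differentials. The resulting complex $FQ^n\to\cdots\to FQ^0\to FX\to 0$ is \emph{not} exact at the intermediate spots: the kernel of $FQ^k\to FQ^{k-1}$ contains the kernel of the cover $FQ^k\epi FZ^{n-k}$, and nothing in your construction forces $FQ^{k+1}$ to surject onto that part. Good $n$-pushout diagrams (adding contractible summands) do not repair this. Moreover, even granting some projective resolution, identifying its $\Hom_\P(-,FY)$-cohomology with the cohomology of $\M(Z^\bullet,Y)$ is precisely what needs proof. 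The paper sidesteps all of this: it never builds a projective resolution. It uses the exact sequence $0\to FK_n\to\cdots\to FK_0\to FM\to 0$ whose terms lie in $F\M$ (only $FK_0$ is projective), and dimension-shifts by induction on the Ext-degree $\ell$, so that the already-established vanishing $\Ext_\P^{m}(F\M,F\M)=0$ for $m\le\ell-1$ lets the non-projective terms $FK_i$ play the role of projectives in the shift. That bootstrapping is the missing idea.

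For (iv): the hypothesis is $\Ext_\P^k(G,F\M)=0$, i.e.\ Ext \emph{out of} $G$, and your proposed ``dual dimension-shift'' to split $0\to G\to FY^1\to C_1\to 0$ would require $\Ext_\P^1(C_1,G)=0$ --- a statement about Ext \emph{into} $G$, which is the hypothesis of (v), not of (iv). The two statements are not formally dual inside $\mod\P$ (a quotient of $FP_0$ cannot be split off the way a subobject of $FY^1$ can), and the paper's proof of (iv) is genuinely different: take the $n$-kernel $(f_n,\dots,f_1)$ of $f_0$ in $\M$, use \th\ref{lemma-exts} together with part (iii) to translate $\Ext_\P^k(G,FN)=0$ into exactness of the contravariant Hom-complexes $\A(P_0,N)\to\A(X_0,N)\to\cdots\to\A(X_n,N)$ for all $N\in\M$, invoke \th\ref{weak-cokernel-can-be-extended-to-n-cokernel} to produce an honest cokernel $P_0\to M$ of $f_0$ in $\M$, and then use \th\ref{projectives-are-strongly-projective} to conclude $G\cong FM$. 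None of this is recoverable from the construction you set up, so (iv) as written would fail.
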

\begin{proof}
  \eqref{it:modP-abelian} This statements follows from the fact that
  $\M$ has weak kernels \cite{auslander_coherent_1966}.

  \eqref{it:F-embedding} Let $M\in\M$. Since $\M$ is projectively
  generated, there exists an $n$-exact sequence
  \begin{equation}
    \label{eq:enough-in-proof}
    \begin{tikzcd}
      K_{n}\rar&\cdots\rar&K_1\rar&K_0\rar{u}&M
    \end{tikzcd}
  \end{equation}
  where $K^0$ is a projective object.  For the same reason, there
  exist a projective object $P^1$ and an epimorphism $v\colon P^1\to
  K_1$. Let $f=vu$ and put $P^0:=K_0$. It follows that the sequence
  \[
  \begin{tikzcd}
    FP^1\rar{Ff}&FP^0\rar&FM\rar&0
  \end{tikzcd}
  \]
  is exact in $\Mod\P$. This shows that $FM\in\mod\P$. That $F$ is
  fully faithful follows from Yoneda's lemma and the existence of a
  sequence of the form $FP^1\xto{Ff} FP^0\to FM\to 0$ with $P^0,P^1\in\M$ for each $M\in\M$.

  \eqref{it:FM-ct-property} Let us show that for every
  $M,N\in\M$ we have $\Ext_\P^k(FM,FN)=0$ for all
  $k\in\set{1,\dots,n-1}$. Consider an $n$-exact sequence of the form
  \eqref{eq:enough-in-proof}. Applying $F$ to
  \eqref{eq:enough-in-proof} yields an exact sequence
  \begin{equation}
    \label{eq:last}
    \begin{tikzcd}[column sep=small]
      0\rar&FK_n\rar&FK_{n-1}\rar&\cdots\rar&FK_1\rar&FK_0\rar&FM\rar&0.
    \end{tikzcd}
  \end{equation}
  Since $F$ is fully faithful there is an isomorphism of complexes
  \[
  \begin{tikzcd}
    \Hom_\P(FK_0,FN)\rar\dar&\cdots\rar&\Hom_\P(FK_{n-1},FN)\rar\dar&\Hom_\P(FK_{n},FN)\dar\\
    \A(K_0,N)\rar&\cdots\rar&\A(K_{n-1},N)\rar&\A(K_n,N)
  \end{tikzcd}
  \]
  Note that the bottom row is exact by the property of $n$-exact
  sequences, hence the top row is also exact. Put
  $C_\ell:=\coker(FK_{\ell+1}\to FK_{\ell})$. Note that
  $C_0:=FM$. There is an exact sequence
  \[
  \begin{tikzcd}[column sep=tiny]
    \Hom_\P(FK_0,FN)\rar&\Hom_\P(C_1,FN)\rar&\Ext_\P^1(FM,FN)\rar&\Ext_\P^1(FK_0,FN)=0
  \end{tikzcd}
  \]
  (recall that $FK_0$ is projective in $\mod\P$).  Since the top row
  of the above diagram is exact, this implies that
  $\Ext_\P^1(FM,FN)=0$. Hence we have
  $\Ext_\P^1(F\M,F\M)=0$.  We show that
  we have a sequence of isomorphisms
  \[
  \Ext_\P^{\ell}(C_0,FN)\cong\Ext_\P^{\ell-1}(C_1,FN)\cong\cdots\cong\Ext_\P^{1}(C_{\ell-1},FN)=0
  \]
  for all $\ell\in\set{1,\dots,n-1}$. If $\ell=1$, then the claim is
  trivial. Inductively, suppose that we have shown that
  $\Ext_\P^{m}(F\M,F\M)=0$ for all $1\leq m\leq \ell-1$. Firstly, note
  that
  applying the functor $\Hom_\P(-,FN)$ to the exact sequence $0\to
  C_{\ell}\to FK_{\ell-1}\to C_{\ell-1}\to0$ gives an exact sequence
  \[
  \begin{tikzcd}[column sep=tiny]
    \Hom_\P(FK_{\ell-1},FN)\rar&\Hom_\P(C_{\ell},FN)\rar&\Ext_\P^1(C_{\ell-1},FN)\rar&\Ext_\P^1(FK_{\ell-1},FN)=0,
  \end{tikzcd}
  \]
  Since the top row of the above diagram is exact, this implies that
  $\Ext_\P^1(C_{\ell-1},FN)=0$. Secondly, by the induction
  hypothesis, for each $1\leq m\leq \ell-1$ applying the functor
  $\Hom_\P(-,FN)$ to the exact sequence $0\to C_{\ell-m}\to
  FK_{\ell-m-1}\to C_{\ell-m-1}\to0$ yields an exact sequence
  \[
  {\small
    \begin{tikzcd}[column sep=tiny]
      0={}_\P^{m}(FK_{\ell-m-1},FN)\rar&{}_\P^{m}(C_{\ell-m},FN)\rar&{}_\P^{m+1}(C_{\ell-m-1},FN)\rar&{}_\P^{m+1}(FK_{\ell-m-1},FN)=0
    \end{tikzcd}
  }
  \]
  where we omitted $\Ext$ because of lack of space (for $m=\ell-1$,
  recall that $FK_0$ is projective in $\mod\P$). The claim follows
  since $C_0=FM$.

  \eqref{it:FM-ct-A} Let $G\in\mod\P$ be such that for all $N\in\M$ and for all
  $k\in\set{1,\dots,n-1}$ we have $\Ext_\P^k(G,FN)=0$.  We need to
  show that there exists $M\in\M$ such that $G$ and $FM$ are
  isomorphic. For this, let
  \[
  \begin{tikzcd}
    FP_1\rar{Ff_0}&FP_0\rar&G\rar&0
  \end{tikzcd}
  \]
  be a projective presentation of $G$ in $\mod\P$ and let 
  \[
  \begin{tikzcd}
    X_n\rar{f_n}&\cdots\rar&X_1\rar{f_1}&X_0
  \end{tikzcd}
  \]
  be an $n$-kernel of $f_0$ (by convention, $X_0:=P_1$). Let $N\in\M$. Applying the
  functor $\Hom_\P(F(-),FN)$ to the sequence
  $\tuple{f_n,\dots,f_1,f_0}$ together with the fact that $F$ is fully
  faithful yields a commutative diagram
  \[
  \begin{tikzcd}
    \Hom_\P(FP_0,FN)\rar\dar&\Hom_\P(FX_0,FN)\rar\dar&\cdots\rar&\Hom_\P(FX_n,FN)\dar\\
    \A(P_0,N)\rar&\A(X_0,N)\rar&\cdots\rar&\A(X_n,N)
  \end{tikzcd}
  \]
  where the vertical arrows are isomorphisms. By what we showed in the
  previous paragraph and \th\ref{lemma-exts}, for all
  $k\in\set{1,\dots,n-1}$ the homology of the top row at
  $\Hom_\P(FX_{k-1},FN)$ is isomorphic to $\Ext_\P^k(G,FN)$ which
  vanishes by hypothesis. It follows that the bottom row is an exact
  sequence. By applying
  \th\ref{weak-cokernel-can-be-extended-to-n-cokernel} to $f_{n-1}$
  and the sequence $\tuple{f_{n-2},\dots,f_1,f_0}$ we deduce that
  $f_0$ admits a cokernel $P_0\to M$ in $\M$. Finally,
  \th\ref{projectives-are-strongly-projective} implies that the
  sequence
  \[
  \begin{tikzcd}
    FP_1\rar{Ff_0}&FP_0\rar&FM\rar&0
  \end{tikzcd}
  \]
  is exact in $\mod\P$.  Therefore $G$ is isomorphic to $FM$ which is
  what we needed to show.

  \eqref{it:FM-ct-B} Let us show that if $G\in\mod\P$ is such that for all
  $M\in\M$ and for all $k\in\set{1,\dots,n-1}$ we have
  $\Ext_\P^k(FM,G)=0$, then $G\in F\M$. Indeed, let
  \[
  \begin{tikzcd}
    FP_1\rar{Ff^0}&FP_0\rar&G\rar&0
  \end{tikzcd}
  \]
  be a projective presentation of $G$ in $\mod\P$.  By axiom
  \ref{ax-ab:n-co-kernel-exists}, there exists an $n$-cokernel
  $\tupleP{f^k\colon M^{k-1}\to M^k}{1\leq k\leq n}$ of $f^0$ in $\M$
  (by convention, $M^0:=P^0$). Then,
  \th\ref{projectives-are-strongly-projective} implies that the
  sequence
  \[
  \begin{tikzcd}
    FP^1\rar{Ff^0}&FP^0\rar{Ff^1}&FM^1\rar{Ff^2}&\cdots\rar{Ff^n}&FM^n\rar&0
  \end{tikzcd}
  \]
  is exact in $\mod\P$. It follows that there is an exact sequence
  \[
  \begin{tikzcd}
    0\rar&G\rar&FM^1\rar{Ff^2}&\cdots\rar{Ff^n}&FM^n\rar&0.
  \end{tikzcd}
  \]
  For each $k\in\set{1,\dots,n-1}$ let $G^k:=\ker Ff^{k+1}$; we claim
  that $G^k\in F\M$. Note that $\Ext_\P^1(FM^k,G^k)=0$ by
  hypothesis. In particular, $G^{n-1}$ is a direct summand of
  $FM^{n-1}$. Since $\M$ is idempotent complete, there exists an
  object $L\in\M$ such that $G^{n-1}\cong FL\in F\M$. Inductively, we
  deduce that $G^1=G\in F\M$.

  \eqref{it:contra-finite} Let $G\in\mod\P$ and take a projective
  presentation
  \[
  \begin{tikzcd}
    FP^1\rar{Ff}&FP^0\rar{p}&G\rar&0
  \end{tikzcd}
  \]
  of $G$ in $\mod\P$. Let $g\colon P^0\to M$ be a weak cokernel of $f$
  in $\M$. We obtain the solid part of the following commutative
  diagram:
  \[
  \begin{tikzcd}
    FP^1\rar{Ff}&FP^0\rar{Fg}\dar{p}&FM\\
    &G\urar\rar[dotted]{h}&FN
  \end{tikzcd}
  \]
  Let $h\colon G\to FN$ be a morphism in $\mod\P$. Since $g$ is a weak
  cokernel there exists a morphism $Fq\colon FM\to FN$ such that
  the diagram
  \[
  \begin{tikzcd}
    FP^0\rar{Fg}\dar{p}&FM\dar{Fq}\\
    G\rar{h}&FN
  \end{tikzcd}
  \]
  is commutative. Finally, given that $p$ is an epimorphism, we
  conclude that the diagram
  \[
  \begin{tikzcd}
    G\rar\drar[swap]{h}&FM\dar{Fq}\\
    &FN
  \end{tikzcd}
  \]
  commutes. This shows that $G\to FM$ is a right $F\M$-approximation
  of $G$.

  \eqref{it:cova-finite} and \eqref{it:generating-cogenerating} It is
  clear that the assumptions imply that $\mod\P$ is injectively
  cogenerated. From part \eqref{it:FM-ct-property} we deduce that for
  every injective object $I\in\mod\P$ we have $I\in F\M$. Hence
  \eqref{it:cova-finite} follows by duality from part
  \eqref{it:contra-finite}. Also, \eqref{it:generating-cogenerating}
  follows since it is now clear that $F\M$ is a cogenerating
  subcategory of $\mod\P$.
\end{proof}

\section{$n$-exact categories}
\label{sec:n-exact-categories}

In this section we introduce $n$-exact categories and establish their
basic properties. We show that $n$-cluster-tilting subcategories of
exact categories have a natural $n$-exact structure.

\subsection{Definition and basic properties}

The treatment of this section is parallel to B\"uhler's exposition of
the basics of the theory of exact categories given in
\cite[Sec. 2]{buhler_exact_2010}.

Let $\C$ be an additive category. If $\X$ is a class of $n$-exact
sequences in $\C$, then we call its members \emph{$\X$-admissible
  $n$-exact sequences}.  Furthermore, if
\[
\begin{tikzcd}
  X^0\rar[tail]{d^0}&X^1\rar{d^1}&\cdots\rar{d^{n-1}}&X^n\rar[two
  heads]{d^n}&X^{n+1}
\end{tikzcd}
\]
is an $\X$-admissible $n$-exact sequence, we say that $d^0$ is an
\emph{$\X$-admissible monomorphism} and that $d^n$ is an
\emph{$\X$-admissible epimorphism}.  In analogy with
\cite{buhler_exact_2010}, we depict $\X$-admissible monomorphisms by
$\mono$ and $\X$-admissible epimorphisms by $\epi$.  A sequence
$\mono\to\cdots\to\epi$ of $n+1$ morphisms always denotes an
$\X$-admissible $n$-exact sequence.  When the class $\X$ is clear from
the context, we write ``admissible'' instead of ``$\X$-admissible''.

\begin{definition}
  We say that a morphism $f\colon X\to Y$ of $n$-exact sequences in
  $\CC^{n}(\C)$ is a \emph{weak isomorphism} if $f^k$ and $f^{k+1}$
  are isomorphisms for some $k\in\set{0,1,\dots,n+1}$ with $n+2:=0$
  (this terminology is borrowed from the theory of $(n+2)$-angulated
  categories, see Section \ref{sec:n-angulated-categories}).  Note
  that weak isomorphisms induce isomorphisms in $\KK(\C)$ by
  \th\ref{weak-isos-induce-homotopy-equivalences}.
\end{definition}

\begin{definition}
  \th\label{def:n-exact-category} Let $n$ be a positive integer and
  $\M$ an additive category.  An \emph{$n$-exact structure on $\M$} is
  a class $\X$ of $n$-exact sequences in $\M$, closed under weak
  isomorphisms of $n$-exact sequences, and which satisfies the
  following axioms:
  \begin{axioms}
  \item[E0]{The sequence $0\mono0\to\cdots\to0\epi0$ is an
      $\X$-admissible $n$-exact sequence.\label{ax-ex:zero-seq}}
  \item[E1]{The class of $\X$-admissible monomorphisms is closed under
      composition.\label{ax-ex:admissible-monos-closed-under-composition}}
  \item[E1$^\op$]{The class of $\X$-admissible epimorphisms is closed
      under
      composition.\label{ax-ex:admissible-epis-closed-under-composition}}
  \item[E2]{For each $\X$-admissible $n$-exact sequence $X$ and each
      morphism $f\colon X^0\to Y^0$, there exists an $n$-pushout
      diagram of $(d_X^0,\dots,d_X^{n-1})$ along $f$ such that $d_Y^0$
      is an $\X$-admissible monomorphism. The situation is illustrated
      in the following commutative
      diagram:\label{ax-ex:n-pushout-exists}}
    \[
    \begin{tikzcd}
      X^0\rar[tail]{d_X^0}\dar{f}&X^1\rar{d_X^1}\dar[dotted]&\cdots\rar{d_X^{n-1}}&X^n(\dar[dotted]\rar[two heads]{d_X^{n+1}}&X^{n+1})\\
      Y^0\rar[tail,dotted]{d_Y^0}&Y^1\rar[dotted]{d_Y^1}&\cdots\rar[dotted]{d_Y^{n-1}}&Y^n
    \end{tikzcd}
    \]
  \item[E2$^\op$]{For each $\X$-admissible $n$-exact sequence $X$ and
      each morphism $g\colon Y^{n+1}\to X^{n+1}$, there exists an
      $n$-pullback diagram of $(d_X^1,\dots,d_X^n)$ along $g$ such
      that $d_Y^n$ is an $\X$-admissible epimorphism. The situation is
      illustrated in the following commutative
      diagram:\label{ax-ex:n-pullback-exists}}
    \[
    \begin{tikzcd}
      {}&Y^1\rar[dotted]{d_Y^1}\dar[dotted]&\cdots\rar[dotted]{d_Y^{n-1}}&Y^n\rar[two heads,dotted]{d_Y^n}\dar[dotted]&Y^{n+1}\dar{g}\\
      (X^0\rar[tail]{d_X^0}&)X^1\rar{d_X^1}&\cdots\rar{d_X^{n-1}}&X^n\rar[two
      heads]{d_X^n}&X^{n+1}
    \end{tikzcd}
    \]
  \end{axioms}
  An \emph{$n$-exact category} is a pair $(\M,\X)$ where $\M$ is an
  additive category and $\X$ is an $n$-exact structure on $\M$. If the
  class $\X$ is clear from the context, we identify $\M$ with the pair
  $(\M,\X)$.
\end{definition}

\begin{remark}
  Our choice of axioms for $n$-exact categories is inspired by
  Keller's minimal list of axioms for exact categories
  \cite[App. A]{keller_chain_1990}, although we opt for a more
  convenient self-dual collection.  In particular, we point out to the
  reader who is more familiar with Quillen's axioms that the so-called
  ``obscure axiom'', axiom c) of \cite[Sec. 2]{quillen_higher_1973},
  is redundant, see \cite[p. 4, Prop. 2.16]{buhler_exact_2010}.
\end{remark}

The following result shows that $n$-abelian categories have a
canonical $n$-exact structure.  Therefore the class of $n$-exact
categories contains the class of $n$-abelian categories.

\begin{theorem}
  \th\label{n-abelian-cats-are-n-exact} Let $\M$ be an $n$-abelian
  category and $\X$ the class of all $n$-exact sequences in $\M$.
  Then, $(\M,\X)$ is an $n$-exact category.
\end{theorem}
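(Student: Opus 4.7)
The plan is to verify the closure condition and the five axioms of \th\ref{def:n-exact-category} for the class $\X$ of all $n$-exact sequences in $\M$. The key simplifying observation, which follows by combining axioms \ref{ax-ab:n-co-kernel-exists} and \ref{ax-ab:monos-are-admissible} (see Remark \ref{n-abelian-monos-are-admissible}), is that the $\X$-admissible monomorphisms coincide with the monomorphisms of $\M$, and dually the $\X$-admissible epimorphisms coincide with the epimorphisms of $\M$. Granting this identification, every axiom either becomes a triviality or reduces directly to a previously established result.

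Closure of $\X$ under weak isomorphisms is immediate: a weak isomorphism of $n$-exact sequences induces an isomorphism in $\KK(\M)$ by \th\ref{weak-isos-induce-homotopy-equivalences}, and the class of $n$-exact sequences is stable under such isomorphisms by \th\ref{n-exact-sequences-are-closed-under-homotopy-equivalence}. Axiom \ref{ax-ex:zero-seq} holds because the zero complex is contractible with $n+2$ terms and hence an $n$-exact sequence by \th\ref{n-exact-sequences-are-closed-under-homotopy-equivalence}. For axioms \ref{ax-ex:admissible-monos-closed-under-composition} and \ref{ax-ex:admissible-epis-closed-under-composition}, the composition of two (co)monomorphisms in any additive category is again a (co)monomorphism, and is therefore admissible by the characterization above.

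For axiom \ref{ax-ex:n-pushout-exists}, given an admissible $n$-exact sequence $X$ and a morphism $f\colon X^0\to Y^0$, part \eqref{it:existences} of \th\ref{n-abelian-cats-have-n-pushout-diagrams} produces the required $n$-pushout diagram of the truncation $(d_X^0,\dots,d_X^{n-1})$ along $f$. Since $d_X^0$ is a monomorphism as the leftmost morphism of an $n$-exact sequence, part \eqref{it:is-mono} of the same theorem ensures that $d_Y^0$ is also a monomorphism, and hence $\X$-admissible. Axiom \ref{ax-ex:n-pullback-exists} follows by the dual argument applied to the dual of \th\ref{n-abelian-cats-have-n-pushout-diagrams}.

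The substantive work has already been absorbed into \th\ref{n-abelian-cats-have-n-pushout-diagrams}, whose proof relies crucially on axiom \ref{ax-ab:split-idempotents} via \th\ref{split-idempotents-weak-cokernel-to-cokernel} to promote the final weak cokernel in the constructed mapping cone to a genuine cokernel. The only point deserving care in the present argument is to ensure that admissible monomorphisms really are stable under composition; but this is automatic once one recognizes that ``admissible monomorphism'' collapses to the plain categorical notion in the $n$-abelian setting. The remainder of the verification is essentially formal bookkeeping.
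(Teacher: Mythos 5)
Your proposal is correct and follows essentially the same route as the paper: identify admissible monomorphisms (resp.\ epimorphisms) with plain monomorphisms (resp.\ epimorphisms) via \th\ref{n-abelian-monos-are-admissible}, dispose of the closure condition and axioms \ref{ax-ex:zero-seq}, \ref{ax-ex:admissible-monos-closed-under-composition}, \ref{ax-ex:admissible-epis-closed-under-composition} formally, and reduce \ref{ax-ex:n-pushout-exists} and \ref{ax-ex:n-pullback-exists} to \th\ref{n-abelian-cats-have-n-pushout-diagrams} and its dual. The only difference is that you spell out a few justifications the paper leaves implicit (closure under weak isomorphisms via \th\ref{weak-isos-induce-homotopy-equivalences} and \th\ref{n-exact-sequences-are-closed-under-homotopy-equivalence}, and the role of part \eqref{it:is-mono} in making $d_Y^0$ admissible), which is harmless.
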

\begin{proof}
  We shall show that $(\M,\X)$ satisfies the axioms of
  \th\ref{def:n-exact-category}.  It is obvious that the class $\X$ is
  closed under weak isomorphisms and that axiom \ref{ax-ex:zero-seq}
  is satisfied.  By axioms \ref{ax-ab:n-co-kernel-exists} and
  \ref{ax-ab:monos-are-admissible} in \th\ref{def:n-abelian-category},
  every monomorphism in $\M$ is the leftmost morphism in an $n$-exact
  sequence, see \th\ref{n-abelian-monos-are-admissible}.  Since the
  composition of two monomorphisms is again a monomorphism, axiom
  \ref{ax-ex:admissible-monos-closed-under-composition} is satisfied.
  That axiom \ref{ax-ex:admissible-epis-closed-under-composition} is
  also satisfied then follows by duality.  Finally,
  \th\ref{n-abelian-cats-have-n-pushout-diagrams} and its dual show
  that axioms \ref{ax-ex:n-pushout-exists} and
  \ref{ax-ex:n-pullback-exists} are satisfied.  This shows that
  $(\M,\X)$ is an $n$-exact category.
\end{proof}

We begin our investigation of the properties of $n$-exact categories
with a simple but useful observation.

\begin{lemma}
  \th\label{n-exact-satisfies-A2} Let $(\M,\X)$ be an $n$-exact
  category and $X^0\mono X^1$ and admissible monomorphism.  If the
  sequence $\tupleP{X^{k}\to X^{k+1}}{1\leq k\leq n}$ is an
  $n$-cokernel of $f^0$, then the sequence
  \[
  \begin{tikzcd}
    X:X^0\rar[tail]&X^1\rar&\cdots\rar&X^n\rar&X^{n+1}
  \end{tikzcd}
  \]
  is an admissible $n$-exact sequence.
\end{lemma}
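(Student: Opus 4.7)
The plan is to exhibit $X$ as homotopy equivalent to some admissible $n$-exact sequence sharing its first morphism, and then invoke the closure of $\X$ under weak isomorphisms.

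Since $f^0 = d_X^0$ is an admissible monomorphism, by definition there exists an admissible $n$-exact sequence
\[
Y \colon X^0 \xrightarrow{f^0} X^1 \xrightarrow{d_Y^1} Y^2 \xrightarrow{d_Y^2} \cdots \xrightarrow{d_Y^{n-1}} Y^n \xrightarrow{d_Y^n} Y^{n+1}
\]
in $\X$. Because $Y$ is an $n$-exact sequence, the tail $(d_Y^1,\dots,d_Y^n)$ is an $n$-cokernel of $f^0$; by hypothesis, so is $(d_X^1,\dots,d_X^n)$. Thus $X$ and $Y$ agree in degrees $0$ and $1$ and their tails are two $n$-cokernels of the same morphism $f^0$.

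I would then construct morphisms of complexes $\phi \colon X \to Y$ and $\psi \colon Y \to X$ in $\CC^n(\M)$ by iterated use of the weak cokernel property, setting $\phi^0 = \psi^0 = 1_{X^0}$ and $\phi^1 = \psi^1 = 1_{X^1}$. For the inductive step on $\phi$: once $\phi^k$ has been defined, the equation $d_X^{k-1}(\phi^k d_Y^k) = \phi^{k-1}d_Y^{k-1}d_Y^k = 0$ together with the weak cokernel property of $d_X^k$ produces $\phi^{k+1}$; at the last step, $d_X^n$ is a cokernel (hence a weak cokernel) of $d_X^{n-1}$, so the construction terminates correctly. The same procedure with the roles of $X$ and $Y$ exchanged produces $\psi$.

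Both complexes $X$ and $Y$, viewed as objects of $\CC^{\ge 0}(\M)$, satisfy the hypothesis of the \th\ref{comparison-lemma}: every $d^{k+1}$ is a weak cokernel of $d^k$ (with the terminal map being a cokernel, so in particular a weak cokernel; and zero beyond degree $n$). Applying the \th\ref{comparison-lemma} to the pairs $(\phi\psi,\,1_X)$ and $(\psi\phi,\,1_Y)$, each of which agrees in degree zero, yields homotopies $\phi\psi \sim 1_X$ and $\psi\phi \sim 1_Y$. Hence $X$ and $Y$ are isomorphic in $\KK(\M)$.

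By \th\ref{n-exact-sequences-are-closed-under-homotopy-equivalence}, $X$ is an $n$-exact sequence. Then $\phi \colon X \to Y$ qualifies as a weak isomorphism of $n$-exact sequences, since $\phi^0$ and $\phi^1$ are identities; since $Y \in \X$ and $\X$ is closed under weak isomorphisms, we conclude $X \in \X$. The only delicate point in this argument is verifying that the iterative construction of $\phi$ terminates at the last step, which is exactly what the cokernel clause in the definition of $n$-cokernel is designed to guarantee, so I do not anticipate any genuine obstacle.
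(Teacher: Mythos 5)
Your proposal is correct and follows essentially the same route as the paper: produce an admissible $n$-exact sequence $Y$ starting with the given admissible monomorphism, use the factorization property of weak cokernels to compare $X$ with $Y$, and conclude via closure of $\X$ under weak isomorphisms. The paper's proof is just a terser version of yours; your extra steps (the Comparison Lemma giving the homotopy equivalence, and \th\ref{n-exact-sequences-are-closed-under-homotopy-equivalence} ensuring $X$ is itself an $n$-exact sequence so that $\phi$ genuinely is a weak isomorphism) are exactly the details the paper leaves implicit.
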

\begin{proof}
  Since $X^0\mono X^1$ is an admissible monomorphism, there exists an
  admissible $n$-exact sequence $Y$ whose first morphism is $X^0\mono
  X^1$. By the factorization property of $n$-cokernels, there exists a
  weak isomorphism $X\to Y$. Then, $X\in\X$ since the class $\X$ is
  closed under weak isomorphisms.
\end{proof}

The next result shows that the $n$-exact structure of an $n$-exact
category is closed under direct sums.

\begin{proposition}
  \th\label{X-is-closed-under-direct-sums} Let $(\M,\X)$ be an
  $n$-exact category, and $X$ and $Y$ be admissible $n$-exact
  sequences.  Then, their direct sum $X\oplus Y$ is an admissible
  $n$-exact sequence.
\end{proposition}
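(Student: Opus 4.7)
The plan is to reduce the statement to showing that the initial morphism $d_{X\oplus Y}^0 = d_X^0\oplus d_Y^0$ is an admissible monomorphism, and then to invoke \th\ref{n-exact-satisfies-A2}. For the latter step, I need only observe that the remaining morphisms $d_X^k \oplus d_Y^k$ form an $n$-cokernel of $d_X^0 \oplus d_Y^0$; this is a direct computation using the biproduct identity $\M(A\oplus B, C) = \M(A,C)\oplus\M(B,C)$, which turns the exactness condition characterizing $n$-cokernels into two independent such conditions, both of which hold by assumption.

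For the admissibility of $d_X^0 \oplus d_Y^0$, I will use the factorization
\[
d_X^0\oplus d_Y^0 \;=\; (d_X^0\oplus 1_{Y^0})\cdot(1_{X^1}\oplus d_Y^0)
\]
together with axiom \ref{ax-ex:admissible-monos-closed-under-composition}. The first factor will be shown admissible by applying axiom \ref{ax-ex:n-pushout-exists} to the admissible sequence $X$ along the coproduct inclusion $\begin{bmatrix}1\\0\end{bmatrix}\colon X^0\to X^0\oplus Y^0$: the axiom produces some admissible $n$-pushout diagram, and the candidate
\[
X^0\oplus Y^0 \xrightarrow{d_X^0\oplus 1} X^1\oplus Y^0 \xrightarrow{[d_X^1,\,0]^\top} X^2\xrightarrow{d_X^2}\cdots\xrightarrow{d_X^n} X^{n+1}
\]
is directly verified to be an $n$-pushout, since its mapping cone splits as a direct sum of the mapping cone of the identity morphism of $X$ with a contractible complex supported in two adjacent degrees and so inherits the $n$-cokernel property. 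The second factor $1_{X^1}\oplus d_Y^0$ is handled symmetrically by applying \ref{ax-ex:n-pushout-exists} to the admissible sequence $Y$ along $\begin{bmatrix}0\\1\end{bmatrix}\colon Y^0\to X^1\oplus Y^0$.

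The hard part is the transfer of admissibility: axiom \ref{ax-ex:n-pushout-exists} guarantees some admissible $n$-exact sequence starting with an $n$-pushout, but a priori not the specific candidate above. Here I will use the uniqueness of $n$-pushouts up to homotopy equivalence provided by \th\ref{universal-property-of-n-pushout-diagrams}, combined with \th\ref{weak-isos-induce-homotopy-equivalences}, to produce a morphism from my candidate to the $n$-pushout sequence produced by the axiom that is the identity on the first two terms and hence a weak isomorphism of $n$-exact sequences in the sense required by \th\ref{def:n-exact-category}. Closure of $\X$ under weak isomorphisms then yields admissibility of the candidate, and hence of each factor, completing the argument.
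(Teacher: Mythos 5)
Your overall skeleton matches the paper's: reduce to showing that $d_X^0\oplus d_Y^0$ is an admissible monomorphism, factor it as $(d_X^0\oplus 1_{Y^0})\cdot(1_{X^1}\oplus d_Y^0)$, invoke axiom \ref{ax-ex:admissible-monos-closed-under-composition}, and finish with \th\ref{n-exact-satisfies-A2}. The divergence---and the gap---is in how you establish that the factor $d_X^0\oplus 1_{Y^0}$ is an admissible monomorphism. Your plan is to exhibit $X^0\oplus Y^0\to X^1\oplus Y^0\to X^2\to\cdots\to X^n$ as an $n$-pushout of $X$ along $[1\ 0]^\top$ (this part is fine) and then transfer admissibility from the $n$-pushout supplied by axiom \ref{ax-ex:n-pushout-exists} via a ``morphism that is the identity on the first two terms and hence a weak isomorphism.'' That step is not justified: \th\ref{universal-property-of-n-pushout-diagrams} only yields comparison morphisms $p$ with $p^0=1$, and there is no reason for $p^1$ to be invertible---the axiom may return, say, a good $n$-pushout in the sense of \th\ref{existence-of-good-n-pushout-diagrams}, whose degree-one term differs from $X^1\oplus Y^0$ by a contractible summand. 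Since $n$-pushouts are unique only up to homotopy equivalence, and a homotopy equivalence is not a weak isomorphism in the sense of \th\ref{def:n-exact-category} (which demands two \emph{consecutive} components be genuine isomorphisms), closure of $\X$ under weak isomorphisms cannot be applied as you propose. There is the further point that this closure property concerns complete $n$-exact sequences in $\CC^{n}(\M)$, whereas your comparison lives between truncated diagrams in $\CC^{n-1}(\M)$; making it work would require extending both to $n$-exact sequences and arranging the comparison to be the identity in degrees $0$ and $n+1$, and the tool for that, \th\ref{universal-property-of-n-pushout-diagrams-n-exact}, presupposes admissibility of the very sequence you are trying to prove admissible.

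The paper closes this step with a short argument at the \emph{other} end of the sequence: the complex $X^0\oplus Y^0\to X^1\oplus Y^0\to X^2\to\cdots\to X^{n+1}$, with second differential $(d_X^1\ 0)$, is an $n$-exact sequence (it is $X$ plus a contractible summand concentrated in degrees $0$ and $1$), and its last morphism is $d_X^n$, which is an admissible epimorphism because $X$ is admissible; the dual of \th\ref{n-exact-satisfies-A2} then makes the whole sequence admissible, so its first morphism $d_X^0\oplus 1_{Y^0}$ is an admissible monomorphism. The factor $1_{X^1}\oplus d_Y^0$ is handled symmetrically. Replacing your pushout argument by this observation repairs the proof; the rest of what you wrote then goes through.
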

\begin{proof}
  This is an adaptation of the proof of
  \cite[Prop. 2.9]{buhler_exact_2010}.  Clearly, $X\oplus Y$ is an
  $n$-exact sequence.  We claim that $d_X^0\oplus 1_{Y^0}$ is an
  admissible monomorphism.  Indeed, the sequence
  \[
  \begin{tikzcd}[column sep=large]
    X^0\oplus Y^0\rar{d_X^0\oplus 1_{Y^0}}&X^1\oplus Y^0\rar{(d_X^1\
      0)}&X^2\rar{d_X^2}&\cdots\rar[two heads]{d_X^{n}}&X^{n+1}
  \end{tikzcd}
  \]
  is an $n$-exact sequence.  Since $d_X^{n}$ is an admissible
  epimorphism, it follows from the dual of
  \th\ref{n-exact-satisfies-A2} that this sequence is moreover an
  admissible $n$-exact sequence and therefore $d_X^0\oplus 1_{Y^0}$ is
  an admissible monomorphism.  We can show that $1_{X^1}\oplus d_Y^0$
  is an admissible monomorphism with a similar argument.  Next,
  observe that
  \[
  d_X^0\oplus d_Y^0=(d_X^0\oplus 1_{Y^0})\cdot(1_{X^1}\oplus d_Y^0).
  \]
  By axiom \ref{ax-ex:admissible-monos-closed-under-composition} we
  have that $d_X^0\oplus d_Y^0\colon X^0\oplus Y^0\to X^1\oplus Y^1$
  is an admissible monomorphism.  Since $X\oplus Y$ is an admissible
  $n$-exact sequence, the claim follows from
  \th\ref{n-exact-satisfies-A2}.
\end{proof}

\begin{remark}
  \th\label{contractible-sequences-are-admissible} Let $(\M,\X)$ be an
  $n$-exact category with $n\geq 2$. It follows from
  \ref{ax-ex:zero-seq} and the fact that $\X$ is assumed to be closed
  under weak isomorphisms that that for all $X\in\M$ the morphism
  $1_X$ is an admissible monomorphism. Indeed, there is a weak
  isomorphism
  \[
    \begin{tikzcd}
      X\rar{1_X}\dar&X\rar{0}\dar&\cdots\rar&0\dar\rar&0\dar\\
      0\rar[tail]&0\rar&\cdots\rar&0\rar[two heads]&0
    \end{tikzcd}
  \]
  Moreover, one can easily show that two contractible $n$-exact
  sequences with the same end terms are weakly isomorphic, hence
  \th\ref{X-is-closed-under-direct-sums} implies that all contractible
  $n$-exact sequences are admissible.  Also, it is straightforward to
  verify that the class of all contractible $n$-exact sequences in an
  additive category $\M$ is an $n$-exact structure; in fact, this is
  the smallest $n$-exact structure on $\M$.  In particular, every
  additive category can be considered as an $n$-exact category with a
  ``contractible $n$-exact structure'', for each positive integer $n$.
\end{remark}

The following characterization of $n$-pushout diagrams of $n$-exact
sequences should be compared with
\cite[Prop. 2.12]{buhler_exact_2010}.

\begin{proposition}
  \th\label{props-of-n-pushout-diagrams} Let $(\M,\X)$ be an $n$-exact
  category.  Suppose that we are given a commutative diagram
  \begin{equation}
    \label{eq:candidate-n-pushout-diagram}
    \begin{tikzcd}
      X^0\rar[tail]{d_X^0}\dar{f^0}&X^1\rar{d_X^1}\dar{f^1}&\cdots\rar{d_X^{n-1}}&X^{n}\dar{f^{n}}(\rar[two
      heads]{d_X^{n}}&X^{n+1})\\
      Y^0\rar[tail]{d_Y^0}&Y^1\rar{d_Y^1}&\cdots\rar{d_Y^{n-1}}&Y^{n}
    \end{tikzcd}
  \end{equation}
  in which the top row is an admissible $n$-exact sequence and $d_Y^0$
  is an admissible monomorphism.  Then the following statements are
  equivalent:
  \begin{enumerate}
  \item\label{it:dia-is-a-n-pushout-diagram} Diagram
    \eqref{eq:candidate-n-pushout-diagram} is an $n$-pushout diagram.
  \item\label{it:mapping-cone-is-n-exact} The mapping cone of diagram
    \eqref{eq:candidate-n-pushout-diagram} is an admissible $n$-exact
    sequence.
  \item\label{it:dia-is-both-n-pushout-and-pullback} Diagram
    \eqref{eq:candidate-n-pushout-diagram} is both an $n$-pushout and
    an $n$-pullback diagram.
  \item\label{it:induced-n-exact-sequence} There exists a commutative
    diagram
    \[
    \begin{tikzcd}
      X\dar{f}&X^0\rar[tail]{d_X^0}\dar{f^0}&X^1\rar{d_X^1}\dar{f^1}&\cdots\rar{d_X^{n-1}}&X^{n}\dar{f^{n}}\rar[two
      heads]{d_X^{n}}&X^{n+1}\dar[equals]\\
      Y&Y^0\rar[tail]{d_Y^0}&Y^1\rar{d_Y^1}&\cdots\rar{d_Y^{n-1}}&Y^{n}\rar[dotted,
      two heads]{d_Y^{n}}&X^{n+1}
    \end{tikzcd}
    \]
    whose rows are admissible $n$-exact sequences.
  \end{enumerate}
\end{proposition}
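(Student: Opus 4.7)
The plan is to establish the cyclic chain of implications $(\text{iii})\Rightarrow(\text{i})\Rightarrow(\text{ii})\Rightarrow(\text{iv})\Rightarrow(\text{iii})$. The implication $(\text{iii})\Rightarrow(\text{i})$ is tautological.

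For $(\text{i})\Rightarrow(\text{ii})$ I would first invoke axiom \ref{ax-ex:n-pushout-exists} to produce a parallel $n$-pushout $\tilde f\colon X\to\tilde Y$ of $X$ along $f^0$ with $d_{\tilde Y}^0$ an admissible monomorphism, and then prove as a preliminary lemma that $C(\tilde f)$ is an admissible $n$-exact sequence. The crux of the lemma is the admissibility of $d_{C(\tilde f)}^{-1} = [-d_X^0,\ f^0]^{\top}$, which I would establish by realising it as the initial differential of a suitable admissible $n$-exact sequence assembled out of $X$ and a contractible sequence on $\tilde Y^0$ via \th\ref{X-is-closed-under-direct-sums} and \th\ref{contractible-sequences-are-admissible}, followed by transport along a weak isomorphism. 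Once $C(\tilde f)\in\X$ is in hand, \th\ref{universal-property-of-n-pushout-diagrams} applied to both $f$ and $\tilde f$ produces chain maps $p\colon Y\to\tilde Y$ and $q\colon\tilde Y\to Y$ fixing $Y^0$ that are mutually inverse up to homotopy; these assemble (after passing to good $n$-pushouts, cf.\ \th\ref{existence-of-good-n-pushout-diagrams}) into a chain of weak isomorphisms between $C(f)$ and $C(\tilde f)$, so the closure of $\X$ under weak isomorphisms delivers $C(f)\in\X$.

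For $(\text{ii})\Rightarrow(\text{iv})$: the relation $d_X^{n-1}d_X^n = 0$ forces $(d_X^n,\ 0)\colon X^n\oplus Y^{n-1}\to X^{n+1}$ to vanish on the image of $d_{C(f)}^{n-2}$, so the $n$-cokernel property of the admissible mapping cone furnishes a canonical $d_Y^n\colon Y^n\to X^{n+1}$ with $f^n d_Y^n = d_X^n$ and $d_Y^{n-1}d_Y^n = 0$. To see that the bottom row $Y^0\mono Y^1\to\cdots\to Y^n\to X^{n+1}$ is admissible $n$-exact I would invoke \th\ref{n-exact-satisfies-A2} applied to the admissible $d_Y^0$, reducing to checking that $(d_Y^1,\ldots,d_Y^n)$ is an $n$-cokernel of $d_Y^0$; this is a diagram chase transferring the $n$-cokernel structure of $C(f)$ through the split embeddings $Y^k\hookrightarrow X^{k+1}\oplus Y^k$. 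For $(\text{iv})\Rightarrow(\text{iii})$ I would extend $f$ by $f^{n+1}:=1_{X^{n+1}}$ to a morphism between the two admissible $n$-exact sequences; the identity on $X^{n+1}$ produces a contractible direct summand in the resulting mapping cone, whose removal identifies $C(f)$ with an admissible $n$-exact sequence via \th\ref{contractible-sequences-are-admissible} and the closure of $\X$ under weak isomorphisms. The $n$-pushout and $n$-pullback universal properties of the subdiagram then follow directly from the $n$-cokernel and $n$-kernel data of $C(f)$.

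I anticipate the main obstacle to be the preliminary lemma within $(\text{i})\Rightarrow(\text{ii})$: proving that $[-d_X^0,\ f^0]^{\top}$ is an admissible monomorphism cannot be achieved by a direct composition argument, since $f^0$ does not factor through either of the admissible monomorphisms $d_X^0$ or $d_{\tilde Y}^0$. Overcoming this requires the combined flexibility of direct sums, contractible $n$-exact sequences, and transport along weak isomorphisms in order to place $[-d_X^0,\ f^0]^{\top}$ inside an explicitly constructed admissible $n$-exact sequence.
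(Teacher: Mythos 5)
Your overall architecture (a cycle of implications, the factorization question for $d_C^{-1}$, the construction of $d_Y^n$ from the cokernel property of the cone, the extension of $f$ by $f^{n+1}=1_{X^{n+1}}$) matches the paper's, but the two steps you flag as delicate are exactly where the proposal breaks down. First, in (i)$\Rightarrow$(ii) you assert that the admissibility of $[-d_X^0\ f^0]^{\top}$ ``cannot be achieved by a direct composition argument'' --- but a composition argument is precisely what works, and what the paper uses: one factors
\[
\begin{tikzcd}[ampersand replacement=\&, column sep=large]
X^0\rar{\begin{bmatrix}1\\0\end{bmatrix}}\&X^0\oplus Y^0\rar{\begin{bmatrix}-1&0\\f^0&1\end{bmatrix}}\&X^0\oplus Y^0\rar{\begin{bmatrix}d_X^0&0\\0&1\end{bmatrix}}\&X^1\oplus Y^0,
\end{tikzcd}
\]
where the first arrow is a split (hence, by \th\ref{contractible-sequences-are-admissible}, admissible) monomorphism, the second is an isomorphism, and the third is an admissible monomorphism by the direct-sum argument of \th\ref{X-is-closed-under-direct-sums}; axiom \ref{ax-ex:admissible-monos-closed-under-composition} then finishes. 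Your objection that $f^0$ does not factor through $d_X^0$ is beside the point, since no such factorization is needed. Your proposed substitute --- realising $[-d_X^0\ f^0]^{\top}$ as the initial differential of a direct sum of $X$ with a contractible sequence and then transporting along a weak isomorphism --- cannot work: any such assembled sequence has $X^0\oplus Y^0$ in degree $0$, and a weak isomorphism is a morphism of complexes, so it can never replace the degree-$0$ term $X^0\oplus Y^0$ by $X^0$; the passage from source $X^0\oplus Y^0$ to source $X^0$ is unavoidably a precomposition, i.e.\ a composition argument. (The detour through a parallel pushout $\tilde f$ and the comparison of $C(f)$ with $C(\tilde f)$ is also unnecessary: once $d_C^{-1}$ is an admissible monomorphism, hypothesis (i) says the cone differentials form an $n$-cokernel of $d_C^{-1}$, and \th\ref{n-exact-satisfies-A2} gives $C(f)\in\X$ directly.)

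Second, your (iv)$\Rightarrow$(iii) is unsupported. Extending $f$ by $f^{n+1}=1_{X^{n+1}}$ yields an $(n+3)$-term mapping cone which indeed splits off a contractible summand complementary to $C(f)$, but this only tells you that the long cone and $C(f)$ are homotopy equivalent; it tells you nothing about either being exact, because there is no result (and none could be invoked without circularity) asserting that the mapping cone of an arbitrary morphism of admissible $n$-exact sequences has any exactness properties --- that is precisely the content of the implication being proved. The actual argument (cf.\ the paper's proof that (iv) implies (ii), and already for $n=1$ in \cite[Prop.~2.12]{buhler_exact_2010}) is a genuine diagram chase: one verifies degree by degree that $d_C^{k-1}$ is a weak cokernel of $d_C^{k-2}$ and that $d_C^{n-1}$ is an epimorphism, and the hardest part --- that $d_C^{0}$ is a weak cokernel of $d_C^{-1}$ --- requires invoking axiom \ref{ax-ex:n-pushout-exists} to build a further pushout, the already-established implication from (i) to (iv), the \th\ref{comparison-lemma}, and the dual of \th\ref{split-mono-implies-sequence-contracts}. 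None of this is replaced by the splitting observation, so the cycle of implications does not close as proposed.
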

\begin{proof}
  \eqref{it:dia-is-a-n-pushout-diagram} implies
  \eqref{it:mapping-cone-is-n-exact}.  This is an adaptation of the
  proof of \cite[Prop. 2.12]{buhler_exact_2010}.  Since the leftmost
  $n$ squares in \eqref{eq:candidate-n-pushout-diagram} form an
  $n$-pushout diagram, by definition its mapping cone gives an
  $n$-cokernel of the morphism
  \[
  d_C^{-1}=[-d_X^0\ f^0]^\top\colon X^0\to X^1\oplus Y^0.
  \]
  Hence, by \th\ref{n-exact-satisfies-A2}, it is sufficient to show
  that $d_C^{-1}$ is an admissible monomorphism.  For this, observe
  that $d_C^{-1}$ equals the composition
  \[
  \begin{tikzcd}[ampersand replacement=\&, column sep=large]
    X^0\rar[tail]{
      \begin{bmatrix}
        1\\
        0
      \end{bmatrix}
    }\&X^0\oplus Y^0\rar[tail]{
      \begin{bmatrix}
        -1&0\\
        f^0&1
      \end{bmatrix}
    }[swap]{\sim}\& X^0\oplus Y^0\rar[tail]{
      \begin{bmatrix}
        d_X^0&0\\
        0&1
      \end{bmatrix}
    }\&X^1\oplus Y^0
  \end{tikzcd}
  \]
  where the rightmost morphism is an admissible monomorphism by
  \th\ref{X-is-closed-under-direct-sums} and the remaining morphisms
  are admissible monomorphisms by
  \th\ref{contractible-sequences-are-admissible}.  Thus $d_C^{-1}$ is
  an admissible monomorphism by axiom
  \ref{ax-ex:admissible-monos-closed-under-composition}.
  
  That \eqref{it:mapping-cone-is-n-exact} implies
  \eqref{it:dia-is-both-n-pushout-and-pullback} follows directly from
  the definitions of $n$-pushout and $n$-pullback diagrams.  That
  \eqref{it:dia-is-both-n-pushout-and-pullback} implies
  \eqref{it:dia-is-a-n-pushout-diagram} is obvious.  Therefore
  statements \eqref{it:dia-is-a-n-pushout-diagram},
  \eqref{it:mapping-cone-is-n-exact} and
  \eqref{it:dia-is-both-n-pushout-and-pullback} are equivalent.

  \eqref{it:dia-is-a-n-pushout-diagram} implies
  \eqref{it:induced-n-exact-sequence}.  We begin by constructing the
  morphism $d_Y^n\colon Y^n\to X^{n+1}$.  Since $d_C^{n-1}$ is a
  cokernel of $d_C^{n-2}$ (here $C$ is the mapping cone of $f$), there
  exists a unique morphism $d_Y^n\colon Y^n\to X^{n+1}$ such that
  $d_X^n=f^nd_Y^n$ and $d_Y^{n-1}d_Y^n=0$, see
  \eqref{differential_mapping_cone}.  Since $d_X^n$ is an epimorphism,
  it follows immediately that so is $d_Y^n$.  It remains to show that
  $d_Y^n$ is a cokernel of $d_Y^{n-1}$.  For this, let
  $u\colon Y^n\to M$ be a morphism such that $d_Y^{n-1}u=0$.  Then we
  have that
  \[
  d_X^{n-1}(f^nu)=(f^{n-1}d_Y^{n-1})u=0.
  \]
  Since $d_X^n$ is a cokernel of $d_X^{n-1}$, there exists a morphism
  $v\colon X^{n+1}\to M$ such that $f^nu=d_X^nv$.  It follows that
  \[
  f^nu=d_X^nv=f^n(d_Y^nv)
  \]
  and
  \[
  d_Y^{n-1}u=0=d_Y^{n-1}(d_Y^nv).
  \]
  Since $d_C^{n-1}$ is a cokernel of $d_C^{n-2}$, we have $u=d_Y^nv$.
  This shows that the epimorphism $d_Y^n$ is a cokernel of
  $d_Y^{n-1}$.
  
  Let $2\leq k\leq n$.  We need to show that $d_Y^k$ is a weak
  cokernel of $d_Y^{k-1}$.  Let $u\colon Y^k\to M$ be a morphism such
  that $d_Y^{k-1}u=0$.  Then we have
  \[
  d_X^{k-1}(f^ku)=(f^{k-1}d_Y^{k-1})u=0.
  \]
  Since $d_X^k$ is a weak cokernel of $d_X^{k-1}$, there exists a
  morphism $v\colon X^{k+1}\to M$ such that $f^ku=d_X^kv$.  Hence,
  given that $d_C^{k-1}$ is a weak cokernel of $d_C^{k-2}$, there
  exists a morphism $w\colon Y^{k+1}\to M$ such that $d_Y^kw=u$, see
  \eqref{differential_mapping_cone}.  Therefore $d_Y^k$ is a weak
  cokernel of $d_Y^{k-1}$, as required.  This shows that
  $(d_Y^1,\dots,d_Y^n)$ is an $n$-cokernel of $d_Y^0$, so we have
  finished the construction of the required commutative diagram.
  Moreover, by \th\ref{n-exact-satisfies-A2}, the sequence $Y$ is an
  admissible $n$-exact sequence.

  \eqref{it:induced-n-exact-sequence} implies
  \eqref{it:mapping-cone-is-n-exact}.  We assume that $n\geq 2$.  The
  case $n=1$ can be shown by combining the arguments below, and is
  easily found in the literature, see for example
  \cite[Prop. 2.12]{buhler_exact_2010}.  By definition, we need to
  show that in the mapping cone $C=C(f)$ we have that
  $(d_C^{0},d_C^{1},\dots,d_C^n)$ is an $n$-cokernel of $d_C^{-1}$.
  
  Let $2\leq k\leq n$.  We shall show that $d_C^{k-1}$ is a weak
  cokernel of $d_C^{k-2}$, see \eqref{differential_mapping_cone}.
  Indeed, let $u\colon Y^{k-1}\to M$ and $v\colon X^k\to M$ be
  morphisms such that $d_Y^{k-2}u=0$ and $d_X^{k-1}v=f^{k-1}u$.  Since
  $d_Y^{k-1}$ is a weak cokernel of $d_Y^{k-2}$, there exists a
  morphism $w\colon Y^k\to M$ such that $u=d_Y^{k-1}w$.  Moreover,
  note that
  \[
  d_X^{k-1}v =f^{k-1}u =f^{k-1}(d_Y^{k-1}w) =d_X^{k-1}(f^kw)
  \]
  for $f$ is a morphism of complexes.  Given that $d_X^k\colon X^k\to
  X^{k+1}$ is a weak cokernel of $d_X^{k-1}\colon X^{k-1}\to X^k$,
  there exists a morphism $h^{k+1}\colon X^{k+1}\to M$ such that
  $f^kw-v=d_X^kh^{k+1}$.  If $k\neq n$, then the claim follows.  If
  $k=n$, then let $w':=w-d_Y^nh^{n+1}$.  It follows that
  $d_Y^{n-1}w'=d_Y^{n-1}w=u$ and
  \[
  f^nw'=f^nw-f^nd_Y^nh^{n+1}=v+d_X^nh^{k+1}-d^nh^{k+1}=v.
  \]
  This shows that $d_C^{n-1}$ is a weak cokernel of $d_C^{n-2}$.
  
  We need to show that $d_C^{n-1}$ is a cokernel of $d_C^{n-2}$.  For
  this it is enough to show that $d_C^{n-1}$ is an epimorphism.  Let
  $p\colon Y^n\to M$ be a morphism such that $d_Y^{n-1}p=0$ and
  $f^np=0$.  Then, since $d_Y^n$ is a cokernel of $d_Y^{n-1}$, there
  exists a morphism $q\colon X^{n+1}\to M$ such that $p=d_Y^nq$.
  Thus,
  \[
  d_X^nq =(f^nd_Y^n)q =f^np=0.
  \]
  Since $d_X^n$ is an epimorphism, we have $q=0$ which implies that
  $p=0$.  This shows that $d_C^{n-1}$ is an epimorphism.
  
  It remains to show that $d_C^0$ is a weak cokernel of $d_C^{-1}$.
  Let $y\colon Y^0\to Z^0$ and $v\colon X^0\to Z^0$ be morphisms such
  that $f^0u=d_X^0v$.  By axiom \ref{ax-ex:n-pushout-exists} there
  exists an $n$-pushout diagram of $(d_Y^0,\dots,d_Y^{n-1})$ along
  $u$.  Moreover, since we have shown the implication from
  \eqref{it:dia-is-a-n-pushout-diagram} to
  \eqref{it:induced-n-exact-sequence}, we can construct a commutative
  diagram
  \[
  \begin{tikzcd}
    X\dar{f}&X^0\rar[tail]\dar&X^1\rar\dar&\cdots\rar&X^n\rar[two heads]\dar&X^{n+1}\dar[equals]\\
    Y\dar{g}&Y^0\rar[tail]\dar{u}&Y^1\rar\dar&\cdots\rar&Y^n\rar[two heads]\dar&X^{n+1}\dar[equals]\\
    Z&Z^0\rar[tail]&Z^1\rar&\cdots\rar&Z^n\rar[two
    heads,dotted]&X^{n+1}
  \end{tikzcd}
  \]
  where the leftmost $n$ squares of the two bottom rows form a pushout
  diagram.  It follows that the following diagram commutes
  \[
  \begin{tikzcd}
    X^0\rar[tail]{d_X^0}\dar{0}&X^1\rar{d_X^1}\dar{f^0u-vd_Z^0}&X^2\rar\dar{f^2g^2}&\cdots\rar&X^n\rar[two heads]{d_X^n}\dar{f^n\gamma^n}&X^{n+1}\dar[equals]\\
    Z^0\rar[tail]{d_Z^0}&Z^1\rar{d_Z^1}&Z^2\rar&\cdots\rar&Z^n\rar[two
    heads]&X^{n+1}
  \end{tikzcd}
  \]
  Then, by the \th\ref{comparison-lemma}, there exists a morphism
  $h\colon X^{n+1}\to Z^n$ such that $hd_Z^n=1_{X^{n+1}}$.  Therefore
  $d_Z^{n+1}$ is a split epimorphism.  From the dual of
  \th\ref{split-mono-implies-sequence-contracts} we conclude that
  $d_Z^0$ is a split monomorphism.  It follows that there exists a
  morphism $w\colon Z^1\to Z^0$ such that $d_Z^0w=1_{Z^0}$.  Finally,
  \[
  d_Y^0(g^1w) =u(d_Z^0w)=u
  \]
  and
  \[
  f^0(uw)-d_X^0(vw) =(f^0u-d_X^0v)w =(vd_Z^0)w=v
  \]
  which is what we needed to show.  This concludes the proof.
\end{proof}

The following property is a refinement of
\th\ref{universal-property-of-n-pushout-diagrams}.

\begin{proposition}
  \th\label{universal-property-of-n-pushout-diagrams-n-exact} Let
  $(\M,\X)$ be an $n$-exact category and $g\colon X\to Z$ a morphism
  of admissible $n$-exact sequences.  Then, for every morphism of
  admissible $n$-exact sequences $g\colon X\to Y$ there exists a
  commutative diagram
  \[
  \begin{tikzcd}
    X\dar{f}&X^0\rar[tail]\dar{g^0}&X^1\rar\dar&\cdots\rar&X^n\dar\rar[two heads]&X^{n+1}\dar[equals]\\
    Y\dar{p}&Y^0\rar[tail]\dar[equals]&Y^1\rar\dar[dotted]&\cdots\rar&Y^n\dar[dotted]\rar[two heads]&Y^{n+1}\dar{g^{n+1}}\\
    Z&Z^0\rar[tail]&Z^1\rar&\cdots\rar&Z^n\rar[two heads]&Z^{n+1}
  \end{tikzcd}
  \]
  where $f^0=g^0$ and $p^{n+1}=g^{n+1}$. Moreover, there exists a
  homotopy $h\colon fp\to g$ with $h^1=0$ and $h^{n+1}=0$.
\end{proposition}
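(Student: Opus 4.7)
The plan is as follows. Given the morphism $g\colon X\to Z$ of admissible $n$-exact sequences, I would first apply axiom \ref{ax-ex:n-pushout-exists} to the admissible $n$-exact sequence $X$ and the morphism $g^0\colon X^0\to Z^0$ to produce an $n$-pushout diagram of $(d_X^0,\dots,d_X^{n-1})$ along $g^0$ in which $d_Y^0$ is an admissible monomorphism. By the implication from being an $n$-pushout diagram to the existence of a companion admissible $n$-exact sequence established in \th\ref{props-of-n-pushout-diagrams}, this extends to a morphism $f\colon X\to Y$ of admissible $n$-exact sequences with $Y^0=Z^0$, $f^0=g^0$, $Y^{n+1}=X^{n+1}$, and $f^{n+1}=1_{X^{n+1}}$.

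Next, I would apply \th\ref{universal-property-of-n-pushout-diagrams} to the underlying $n$-pushout diagram of $f$ (viewed in $\CC^{n-1}(\M)$) and to the restriction of $g$, which lifts $g^0$. This yields morphisms $p^0,\dots,p^n$ with $p^0=1_{Z^0}$ together with a homotopy $h^1,\dots,h^n$ with $h^1=0$; in particular, with the boundary convention $h^{n+1}=0$, the homotopy equation at index $n$ reads $f^np^n-g^n=h^nd_Z^{n-1}$.

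It remains to extend $p$ and $h$ to the full $n$-exact sequences in $\CC^n(\M)$. I would set $p^{n+1}:=g^{n+1}$ and $h^{n+1}:=0$. The homotopy equation at index $n+1$ then becomes $f^{n+1}p^{n+1}-g^{n+1}=0$, which is immediate from $f^{n+1}=1_{X^{n+1}}$. The nontrivial point is to verify the commutativity $p^nd_Z^n=d_Y^ng^{n+1}$ in the last square. To this end, introduce $\beta:=d_Y^ng^{n+1}-p^nd_Z^n\colon Y^n\to Z^{n+1}$. A direct computation gives $d_Y^{n-1}\beta=0$ (using that $p$ is a morphism of complexes in degrees $\leq n$ and that $d_Y^{n-1}d_Y^n=0$), while substituting the homotopy equation at index $n$ and using that $g$ is a morphism of complexes yields $f^n\beta=d_X^ng^{n+1}-g^nd_Z^n=0$.

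The main obstacle is deducing $\beta=0$ from these two identities, which simultaneously uses the $n$-exact structure on $Y$ and the admissibility of $X$. Since $Y$ is an admissible $n$-exact sequence, the morphism $d_Y^n$ is a cokernel of $d_Y^{n-1}$, so $\beta$ factors uniquely as $\beta=d_Y^n\alpha$ for some $\alpha\colon X^{n+1}\to Z^{n+1}$. Combining $f^n\beta=0$ with $f^nd_Y^n=d_X^n$ gives $d_X^n\alpha=0$, and since $d_X^n$ is an admissible (hence ordinary) epimorphism we conclude $\alpha=0$ and therefore $\beta=0$. This shows that the extended $p$ is a morphism of complexes and $h$ is the required homotopy, completing the construction.
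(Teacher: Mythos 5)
Your proof is correct and follows essentially the same route as the paper: reduce to \th\ref{universal-property-of-n-pushout-diagrams} and \th\ref{props-of-n-pushout-diagrams}, set $p^{n+1}:=g^{n+1}$, and verify $p^nd_Z^n=d_Y^ng^{n+1}$ by showing that $\beta:=d_Y^ng^{n+1}-p^nd_Z^n$ vanishes after precomposition with $f^n$ and with $d_Y^{n-1}$. The only (harmless) divergence is the final deduction: the paper concludes $\beta=0$ directly from the fact that $d_C^{n-1}=[f^n\ d_Y^{n-1}]$ is a cokernel in the mapping cone, hence an epimorphism, whereas you factor $\beta$ through the cokernel $d_Y^n$ of $d_Y^{n-1}$ and then use that $d_X^n$ is an epimorphism.
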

\begin{proof}
  By \th\ref{props-of-n-pushout-diagrams} the morphism of admissible
  $n$-exact sequences $f$ exists. Then, by
  \th\ref{universal-property-of-n-pushout-diagrams}, we only need to
  show that $p^nd_Z^n=d_Y^ng^{n+1}$.  Indeed, on one hand we have
  \[
  f^n(g^nd_Z^n)=d_X^ng^{n+1}=f^n(d_Y^ng^{n+1}).
  \]
  On the other hand,
  \[
  d_Y^{n-1}(g^nd_Z^n)=g^{n-1}(d_Z^{n-1}d_Z^n)=0=d_Y^{n-1}(d_Y^ng^{n+1}).
  \]
  Since in the mapping cone $C(f)$ we have that $d_C^{n-1}$ is a
  cokernel of $d_C^{n-2}$, we have $p^nd_Z^n=d_Y^ng^{n+1}$, see
  \eqref{differential_mapping_cone}.  This concludes the proof.
\end{proof}

The next result shows that, in an $n$-exact category, equivalences of
admissible $n$-exact sequences induce isomorphisms in the homotopy
category, \cf \th\ref{weak-isos-induce-homotopy-equivalences} and the
remark after it.

\begin{proposition}
  \th\label{X-closed-under-equivalences} Let $(\M,\X)$ be an $n$-exact
  category, $f\colon X\to Y$ an equivalence of admissible $n$-exact
  sequences. Then, there exists an equivalence of $n$-exact sequences
  $g\colon Y\to X$  such that $f$ and $g$ are mutually inverse
  isomorphisms in $\KK(\M)$.
\end{proposition}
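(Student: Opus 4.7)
The plan is to construct $g$ via \th\ref{universal-property-of-n-pushout-diagrams-n-exact} applied to the identity morphism of $X$, and then conclude using the standard two-sided inverse trick in the homotopy category $\KK(\M)$.

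First, I would invoke \th\ref{universal-property-of-n-pushout-diagrams-n-exact} with the given $f\colon X \to Y$ in the role of the morphism between admissible $n$-exact sequences and with the identity $1_X\colon X \to X$ in the role of the auxiliary morphism (so the target sequence is $Z := X$). The compatibility hypotheses encoded in the diagram of that proposition, namely that the source and target of the auxiliary morphism share their $0$-th term with the target of $f$, that the $0$-components agree, and that $f$ is the identity in degree $n+1$, are all satisfied precisely because $f$ is an equivalence. The proposition then yields a morphism $g\colon Y \to X$ whose components in degrees $0$ and $n+1$ are the respective identities, so $g$ is itself an equivalence of admissible $n$-exact sequences, together with a homotopy realizing $fg \simeq 1_X$ in $\KK(\M)$.

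Next, applying the same construction with $g\colon Y \to X$ in place of $f$ (which is legitimate since $g$ is also an equivalence) produces an equivalence $g'\colon X \to Y$ satisfying $gg' \simeq 1_Y$ in $\KK(\M)$. The usual inverse-chase in $\KK(\M)$ now gives
\[
f \;\simeq\; f \cdot 1_X \;\simeq\; f(gg') \;=\; (fg)g' \;\simeq\; 1_X \cdot g' \;=\; g',
\]
whence $gf \simeq gg' \simeq 1_Y$. Combined with $fg \simeq 1_X$, this exhibits $f$ and $g$ as mutually inverse isomorphisms in $\KK(\M)$, as required.

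I do not anticipate a serious obstacle here: the whole argument pivots on verifying that \th\ref{universal-property-of-n-pushout-diagrams-n-exact} applies when the auxiliary morphism is $1_X$, and this is built directly into the definition of an equivalence of $n$-exact sequences, which supplies exactly the identifications needed at both endpoints. The remainder is formal manipulation in $\KK(\M)$.
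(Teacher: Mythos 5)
Your argument is correct, but it takes a different route to the reverse equivalence $g$ than the paper does, and it is worth spelling out exactly where the two diverge. The paper applies \th\ref{props-of-n-pushout-diagrams} to see that the mapping cone $C(f)$ of the truncation of $f$ to degrees $0,\dots,n$ is an admissible $n$-exact sequence, notes that $d_C^{-1}=[-d_X^0\ \ 1]^\top$ is a split monomorphism because $f^0=1_{X^0}$, concludes from \th\ref{split-mono-implies-sequence-contracts} that $C(f)$ is contractible, and reads the equivalence $g\colon Y\to X$ off the components of a contraction of $C(f)$. You instead obtain $g$ from \th\ref{universal-property-of-n-pushout-diagrams-n-exact} applied to the auxiliary morphism $1_X\colon X\to X$; this works, but note that the one hypothesis you wave at (``all satisfied precisely because $f$ is an equivalence'') is not purely formal: you must know that the truncation of $f$ is an $n$-pushout diagram, and this is exactly the implication \eqref{it:induced-n-exact-sequence}$\Rightarrow$\eqref{it:dia-is-a-n-pushout-diagram} of \th\ref{props-of-n-pushout-diagrams}, so both proofs ultimately rest on the same lemma. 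Your second half is more roundabout than necessary: once any equivalence $g\colon Y\to X$ is in hand, the composites $fg$ and $1_X$ agree in degree $0$, as do $gf$ and $1_Y$, and since $X$ and $Y$ are $n$-exact sequences the \th\ref{comparison-lemma} immediately yields $fg\simeq 1_X$ and $gf\simeq 1_Y$; this replaces your second application of the universal property and the inverse-chase $f\simeq f(gg')=(fg)g'\simeq g'$ (in which, incidentally, $f\cdot 1_X$ should read $f\cdot 1_Y$ under the paper's concatenation convention). What your approach buys is that it avoids unwinding a contraction of the cone into a morphism of complexes, the one step the paper leaves as ``straightforward to verify''; what it costs is a second, avoidable invocation of the pushout machinery.
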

\begin{proof}
  By \th\ref{props-of-n-pushout-diagrams}, the mapping cone $C=C(f)$
  of the diagram
  \[
  \begin{tikzcd}
    X^0\rar\dar[equals]&X^1\rar\dar{f^1}&\cdots\rar&X^{n-1}\rar\dar{f^{n-1}}&X^n\dar{f^n}\\
    Y^0\rar[tail]&Y_1\rar&\cdots\rar&Y^{n-1}\rar&Y^n
  \end{tikzcd}
  \]
  is an admissible $n$-exact sequence. Since $d_C^{-1}\colon X^0\to
  X^1\oplus X^0$ is a split monomorphism, $C$ is a contractible
  $n$-exact sequence, see
  \th\ref{split-mono-implies-sequence-contracts}.  Hence there exists
  a null-homotopy $h\colon 1_C\to 0$. It is straightforward to verify
  that $h$ induces an equivalence of admissible $n$-exact sequences
  $g\colon Y\to X$. Finally, the \th\ref{comparison-lemma} implies
  that $f$ and $g$ induce mutually inverse isomorphisms in $\KK(\M)$.
\end{proof}

By axiom \ref{ax-ex:admissible-monos-closed-under-composition}, the
class of admissible monomorphisms in an $n$-exact category is closed under
composition.  The next result, an analog of Quillen's obscure axiom
for exact categories, shows that a partial converse also holds, \cf
\cite[Prop. 2.16]{buhler_exact_2010}.

\begin{proposition}[Obscure axiom]
  \th\label{obscure-axiom} Let $(\M,\X)$ be an $n$-exact category.
  Suppose there is a commutative diagram
  \begin{equation}
    \label{eq:obscure-axiom}
    \begin{tikzcd}
      X\dar{f}&X^0\rar\dar[equals]&X^1\rar\dar&\cdots\rar&X^n\rar\dar&X^{n+1}\dar\\
      Y&Y^0\rar[tail]&Y^1\rar&\cdots\rar&Y^n\rar[two heads]&Y^{n+1}
    \end{tikzcd}
  \end{equation}
  where the bottom row is an admissible $n$-exact sequence and
  $(d_X^1,\dots,d_X^n)$ is an $n$-cokernel of $d_X^0$.  Then, the top
  row is also an admissible $n$-exact sequence.
\end{proposition}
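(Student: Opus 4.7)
The plan is to build an admissible $n$-exact sequence $Z$ together with a chain map $\psi\colon X\to Z$ which restricts to the identity at degree $0$ and degree $n+1$; closure of $\X$ under weak isomorphisms will then finish. This adapts the strategy of the classical obscure axiom for exact categories.

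First, commutativity of the leftmost square gives $d_X^0\cdot f^1 = f^0\cdot d_Y^0 = d_Y^0$, so $d_X^0$ is a monomorphism. Next, apply axiom \ref{ax-ex:n-pullback-exists} to $Y$ and $f^{n+1}\colon X^{n+1}\to Y^{n+1}$ to produce an $n$-pullback diagram whose rightmost top morphism is an admissible epimorphism $d_Z^n$. By the dual of \th\ref{props-of-n-pushout-diagrams}\eqref{it:induced-n-exact-sequence}, this extends to an admissible $n$-exact sequence
\[
Z\colon X^0\mono Z^1\to\cdots\to Z^n\epi X^{n+1}
\]
together with a morphism $\phi\colon Z\to Y$ of admissible $n$-exact sequences satisfying $\phi^0=1_{X^0}$ and $\phi^{n+1}=f^{n+1}$.

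The dual of \th\ref{universal-property-of-n-pushout-diagrams} applied to $f\colon X\to Y$ then yields a chain map $\psi\colon X\to Z$ with $\psi^{n+1}=1_{X^{n+1}}$ together with a homotopy $h\colon\psi\phi\to f$ for which $h^{n+1}=0$. Evaluating the homotopy relation in degree $0$ gives $\psi^0-1_{X^0} = d_X^0 h^1$; a routine recalibration---replace $\psi^1$ by $\psi^1-h^1 d_Z^0$, which preserves the chain map condition---delivers a new chain map, still denoted $\psi$, with $\psi^0=1_{X^0}$ and $\psi^{n+1}=1_{X^{n+1}}$.

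It remains to prove that $X$ is an $n$-exact sequence, because then $\psi$ will be a weak isomorphism (two consecutive components $\psi^{n+1}$ and $\psi^0$ being identities, with indices read modulo $n+2$) and closure of $\X$ under weak isomorphisms will give $X\in\X$. Since the $n$-cokernel property of $(d_X^1,\dots,d_X^n)$ is given, by \th\ref{n-exact-sequences-are-closed-under-homotopy-equivalence}(i) it suffices to show $X\cong Z$ in $\KK(\M)$. This is the main obstacle of the proof: because $\psi^0$ and $\psi^{n+1}$ are identities, the mapping cone of $\psi$ is split contractible in its extreme degrees, and the remaining interior portion must be shown to be null-homotopic by iterated applications of the \th\ref{comparison-lemma}, exploiting both the weak-cokernel structure of $X$ (which is given) and the $n$-exactness of $Z$. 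Constructing this null-homotopy is the technical heart of the argument.
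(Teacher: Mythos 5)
Your global strategy---manufacture an admissible $n$-exact sequence $Z$ with $Z^0=X^0$ and $Z^{n+1}=X^{n+1}$, produce an equivalence $X\to Z$, and invoke closure of $\X$ under weak isomorphisms---is also the paper's, and the preliminary steps are essentially recoverable. (Your justification of $\psi^0=1_{X^0}$ is not quite right: the dual of \th\ref{universal-property-of-n-pushout-diagrams} lives in degrees $1,\dots,n+1$, so there is no degree-$0$ homotopy relation and no $h^1$ to recalibrate with; the correct argument is that $d_X^0\psi^1-d_Z^0$ composes to zero with both $\phi^1$ and $d_Z^1$, and the first map $Z^1\to Y^1\oplus Z^2$ of the cocone of the $n$-pullback is a kernel, hence a monomorphism, which forces $d_X^0\psi^1=d_Z^0$.) The genuine gap is the step you defer as ``the technical heart'': proving $X\cong Z$ in $\KK(\M)$. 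With your choice of $Z$ this is not a matter of iterating the \th\ref{comparison-lemma}. To apply that lemma you first need a chain map $\chi\colon Z\to X$ agreeing with the identity in degree $0$ (after which the given weak-cokernel chain of $X$ and the $n$-exactness of $Z$ would indeed give $\psi\chi\simeq 1_X$ and $\chi\psi\simeq 1_Z$). But seeding $\chi$ requires solving $d_Z^0\chi^1=d_X^0$, an extension problem \emph{along} $d_Z^0$ that none of the hypotheses addresses---the weak cokernel property of $d_Z^1$ factors maps out of $Z^1$ killing $d_Z^0$, which is the wrong direction---while seeding from the right with $\chi^{n+1}=1_{X^{n+1}}$ needs a weak kernel property of $X$ that is not assumed. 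The alternative of contracting the mapping cone of $\psi$ fails at the same spot: the interior weak-cokernel conditions of $C(\psi)$ do follow from those of $X$ and $Z$ separately, but the condition that $d_C^0$ be a weak cokernel of $d_C^{-1}=[-d_X^0\ 1]^\top$ reduces to the same unsolvable extension problem along $d_Z^0$.

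The paper circumvents exactly this by choosing $Z$ so that the degree-$1$ seeding is free in both directions: pushing out $Y$ along $d_X^0$ and composing with an elementary triangular automorphism of $Y^1\oplus X^1$ (using $d_X^0f^1=d_Y^0$) shows that $[0\ d_X^0]^\top\colon X^0\to Y^1\oplus X^1$ is an admissible monomorphism, and $Z$ is taken to be an admissible $n$-exact sequence starting with it. The inclusion $X^1\to Y^1\oplus X^1$ and the projection $Y^1\oplus X^1\to X^1$ then seed chain maps $p\colon X\to Z$ and $q\colon Z\to X$ with $p^0=q^0=1_{X^0}$, which extend rightwards using the weak-cokernel chains of $X$ and of $Z$ respectively, and the \th\ref{comparison-lemma} makes them mutually inverse in $\KK(\M)$. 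Your pullback of $Y$ along $f^{n+1}$ does appear in the paper's proof, but only in the final step, after $X$ is already known to be an $n$-exact sequence, where \th\ref{universal-property-of-n-pushout-diagrams-n-exact} legitimately produces the equivalence needed to conclude $X\in\X$. To salvage your route you would need to supply the missing homotopy inverse, which in effect means rediscovering a model of $Z$ with a split degree somewhere---i.e., the paper's construction or its dual.
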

\begin{proof}
  This is an adaptation of the proof of
  \cite[Prop. 2.16]{buhler_exact_2010}, which is due to Keller.  Since
  a pushout of the bottom row along $d_X^0$ exists, by
  \th\ref{props-of-n-pushout-diagrams} the morphism $[-d_Y^0\
  d_X^0]^\top\colon X^0\to Y^1\oplus X^1$ is an admissible
  monomorphism.  Moreover, the diagram
  \[
  \begin{tikzcd}[ampersand replacement=\&, row sep=large, column
    sep=huge]
    X^0\rar[tail]{
      \begin{bmatrix}
        -d_Y^0& d_X^0
      \end{bmatrix}^\top }\dar[equals]\&Y^1\oplus X^1 \dar[tail]{
      \begin{bmatrix}
        1&f^1\\
        0& 1
      \end{bmatrix}
    }\\
    X^0 \rar{
      \begin{bmatrix}
        0& d_X^0
      \end{bmatrix}^\top } \& Y^1\oplus X^1
  \end{tikzcd}
  \]
  commutes.  Hence $[0\ d_X^0]^\top\colon X^0\to Y^1\oplus X^1$ is the
  composition of an admissible monomorphism with an isomorphism and,
  by axiom \ref{ax-ex:admissible-monos-closed-under-composition} and
  \th\ref{contractible-sequences-are-admissible}, we conclude that
  itself is an admissible monomorphism.  Using the factorization
  property of weak cokernels we can construct a commutative diagram
  \[
  \begin{tikzcd}[ampersand replacement=\&, row sep=large, column
    sep=large]
    X\dar{p}\&X^0\rar{d_X^0}\dar[equals] \&X^1\rar\dar{
      \begin{bmatrix}
        0\\
        1
      \end{bmatrix}
    }\&X^2\rar\dar[dotted]\&\cdots\rar\&X^{n+1}\dar[dotted]\\
    Z\dar{q}\&X^0\rar[tail]{
      \begin{bmatrix}
        0 & d_X^0
      \end{bmatrix}^\top }\dar[equals]\&Y^1\oplus X^1\rar\dar{
      \begin{bmatrix}
        0&1
      \end{bmatrix}
    }\&Z^2\rar\dar[dotted]\&\cdots
    \rar[two heads]\&Z^{n+1}\dar[dotted]\\
    X\&X^0\rar{d_X^0}\& X^1\rar\&X^2\rar\&\cdots\rar\&X^{n+1}
  \end{tikzcd}
  \]
  where the middle row is an admissible $n$-exact sequence.  By the
  \th\ref{comparison-lemma} applied to the compositions $pq$ and $qp$,
  the morphisms $p$ and $q$ induce mutually inverse isomorphisms in
  $\KK(\M)$. By
  \th\ref{n-exact-sequences-are-closed-under-homotopy-equivalence} we
  have that $X$ is an $n$-exact sequence. Then,
  \th\ref{universal-property-of-n-pushout-diagrams-n-exact} gives an
  equivalence between $X$ and an admissible $n$-exact sequence which
  is obtained by $n$-pullback from $Y$ along $f^{n+1}$.  Since $\X$ is
  closed under weak isomorphisms (in particular, equivalences) of
  $n$-exact sequences, we have that $X$ is moreover an admissible
  $n$-exact sequence.
\end{proof}

The following result shows that the class of $n$-exact sequences in an
$n$-exact category is closed under direct summands.

\begin{proposition}
  \th\label{X-is-closed-under-direct-summands} Let $(\M,\X)$ be an
  $n$-exact category, and $X$ and $Y$ complexes in $\C^n(\C)$.  If
  $X\oplus Y$ is an $n$-exact sequence, then so are $X$ and $Y$.
\end{proposition}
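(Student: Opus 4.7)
The plan is to apply the Obscure Axiom \th\ref{obscure-axiom} after setting up a suitable comparison diagram.

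First, I will verify that $\tuple{d_X^1,\ldots,d_X^n}$ is an $n$-cokernel of $d_X^0$ in the ambient additive category. Given $1\leq k\leq n$ and $u\colon X^k\to M$ with $d_X^{k-1}u=0$, form $\bar u:=\pi_{X^k}\,u\colon X^k\oplus Y^k\to M$, where $\pi_{X^k}$ denotes the canonical projection. Then $\bar u$ is annihilated on the left by $d_X^{k-1}\oplus d_Y^{k-1}$, so the hypothesis that $X\oplus Y$ is an $n$-exact sequence yields $\bar v\colon X^{k+1}\oplus Y^{k+1}\to M$ with $\bar u=(d_X^k\oplus d_Y^k)\bar v$; postcomposing $\bar v$ with the inclusion $X^{k+1}\to X^{k+1}\oplus Y^{k+1}$ produces the required $v\colon X^{k+1}\to M$ satisfying $u=d_X^k v$. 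In the case $k=n$ the same trick, applied to the epimorphism $d_X^n\oplus d_Y^n$, also shows that $d_X^n$ is itself an epimorphism.

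Second, I will construct an admissible $n$-exact sequence whose zeroth object is $X^0$ together with a morphism of complexes $X\to Z$ acting as the identity in degree $0$. For this, apply axiom \ref{ax-ex:n-pushout-exists} to the admissible $n$-exact sequence $X\oplus Y$ and the projection $p\colon X^0\oplus Y^0\to X^0$; by \th\ref{props-of-n-pushout-diagrams} the result extends to a morphism of admissible $n$-exact sequences $h\colon X\oplus Y\to Z$ with $Z^0=X^0$, $Z^{n+1}=X^{n+1}\oplus Y^{n+1}$, $h^0=p$ and $h^{n+1}=1_{X^{n+1}\oplus Y^{n+1}}$. Composing with the canonical inclusion $\iota\colon X\to X\oplus Y$ gives a morphism of complexes $\iota h\colon X\to Z$ whose zeroth component is $\iota^0 p=1_{X^0}$.

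Third, the resulting commutative diagram fits exactly into the hypotheses of the Obscure Axiom \th\ref{obscure-axiom}: the bottom row $Z$ is an admissible $n$-exact sequence, the leftmost vertical arrow is $1_{X^0}$, and the top row $X$ admits an $n$-cokernel at $d_X^0$ by the first step. One therefore concludes that $X$ is itself an admissible $n$-exact sequence, and by the obvious symmetry between $X$ and $Y$ the same argument shows that $Y$ is admissible as well.

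The main obstacle is the construction in the second step: the Obscure Axiom insists that the bottom row start with $X^0$ under the identity, and it is not a priori clear how to manufacture such an admissible $n$-exact sequence out of $X\oplus Y$, whose zeroth object is $X^0\oplus Y^0$. The key trick is to push $X\oplus Y$ out along the projection $p$, which by axiom \ref{ax-ex:n-pushout-exists} produces exactly such an admissible sequence, and then to precompose with the inclusion $\iota$ to extract the required morphism whose zeroth component is the identity.
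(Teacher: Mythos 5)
Your proof is correct, and it shares the paper's overall strategy: first check that $\tuple{d_X^1,\dots,d_X^n}$ is an $n$-cokernel of $d_X^0$ (the paper dismisses this as ``clear''; your retract argument via the projections and inclusions of the direct sum is exactly what is meant), then feed a morphism of complexes $X\to Z$, equal to the identity in degree zero and with $Z$ admissible, into the Obscure Axiom \th\ref{obscure-axiom}. Where you genuinely differ is in manufacturing $Z$. The paper factors $[d_X^0\ 0]^\top\colon X^0\to X^1\oplus Y^1$ as the split monomorphism $X^0\to X^0\oplus Y^0$ followed by $d_X^0\oplus d_Y^0$, so that \th\ref{contractible-sequences-are-admissible} together with axiom \ref{ax-ex:admissible-monos-closed-under-composition} exhibits it as an admissible monomorphism, which is then completed to an admissible $n$-exact sequence receiving a map from $X$ via the weak-cokernel factorization property of the top row. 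You instead push $X\oplus Y$ out along the projection $X^0\oplus Y^0\to X^0$ using axiom \ref{ax-ex:n-pushout-exists} and \th\ref{props-of-n-pushout-diagrams}, and precompose with the inclusion $X\to X\oplus Y$. Both routes are valid: the paper's uses only the cheaper axioms \ref{ax-ex:zero-seq} and \ref{ax-ex:admissible-monos-closed-under-composition}, while yours invokes the pushout axiom but obtains the complete morphism of admissible $n$-exact sequences (including the degree-$(n+1)$ component) directly from \th\ref{props-of-n-pushout-diagrams}\eqref{it:induced-n-exact-sequence}, with no squares left to fill in by hand.
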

\begin{proof}
  This is an adaptation of the proof of
  \cite[Cor. 2.18]{buhler_exact_2010}.  Clearly, $X$ and $Y$ are
  $n$-exact sequences.  In particular, $d_X^0$ admits an $n$-cokernel.
  Moreover, we have a commutative diagram
  \[
  \begin{tikzcd}[ampersand replacement=\&, row sep=large]
    X^0\ar{rr}{d_X^0}\dar[equals]\& \& X^1 \dar{
      \begin{bmatrix}
        1_{X^1}\\ 0
      \end{bmatrix}
    } \\
    X_1\rar[tail]{
      \begin{bmatrix}
        1_{X^0}\\0
      \end{bmatrix}
    } \&X^0\oplus X^1\rar[tail]{d_X^0\oplus d_Y^0} \& X^1\oplus Y^1
  \end{tikzcd}
  \]
  in which the composition along the bottom row yields an admissible
  monomorphism by
  \ref{ax-ex:admissible-monos-closed-under-composition}. 
  We conclude
  from \th\ref{obscure-axiom} that $X$ is an admissible $n$-exact
  sequence.  Analogously one can show that $Y$ is an admissible
  $n$-exact sequence.
\end{proof}

\subsection{$n$-exact categories and cluster-tilting}

Let $(\E,\X)$ be an exact category. Recall that a morphism $X\to Y$ in
$\E$ is \emph{proper} if it has a factorization $X\epi Z\mono Y$
through an $\X$-admissible epimorphism and an $\X$-admissible
monomorphism.  A sequence of proper morphisms
\[
\begin{tikzcd}[row sep=small,column sep=small]
  \cdots\ar{rr}&&X^{k-1}\ar{rr}\drar[two heads]&&X^k\ar{rr}\drar[two heads]&&X^{k+1}\ar{rr}&&\cdots\\
  &&&Y^{k-1}\urar[tail]&&Y^k\urar[tail]
\end{tikzcd}
\]
is an \emph{$\X$-acyclic complex} if for each $k\in\ZZ$ the sequence
$Y^{k-1}\mono X^k\epi Y^k$ is an $\X$-admissible exact sequence. If
the class $\X$ is clear from the complex, then we may write
``acyclic'' instead of ``$\X$-acyclic''. We denote the full
subcategory of $\KK(\E)$ given by the acyclic complexes by $\Ac$.

Following Neeman \cite{neeman_derived_1990}, the derived category
$\D=\DD(\E,\X)$ is by definition the Verdier quotient
$\KK(\E)/\thick(\Ac(\X))$. Then, for all $k\geq1$ and for all $E\in\E$
we can define the functor $\Ext_\X^k(E,-)\colon \E\to \Mod\ZZ$ by
$F\mapsto \Hom_\D(E,F[k])$. Equivalently, one can define
$\Ext_\X^k(E,F)$ as the set of Yoneda equivalence classes of Yoneda
splicings of $\X$-admissible exact sequences in the usual way, see
\cite[Sec. 6]{frerick_exact_2010}.

We use the following variant of the definition of $n$-cluster-tilting
subcategory of an exact category.  Note that in the case of abelian
categories this definition agrees with
\th\ref{def:n-cluster-tilting-abelian}, see
\th\ref{n-ct-abelian-approximations-mono}.

\begin{definition}
  Let $(\E,\X)$ be a small exact category and $\M$ a subcategory of
  $\E$.  We say that $\M$ is an \emph{$n$-cluster-tilting subcategory
    of $(\E,\X)$} if the following conditions are satisfied:
  \begin{enumerate}
  \item Every object $E\in\E$ has a left $\M$-approximation by an
    $\X$-admissible monomorphism $E\mono M$.
  \item Every object $E\in\E$ has a right $\M$-approximation by an
    $\X$-admissible epimorphism $M'\epi E$.
  \item We have
    \begin{align*}
      \M=&\setP{E\in\E}{\text{for all }
        i\in\set{1,\dots,n-1}\ \Ext_\X^i(E,\M)=0}\\
      =&\setP{E\in\E}{\text{for all } i\in\set{1,\dots,n-1}\
        \Ext_\X^i(\M,E)=0}.
    \end{align*}
  \end{enumerate}
  Note that $\E$ itself is the unique {1-cluster-tilting} subcategory
  of $\E$.
\end{definition}

Our aim in this section is to prove the following result, which is
analogous to \th\ref{recognition-thm-abelian} in the case of exact
categories.

\begin{theorem}
  \th\label{recognition-thm-exact} Let $(\E,\X)$ be an exact category
  and $\M$ an $n$-cluster-tilting subcategory of $(\E,\X)$.  Let
  $\Y=\Y(\M,\X)$ be the class of all $\X$-acyclic complexes
  \begin{equation}
    \label{eq:acyclic-n-exact-sequence}
    \begin{tikzcd}
      X^0\rar[tail]&X^1\rar&\cdots\rar&X^n\rar[two heads]&X^{n+1}
    \end{tikzcd}
  \end{equation}
  such that for all $k\in\set{0,1,\dots,n+1}$ we have $X^k\in\M$.
  Then $(\M,\Y)$ is an $n$-exact category.
\end{theorem}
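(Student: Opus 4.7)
The plan is to mirror the strategy used in the proof of \th\ref{recognition-thm-abelian}, adapting each ingredient to the exact-category setting. The central preliminary result is an exact-category analog of \th\ref{n-ct_resolution}: every object $E\in\E$ admits an $\X$-acyclic resolution of length $n$ by objects of $\M$. The construction iterates right $\M$-approximations by $\X$-admissible epimorphisms (guaranteed by the definition of $n$-cluster-tilting) and forms their kernels; the cluster-tilting vanishing $\Ext^i_\X(\M,\M)=0$ for $1\le i\le n-1$, combined with a long-exact-$\Ext$-sequence argument identical to the one used in \th\ref{n-ct_resolution}, forces the $n$-th syzygy to lie in $\M$. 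The dual statement, with resolutions by $\X$-admissible monomorphisms, holds analogously.

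With this preliminary in hand, I would first check that every sequence in $\Y$ is actually an $n$-exact sequence in $\M$. Given such a sequence $X$, decompose it into short $\X$-admissible exact sequences $Y^{k-1}\mono X^k\epi Y^k$ in $\E$, apply $\Hom_\E(-,N)$ for $N\in\M$ to each piece, and splice the resulting long-exact $\Ext_\X$-sequences together using the cluster-tilting vanishing (in the style of \th\ref{lemma-exts}). This yields the exactness that encodes the $n$-kernel property of $(d^0,\dots,d^{n-1})$; the dual application of $\Hom_\E(N,-)$ yields the $n$-cokernel property of $(d^1,\dots,d^n)$. For the closure of $\Y$ under weak isomorphisms, one invokes \th\ref{weak-isos-induce-homotopy-equivalences} to upgrade a weak isomorphism to an isomorphism in $\KK(\M)$, hence in $\KK(\E)$, and uses that $\X$-acyclic complexes form a triangulated subcategory of $\KK(\E)$ and so are closed under such isomorphisms.

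Axiom \ref{ax-ex:zero-seq} is immediate. For \ref{ax-ex:admissible-monos-closed-under-composition}, the composition of two admissible monomorphisms appearing as leading arrows of sequences in $\Y$ is an $\X$-admissible monomorphism in $\E$; I would splice the preliminary $\M$-resolution of its cokernel (computed in $\E$) onto it to produce the required admissible $n$-exact sequence in $\Y$. Axiom \ref{ax-ex:admissible-epis-closed-under-composition} follows by duality.

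The main obstacle is axiom \ref{ax-ex:n-pushout-exists}. Given $X\in\Y$ and $f^0\colon X^0\to Y^0$, one first forms the pushout $P^1$ of $X^0\mono X^1$ along $f^0$ inside $(\E,\X)$, obtaining an $\X$-admissible exact sequence $Y^0\mono P^1\epi Y^1$ where $Y^1$ is the first cocycle of $X$. The difficulty is that $P^1$ generally fails to lie in $\M$, so the naive mapping-cone construction leaves $\M$. To correct this, take a left $\M$-approximation $P^1\mono M^1$ by an $\X$-admissible monomorphism, and proceed inductively: at each stage interleave the preliminary $\M$-resolution with the remaining $\X$-admissible exact pieces of $X$, gluing them along the cocycles. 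One then verifies that the resulting diagram, with all terms in $\M$, is an $n$-pushout by applying the mapping-cone characterization (in the spirit of \th\ref{props-of-n-pushout-diagrams}) and checking that the mapping cone is $\X$-acyclic via the same $\Ext_\X$-computation used in Step~1. Axiom \ref{ax-ex:n-pullback-exists} is handled by duality. The bookkeeping required to interleave the cluster-tilting resolutions with the exact-category pushouts while preserving admissibility at every intermediate step is where the essential technical care is concentrated.
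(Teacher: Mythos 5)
Your proposal follows essentially the same route as the paper: the exact-category analog of the resolution proposition (which appears there as \th\ref{n-ct_resolution-exact}), the $\Ext_\X$-splicing argument to see that members of $\Y$ are $n$-exact sequences, the identification of $\Y$-admissible with $\X$-admissible monomorphisms for axiom \ref{ax-ex:admissible-monos-closed-under-composition}, and, for axiom \ref{ax-ex:n-pushout-exists}, the iterated ``pushout in $\E$, then left $\M$-approximation by an admissible monomorphism'' construction of the mapping cone. The only imprecision is your suggestion to verify $\X$-acyclicity of the mapping cone by an $\Ext_\X$-computation: acyclicity is a factorization property rather than a $\Hom$-exactness property, and in the paper it falls out of the construction itself, each differential of the cone being built as an admissible epimorphism onto an intermediate pushout object $Z^{k+1}$ followed by an admissible monomorphism into $X^{k+2}\oplus Y^{k+1}$.
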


To prove \th\ref{recognition-thm-exact}, we need the following result,
whose proof is completely analogous to the proof of
\th\ref{n-ct_resolution}.

\begin{proposition}
  \th\label{n-ct_resolution-exact} Let $(\E,\X)$ be an exact category
  and $\M$ an $n$-cluster-tilting subcategory of $\E$. Then, for each
  $E\in\E$ there exists an acyclic complex of the form
  \[
  \begin{tikzcd}[column sep=small, row sep=small]
    0\ar{rr}&&E\ar[tail]{rr}{f^0}\drar[equals]&&M^1\ar{rr}{f^1}&&\cdots\ar{rr}{f^{n-2}}\drar[two
    heads,swap]{g^{n-2}}&&M^{n-1}\ar[two heads]{rr}{f^{n-1}}\drar[two heads,swap]{g^{n-1}}&&M^n\ar{rr}&&0\\
    &&&C^1\urar[tail,swap]{h^1}&&&&C^{n-1}\urar[tail,swap]{h^{n-1}}&&C^n\urar[equals]
  \end{tikzcd}
  \]
  satisfying the following properties:
  \begin{enumerate}
  \item For each $k\in\set{1,\dots,n}$ we have $M^k\in\M$.
  \item For each $k\in\set{1,\dots,n-1}$ the morphism $h^k\colon
    C^k\mono M^k$ is an $\X$-admissible monomorphism and a left
    $\M$-approximation of $C^k$.
  \item For each $k\in\set{1,\dots,n-1}$ the pair $C^k\mono M^k\epi
    C^{k+1}$ is an $\X$-admissible exact sequence.
  \end{enumerate}
\end{proposition}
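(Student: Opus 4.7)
The plan is to mimic the proof of \th\ref{n-ct_resolution} in the exact-category setting. I would construct the required $\X$-acyclic complex inductively, resolving $E$ step by step using left $\M$-approximations by $\X$-admissible monomorphisms, and then verify that the final term $C^n$ lies in $\M$ via an Ext-vanishing argument coming from the $n$-cluster-tilting hypothesis.

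First I set $C^1:=E$ and pick an $\X$-admissible monomorphism $h^1:=f^0\colon C^1\mono M^1$ which is a left $\M$-approximation of $E$; such a morphism exists by hypothesis. I form the $\X$-admissible exact sequence $C^1\mono M^1\epi C^2$. Inductively, for $k\in\set{2,\dots,n-1}$, given $C^k$ I take a left $\M$-approximation $h^k\colon C^k\mono M^k$ which is simultaneously an $\X$-admissible monomorphism, and form the $\X$-admissible exact sequence $C^k\mono M^k\epi C^{k+1}$. Concatenating these sequences in the obvious manner, with $f^{k}:=h^{k+1}\circ g^{k}$ for $1\leq k\leq n-2$, produces the desired complex, provided that the last object $C^n$ belongs to $\M$.

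The main step, as in the proof of \th\ref{n-ct_resolution}, is to show inductively that for every $k\in\set{2,\dots,n}$ and every $M\in\M$ we have $\Ext_\X^i(C^k,M)=0$ for all $i\in\set{1,\dots,k-1}$. The base case $k=2$ follows by applying $\Hom_\E(-,M)$ to the $\X$-admissible exact sequence $E\mono M^1\epi C^2$ and using both $\Ext_\X^1(M^1,M)=0$ and the fact that $h^1$ is a left $\M$-approximation, which makes $\E(M^1,M)\to\E(E,M)$ surjective. For the induction step, applying $\Hom_\E(-,M)$ to $C^k\mono M^k\epi C^{k+1}$ and using $\Ext_\X^i(M^k,M)=0$ for all $i\geq 1$ yields an isomorphism $\Ext_\X^{i+1}(C^{k+1},M)\cong\Ext_\X^i(C^k,M)$ for $i\geq 1$, while the $\M$-approximation property of $h^k$ handles the case $i=1$ in the same way as the base case.

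Taking $k=n$ we obtain $\Ext_\X^i(C^n,M)=0$ for every $i\in\set{1,\dots,n-1}$ and every $M\in\M$. Since $\M$ is an $n$-cluster-tilting subcategory of $(\E,\X)$, this forces $C^n\in\M$, so setting $M^n:=C^n$, $g^{n-1}:=h^{n-1}$, and $f^{n-1}$ to be the admissible epimorphism $M^{n-1}\epi M^n$ of the last $\X$-admissible exact sequence completes the construction. The only delicate point, which I expect to be the main technical obstacle, is verifying that the long exact sequence of $\Ext_\X^{\bullet}$-groups associated with an $\X$-admissible exact sequence behaves as in the abelian case; this is a standard feature of the Verdier quotient $\DD(\E,\X)=\KK(\E)/\thick(\Ac(\X))$ following Neeman, and is in any case the same ingredient tacitly used in the proof of \th\ref{n-ct_resolution}.
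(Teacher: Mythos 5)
Your proposal is correct and takes essentially the same route as the paper, which gives no separate argument for this statement and simply declares its proof ``completely analogous'' to that of \th\ref{n-ct_resolution}: your inductive construction via left $\M$-approximations that are $\X$-admissible monomorphisms, followed by the dimension-shifting $\Ext_\X$-vanishing argument to conclude $C^n\in\M$, is exactly that adaptation, and your remark that the long exact sequence for $\Ext_\X^\bullet$ along admissible exact sequences is the one ingredient needing justification in the exact setting is well placed.
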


We now give the proof of \th\ref{recognition-thm-exact}.

\begin{proof}[Proof of \th\ref{recognition-thm-exact}]
  If $n=1$, then the result is trivial. Let $n\geq2$.  We observe that
  the class $\Y$ indeed consists of $n$-exact sequences. To see this,
  given $X\in\Y$ and $M\in\M$, apply the functor $\E(M,-)$ to $X$ to
  obtain a sequence
  \[
  \begin{tikzcd}[column sep=small]
    0\rar&\E(M,X^0)\rar&\E(M,X^1)\rar&\cdots\rar&\E(M,X^n)\rar&\E(M,X^{n+1})
  \end{tikzcd}
  \]
  which is exact at $\E(M,X^0)$ and $\E(M,X^1)$. By the version of the
  dual of
  \th\ref{lemma-exts} for exact categories, for each
  $k\in\set{1,\dots,n-1}$ the homology of this complex at
  $\E(M,X^{k+1})$ is isomorphic to $\Ext_\X^k(M,X^0)$ which vanishes
  by assumption. Combining with the dual argument, this shows that $X$
  is an $n$-exact sequence.

  We only show that $(\M,\Y)$ satisfies axioms
  \ref{ax-ex:admissible-monos-closed-under-composition} and
  \ref{ax-ex:n-pushout-exists} in \th\ref{def:n-exact-category}, since
  the remaining axioms are dual.  Note that axiom \ref{ax-ex:zero-seq}
  is obviously satisfied.
  
  \emph{The pair $(\M,\Y)$ satisfies axiom
    \ref{ax-ex:admissible-monos-closed-under-composition}:} Let
  $M\mono N$ be an $\X$-admissible monomorphism such that $M,N\in\M$
  and $N\epi E$ a cokernel of $M\mono N$. Applying
  \th\ref{n-ct_resolution-exact} to $N$ yields an acyclic complex in
  $\Y$ with $M\mono N$ as the leftmost morphism.  This shows that the
  class of $\Y$-admissible monomorphisms coincides with the class of
  $\X$-admissible monomorphisms.  Therefore $(\M,\Y)$ satisfies axiom
  \ref{ax-ex:admissible-monos-closed-under-composition}.  That
  $(\M,\Y)$ satisfies axiom
  \ref{ax-ex:admissible-epis-closed-under-composition} then follows by
  duality.

  \emph{The pair $(\M,\Y)$ satisfies axiom
    \ref{ax-ex:n-pushout-exists}:} Let $X$ be a $\Y$-admissible
  $n$-exact sequence and $f^0\colon X^0\to Y^0$ a morphism in $\M$.
  We need to construct a commutative diagram
  \begin{equation}
    \label{eq:n-pushout-diagram-exact-cat}
    \begin{tikzcd}
      X^0\rar[tail]{d_X^0}\dar{f^0}&X^1\rar{d_X^1}\dar{f^1}&\cdots\rar{d_X^{n-1}}&X^{n}\dar{f^{n}}\rar[two
      heads]{d_X^{n}}&X^{n+1}\\
      Y^0\rar[tail]{d_Y^0}&Y^1\rar{d_Y^1}&\cdots\rar{d_Y^{n-1}}&Y^{n}
    \end{tikzcd}
  \end{equation} 
  such that $d_Y^0$ is a $\Y$-admissible monomorphism or, equivalently
  by the previous paragraph, an $\X$-admissible monomorphism.

  Step 1: We claim that there is a $\Y$-admissible $n$-exact sequence
  \begin{equation}
    \label{eq:in-proof-mapping-cone}
    \begin{tikzcd}
      C:&X^0\rar[tail]&X^1\oplus Y^0\rar&\cdots\rar&X^n\oplus
      Y^{n-1}\rar[two heads]&Y^n
    \end{tikzcd}
  \end{equation}
  with differentials
  \[
  \begin{tikzcd}
    d_C^{k-1}:=\begin{bmatrix}
      -d_X^k&0\\
      f^k&d_Y^{k-1}
    \end{bmatrix}\colon X^k\oplus Y^{k-1}\rar& X^{k+1}\oplus
    Y^k.
  \end{tikzcd}
  \]
  We shall construct $C$ inductively.  Set $Z^0:=X^0$, $u^0:=1_{X^0}$,
  $p^0:=-d_X^0$ and $q^0:=f^0$.  Let $0\leq k\leq n-2$ and suppose
  that we have constructed a commutative diagram
  \[
  \begin{tikzcd}[ampersand replacement=\&]
    X^k\oplus Y^{k-1}\ar{rr}{
      \begin{bmatrix}
        -d_X^k&0\\
        f^k&d_Y^{k-1}
      \end{bmatrix}
    }\drar[two heads,swap]{
      \begin{bmatrix}
        u^k&v^k
      \end{bmatrix}
    }\&\&X^{k+1}\oplus Y^k\\
    \&Z^k\urar[tail,swap]{
      \begin{bmatrix}
        p^k\\q^k
      \end{bmatrix}
    }
  \end{tikzcd}
  \]
  Since $Z^k\mono X^{k+1}\oplus Y^k$ is an $\X$-admissible
  monomorphism, there exist morphisms $u^{k+1}\colon X^{k+1}\to
  Z^{k+1}$ and $v^{k+1}\colon Y^k\to Z^{k+1}$ such that $[u^{k+1}\
  v^{k+1}]\colon X^{k+1}\oplus Y^k\epi Z^{k+1}$.  Therefore the square
  \begin{equation}
    \label{eq:in-proof-pushout-diagram}
    \begin{tikzcd}
      Z^k\rar{p^k}\dar{q^k}&X^{k+1}\dar{u^{k+1}}\\
      Y^k\rar{v^{k+1}}&Z^{k+1}
    \end{tikzcd}
  \end{equation}
  is a pushout diagram.

  It is readily verified that the composition
  \[
  \begin{tikzcd}[ampersand replacement=\&, column sep=huge]
    X^k\oplus Y^{k-1}\rar{
      \begin{bmatrix}
        -d_X^k&0\\
        f^k&d_Y^{k-1}
      \end{bmatrix}
    }\&X^{k+1}\oplus Y^k\rar{
      \begin{bmatrix}
        d_X^{k+1}&0
      \end{bmatrix}
    }\&X^{k+2}
  \end{tikzcd}
  \]
  is zero.  Then, given that $X^k\oplus Y^{k-1}\epi Z^k$ is an
  epimorphism, the composition
  \[
  \begin{tikzcd}[ampersand replacement=\&, column sep=large]
    Z^k\rar[tail]{
      \begin{bmatrix}
        p^k\\q^k
      \end{bmatrix}
    }\&X^{k+1}\oplus Y^k\rar{
      \begin{bmatrix}
        d_X^{k+1}&0
      \end{bmatrix}
    }\&X^{k+2}
  \end{tikzcd}
  \]
  is also zero.  Hence we have $p^kd_X^0=0$.
  
  Since \eqref{eq:in-proof-pushout-diagram} is a pushout diagram,
  there exists a morphism $p^{k+1}\colon Z^{k+1}\to X^{k+2}$ such that
  $u^{k+1}p^{k+1}=-d_X^{k+1}$ and $v^{k+1}p^{k+1}=0$.  Let
  $q^{k+1}\colon Z^{k+1}\mono Y^{k+1}$ be a left $\M$-approximation of
  $Z^{k+1}$.  Given that $-q^{k+1}$ is an $\X$-admissible
  monomorphism, \th\ref{props-of-n-pushout-diagrams} applied to the
  exact category $(\E,\X)$ implies that the morphism $[q^{k+1}\
  p^{k+1}]\colon Z^{k+1}\mono X^{k+2}\oplus Y^{k+1}$ is an
  $\X$-admissible monomorphism.  Set $f^{k+1}:=u^{k+1}q^{k+1}$ and
  $d_Y^k:=v^{k+1}q^{k+1}$.  It is readily verified that the following
  diagram commutes:
  \[
  \begin{tikzcd}[ampersand replacement=\&]
    d_C^{k}:X^{k+1}\oplus Y^k\ar{rr}{
      \begin{bmatrix}
        -d_X^{k+1}&0\\
        f^{k+1}&d_Y^k
      \end{bmatrix}
    }\drar[two heads,swap]{
      \begin{bmatrix}
        u^{k+1}&v^{k+1}
      \end{bmatrix}
    }\&\&X^{k+2}\oplus Y^{k+1}\\
    \&Z^{k+1}\urar[tail,swap]{
      \begin{bmatrix}
        p^{k+1}\\q^{k+1}
      \end{bmatrix}
    }
  \end{tikzcd}
  \]
  Finally, let $d_C^{n-1}\colon X^n\oplus Y^{n-1}\to Y^n$ be a
  cokernel of $Z^{n-1}\mono X^n\oplus Y^{n-1}$. It follows that
  $C\in\Y$, and hence is an $n$-exact sequence.

  Step 2: We claim that the morphism $d_Y^0$ is a $\Y$-admissible
  monomorphism.  Indeed, we have $d_Y^0=v^1q^1$.  Moreover, $v^1$ is
  an $\X$-admissible monomorphism for it is defined by the following
  pushout diagram in the exact category $(\E,\X)$:
  \[
  \begin{tikzcd}
    X^0\rar[tail]{d_X^0}\dar{f^0}&X^{1}\dar{u^1}\\
    Y^0\rar[tail]{v^1}&Z^1
  \end{tikzcd}
  \]
  Therefore $d_Y^0$ is the composition of two $\X$-admissible
  monomorphisms, hence itself is an $\X$-admissible monomorphism.
  Since $\X$-admissible monomorphisms and $\Y$-admissible morphisms
  coincide, the claim follows.  The existence of a commutative diagram
  of the form \eqref{eq:n-pushout-diagram-exact-cat} follows since $C$
  is a complex.  This shows that $(\M,\Y)$ satisfies axiom
  \ref{ax-ex:n-pushout-exists}.  That $(\M,\Y)$ satisfies axiom
  \ref{ax-ex:n-pullback-exists} then follows by duality.
\end{proof}

\section{Frobenius $n$-exact categories}
\label{sec:frobeinus-n-exact-categorties}

In this section we introduce Frobenius $n$-exact categories and show
that their stable categories have the structure of an
$(n+2)$-angulated category; this allows us to introduce algebraic
$(n+2)$-angulated categories. We give a method to construct Frobenius
$n$-exact categories (and thus also algebraic $n$-angulated
categories) from certain $n$-cluster-tilting subcategories of
Frobenius exact categories. We show that our construction is closely
related to the standard construction of $(n+2)$-angulated categories
given in \cite[Thm. 1]{geiss_n-angulated_2013}, see
\th\ref{standard-construction-n-exact}.

\subsection{Reminder on $(n+2)$-angulated categories}
\label{sec:n-angulated-categories}

We follow the exposition of \cite[Sec. 2]{geiss_n-angulated_2013}
although we prefer to set the base case of triangulated categories as
the case $n=1$, in agreement with our convention for abelian and exact
categories. Thus we consider $(n+2)$-angulated categories instead of
`$n$-angulated categories' as done in \loccit.

Let $n$ be a positive integer and $\F$ an additive category equipped
with an automorphism $\Sigma\colon\F\to\F$.  An
\emph{$n$-$\Sigma$-sequence} in $\F$ is a sequence of morphisms
\begin{equation}
  \label{eq:n-angle}
  \begin{tikzcd}
    X^0\rar{\alpha^0}&X^1\rar{\alpha^1}&X^2\rar{\alpha^2}&\cdots\rar{\alpha^n}&X^{n+1}\rar{\alpha^{n+1}}&\Sigma
    X^0.
  \end{tikzcd}
\end{equation}
Its \emph{left rotation} is the $n$-$\Sigma$-sequence
\[
\begin{tikzcd}[column sep=large]
  X^1\rar{\alpha^1}&X^2\rar{\alpha^2}&\cdots\rar{\alpha^n}&X^{n+1}\rar{\alpha^{n+1}}&\Sigma
  X^0\rar{(-1)^n\Sigma\alpha^0}&\Sigma X^1
\end{tikzcd}
\]
A \emph{morphism of $n$-$\Sigma$-sequences} is a commutative diagram
\[
\begin{tikzcd}
  X^0\rar{\alpha^0}\dar{\phi^0}&X^1\rar{\alpha^1}\dar{\phi^1}&X^2\rar{\alpha^2}\dar{\phi^2}&\cdots\rar{\alpha^n}&X^{n+1}\rar{\alpha^{n+1}}\dar{\phi^{n+1}}&\Sigma X^0\dar{\Sigma \phi^0}\\
  Y^0\rar{\beta^0}&Y^1\rar{\beta^1}&Y^2\rar{\beta^2}&\cdots\rar{\beta^n}&Y^{n+1}\rar{\beta^{n+1}}&\Sigma{Y^0}
\end{tikzcd}
\]
where each row is an $n$-$\Sigma$-sequence. The mapping cone $C(\phi)$
of the above morphism is the $n$-$\Sigma$-sequence
\[
\begin{tikzcd}
  X^1\oplus Y^0\rar{\gamma^0}& X^2\oplus Y^1\rar{\gamma^1}& \cdots
  \rar{\gamma^n}& \Sigma X^0\oplus Y^{n+1}\rar{\gamma^{n+1}}& \Sigma
  X^1\oplus \Sigma Y^0
\end{tikzcd}
\]
where for each $k\in\set{0,\dots,n}$ we define
\[
\begin{tikzcd}[ampersand replacement=\&]
  \gamma^k:=
  \begin{bmatrix}
    -\alpha^{k+1}&0\\
    \phi^{k+1}&\beta^k
  \end{bmatrix}\colon X^{k+1}\oplus Y^k\rar\& X^{k+2}\oplus
  Y^{k+1}
\end{tikzcd}
\]
(by convention, $\alpha^{n+2}:=\Sigma\alpha^0$,
$\phi^{n+2}:=\Sigma\phi^0$).  A \emph{weak isomorphism} is a morphism
of $n$-$\Sigma$-sequences such that $\phi^k$ and $\phi^{k+1}$ are
isomorphisms for some $k\in\set{0,1,\dots,n+1}$.  Abusing the
terminology, we say that two $n$-$\Sigma$-sequences are \emph{weakly
  isomorphic} if they are connected by a finite zigzag of weak
isomorphisms.

\begin{definition}
  \th\label{def:n-angulated-category} \cite{geiss_n-angulated_2013} A
  \emph{pre-$(n+2)$-angulated category} is a triple $(\F,\Sigma,\S)$
  where $\F$ is an additive category, $\Sigma\colon\F\to\F$ is an
  automorphism\footnote{ One may consider the more general case when
    $\Sigma\colon\F\to\F$ is an autoequivalence.  As mentioned in
    \cite[2.2 Rmks.]{geiss_n-angulated_2013}, it can be shown that the
    assumption of $\Sigma$ being invertible is but a mild sacrifice,
    \cf \cite[Sec. 2]{keller_sous_1987}.}, and $\S$ is a class of
  $n$-$\Sigma$-sequences (whose members we call $(n+2)$-angles) which
  satisfies the following axioms:
  \begin{enumerate}[label=(F\arabic{*}), ref=(F\arabic{*})]
  \item\label{ax-ang:F1} The class $\S$ is closed under taking direct
    summands and making direct sums.  For all $X\in\F$ the
    \emph{trivial sequence}
    \[
    \begin{tikzcd}
      X\rar{1_X} & X\rar & 0\rar & \cdots \rar & 0 \rar & \Sigma X
    \end{tikzcd}
    \]
    belongs to $\S$.  For each morphism $\alpha$ in $\M$ there exists
    an $n$-angle whose first morphism is $\alpha$.
  \item\label{ax-ang:F2} An $n$-$\Sigma$-sequence of the form
    \eqref{eq:n-angle} is a $(n+2)$-angle if and only if its left
    rotation is a $(n+2)$-angle.
  \item\label{ax-ang:F3} Each commutative diagram
    \[
    \begin{tikzcd}
      X^0\rar{\alpha^0}\dar{\phi^0} & X^1\rar{\alpha^1}\dar{\phi^1} & X^2\rar{\alpha^2}\dar[dotted]{\phi^2}&\cdots\rar{\alpha^n} & X^{n+1} \rar{\alpha^{n+1}}\dar[dotted]{\phi^{n+1}} & \Sigma X^0\dar{\Sigma \phi^0}\\
      Y^0\rar{\beta^0} & Y^1\rar{\beta^1} &
      Y^2\rar{\beta^2}&\cdots\rar{\beta^n} & Y^{n+1} \rar{\beta^{n+1}}
      & \Sigma Y^0
    \end{tikzcd}
    \]
    whose rows are $(n+2)$-angles can be completed to a morphism of
    $n$-$\Sigma$-sequences.
  \end{enumerate}
  We call the class $\S$ a \emph{pre-$(n+2)$-angulation of $\F$ with
    respect to $\Sigma$}.  If moreover the following axiom is
  satisfied, then $(\F,\Sigma,\S)$ is called an \emph{$n$-angulated
    category} and the class $\S$ is called a
  \emph{$(n+2)$-angulation}.
  \begin{enumerate}[label=(F\arabic{*}), ref=(F\arabic{*}), start=4]
  \item\label{ax-ang:F4} In the situation of axiom \ref{ax-ang:F3},
    the morphisms $\phi^2,\dots,\phi^{n+1}$ can be chosen in such a
    way that the mapping cone $C(\phi)$ is a $(n+2)$-angle.
  \end{enumerate}
\end{definition}

An $n$-$\Sigma$-sequence $X$ is \emph{exact} if for all $F\in\F$ and
for all $k\in\ZZ$ the induced sequence of abelian groups
\[
\begin{tikzcd}[column sep=small]
  \F(F,\Sigma^{k-1}X^{n+1})\rar&\F(F,\Sigma^k
  X^0)\rar&\cdots\rar&\F(F,\Sigma^k X^{n+1})\rar&\F(F,\Sigma^{k+1}
  X^0)
\end{tikzcd}
\]
is exact.  We need the following result.

\begin{proposition}
  \th\label{props-pre-n} \cite[Prop. 2.5(c)]{geiss_n-angulated_2013}
  Let $(\F,\Sigma,\S)$ be a pre-$(n+2)$-angulated category. If $\S'$
  is a pre-$(n+2)$-angulation of $\F$ such that $\S'\subseteq \S$, then
  $\S=\S'$.
\end{proposition}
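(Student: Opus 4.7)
The plan is to prove the reverse inclusion $\S \subseteq \S'$. Let $X$ be any $(n+2)$-angle in $\S$ and write its first morphism as $\alpha^0 \colon X^0 \to X^1$. By axiom \ref{ax-ang:F1} applied to $\S'$ there exists an $(n+2)$-angle $Y \in \S'$ with the same first morphism $\alpha^0$. Since $\S' \subseteq \S$, the sequence $Y$ lies in $\S$ as well, so axiom \ref{ax-ang:F3} for $\S$, applied to the commutative square consisting of identities at positions $0$ and $1$, produces a morphism of $n$-$\Sigma$-sequences $\phi \colon X \to Y$ with $\phi^0 = 1_{X^0}$ and $\phi^1 = 1_{X^1}$.

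The heart of the argument is to show that $\phi$ is an isomorphism in every degree; once this is established, $X \cong Y \in \S'$, and closure of $\S'$ under direct summands (part of axiom \ref{ax-ang:F1}) forces $X \in \S'$. To verify that $\phi$ is a termwise isomorphism, I would first prove the auxiliary statement that every $(n+2)$-angle in a pre-$(n+2)$-angulated category is exact, in the sense that applying $\F(Z,-)$ or $\F(-,Z)$ and iterating the rotation axiom \ref{ax-ang:F2} yields a long exact sequence of abelian groups. This is the higher-angulated analogue of the classical fact in the triangulated case ($n=1$), and it is established by the same pattern: axiom \ref{ax-ang:F3}, combined with the trivial angles guaranteed by \ref{ax-ang:F1}, produces the factorizations needed to check exactness at each position, and rotation propagates exactness throughout the infinite sequence.

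With exactness in place, applying $\F(Z,-)$ to $\phi$ yields a morphism of long exact sequences of abelian groups which is the identity at two consecutive spots. A standard five-lemma diagram chase, combined with sufficiently many rotations, then forces $\F(Z, \phi^k)$ to be an isomorphism for every $k \in \ZZ$ and every $Z \in \F$. Yoneda's lemma then implies that each $\phi^k$ is itself an isomorphism, completing the argument.

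The main obstacle is the auxiliary exactness lemma. In the triangulated case this is essentially immediate, but in the higher-angulated setting each long exact sequence segment spans $n+2$ consecutive terms, so propagating exactness at every position requires an inductive bookkeeping that invokes \ref{ax-ang:F3} at carefully chosen degenerate commutative squares rather than a single formal rotation. Once this is set up cleanly the remainder of the proof is routine.
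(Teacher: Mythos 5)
Your reduction to showing that $\phi\colon X\to Y$ is a degreewise isomorphism is where the argument breaks down, and the problem is not one of bookkeeping: the claim is false for $n\geq 2$. Note that your argument only uses $Y\in\S$, so if it worked it would prove that \emph{any} two $(n+2)$-angles in $\S$ with the same first morphism are isomorphic. But an appropriate left rotation of the trivial angle $N\xto{1}N\to 0\to\cdots\to 0\to\Sigma N$ is an $(n+2)$-angle $D$ of the form $0\to 0\to\cdots\to N'\xto{1}N'\to\cdots\to 0$ whose first \emph{two} terms vanish (this is only possible when $n\geq 2$), and by axioms \ref{ax-ang:F1} and \ref{ax-ang:F2} the direct sum $X\oplus D$ lies in $\S$, has the same first morphism $\alpha^0$ as $X$, and is not isomorphic to $X$ when $N'\neq 0$. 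Correspondingly, the five lemma does not apply: after applying $\F(Z,-)$ you know the ladder map is an isomorphism only at the positions $\Sigma^kX^0$ and $\Sigma^kX^1$, and for $n\geq 2$ the five-term window centred at any $X^j$ with $2\leq j\leq n+1$ contains at most three such positions instead of the required four. (For $n=1$ the window around $X^2$ is $X^0\to X^1\to X^2\to\Sigma X^0\to\Sigma X^1$, which is why the familiar triangulated argument you are transporting does go through there.) Your auxiliary exactness lemma, by contrast, is correct and is not the main obstacle; it is \cite[Prop. 2.5(a)]{geiss_n-angulated_2013}, and rotation reduces it to checking a single position.

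The endgame therefore has to be different: rather than proving $X\cong Y$, one shows that $X$ is a \emph{direct summand} of an $(n+2)$-angle assembled from $Y$ and rotated trivial angles, all of which belong to $\S'$ by axioms \ref{ax-ang:F1} and \ref{ax-ang:F2}, and then concludes by the closure of $\S'$ under direct sums and direct summands. This is the same ``add a contractible complement so that the comparison map splits'' device used in \th\ref{existence-of-good-n-pushout-diagrams} and \th\ref{weak-isos-induce-homotopy-equivalences} of the present paper, which states the proposition without proof and refers to \cite[Prop. 2.5(c)]{geiss_n-angulated_2013} for the argument. Your closing appeal to closure under direct summands is thus the right ingredient, but it must carry the entire proof rather than merely convert an (unobtainable) isomorphism $X\cong Y$ into membership of $\S'$.
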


Let $(\F,\Sigma_\F,\S_\F)$ and $(\G,\Sigma_\G,\S_\G)$ be
$(n+2)$-angulated categories. An \emph{exact functor} is a pair
$(F,\eta)$ wehre $F\colon\F\to\G$ is an additive functor and
$\eta\colon F\Sigma_\F\to \Sigma_\G F$ is a natural transformation
such that for every $(n+2)$-angle in $\F$
\[
  \begin{tikzcd}
    X^0\rar{\alpha^0}&X^1\rar{\alpha^1}&X^2\rar{\alpha^2}&\cdots\rar{\alpha^n}&X^{n+1}\rar{\alpha^{n+1}}&\Sigma_\F
    X^0
  \end{tikzcd}
\]
we have that
\[
  \begin{tikzcd}
    FX^0\rar{F\alpha^0}&FX^1\rar{F\alpha^1}&FX^2\rar{F\alpha^2}&\cdots\rar{F\alpha^n}&FX^{n+1}\rar{\beta}&\Sigma_\G
    FX^0
  \end{tikzcd}
\]
is a $(n+2)$-angle in $\G$, where $\beta:=(F\alpha^{n+1})\eta_{X^0}$.

\subsection{Frobenius $n$-exact categories and algebraic
  $(n+2)$-angulated categories}

Our approach in this subsection is analogous to
\cite[Sec. I.2]{happel_triangulated_1988}.

\begin{definition}
  Let $(\M,\X)$ be an $n$-exact category. An object $I\in\M$ is
  $\X$-injective if for every admissible monomorphism $f\colon M\mono
  N$ the sequence of abelian groups
  \[
  \begin{tikzcd}
    \M(N,I)\rar{f\cdot?}&\M(M,I)\rar&0
  \end{tikzcd}
  \]
  is exact.  We say that $(\M,\X)$ \emph{has enough $\X$-injectives}
  if for every object $M\in\M$ there exists $\X$-injective objects
  $I^1,\dots,I^n$ and an admissible $n$-exact sequence
  \[
  \begin{tikzcd}
    M\rar[tail]&I^1\rar&\cdots\rar&I^n\rar[two heads]&N
  \end{tikzcd}
  \]
  The notion of \emph{having enough $\X$-projectives} is defined
  dually.
\end{definition}

\begin{remark}
  If $n=1$, then the notions of $\X$-injectively cogenerated exact
  category and of an exact category with enough $\X$-injectives
  coincide. On the other hand, if $n\geq2$, then there are $n$-exact
  categories which are $\X$-injectively cogenerated but do not have
  enough $\X$-injectives.  Indeed, let $\Lambda$ be a
  finite-dimensional selfinjective algebra. By
  \th\ref{recognition-thm-abelian} we know that an $n$-cluster-tilting
  subcategories $\M$ of $\mod\Lambda$ is an injectively cogenerated
  $n$-abelian category. However, $\M$ has enough injectives if and
  only if $\M$ is stable under taking $n$-th cosyzygies, \cf
  \th\ref{standard-construction-n-exact}.
\end{remark}

Let $(\M,\X)$ be an $n$-exact category. For objects $M,N\in\M$, we
denote by $I(M,N)$ the subgroup of $\M(M,N)$ of morphisms which factor
through an $\X$-injective object. The \emph{$\X$-injectively stable
  category of $\M$}, denoted by $\overline{\M}$, is the category with
the same objects as $\M$ and with morphisms groups defined by
\[
\overline{\M}(M,N) := \M(M,N)/I(M,N).
\]
If $\alpha\colon M\to N$ is a morphism in $\M$, we denote its
equivalence class in $\overline{\M}(M,N)$ by $\overline{\alpha}$.  It
easy to see that $\overline{\M}$ is also an additive category.  The
\emph{$\X$-projectively stable category of $\M$}, denoted by $\sM$, is
defined dually.

\begin{definition}
  We say that an $n$-exact category $(\M,\X)$ is \emph{Frobenius} if
  it has enough $\X$-injectives, enough $\X$-projectives, and if
  $\X$-injective and $\X$-projective objects coincide.  In this case
  one has $\overline{\M}=\sM$, and we refer to this category as the
  \emph{stable category of $\M$}.
\end{definition}

\begin{remark}
  In keeping the convention of the classical theory, if $(\M,\X)$ is a
  Frobenius $n$-exact category, then we denote its stable category by
  $\sM$.
\end{remark}

Our aim is to show that the stable category of a Frobenius $n$-exact
category, has a natural structure of a $(n+2)$-angulated category.  We
begin with the construction of an autoequivalence
$\Sigma\colon\sM\to\sM$.  The following result should be compared with
\cite[Lemma I.2.2]{happel_triangulated_1988}.

\begin{lemma}
  \th\label{shift-well-defined-on-objects} Let $(\M,\X)$ be an
  $n$-exact category.  Suppose that we are given two admissible
  $n$-exact sequences $X$ and $Y$ such that $X^0=Y^0$ and, for
  $k\in\set{1,n}$, the objects $X^k$ and $Y^k$ are $\X$-injective.
  Then, $X^{n+1}$ and $Y^{n+1}$ are isomorphic in $\overline{\M}$.
\end{lemma}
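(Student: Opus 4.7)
The plan is to construct comparison morphisms of complexes $f\colon X\to Y$ and $g\colon Y\to X$, both extending the identity on $X^0=Y^0$, and then use the \th\ref{comparison-lemma} to prove that $f^{n+1}$ and $g^{n+1}$ induce mutually inverse isomorphisms in $\overline{\M}$.

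First I would construct $f\colon X\to Y$ inductively. Set $f^0:=1_{X^0}$. Since $d_X^0$ is an admissible monomorphism and $Y^1$ is $\X$-injective, the morphism $d_Y^0\colon X^0\to Y^1$ factors as $d_Y^0=d_X^0f^1$ for some $f^1\colon X^1\to Y^1$, which is exactly the compatibility condition in degree $0$. For $1\leq k\leq n-1$, given $f^k$, the morphism $f^kd_Y^k\colon X^k\to Y^{k+1}$ satisfies $d_X^{k-1}(f^kd_Y^k)=f^{k-1}d_Y^{k-1}d_Y^k=0$; since $X$ is an admissible $n$-exact sequence, $d_X^k$ is a weak cokernel of $d_X^{k-1}$, so a lift $f^{k+1}\colon X^{k+1}\to Y^{k+1}$ with $d_X^kf^{k+1}=f^kd_Y^k$ exists. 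Finally, the existence of $f^{n+1}$ follows from the fact that $d_X^n$ is a cokernel of $d_X^{n-1}$. By the completely symmetric construction, using that $X^1$ is $\X$-injective to produce $g^1$, there exists a morphism of complexes $g\colon Y\to X$ with $g^0=1_{X^0}$.

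Next I would analyse the composition $fg\colon X\to X$, which satisfies $(fg)^0=1_{X^0}=(1_X)^0$. Because every $d_X^{k+1}$ is a weak cokernel of $d_X^k$ for $0\leq k\leq n-1$, the \th\ref{comparison-lemma} provides a homotopy $h\colon fg\to 1_X$ with $h^1=0$. Evaluating the homotopy identity in degree $n+1$ yields
\[
(fg)^{n+1}-1_{X^{n+1}}=h^{n+1}d_X^n+d_X^{n+1}h^{n+2}=h^{n+1}d_X^n,
\]
since the second summand factors through $X^{n+2}=0$. Hence $(fg)^{n+1}-1_{X^{n+1}}$ factors through the $\X$-injective object $X^n$, so $\overline{(fg)^{n+1}}=1_{X^{n+1}}$ in $\overline{\M}$. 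The same argument applied to $gf$ gives $\overline{(gf)^{n+1}}=1_{Y^{n+1}}$, and therefore $\overline{f^{n+1}}\colon X^{n+1}\to Y^{n+1}$ is an isomorphism in $\overline{\M}$ with inverse $\overline{g^{n+1}}$.

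There is no real obstacle; the only point requiring attention is the inductive construction of $f$, where $\X$-injectivity is used exactly once (to produce $f^1$) and the weak-cokernel property inherited from the admissible $n$-exactness of $X$ handles all the remaining steps. The injectivity of the middle terms of the \emph{target} sequence $Y$ is in fact not needed beyond $Y^1$, but one invokes the symmetric hypothesis on $X^1$ when constructing $g$ in the opposite direction.
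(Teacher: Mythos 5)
Your proof is correct and follows essentially the same route as the paper: build comparison morphisms $f\colon X\to Y$ and $g\colon Y\to X$ extending $1_{X^0}$ using the injectivity of $Y^1$ and $X^1$ respectively, then apply the \th\ref{comparison-lemma} to $fg$ and $gf$ so that $(fg)^{n+1}-1$ factors through the $\X$-injective object $X^n$ (and dually through $Y^n$), hence vanishes in $\overline{\M}$.
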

\begin{proof}
  Since $X^1$ and $Y^1$ are $\X$-injective and $X$ and $Y$ are
  admissible $n$-exact sequences, we can construct a commutative
  diagram
  \[
  \begin{tikzcd}
    X\dar{f}&X^0\rar[tail]\dar[equals]&X^1\rar\dar[dotted]&\cdots\rar&X^n\rar[two heads]\dar[dotted]&X^{n+1}\dar[dotted]\\
    Y\dar{g}&Y^0\rar[tail]\dar[equals]&Y^1\rar\dar[dotted]&\cdots\rar&Y^n\rar[two heads]\dar[dotted]&Y^{n+1}\dar[dotted]\\
    X&X^0\rar[tail]&X^1\rar&\cdots\rar&X^n\rar[two heads]&X^{n+1}
  \end{tikzcd}
  \]
  By the \th\ref{comparison-lemma} there exists a morphism $h\colon
  X^{n+1}\to X^n$ such that
  \[
  f^{n+1}g^{n+1}-1=hd_X^n.
  \]
  Since $X^n$ is $\X$-injective, we have
  $\overline{f^{n+1}g^{n+1}}=\overline{1}$.  A similar argument shows
  that $\overline{g^{n+1}f^{n+1}}=\overline{1}$.  Therefore $X^{n+1}$
  and $Y^{n+1}$ are isomorphic in $\overline{\M}$.
\end{proof}

Let $(\M,\X)$ be an $n$-exact category with enough $\X$-injectives.
For each $M\in\M$ we choose an admissible $n$-exact sequence
\[
\begin{tikzcd}
  I(M)\colon M\rar[tail]&I^1(M)\rar&\cdots\rar&I^n(M)\rar[two
  heads]&SM.
\end{tikzcd}
\]
such that for each $k\in\set{1,\dots,n}$ the objects $I^k(M)$ are
$\X$-injective.  It follows from
\th\ref{shift-well-defined-on-objects} that the isomorphism class of
$SM$ in $\sM$ does not depend on the choice of $I(M)$.

Let $f\colon M\to N$ be a morphism in $\M$.  Since $I^1(N)$ is
$\X$-injective, there is a commutative diagram of admissible $n$-exact
sequences
\[
\begin{tikzcd}
  I(M)\dar{I(f)}&M\rar[tail]\dar{f}&I^1(M)\dar{I^1(f)}\rar&\cdots\rar&I^n(M)\rar[two heads]\dar{I^n(f)}&SM\dar{Sf}\\
  I(N)&N\rar[tail]&I^1(N)\rar&\cdots\rar&I^n(N)\rar[two heads]&SY
\end{tikzcd}
\]
The \th\ref{comparison-lemma} implies that $\overline{Sf}$ does not
depend on the choices of $I^1(f),\dots,I^n(f)$.  It is readily
verified that the correspondences $M\mapsto SM$ and
$f\mapsto \overline{Sf}$ define a functor
$\Sigma\colon\overline{\M}\to\overline{\M}$. Note that the
construction of $\Sigma$ involves choices of both $I(M)$ and $I(f)$.

The proof of the following result is straightforward, \cf
\cite[Prop. 2.2]{happel_triangulated_1988} and the remark following. We leave the details to
the reader.

\begin{proposition}
  Let $(\M,\X)$ be a Frobenius $n$-exact category.  Then
  $\Sigma\colon\sM\mapsto\sM$ is an autoequivalence.  Moreover, any
  two choices of assignments $M\mapsto SM$ and $M\mapsto S'M$ yield
  isomorphic functors.
\end{proposition}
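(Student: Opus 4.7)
The plan is to dualize the construction of $\Sigma$ to build a candidate quasi-inverse, and then exploit the Frobenius assumption to identify the two kinds of resolutions so that the unit and counit isomorphisms are essentially tautologies. First I would dualize the construction preceding the statement: since $\M$ has enough $\X$-projectives, for each $M\in\M$ I choose an admissible $n$-exact sequence
\[
\begin{tikzcd}
P(M)\colon TM\rar[tail] & P^1(M) \rar & \cdots \rar & P^n(M) \rar[two heads] & M
\end{tikzcd}
\]
in which each $P^k(M)$ is $\X$-projective. The dual of \th\ref{shift-well-defined-on-objects} ensures that $TM$ is determined up to canonical isomorphism in $\sM$, and the dual of the argument used to define $\Sigma$ on morphisms (a direct application of the \th\ref{comparison-lemma}) promotes $T$ to a functor $T\colon\sM\to\sM$.

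Second, I would show that $T$ is quasi-inverse to $\Sigma$. This is where the Frobenius hypothesis enters crucially: because $\X$-projective and $\X$-injective objects coincide, the sequence $P(M)$ has $\X$-injective middle terms and therefore is admissible as an ``$I$-sequence'' for $TM$. Hence it may be used to compute $\Sigma(TM)$, giving $\Sigma(TM)\cong M$ by \th\ref{shift-well-defined-on-objects}. Dually, the defining $I$-sequence of $M$ has $\X$-projective middle terms and computes $T(\Sigma M)\cong M$. To upgrade these object-level isomorphisms to natural isomorphisms $\Sigma T\cong 1_{\sM}$ and $T\Sigma\cong 1_{\sM}$, I would invoke the \th\ref{comparison-lemma} once more to compare, for each morphism $f\colon M\to N$ in $\M$, the two lifts of $f$ provided by the different resolutions; the difference factors through an $\X$-injective object and therefore vanishes in $\sM$.

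Finally, for the uniqueness statement, let $M\mapsto SM$ and $M\mapsto S'M$ be two choices of $I$-sequences $I(M)$ and $I'(M)$, with corresponding functors $\Sigma$ and $\Sigma'$. Since both $I(M)$ and $I'(M)$ start at $M$ and have $\X$-injective middle terms, the argument used in \th\ref{shift-well-defined-on-objects} furnishes mutually inverse morphisms $SM\rightleftarrows S'M$ in $\sM$, giving an isomorphism $\eta_M\colon\Sigma M\to\Sigma' M$. The \th\ref{comparison-lemma} shows both that $\eta_M$ is independent of the chosen comparison morphism modulo morphisms factoring through $\X$-injectives, and that, for any morphism $f\colon M\to N$ in $\M$, the identity $\eta_N\circ\overline{Sf}=\overline{S'f}\circ\eta_M$ holds in $\sM$. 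This produces the required natural isomorphism $\Sigma\cong\Sigma'$.

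The main obstacle is the naturality bookkeeping in the second and third paragraphs; it is routine but requires one to consistently lift each morphism through two different admissible $n$-exact sequences and identify the two lifts modulo $I(-,-)$ via the \th\ref{comparison-lemma}. No new ideas beyond those already used to define $\Sigma$ on morphisms are needed.
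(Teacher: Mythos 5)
Your proposal is correct. Note that the paper itself gives no proof of this proposition: it declares the result straightforward and defers to Happel's classical argument (\cite[Prop.~2.2]{happel_triangulated_1988}), so there is nothing to compare line by line. What you write is exactly the intended adaptation of that argument: the Frobenius hypothesis lets the admissible $n$-exact sequence $TM\mono P^1(M)\to\cdots\to P^n(M)\epi M$ with $\X$-projective (hence $\X$-injective) middle terms be read in both directions, so that by \th\ref{shift-well-defined-on-objects} and its dual it simultaneously computes $\Sigma(TM)\cong M$ and $T(\Sigma M)\cong M$ in $\sM$, and the \th\ref{comparison-lemma} handles both the independence of choices and the naturality squares, since any two lifts of a morphism through such resolutions differ by a homotopy whose relevant component factors through an $\X$-injective object. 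The uniqueness-of-$\Sigma$ argument is the same mechanism applied to two choices of $I$-sequences. No gaps.
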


\begin{remark}
  Let $(\M,\X)$ be a Frobenius $n$-exact category.  In analogy with
  \cite[Sec. 2]{keller_sous_1987} we assume that
  $\Sigma\colon\sM\to\sM$ is not only an autoequivalence but an
  automorphism of $\sM$ (see also \cite[Rmk. 2.2(d)]{geiss_n-angulated_2013}).
\end{remark}

Let $(\M,\X)$ be a Frobenius $n$-exact category.  We define a class
$\S=\S(\X)$ of $n$-$\Sigma$-sequences in $\sM$ as follows.  Let
$\alpha^0\colon X^0\to X^1$ be a morphism in $\M$.  Then, for every
morphism of $n$-exact sequences of the form
\[
\begin{tikzcd}
  X^0\rar[tail]\dar{\alpha^0}&I^1(X^0)\rar\dar&\cdots\rar&I^n(X^0)\rar[two heads]\dar&SX^0\dar[equals]\\
  X^1\rar[tail]{\alpha^1}&X^2\rar{\alpha^2}&\cdots\rar{\alpha^n}&X^{n+1}\rar[two
  heads]{\alpha^{n+1}}&SX^0
\end{tikzcd}
\]
the sequence
\[
\begin{tikzcd}
  X^0\rar{\overline{\alpha^0}}&X^1\rar{\overline{\alpha^1}}&X^2\rar{\overline{\alpha^2}}&\cdots\rar{\overline{\alpha^n}}&X^{n+1}\rar{\overline{\alpha^{n+1}}}&\Sigma
  X^0
\end{tikzcd}
\]
is called a \emph{standard $(n+2)$-angle}. An $n$-$\Sigma$-sequence
$Y$ in $\sM$ belongs to $\S$ if and only if it is isomorphic to a
standard $(n+2)$-angle.

We need the following result, which shows how admissible $n$-exact
sequences give rise to standard $(n+2)$-angles, \cf \cite[Lemma
I.2.7]{happel_triangulated_1988}.

\begin{lemma}
  \th\label{n-exact-sequence-induce-n+2-angle} Let $(\M,\X)$ be a
  Frobenius $n$-exact category and $X$ an admissible $n$-exact
  sequence in $\M$.  The following statements hold:
  \begin{enumerate}
  \item\label{it:n-exact-sequence-induce-n+2-angle-diagram} There
    exists a commutative diagram
    \begin{equation}
      \label{eq:n-exact-sequence-induce-n+2-angle-diagram}
      \begin{tikzcd}
        X\dar{f}&X^0\rar[tail]\dar[equals]&X^1\rar\dar&\cdots\rar&X^n\rar[two heads]\dar&X^{n+1}\dar\\
        I(X^0)&X^0\rar[tail]&I^1(X^0)\rar&\cdots\rar&I^n(X^0)\rar[two
        heads]&SX^0
      \end{tikzcd}
    \end{equation}
  \item\label{it:n-exact-sequence-induce-n+2-angle} The sequence
    \[
    \begin{tikzcd}[column sep=large]
      X^0\rar{\overline{d^0}} & X^1\rar{\overline{d^1}} & \cdots
      \rar{\overline{d^n}} & X^{n+1} \rar{(-1)^n\overline{f^{n+1}}} &
      \Sigma X^0
    \end{tikzcd}
    \]
    is a standard $(n+2)$-angle.
  \end{enumerate}
\end{lemma}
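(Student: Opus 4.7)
For part (i), the strategy is to construct $f = (f^0, f^1, \ldots, f^{n+1})$ inductively, beginning with $f^0 := 1_{X^0}$. Since $d_X^0 \colon X^0 \mono X^1$ is an admissible monomorphism and $I^1(X^0)$ is $\X$-injective, the canonical morphism $X^0 \to I^1(X^0)$ extends along $d_X^0$ to a morphism $f^1 \colon X^1 \to I^1(X^0)$ making the first square commute. Inductively, suppose $f^0, \ldots, f^k$ have been constructed compatibly with the differentials for some $1 \leq k \leq n$. Then the composite $d_X^{k-1} (f^k d_{I(X^0)}^k)$ equals $f^{k-1} d_{I(X^0)}^{k-1} d_{I(X^0)}^k$ by commutativity of the previous square, and this vanishes because $I(X^0)$ is a complex. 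Since $X$ is an admissible $n$-exact sequence, $d_X^k$ is a weak cokernel of $d_X^{k-1}$, so $f^k d_{I(X^0)}^k$ factors through $d_X^k$, yielding the next map $f^{k+1}$. For $k = n$ the same argument, with $SX^0$ playing the role of $I^{n+1}(X^0)$, produces $f^{n+1} \colon X^{n+1} \to SX^0$. The \th\ref{comparison-lemma} ensures that $f$ is determined up to homotopy, so in particular that $\overline{f^{n+1}}$ does not depend on the choices made.

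For part (ii), the plan is to realize the target $n$-$\Sigma$-sequence as a standard $(n+2)$-angle by direct construction. Apply axiom \ref{ax-ex:n-pushout-exists} to $I(X^0)$ along $d_X^0 \colon X^0 \to X^1$ and invoke \th\ref{props-of-n-pushout-diagrams} to obtain an admissible $n$-exact sequence $Z \colon X^1 \mono Z^2 \to \cdots \to Z^{n+1} \epi SX^0$ together with a morphism of admissible $n$-exact sequences $I(X^0) \to Z$ whose first column is $d_X^0$ and whose last column is $1_{SX^0}$. By definition the resulting $n$-$\Sigma$-sequence $X^0 \to X^1 \to Z^2 \to \cdots \to Z^{n+1} \to \Sigma X^0$ is a standard $(n+2)$-angle, so it suffices to exhibit an isomorphism in $\sM$ between this standard angle and the target sequence.

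To produce the comparison, combine the morphism $f \colon X \to I(X^0)$ from (i) with the universal property of the $n$-pushout (\th\ref{universal-property-of-n-pushout-diagrams-n-exact}) to obtain a morphism of admissible $n$-exact sequences relating $Z$ to a rotated companion of $X$ whose last object is $SX^0$ and whose last morphism is built from $f^{n+1}$. Appealing to \th\ref{X-closed-under-equivalences} together with the vanishing in $\sM$ of any morphism that factors through an $\X$-injective (which is exactly what disappears in the passage from $\M$ to $\sM$), the intermediate components of this morphism become isomorphisms in $\sM$, yielding an isomorphism of the two $n$-$\Sigma$-sequences in $\sM$.

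The main obstacle is the sign bookkeeping. The factor $(-1)^n$ originates from the minus sign on the diagonal of the mapping cone differential in \eqref{differential_mapping_cone}, and propagates multiplicatively across the $n$ differentials of the pushout-and-comparison procedure. Only by carefully tracking this alternating sign at each stage does the connecting morphism of the standard $(n+2)$-angle come out precisely as $(-1)^n \overline{f^{n+1}}$ rather than $\overline{f^{n+1}}$; this is the delicate point that requires genuine computation with the matrix differentials of \eqref{differential_mapping_cone}, as opposed to mere diagram chasing.
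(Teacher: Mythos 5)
Your part (i) is correct and is essentially the paper's argument: extend the canonical map $X^0\to I^1(X^0)$ over the admissible monomorphism $d_X^0$ using $\X$-injectivity, then propagate along the weak-cokernel property of the $n$-exact sequence $X$, with the \th\ref{comparison-lemma} giving uniqueness up to homotopy. Part (ii), however, breaks at the comparison step. You want to relate the pushout $Z$ of $I(X^0)$ along $d_X^0$ to ``a rotated companion of $X$'' via \th\ref{universal-property-of-n-pushout-diagrams-n-exact}, but that proposition compares the pushout against morphisms of \emph{admissible $n$-exact sequences} out of $I(X^0)$, and the rotated sequence $X^1\to X^2\to\cdots\to X^{n+1}\xto{f^{n+1}}SX^0$ is in general not an admissible $n$-exact sequence in $\M$ (its first map need not be a monomorphism and $f^{n+1}$ need not be an epimorphism; it only becomes an $(n+2)$-angle after passing to $\sM$). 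Moreover, the only morphism you have in hand, $f\colon X\to I(X^0)$, points \emph{into} $I(X^0)$, whereas the universal property requires a morphism \emph{out of} $I(X^0)$ into the comparison object. So the asserted ``morphism of admissible $n$-exact sequences relating $Z$ to a rotated companion of $X$'' is not produced by the stated ingredients.

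The missing device is the mapping cone $C(f)$ of the morphism $f$ from part (i). Since $f^0=1_{X^0}$, the dual of \th\ref{props-of-n-pushout-diagrams} shows that
\[
X^1\to X^2\oplus I^1(X^0)\to\cdots\to X^{n+1}\oplus I^n(X^0)\to SX^0
\]
is an admissible $n$-exact sequence, and the signed inclusions $g^k=[\,0\ \ (-1)^{k-1}\,]^\top\colon I^k(X^0)\to X^{k+1}\oplus I^k(X^0)$, together with $d_X^0$ on the left, $1_{SX^0}$ on the right, and $(-1)^n d_C^{n-1}$ as the last differential, form a commutative diagram of exactly the shape defining a standard $(n+2)$-angle; no auxiliary pushout $Z$ and no universal property are needed. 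Since each $I^k(X^0)$ vanishes in $\sM$, this standard angle is the target sequence, and the single sign $(-1)^n$ on the last map is forced by the alternating signs of the $g^k$ needed to make the squares commute against the $-d_X$ entries of \eqref{differential_mapping_cone} (your instinct about where the sign comes from is right, but the computation lives inside the cone, which your write-up never actually constructs or proves admissible). Finally, \th\ref{X-closed-under-equivalences} is not the relevant tool for identifying $X^{k+1}\oplus I^k(X^0)$ with $X^{k+1}$ in $\sM$: that identification is simply the vanishing of $\X$-injective objects in the stable category.
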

\begin{proof}
  \eqref{it:n-exact-sequence-induce-n+2-angle-diagram} The existence
  of the required commutative diagram follows from the fact that
  $I^1(M)$ is $\X$-injective and $X$ is an $n$-exact sequence.

  \eqref{it:n-exact-sequence-induce-n+2-angle} The dual of
  \th\ref{props-of-n-pushout-diagrams} implies that the mapping cone
  $C(f)$ of the right most part of
  \eqref{eq:n-exact-sequence-induce-n+2-angle-diagram} is an
  admissible $n$-exact sequence.  For each $k\in\set{1,\dots,n}$ we
  define
  \[
  \begin{tikzcd}[ampersand replacement=\&]
    g^k:=\begin{bmatrix} 0&(-1)^{k-1}
    \end{bmatrix}^\top\colon I^k(X^0)\rar\&X^{k+1}\oplus I^k(X^0).
  \end{tikzcd}
  \]
  It readily follows that the diagram
  \[
  \begin{tikzcd}
    X^0\rar[tail]\dar{\alpha^0}&I^1(X^0)\rar\dar{g^1}&\cdots\rar&I^n(X^0)\rar[two heads]\dar{g^n}&SX^0\dar[equals]\\
    X^1\rar[tail]{d_C^{-1}}&X^2\oplus
    I^1(X^0)\rar{d_C^0}&\cdots\rar{d_C^{n-2}}&X^{n+1}\oplus
    I^n(X^0)\rar[two heads]{(-1)^nd_C^{n-1}}&SX^0
  \end{tikzcd}
  \]
  is commutative, and that it gives rise to the standard $(n+2)$-angle
  \[
  \begin{tikzcd}[column sep=large]
    X^0\rar{\overline{d^0}} & X^1\rar{\overline{d^1}} & \cdots
    \rar{\overline{d^n}} & X^{n+1} \rar{(-1)^n\overline{f^{n+1}}} &
    \Sigma X^0.\qedhere
  \end{tikzcd}
  \]
\end{proof}

The following result is a higher analog of
\cite[Thm. I.2.6]{happel_triangulated_1988}.

\begin{theorem}
  \th\label{n-happel-theorem} Let $(\M,\X)$ a Frobenius $n$-exact
  category.  Then, $(\sM,\Sigma,\S(\X))$ is an $(n+2)$-angulated
  category.
\end{theorem}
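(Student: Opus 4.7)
The plan is to verify the four axioms \ref{ax-ang:F1}--\ref{ax-ang:F4} of \th\ref{def:n-angulated-category} for the triple $(\sM,\Sigma,\S(\X))$ in turn, adapting Happel's strategy from the classical case $n=1$ by using the machinery developed for $n$-exact categories. For \ref{ax-ang:F1}, closure of $\S(\X)$ under direct sums and summands of $n$-$\Sigma$-sequences follows, via \th\ref{n-exact-sequence-induce-n+2-angle}, from the analogous statements \th\ref{X-is-closed-under-direct-sums} and \th\ref{X-is-closed-under-direct-summands} for admissible $n$-exact sequences. The trivial sequence at $X$ is recovered as the standard $(n+2)$-angle associated to $1_X$, using $I(X)$ itself as the admissible $n$-exact sequence whose leftmost morphism is $1_X$. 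For every morphism $\alpha^0\colon X^0\to X^1$, axiom \ref{ax-ex:n-pushout-exists} applied to $I(X^0)$ along $\alpha^0$ produces an admissible $n$-exact sequence whose first morphism is $\alpha^0$, and \th\ref{n-exact-sequence-induce-n+2-angle} then delivers the required standard $(n+2)$-angle.

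For \ref{ax-ang:F2}, starting from a standard $(n+2)$-angle arising from a diagram $f\colon X\to I(X^0)$, I would exhibit its left rotation also as a standard $(n+2)$-angle by realizing a second admissible $n$-exact sequence around $d_X^1$ via the $n$-pushout of $I(X^1)$ along $d_X^1$ from axiom \ref{ax-ex:n-pushout-exists}. The sign $(-1)^n$ appearing in the rotation matches the sign already present in part \eqref{it:n-exact-sequence-induce-n+2-angle} of \th\ref{n-exact-sequence-induce-n+2-angle}, and the comparison between the two realizations uses \th\ref{X-closed-under-equivalences} together with the uniqueness up to stable equivalence provided by \th\ref{shift-well-defined-on-objects}.

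For \ref{ax-ang:F3}, given two standard $(n+2)$-angles induced by $\alpha^0$ and $\beta^0$ together with morphisms $\phi^0,\phi^1$ in $\M$ satisfying $\overline{\phi^0\beta^0}=\overline{\alpha^0\phi^1}$, I would first lift this stable equation to an honest equation in $\M$ by adjusting $\phi^1$ by a morphism factoring through an $\X$-injective, which is possible because $I^1(Y^0)$ is $\X$-injective. The \th\ref{comparison-lemma} applied to the two resulting admissible $n$-exact sequences then produces the remaining components $\phi^2,\ldots,\phi^{n+1}$, and compatibility with $\Sigma\phi^0$ in the rightmost square follows from \th\ref{shift-well-defined-on-objects}.

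The principal obstacle is \ref{ax-ang:F4}: given the morphism of standard $(n+2)$-angles built in \ref{ax-ang:F3}, one must choose $\phi^2,\ldots,\phi^{n+1}$ so that the mapping cone $C(\phi)$ in the sense of \th\ref{def:n-angulated-category} is itself a standard $(n+2)$-angle. The strategy is to realize both angles by admissible $n$-exact sequences $X$ and $Y$ in $\M$, lift $(\phi^0,\phi^1)$ to a genuine morphism of complexes $\phi\colon X\to Y$, and form the mapping cone $C(\phi)$ in $\CC^n(\M)$. An extension of the argument in \th\ref{props-of-n-pushout-diagrams} together with \th\ref{X-is-closed-under-direct-sums} and axiom \ref{ax-ex:admissible-monos-closed-under-composition} should show that this mapping cone is again an admissible $n$-exact sequence, whose associated standard $(n+2)$-angle from \th\ref{n-exact-sequence-induce-n+2-angle} can be matched with $C(\phi)$ after carefully identifying the three connecting morphisms to $\Sigma X^0$, $\Sigma X^1$ and $\Sigma Y^0$ and tracking the signs dictated by the definition of the mapping cone for $n$-$\Sigma$-sequences. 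This sign-and-boundary bookkeeping is where the bulk of the technical work lies, and in the end \th\ref{props-pre-n} can be invoked to streamline the identification if needed.
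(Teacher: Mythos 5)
There is a genuine gap in your treatment of axiom \ref{ax-ang:F3}. You propose to ``lift the stable equation to an honest equation in $\M$ by adjusting $\phi^1$ by a morphism factoring through an $\X$-injective.'' This cannot be done in general: the hypothesis only gives that $\delta:=\alpha^0\phi^1-\phi^0\beta^0$ factors through some $\X$-injective $I$, say $\delta=uv$ with $u\colon X^0\to I$. Injectivity of $I$ lets you extend $u$ along the admissible monomorphism $X^0\mono I^1(X^0)$, so that $\delta=d_{IX}^0h^1$ for some $h^1\colon I^1(X^0)\to Y^1$ --- but to absorb $\delta$ into $\phi^1$ you would need $\delta$ to factor through $\alpha^0$, and $\alpha^0$ is an arbitrary morphism of $\M$ (only $d_{IX}^0$ is an admissible mono), so no such factorization is available; adjusting $\phi^0$ instead would require $\delta$ to factor through $\beta^0$, which is equally unavailable. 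The paper's proof instead keeps the correction term $h^1$ and constructs $\phi^2,\dots,\phi^{n+1}$ \emph{simultaneously} with homotopy components $h^2,\dots,h^{n+1}$ by lifting, at each step, against the weak cokernels of the mapping cone $C(f)$ of $f\colon I(X^0)\to X$; the strict identity $\alpha^{n+1}S\phi^0=\phi^{n+1}\beta^{n+1}$ in the last square then comes from the cokernel property of $d_C^{n-1}$, not from \th\ref{shift-well-defined-on-objects}. Note also that the \th\ref{comparison-lemma} is the wrong tool here: it produces a homotopy between two \emph{given} morphisms of complexes agreeing in degree $0$, not the extension of a partially defined morphism.

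A second, smaller problem occurs in \ref{ax-ang:F4}. If $X$ and $Y$ are the admissible $n$-exact sequences (with $n+2$ terms each) realizing the two standard $(n+2)$-angles and $\phi\colon X\to Y$ is a morphism of complexes, then the mapping cone of $\phi$ in $\CC(\M)$ has $n+3$ terms, so it cannot literally be the admissible $n$-exact sequence realizing the $(n+2)$-angle $C(\overline{\phi})$; the degree bookkeeping of the angulated mapping cone involves $\Sigma$ and does not match the complex-level cone. The paper circumvents this by forming a morphism $r\colon X\oplus I(Y^0)\to T(SX^0,0)\oplus Y$ whose component in degree $n+1$ is an isomorphism (so that $C(r)$ is admissible by \th\ref{props-of-n-pushout-diagrams}), and then mapping $C(r)$ into $I(X^1)\oplus I(Y^0)$ and invoking \th\ref{n-exact-sequence-induce-n+2-angle}. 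Your outline for \ref{ax-ang:F1} and the forward direction of \ref{ax-ang:F2} is essentially the paper's argument, but as written the proposal does not contain a workable proof of \ref{ax-ang:F3} or \ref{ax-ang:F4}.
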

\begin{proof}
  We need to show that $(\sM,\Sigma,\S(\X))$ satisfies the axioms of
  $(n+2)$-angulated categories, see \th\ref{def:n-angulated-category}.
  For axioms \ref{ax-ang:F1}, \ref{ax-ang:F2} and \ref{ax-ang:F3}, our
  proof is an adaptation of the proof of
  \cite[Thm. I.2.6]{happel_triangulated_1988}.

  \ref{ax-ang:F1} Firstly, recall that $\X$ is closed under direct
  sums and direct summands, see
  \th\ref{X-is-closed-under-direct-sums,X-is-closed-under-direct-summands}.
  It is easy to show that this implies that the same is true for $\S$.
  Secondly, the diagram
  \[
  \begin{tikzcd}
    M\rar[tail]\dar[equals] & I^1(M)\rar\dar[equals]&\cdots \rar &
    I^n(M) \rar[two heads]\dar[equals] & SM\dar[equals]\\
    X\rar[tail] & I^1(M)\rar&\cdots \rar & I^n(M) \rar[two heads] & SM
  \end{tikzcd}
  \]
  shows that the $n$-$\Sigma$-sequence
  \[
  \begin{tikzcd}
    X\rar{1}&X\rar&0\rar&\cdots\rar&0\rar&\Sigma X.
  \end{tikzcd}
  \]
  is a standard $(n+2)$-angle.  Finally, by the definition of the
  class $\S$, every morphism is the first morphism of some standard
  $(n+2)$-angle.  This shows that $(\sM,\Sigma,\S(\X))$ satisfies
  axiom \ref{ax-ang:F1}.

  \ref{ax-ang:F2} It suffices to consider the case of standard
  $(n+2)$-angles.  Let
  \begin{equation}
    \label{eq:pushout-n-exact-proof}
    \begin{tikzcd}
      I(X^0)\dar{f}&X^0\rar[tail]\dar{\alpha^0} & I^1(X^0)\rar\dar&\cdots \rar & I^n(X^0) \rar[two heads]\dar & SX^0\dar[equals]\\
      X&X^1\rar[tail]{\alpha^1} & X^2\rar{\alpha^2} &\cdots
      \rar{\alpha^n} & X^{n+1} \rar[two heads]{\alpha^{n+1}} & SX^0
    \end{tikzcd}
  \end{equation}
  be a commutative diagram giving rise to the standard $(n+2)$-angle
  \begin{equation}
    \label{eq:standard-angle-proof}
    \begin{tikzcd}
      X^0\rar{\overline{\alpha^0}} & X^1\rar{\overline{\alpha^1}} &
      \cdots \rar{\overline{\alpha^n}} & X^{n+1}
      \rar{\overline{\alpha^{n+1}}} & \Sigma X^0.
    \end{tikzcd}
  \end{equation}
  We need to show that its left rotation is a standard $(n+2)$-angle.
  
  Firstly, by the definition of $\Sigma\colon\sM\to\sM$ we have a
  commutative diagram
  \[
  \begin{tikzcd}
    X^0\rar[tail]\dar{\alpha^0} & I^1(X^0)\rar\dar&\cdots \rar &
    I^n(X^0) \rar[two heads]\dar & SX^0\dar{S(\alpha^0)}\\
    X^1\rar[tail] & I^1(X^1)\rar &\cdots \rar & I^n(X^1) \rar[two
    heads] & SX^1
  \end{tikzcd}
  \]
  Secondly, \th\ref{universal-property-of-n-pushout-diagrams-n-exact}
  yields a commutative diagram
  \[
  \begin{tikzcd}
    I(X^0)\dar{f}&X^0\rar[tail]\dar{\alpha^0}&I^1(X^0)\rar\dar&\cdots\rar&I^n(X^0)\rar[two heads]\dar&SX^0\dar[equals]\\
    X\dar{p}&X^1\rar[tail]{\alpha^1}\dar[equals]&X^2\rar{\alpha^2}\dar&\cdots\rar&X^{n+1}\rar[two heads]{\alpha^{n+1}}\dar& SX^0\dar{S\alpha^0}\\
    I(X^1)&X^1\rar[tail]&I^1(X^1)\rar&\cdots\rar&I^n(X^1)\rar[two
    heads]&SX^1
  \end{tikzcd}
  \]
  By the dual of \th\ref{props-of-n-pushout-diagrams}, the mapping
  cone $C=C(p)$ is an admissible exact sequence. Thirdly, for each
  $k\in\set{1,\dots,n}$ we define
  \[
  \begin{tikzcd}[ampersand replacement=\&]
    g^k:=\begin{bmatrix} 0&(-1)^{k-1}
    \end{bmatrix}^\top\colon I^k(X^1)\rar\&X^{k+2}\oplus I^k(X^1)
  \end{tikzcd}
  \]
  (by convention, $X^{n+1}:=\Sigma X^0$).  It follows that the diagram
  \[
  \begin{tikzcd}
    X^1 \rar[tail]\dar{\alpha^2}&I^1(X^1)\rar\dar{g^1}&\cdots\rar&I^n(X^1)\rar[two heads]\dar{g^n}&SX^1\dar[equals]\\
    X^2\rar[tail]{d_C^{-1}}&X^3\oplus
    I^1(X^1)\rar{d_C^0}&\cdots\rar{d_C^{n-2}}&SX^0\oplus
    I^n(X^1)\rar[two heads]{(-1)^nd_C^{n-1}}&SX^1
  \end{tikzcd}
  \]
  is commutative. Note that the bottom row is an admissible $n$-exact
  sequence for it is isomorphic to $C$.  Finally, the standard
  $(n+2)$-angle induced by this diagram is isomorphic in $\sM$ to the
  left rotation of \eqref{eq:standard-angle-proof}.
  
  Conversely, suppose that there is a commutative diagram
  \[
  \begin{tikzcd}
    I(X^1)\dar{f}&X^1\rar[tail]\dar{\alpha^1}&I^1(X^1)\rar\dar&\cdots\rar&I^n(X^1)\rar[two heads]\dar&SX^1\dar[equals]\\
    Y&X^2\rar[tail]{\alpha^2}&X^3\rar{\alpha^3}&\cdots\rar{\alpha^{n+1}}&SX^0\rar[two
    heads]{(-1)^dS\alpha^0}&SX^1
  \end{tikzcd}
  \]
  which gives rise to a standard $(n+2)$-angle of the form
  \[
  \begin{tikzcd}[column sep=large]
    X^1\rar{\overline{\alpha^1}}&X^2\rar{\overline{\alpha^2}}&\cdots\rar{\overline{\alpha^n}}&X^{n+1}\rar{\overline{\alpha^{n+1}}}&\Sigma
    X^0\rar{(-1)^n\Sigma\overline{\alpha^0}}&\Sigma X^1
  \end{tikzcd}
  \]
  On one hand, the definition of $\Sigma\colon\sM\to\sM$ yields the
  top two rows in the following commutative diagram:
  \[
  \begin{tikzcd}
    I(X^0)\dar{I(\alpha^0)}&X^0\rar[tail]\dar{\alpha^0}&I^1(X^0)\rar\dar&\cdots\rar&I^n(X^0)\rar[two heads]{d^n}\dar & SX^0\dar{S\alpha^0}\\
    I(X^1)\dar{f}&X^1\rar[tail]\dar{\alpha^1}&I^1(X^1)\rar\dar&\cdots\rar&I^n(X^1)\rar[two heads]\dar&SX^1\dar[equals]\\
    Y&X^2\rar[tail]{\alpha^2}&X^3\rar{\alpha^3}&\cdots\rar{\alpha^{n+1}}&SX^0\rar[two
    heads]{(-1)^dS\alpha^0}&SX^1
  \end{tikzcd}
  \]
  On the other hand, we have a commutative diagram
  \[
  \begin{tikzcd}
    I(X^0)\dar{q}&X^0\rar[tail]\dar{0}&\cdots\rar&I^{n-1}(X^0)\rar\dar{0}&I^n(X^0)\rar[two heads]{d^n}\dar{(-1)^nd^n}&SX^0\dar{S\alpha^0}\\
    Y&X^2\rar[tail]{\alpha^2}&\cdots\rar{\alpha^n}&X^{n+1}\rar{\alpha^{n+1}}&SX^0\rar[two
    heads]{(-1)^nS\alpha^0}&SX^1
  \end{tikzcd}
  \]
  Then, the dual of the \th\ref{comparison-lemma} implies the
  existence of a homotopy $h\colon I(\alpha^0)f\to q$.  For each
  $k\in\set{1,\dots,n}$ we define
  \[
  \begin{tikzcd}[ampersand replacement=\&]
    g^k:=\begin{bmatrix} [(-1)^{k}I^k(\alpha^0)\&(-1)^{k-1}h^k]
    \end{bmatrix}^\top\colon I^k(X^0)\rar\& I^k(X^k)\oplus X^{k+1}.
  \end{tikzcd}
  \]
  It is straightforward to verify that the diagram
  \[
  \begin{tikzcd}
    X^0\rar[tail]\dar{\alpha^0}&I^1(X^0)\rar\dar{g^1}&\cdots\rar&I^n(X^0)\rar[two heads]\dar{g^n}&SX^0\dar[equals]\\
    X^1\rar[tail]{d_C^{-1}}& I^1(X^1)\oplus
    X^2\rar{d_C^0}&\cdots\rar{d_C^{n-2}}&I^n(X^1)\oplus
    X^{n+1}\rar[two heads]{d_C^{n-1}}&SX^0
  \end{tikzcd}
  \]
  commutes, where the bottom row is given by $C(f)$. Finally, the
  standard $(n+2)$-angle induced by this diagram is isomorphic to the
  $n$-$\Sigma$-sequence
  \[
  \begin{tikzcd}
    X^0\rar{\overline{\alpha^0}}&X^1\rar{\overline{\alpha^1}}&\cdots\rar{\overline{\alpha^n}}&X^{n+1}\rar{\overline{\alpha^{n+1}}}&\Sigma
    X^0.
  \end{tikzcd}
  \]
  This shows that $(\sM,\X,\S(\X))$ satisfies axiom \ref{ax-ang:F2}.

  \ref{ax-ang:F3} for standard $(n+2)$-angles.  Let
  \[
  \begin{tikzcd}
    I(X^0)\dar{f}&X^0\rar[tail]{d_{IX}^0}\dar{\alpha^0}&I^1(X^0)\rar{d_{IX}^1}\dar&\cdots\rar{d_{IX}^{n-1}}&I^n(X^0)\rar[two heads]{d_{IX}^n}\dar&SX^0\dar[equals]\\
    X&X^1\rar[tail]{\alpha^1}&X^2\rar{\alpha^2}&\cdots\rar{\alpha^n}&X^{n+1}\rar[two
    heads]{\alpha^{n+1}}&SX^0
  \end{tikzcd}
  \]
  and
  \[
  \begin{tikzcd}
    I(Y^0)\dar{g}&Y^0\rar[tail]{d_{IY}^0}\dar{\beta^0}&I^1(Y^0)\rar{d_{IY}^1}\dar{g^1}&\cdots\rar{d_{IY}^{n-1}}&I^n(X^0)\rar[two heads]{d_{IY}^n}\dar&SY^0\dar[equals]\\
    Y&Y^1\rar[tail]{\beta^1}&Y^2\rar{\beta^2}&\cdots\rar{\beta^n}&Y^{n+1}\rar[two
    heads]{\beta^{n+1}}&SY^0
  \end{tikzcd}
  \]
  be pushout diagrams in $\M$.  We set $C:=C(f)$.

  Also, let $\phi^0\colon X^0\to Y^0$ and $\phi^1\colon X^1\to Y^1$ be
  morphisms such that
  $\overline{\phi^0\beta^0}=\overline{\alpha^0\phi^1}$.  Thus, we have
  a diagram
  \begin{equation}
    \label{eq:in-proof-axiom-F3}
    \begin{tikzcd}
      X^0\rar{\overline{\alpha^0}}\dar{\overline{\phi^0}} & X^1\rar{\overline{\alpha^1}}\dar{\overline{\phi^1}} & X^2\rar{\overline{\alpha^2}}&\cdots\rar{\overline{\alpha^n}} & X^{n+1} \rar{\overline{\alpha^{n+1}}} & \Sigma X^0\dar{\Sigma \overline{\phi^0}}\\
      Y^0\rar{\overline{\beta^0}} & Y^1\rar{\overline{\beta^1}} &
      Y^2\rar{\overline{\beta^2}}&\cdots\rar{\overline{\beta^n}} &
      Y^{n+1} \rar{\overline{\beta^{n+1}}} & \Sigma Y^0
    \end{tikzcd}
  \end{equation}
  whose rows are standard $(n+2)$-angles.  Recall that by the
  definition of $\Sigma\colon\sM\to\sM$, there is a commutative
  diagram
  \[
  \begin{tikzcd}
    I(X^0)\dar{I(\phi^0)}&X^0\rar[tail]\dar{\phi^0}&I^1(X^0)\dar\rar&\cdots\rar&I^n(X^0)\rar[two heads]\dar&SX^0\dar{S\phi^0}\\
    I(Y^0)&Y^0\rar[tail]&I^1(Y^0)\rar&\cdots\rar&I^n(Y^0)\rar[two
    heads]&SY^0
  \end{tikzcd}
  \]

  We shall construct a commutative diagram
  \[
  \begin{tikzcd}
    X\dar{\phi}&X^1\rar[tail]{\alpha^1}\dar{\phi^1}&X^2\rar{\alpha^2}\dar[dotted]{\phi^2}&\cdots\rar{\alpha^n}&X^{n+1}\rar[two heads]{\alpha^{n+1}}\dar[dotted]{\phi^{n+1}}&SX^0\dar{S\phi^0}\\
    Y&Y^1\rar[tail]{\beta^1} & Y^2\rar{\beta^2}&\cdots \rar{\beta^n} &
    Y^{n+1} \rar[two heads]{\beta^{n+1}} & SY^0
  \end{tikzcd}
  \]
  together with a homotopy $h\colon f\phi\to I(\phi^0)g$ such that
  $h^{n+1}\colon SX^0\to Y^{n+1}$ is the zero morphism.  Note that
  this gives the required completion of diagram
  \eqref{eq:in-proof-axiom-F3}.
  
  We begin with the construction of $h^1$ and $\phi^2$.  Since
  $\overline{\phi^0\beta^0}=\overline{\alpha^0\phi^1}$, there exists
  an $\X$-injective object $I\in\M$ and morphisms $u\colon X^0\to I$
  and $v\colon I\to Y^1$ such that $\alpha^0\phi^1-\phi^0\beta^0=uv$.
  Then, given that $d_{IX}^0$ is an admissible monomorphism and $I$ is
  $\X$-injective, we can construct a commutative diagram
  \[
  \begin{tikzcd}
    X^0\rar[tail]{d_{IX}^0}\drar{u}& I^1(X^0)\dar[dotted]\drar[dotted]{h^1}\\
    &I\rar{v}&Y^1
  \end{tikzcd}
  \]
  Hence $\alpha^0\phi^1-\phi^0\beta^0=d_{IX}^0h^1$, as required.  Then
  we have
  \begin{align*}
    d_{IX}^0(I(\phi^0)^1g^1+h^1\beta^1)
    =&\phi^0d_{IY}^0g^1+(\alpha^0\phi^1-\phi^0g^1)\beta^1\\
    =&\alpha^0\phi^1\beta^1.
  \end{align*}
  Since $d_C^{0}$ is a weak cokernel of $d_C^{-1}$, there exists
  morphisms $\phi^2\colon X^2\to Y^2$ and $h^2\colon I^2(X^0)\to Y^2$
  such that $\phi^1\beta^1=\alpha^1\phi^2$ and
  $f^1\phi^2-(I(\phi^0)^1g^1+h^1\beta^1)=d_{IX}^2h^2$ or,
  equivalently,
  \[
  f^1\phi^2-I(\phi^0)^1g^1=h^1\beta^1+d_{IX}^1h^2
  \]

  Let $2\leq k\leq n$ and suppose that for each $\ell\leq k$ we have
  constructed morphisms $\phi^\ell\colon X^\ell\to Y^\ell$ and
  $h^\ell\colon I^\ell(X^0)\to Y^\ell$ such that
  $\alpha^{\ell-1}\phi^\ell=\phi^{\ell-1}\beta^{\ell-1}$ and
  \[
  f^{\ell-1}\phi^\ell-I(\phi^0)^{\ell-1}
  g^{\ell-1}=h^{\ell-1}\beta^{\ell-1}+d_{IX}^{\ell-1}h^\ell.
  \]
  Then we have
  \begin{align*}
    d_{IX}^{k-1}(I(\phi^0)^kg^k+h^k\beta^k)
    =&I(\phi^0)^{k-1}d_{IY}^{k-1}g^k+(f^{k-1}\phi^k-I(\phi^0)^{k-1}g^{k-1}-h^{k-1}\beta^{k-1})\beta^k\\
    =&f^{k-1}\phi^k\beta^k.
  \end{align*}
  Moreover, $\alpha^{k-1}(\phi^k\beta^k)=$ Since $d_C^{k-1}$ is a weak
  cokernel of $d_C^{k-2}$, there exists morphisms $\phi^{k+1}\colon
  X^{k+1}\to Y^{k+1}$ and $h^{k+1}\colon I^{k+1}(X^0)\to Y^{k+1}$ such
  that $\alpha^k\phi^{k+1}=\phi^k\beta^k$ and
  \[
  f^k\phi^{k+1}-I(\phi^0)^k g^k=h^k\beta^k+d_{IX}^kh^{k+1}.
  \]
  This finishes the induction step.

  It remains to show that $\alpha^{n+1}S\phi^0=\phi^{n+1}\beta^{n+1}$.
  Indeed, we have $\alpha^n(\alpha^{n+1}S\phi^0)=0$ and
  $\alpha^n(\phi^{n+1}\beta^{n+1})=\phi^n(\beta^n\beta^{n+1})=0$.
  Moreover,
  \[
  f^{n+1}(\phi^{n+1}\beta^{n+1})
  =(I^n(\phi^0)g^n-h^n\beta^n)\beta^{n+1} =d_{IX}^nS(\phi^0)
  =f^{n+1}(\alpha^{n+1}S\phi^0).
  \]
  Since $d_C^{n-1}$ is a cokernel of $d_C^{n-2}$, we have
  $\alpha^{n+1}S(\phi^0)=\phi^{n+1}\beta^{n+1}$, as required.
  
  This shows that $(\sM,\Sigma,\S(\X))$ satisfies axiom
  \ref{ax-ang:F3} in the case of standard $(n+2)$-angles. The general
  case is left to the reader.
  
  \ref{ax-ang:F4} for standard $(n+2)$-angles.  We shall show that the
  mapping cone of the morphism of standard $(n+2)$-angles that we
  constructed in the proof of axiom \ref{ax-ang:F3} is a
  $(n+2)$-angle.  We keep the notation and morphisms of the previous
  paragraphs.
  
  For each $k\in\set{0,1\dots,n-1}$ we define
  \[
  \begin{tikzcd}
    r^k:=[\phi^{k+1}\ g^k]\colon X^{k+1}\oplus I^k(Y^0)\rar&Y^{k+1}.
  \end{tikzcd}
  \]
  Also, we define
  \[
  \begin{tikzcd}[ampersand replacement=\&]
    r^n:=
    \begin{bmatrix}
      \alpha^{n+1}&0\\
      \phi^{n+1}&g^n
    \end{bmatrix}
    \colon X^{n+1}\rar\&I^n(Y^0)\to Y^{n+1}
  \end{tikzcd}
  \]
  and
  \[
  \begin{tikzcd}[ampersand replacement=\&]
    r^{n+1}:=
    \begin{bmatrix}
      1_{SX^0}&0\\
      S\phi^0&1_{SY^0}
    \end{bmatrix}\colon SX^0\oplus SY^0\rar\&SX^0\oplus SY^0.
  \end{tikzcd}
  \]
  It is straightforward to check that this defines a morphism of
  admissible $n$-exact sequences
  \[
  \begin{tikzcd}
    r\colon X\oplus I(Y^0)\rar&T(SX^0,0)\oplus Y.
  \end{tikzcd}
  \]
  (recall that the direct sum of two admissible $n$-exact sequences is
  again an admissible $n$-exact sequence, see
  \th\ref{X-is-closed-under-direct-sums}).  Since $r^{n+1}$ is an
  isomorphism, we have that the mapping cone $C(r)$ is an admissible
  $n$-exact sequence, see \th\ref{props-of-n-pushout-diagrams}.

  Next, by the definition of $\Sigma\colon\sM\to\sM$ there is a
  commutative diagram
  \[
  \begin{tikzcd}
    I(X^0)\dar{I(\alpha^0)}&X^0\rar[tail]\dar{\alpha^0}&I^1(X^0)\dar\rar&\cdots\rar&I^n(X^0)\rar[two heads]\dar&SX^0\dar{S\alpha^0}\\
    I(X^1)&X^1\rar[tail]&I^1(X^1)\rar&\cdots\rar&I^n(X^1)\rar[two
    heads]&SX^1
  \end{tikzcd}
  \]
  Then, by \th\ref{universal-property-of-n-pushout-diagrams-n-exact}
  and the dual of \th\ref{props-of-n-pushout-diagrams} there exists a
  commutative diagram
  \[
  \begin{tikzcd}
    X\dar{s}&X^1\rar[tail]\dar[equals]&X^2\rar\dar&\cdots\rar&X^{n+1}\rar[two heads]\dar&SX^0\dar{S\alpha^0}\\
    I(X^1)&X^1\rar&I^1(X^1)\rar&\cdots\rar&I^n(X^1)\rar&SX^1
  \end{tikzcd}
  \]
  Let $t:=1_{X^1}\oplus 1_{Y^0}$.  For each $k\in\set{1,\dots,n}$ we
  define
  \[
  \begin{tikzcd}[ampersand replacement=\&]
    t^k:=
    \begin{bmatrix}
      s^k&0&0\\
      0&1_{I^k(Y^0)}&0
    \end{bmatrix}
    \colon X^{k+1}\oplus I^k(Y^0)\oplus Y^k\rar\&I^k(X^1)\oplus
    I^1(Y^0)
  \end{tikzcd}
  \]
  and
  \[
  \begin{tikzcd}[ampersand replacement=\&]
    t^{n+1}:=(-1)^n\begin{bmatrix}
      S\alpha^0&0\\
      -S\phi^0&\beta^{n+1}
    \end{bmatrix}
    \colon SX^0\oplus Y^{n+1}\rar\&SX^1\oplus SY^0.
  \end{tikzcd}
  \]
  It is readily verified that these morphisms define a morphism of
  admissible $n$-exact sequences
  \[
  \begin{tikzcd}
    t\colon C(r)\rar& I(X^1)\oplus I(Y^0).
  \end{tikzcd}
  \]
  Finally, applying \th\ref{n-exact-sequence-induce-n+2-angle} to the
  morphism $t$ yields that the sequence
  \[
  \begin{tikzcd}
    X^1\oplus Y^0\rar{\gamma^0}& X^2\oplus Y^1\rar{\gamma^1}& \cdots
    \rar{\gamma^n}& \Sigma X^0\oplus Y^{n+1}\rar{\gamma^{n+1}}& \Sigma
    X^1\oplus \Sigma Y^0
  \end{tikzcd}
  \]
  where for each $k\in\set{0,\dots,n}$ we have
  \[
  \begin{tikzcd}[ampersand replacement=\&]
    \gamma^k=
    \begin{bmatrix}
      -\overline{\alpha}^{k+1}&0\\
      \overline{\phi}^{k+1}&\overline{\beta}^k
    \end{bmatrix}\colon X^{k+1}\oplus Y^k\rar\& X^{k+2}\oplus
    Y^{k+1}
  \end{tikzcd}
  \]
  is a standard $(n+2)$-angle.

  This shows that $(\sM,\Sigma,\S(\X))$ satisfies axiom
  \ref{ax-ang:F4} in the case of standard $(n+2)$-angles. The general
  case is left to the reader.
\end{proof}

\th\ref{n-happel-theorem} allows us to define the following class of
$(n+2)$-angulated categories.

\begin{definition}
  We say that a $(n+2)$-angulated category $(\F,\Sigma_\F,\S)$ is
  \emph{algebraic} if there exists a Frobenius $n$-exact category
  $(\M,\X)$ together with an equivalence of $(n+2)$-angulated
  categories between $(\sM,\Sigma_\M,\S(\X))$ and $(\F,\Sigma_\F,\S)$.
\end{definition}

\subsection{Standard construction}

We remind the reader of the definition of an $n$-cluster-tilting
subcategory of a triangulated category.

\begin{definition}
  \th\label{def:n-cluster-tilting-tri} Let $(\T,\Sigma,\S)$ be a
  triangulated category and $\M$ a subcategory of $\T$.  We say that
  $\M$ is an \emph{$n$-cluster-tilting subcategory of $\T$} if $\M$ is
  functorially finite (see subsection \ref{sec:conventions}) in $\T$
  and
  \begin{align*}
    \M=&\setP{X\in\T}{\forall i\in\set{1,\dots,n-1}\
      \Ext_\T^i(X,\M)=0}\\
    =&\setP{X\in\T}{\forall i\in\set{1,\dots,n-1}\ \Ext_\T^i(\M,X)=0}.
  \end{align*}
\end{definition}

In \cite[Sec. 4]{geiss_n-angulated_2013}, Gei\ss-Keller-Oppermann give
a standard construction of $(n+2)$-angulated categories from
$n$-cluster-tilting categories of a triangulated category which are
closed under the $n$-th power of the suspension functor.  More
precisely, they prove the following theorem.

\begin{theorem}
  \th\label{standard-construction-n-angulated}
  \cite[Thm. 1]{geiss_n-angulated_2013} Let $(\T,\Sigma_3,\S)$ be a
  triangulated category with an $n$-cluster-tilting subcategory $\C$
  such that $\Sigma_3^n(\C)\subseteq\C$. Then,
  $(\C,\Sigma_3^n,\S(\C))$ is an $(n+2)$-angulated category where
  $\S(\C)$ is the class of all sequences
  \[
  \begin{tikzcd}
    X^0\rar{\alpha^0}&X^1\rar{\alpha^1}&X^2\rar{\alpha^2}&\cdots\rar{\alpha^n}&X^{n+1}\rar{\alpha^{n+1}}&\Sigma
    X^0.
  \end{tikzcd}
  \]
  such that there exists a diagram
  \[
  \begin{tikzcd}[column sep=tiny, row sep=small]
    {}&X^1\drar\ar{rr}{\alpha^1}&&X^2\drar&&\cdots&&X^n\drar{\alpha^n}\\
    X^0\urar{\alpha^0}&&X^{1.5}\ar{ll}[description]{|}\urar&&X^{2.5}\ar{ll}[description]{|}&\cdots&X^{n-1.5}\urar&&X^{n+1}\ar{ll}[description]{|}
  \end{tikzcd}
  \]
  with $X^k\in\C$ for all $k\in\ZZ$ such that all oriented triangles
  are triangles in $\T$, all non-oriented triangles commute, and
  $\alpha^{n+1}$ is the composition along the lower edge of the
  diagram.
\end{theorem}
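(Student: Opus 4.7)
The overall strategy is to verify the four axioms \ref{ax-ang:F1}--\ref{ax-ang:F4} of \th\ref{def:n-angulated-category} for the triple $(\C,\Sigma_3^n,\S(\C))$ directly from the triangulated structure of $\T$, using the $n$-cluster-tilting property of $\C$ together with the hypothesis $\Sigma_3^n(\C)\subseteq\C$. The key point is that the class $\S(\C)$ is defined by means of certain zig-zag diagrams of ordinary triangles in $\T$ whose ``even'' vertices $X^k$ lie in $\C$ while the ``half-integer'' vertices $X^{k+0.5}$ are allowed to lie in $\T$; so all constructions will be performed inside $\T$, with the cluster-tilting hypothesis used repeatedly to pass from $\T$-objects back into $\C$.

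For \ref{ax-ang:F1}, closure of $\S(\C)$ under direct sums is immediate because direct sums of triangles in $\T$ are triangles; closure under direct summands follows from the corresponding fact for $\T$ combined with the idempotent completeness of $\C$. The trivial $n$-$\Sigma_3^n$-sequence on $X\in\C$ arises by taking all half-integer vertices to be $0$ and using trivial (split) triangles. The nontrivial part is to extend an arbitrary morphism $\alpha^0\colon X^0\to X^1$ of $\C$ to a member of $\S(\C)$. I would build the underlying zig-zag inductively: first form the triangle $X^0\to X^1\to X^{1.5}\to\Sigma_3 X^0$ in $\T$; then take a left $\C$-approximation $X^{1.5}\to X^2$ (available by functorial finiteness) and complete it to a triangle $X^{1.5}\to X^2\to X^{2.5}\to\Sigma_3 X^{1.5}$; iterate. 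The vanishing $\Ext_\T^j(\C,\C)=0$ for $1\leq j\leq n-1$ guarantees that the intermediate objects $X^{k+0.5}$ have exactly the required position in the $\Sigma_3$-filtration, and after $n$ steps $X^{n+0.5}$ is forced to be isomorphic to $\Sigma_3^n X^0$, which lies in $\C$ by hypothesis; so we terminate by setting $X^{n+1}:=\Sigma_3^n X^0$.

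Axiom \ref{ax-ang:F2} (rotation) is essentially formal: a defining zig-zag for an $n$-angle rotates into a defining zig-zag for its left rotation by rotating each constituent triangle in $\T$, and the sign $(-1)^n$ in the last morphism accounts for the $n$ elementary triangle rotations involved. For \ref{ax-ang:F3}, given a commutative square with $\phi^0$ and $\phi^1$, I construct $\phi^2,\dots,\phi^{n+1}$ inductively: at each step the relevant data reduces to lifting a morphism across a single triangle of $\T$, which is possible by (TR3), and the $n$-cluster-tilting $\Ext$-vanishing ensures the lifts can be chosen in $\C$ with the desired commutativities.

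The main obstacle is axiom \ref{ax-ang:F4}, namely that the lifts in \ref{ax-ang:F3} can be chosen so that the mapping cone $C(\phi)$ again belongs to $\S(\C)$. The plan is to build a morphism between the defining zig-zag diagrams of the two given $n$-angles, and then apply the octahedral axiom level by level: at each pair of consecutive triangles, the octahedral axiom produces a new triangle relating the mapping cones, together with a morphism of half-integer vertices. Assembling these across all levels yields a zig-zag of triangles in $\T$ whose even vertices form exactly the sequence $C(\phi)$ and whose terminal vertex is $\Sigma_3^n(X^1\oplus Y^0)\in\C$ (again using $\Sigma_3^n(\C)\subseteq\C$). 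The hard part is the bookkeeping: one must check that with suitable choices all the squares commute, that the signs match the definition of the mapping cone of an $n$-$\Sigma$-sequence, and that a single coherent choice of lifts $\phi^k$ makes both the lifting of \ref{ax-ang:F3} and the mapping-cone diagram compatible. Once this is done, \th\ref{props-pre-n} can be invoked to conclude that no further $n$-angles beyond those in $\S(\C)$ are needed, so the structure obtained is indeed $(n+2)$-angulated.
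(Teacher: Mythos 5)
The paper offers no proof of this statement: \th\ref{standard-construction-n-angulated} is imported verbatim from Gei\ss--Keller--Oppermann \cite[Thm.~1]{geiss_n-angulated_2013}, so there is no internal argument to compare yours against. Judged on its own terms, your outline follows the same global strategy as the original (verify \ref{ax-ang:F1}--\ref{ax-ang:F4} by working with the defining zig-zags of triangles inside $\T$), but it contains a concrete error at the step that carries most of the weight of the cluster-tilting hypothesis.

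In your construction for \ref{ax-ang:F1}, the terminal vertex $X^{n+1}$ is \emph{not} ``forced to be isomorphic to $\Sigma_3^n X^0$''. Already for $n=1$ it is the cone of $\alpha^0$; in general it is the cone of the last triangle of the zig-zag, i.e.\ of the final left $\C$-approximation, and the substantive claim is that this iterated cone again lies in $\C$. That is proved by dimension shifting along the $n$ triangles: using that each map out of a half-integer vertex is a left $\C$-approximation and that $\Ext_\T^i(\C,\C)=0$ for $1\le i\le n-1$, one obtains $\Ext_\T^i(X^{n+1},\C)=0$ for $1\le i\le n-1$, and the displayed equalities in \th\ref{def:n-cluster-tilting-tri} then force $X^{n+1}\in\C$. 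The hypothesis $\Sigma_3^n(\C)\subseteq\C$ enters only to guarantee that $\Sigma_3^n$ restricts to an automorphism of $\C$, so that the target $\Sigma_3^nX^0$ of $\alpha^{n+1}$ is an object of $\C$; it does not identify $X^{n+1}$. Two further points: your appeal to \th\ref{props-pre-n} at the end is misplaced, since that proposition compares two pre-$(n+2)$-angulations of the same category and says nothing about whether the mapping cone of a chosen completion belongs to $\S(\C)$; and the ``bookkeeping'' you defer in \ref{ax-ang:F4} --- producing one coherent choice of $\phi^2,\dots,\phi^{n+1}$ realizing both the completion of \ref{ax-ang:F3} and a zig-zag presentation of the mapping cone --- is precisely where the bulk of the original proof lies, so as written the mapping cone axiom is not established.
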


Our aim is to give an analogous construction for Frobenius $n$-exact
categories. For this, we need some terminology.

Let $(\E,\X)$ be a Frobenius exact category and $E\in\E$. An
\emph{$n$-th cosyzygy $\mho^n(E)$ of $E$} is defined by an acyclic
complex
\[
\begin{tikzcd}
  E\rar[tail]&I^1\rar&\cdots\rar&I^n\rar[two heads]&\mho^n(E)
\end{tikzcd}
\]
where for all $k\in\set{1,\dots,n}$ the object $I^k$ is
$\X$-injective.

\begin{proposition}
  \th\label{cosyzygy-independent} Let $(\E,\X)$ be an exact category,
  $\M$ an $n$-cluster-tilting subcategory of $\E$ and $M\in\M$. If an
  $n$-th cosyzygy $\mho^n(M)$ of $M$ satisfies $\mho^n(M)\in\M$, then
  so does any other $n$-th cosyzygy $\tilde{\mho}^n(M)$ of $M$.
\end{proposition}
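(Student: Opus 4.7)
My plan is to run an iterated Schanuel-type argument to show that any two $n$-th cosyzygies of $M$ agree up to direct summands which are $\X$-injective. Combined with the observation that $\X$-injectives already lie in $\M$ and that $\M$ is closed under direct summands, this will immediately force $\tilde{\mho}^n(M)\in\M$ whenever $\mho^n(M)\in\M$.

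First I would record two preliminary facts. Every $\X$-injective $I$ lies in $\M$: indeed $\Ext_\X^i(-,I)=0$ for all $i\geq 1$, so the second characterization $\M=\setP{E\in\E}{\Ext_\X^i(\M,E)=0\text{ for all }1\leq i\leq n-1}$ is automatic. Moreover $\M$ is closed under direct summands in $\E$, by additivity of $\Ext$ in its first argument; in particular, it is closed under taking finite direct sums of its objects.

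Next I would run the Schanuel induction. Break each given coresolution into admissible short exact sequences
\[
C^{k-1}\mono I^k\epi C^k,\qquad \tilde C^{k-1}\mono\tilde I^k\epi\tilde C^k,
\]
with $C^0=\tilde C^0=M$, $C^n=\mho^n(M)$ and $\tilde C^n=\tilde{\mho}^n(M)$. The classical Schanuel lemma, proved by forming the pushout of $M\mono I^1$ with $M\mono\tilde I^1$ and using that the resulting admissible monos from $I^1$ and $\tilde I^1$ each split by injectivity, yields $C^1\oplus\tilde I^1\cong\tilde C^1\oplus I^1$. Inductively, at the $k$-th stage I would add to each resolution the missing injectives so that the $(k-1)$-st syzygies become isomorphic, and then apply Schanuel once more to the two short exact sequences
\[
C^{k-1}\oplus(\text{injectives})\mono I^k\oplus(\text{injectives})\epi C^k,\qquad \tilde C^{k-1}\oplus(\text{injectives})\mono\tilde I^k\oplus(\text{injectives})\epi\tilde C^k.
\]
After $n$ iterations this produces $\X$-injective objects $J,\tilde J$ and an isomorphism
\[
\mho^n(M)\oplus J\;\cong\;\tilde{\mho}^n(M)\oplus\tilde J.
\]

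By the preliminary step $J,\tilde J\in\M$; by hypothesis $\mho^n(M)\in\M$; hence the left-hand side lies in $\M$, and so does the right-hand side. Since $\M$ is closed under direct summands, $\tilde{\mho}^n(M)\in\M$. The only technical nuisance is the bookkeeping in the iterated Schanuel step, but all ingredients are standard in any exact category with enough injectives, and no property of $\M$ beyond those established in the first step is used.
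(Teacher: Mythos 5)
Your proof is correct, but it takes a genuinely different route from the paper's. The paper argues by dimension shifting: since the intermediate terms of an injective coresolution are $\X$-injective, one has $\Ext_\X^k(-,\mho^n(M))\cong\Ext_\X^{k+n}(-,M)\cong\Ext_\X^k(-,\tilde{\mho}^n(M))$ for $k\in\set{1,\dots,n-1}$, and then the membership criterion $\M=\setP{E\in\E}{\Ext_\X^i(\M,E)=0,\ 1\leq i\leq n-1}$ immediately transfers from $\mho^n(M)$ to $\tilde{\mho}^n(M)$. You instead run the iterated (dual) Schanuel lemma to produce $\X$-injectives $J,\tilde J$ and an isomorphism $\mho^n(M)\oplus J\cong\tilde{\mho}^n(M)\oplus\tilde J$, and conclude using the two closure properties you correctly isolate: $\X$-injectives lie in $\M$ (by the same $\Ext$-characterization) and $\M$ is closed under finite direct sums and direct summands. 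Both arguments are sound; all the ingredients of yours (pushout of an admissible mono along an admissible mono, splitting of admissible monos out of injectives, the bookkeeping in the induction) are standard in exact categories. The paper's route is shorter but leans on the construction of $\Ext_\X^{k+n}$ via the derived category $\DD(\E,\X)$ and the long-exact-sequence/shift isomorphisms there; yours is more elementary and yields the slightly stronger, explicit statement that any two $n$-th cosyzygies agree up to $\X$-injective direct summands, which is of independent use (it is essentially the well-definedness of $\mho^n$ on the injectively stable category).
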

\begin{proof}
  Note that for all $k\in\set{1,\dots,n-1}$ we have
  \[
  \Ext_\X^k(-,\mho^n(E))\cong\Ext_\X^{k+n}(-,E)\cong\Ext_\X^k(-,\tilde{\mho}^n(E)).
  \]
  Then, it follows from the definition of $n$-cluster-tilting
  subcategory that $\mho^n(E)\in\M$ if and only if
  $\tilde{\mho}^n(E)\in\M$.
\end{proof}

Let $(\E,\X)$ be a Frobenius exact category and for each $E\in\E$ fix
a choice of $n$-th cosyzygy $\mho^n(E)$ of $E$. This defines a map on
objects $\mho^n\colon \Obj(\E)\to \Obj(\E)$. Note that if $\M$ is an
$n$-cluster-tilting subcategory of $\E$, then
\th\ref{cosyzygy-independent} shows that the condition
$\mho^n(\M)\subseteq\M$ is independent of the choice of $\mho^n$.  We
have the following result, which is closely related to
\th\ref{standard-construction-n-angulated}.

\begin{theorem}
  \th\label{standard-construction-n-exact} Let $(\E,\X)$ be a
  Frobenius exact category with an $n$-cluster-tilting subcategory
  $\M$ such that $\mho^n(\M)\subseteq\M$, and let $(\M,\Y)$ be the
  $n$-exact structure on $\M$ given in \th\ref{recognition-thm-exact}.
  Then, the following statements hold:
  \begin{enumerate}
  \item\label{it:is-frobenius} The pair $(\M,\Y)$ is a Frobenius
    $n$-exact category.
  \item\label{it:sM-is-CT} Let $(\sE,\Sigma_{\sE},\S(\X))$ be the
    standard triangulated structure of $\sE$. Then, $\sM$ is an
    $n$-cluster-tilting subcategory of $\sE$.
  \item\label{it:structures-coincide} Let $(\sM,\Sigma,\S(\sM))$ be
    the standard $(n+2)$-angulated structure of $\sM$. Then, we have
    an equivalence of $(n+2)$-angulated categories between
    $(\sM,\Sigma,\S(\sM))$ and $(\sM,\Sigma_{\sE}^n,\S(\sM))$.
  \end{enumerate}
\end{theorem}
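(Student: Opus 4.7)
The plan is to treat the three assertions in order, relying on \th\ref{recognition-thm-exact} for the fact that $(\M,\Y)$ is an $n$-exact category.

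For \eqref{it:is-frobenius}, the key step is identifying the $\Y$-injective (resp. $\Y$-projective) objects of $\M$ with the objects of $\M$ that are $\X$-injective (equivalently, $\X$-projective) in $\E$. Any $\X$-injective object of $\E$ has vanishing higher $\Ext_\X^i$ and so lies in $\M$; moreover, since every $\Y$-admissible monomorphism in $\M$ is an $\X$-admissible monomorphism in $\E$ (see the proof of \th\ref{recognition-thm-exact}), such objects are automatically $\Y$-injective when viewed in $\M$. The hypothesis $\mho^n(\M)\subseteq\M$ then produces, for each $M\in\M$, a $\Y$-admissible $n$-exact sequence $M\mono I^1\to\cdots\to I^n\epi \mho^n(M)$ with $\X$-injective middle terms, giving enough $\Y$-injectives; the dual argument, using enough $\X$-projectives of $\E$ together with the dual closure property (which is equivalent to $\mho^n(\M)\subseteq\M$ by the shift isomorphisms in $\sE$), yields enough $\Y$-projectives. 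Conversely, any $\Y$-injective object $I\in\M$ is a direct summand of $I^1$ via splitting the first morphism of its injective coresolution, and thus $\X$-injective. Since $\X$-injective and $\X$-projective objects of $\E$ coincide, so do the $\Y$-injective and $\Y$-projective objects of $\M$.

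For \eqref{it:sM-is-CT}, I would use the identification $\Hom_{\sE}(X,\Sigma_\sE^i Y)\cong\Ext_\X^i(X,Y)$ valid for $X,Y\in\E$ and $i\geq1$ in a Frobenius exact category. Under this identification, the two vanishing conditions defining the $n$-cluster-tilting property of $\M$ in $\E$ translate directly into the two vanishing conditions defining an $n$-cluster-tilting subcategory of the triangulated category $\sE$; closure of $\sM$ under $\Sigma_\sE^n$ is exactly the hypothesis $\mho^n(\M)\subseteq\M$. Functorial finiteness of $\sM$ in $\sE$ is inherited from that of $\M$ in $\E$: given $E\in\E$, a right $\M$-approximation $M'\epi E$ and a left $\M$-approximation $E\mono M$ descend to right and left $\sM$-approximations in $\sE$, by a direct verification using that morphisms factoring through $\X$-injective objects are zero in $\sE$.

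For \eqref{it:structures-coincide}, by \th\ref{props-pre-n} it suffices to show that every standard $(n+2)$-angle in $(\sM,\Sigma,\S(\Y))$ coming from the Happel construction of \th\ref{n-happel-theorem} belongs to the Gei\ss-Keller-Oppermann class $\S(\sM)$ of \th\ref{standard-construction-n-angulated}. Given an admissible $n$-exact sequence $X^0\mono X^1\to\cdots\to X^n\epi X^{n+1}$ in $(\M,\Y)$, it is by definition an $\X$-acyclic complex in $\E$, so it decomposes into $\X$-admissible short exact sequences $X^{k-0.5}\mono X^k\epi X^{k+0.5}$ (with $X^{0.5}=X^0$ and $X^{n+0.5}=X^{n+1}$), each of which yields a triangle in $\sE$, assembling into the zig-zag diagram required by \th\ref{standard-construction-n-angulated}. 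The main obstacle is to verify that the connecting morphism $\alpha^{n+1}\colon X^{n+1}\to\Sigma_\sE^n X^0$ obtained by composing these triangle connecting morphisms agrees, up to sign, with the morphism produced by the Happel construction via an injective coresolution of $X^0$. This can be shown inductively: one compares the chosen coresolution $X^0\mono I^1\to\cdots\to I^n\epi\Sigma X^0$ with the given $n$-exact sequence via the diagram \eqref{eq:n-exact-sequence-induce-n+2-angle-diagram} from \th\ref{n-exact-sequence-induce-n+2-angle}, decomposes both into $\X$-admissible short exact sequences, and uses the standard compatibility between the connecting morphisms of a cosyzygy and those of an arbitrary acyclic resolution in a Frobenius exact category. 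Once this identification is established, inclusion of the two $(n+2)$-angulations forces equality by \th\ref{props-pre-n}, and the identity functor gives the desired equivalence.
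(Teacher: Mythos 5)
Your proposal is correct, and for parts \eqref{it:is-frobenius} and \eqref{it:sM-is-CT} it follows the paper's argument (the paper dismisses \eqref{it:sM-is-CT} as immediate from the definitions, and leaves implicit both the converse identification of $\Y$-injectives with $\X$-injectives and the fact that $\mho^n(\M)\subseteq\M$ forces closure under $n$-th syzygies, which you spell out). The one genuine divergence is in part \eqref{it:structures-coincide}: both you and the paper reduce to a one-sided inclusion via \th\ref{props-pre-n} and both exploit the dictionary between $\X$-acyclic complexes with terms in $\M$ and zig-zags of triangles in $\sE$, but you propose to prove $\S(\Y)\subseteq\S(\sM)$ by decomposing a standard Happel $(n+2)$-angle, whereas the paper proves $\S(\sM)\subseteq\S(\Y)$ by gluing the $\X$-admissible short exact sequences $X^{k-0.5}\mono I(X^{k-0.5})\oplus X^k\epi X^{k+0.5}$ attached to a Gei\ss--Keller--Oppermann zig-zag into a single $\Y$-admissible $n$-exact sequence and then applying \th\ref{n-exact-sequence-induce-n+2-angle}. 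The paper's direction is slightly more economical because a GKO angle comes packaged with explicit triangle data, while your direction requires one reduction that your sketch glosses over: a standard $(n+2)$-angle in $\S(\Y)$ is attached to an \emph{arbitrary} morphism $\alpha^0\colon X^0\to X^1$, and its underlying admissible $n$-exact sequence is the pushout $X^1\mono X^2\to\cdots\to X^{n+1}\epi SX^0$, not a sequence beginning with an admissible monomorphism $X^0\mono X^1$. Applied verbatim, your decomposition therefore produces a GKO zig-zag only for the left rotation of the angle, and you must then either invoke closure of $\S(\sM)$ under inverse rotation (axiom \ref{ax-ang:F2}, with its sign convention) or first replace the standard angle by the isomorphic one induced, via \th\ref{n-exact-sequence-induce-n+2-angle}, by the mapping-cone sequence $X^0\mono I^1(X^0)\oplus X^1\to\cdots\to X^{n+1}$. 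With that repair, together with the connecting-morphism comparison you already flag (the same verification the paper performs in its closing diagram), your argument goes through and yields the same conclusion.
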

\begin{proof}
  \eqref{it:is-frobenius} By \th\ref{recognition-thm-exact} we have
  that $(\M,\Y)$ is an $n$-exact category; thus we only need to show
  that it is Frobenius. Indeed, note that the definition of
  $n$-cluster-tilting subcategory implies that $\M$ contains all
  $\X$-injective objects. Moreover, since $\X$-admissible
  monomorphisms with terms in $\M$ are precisely the $\Y$-admissible
  monomorphisms, all $\X$-injectives are also
  $\Y$-injectives. Finally, the condition $\mho(\M)\subseteq\M$
  implies that $(\M,\Y)$ has enough $\Y$-injectives. By duality,
  $(\M,\Y)$ has enough $\Y$-projectives (and they are the
  $\X$-projectives). Since $\X$-projectives and $\X$-projectives
  coincide, this shows that $(\M,\Y)$ is a Frobenius $n$-exact
  category.

  \eqref{it:sM-is-CT} This statement follows readily from the
  definitions.

  \eqref{it:structures-coincide} For simplicity, we assume that
  $\Sigma_{\sM}$ and $\Sigma_{\sE}^n$ are equal and not only
  isomorphic (note that $\Sigma_{\sM}$ is induced by the $n$-th power
  of the cosyzygy in $\E$).  By \th\ref{props-pre-n} it is enough to
  show that $\S(\sM)\subseteq\S(\Y)$. For this, recall that a standard
  triangle
  $A\xto{\overline{u}} B\xto{\overline{v}}
  C\xto{\overline{w}}\Sigma_{\sE}A$
  in $\S(\X)$ is given by a morphism of admissible $\X$-exact
  sequences
  \[
  \begin{tikzcd}
    A\rar[tail]{u'}\dar{u}&I(A)\rar[two heads]{u}\dar{v'}&SA\dar[equals]\\
    B\rar{v}&C\rar[two heads]{w}&SA
  \end{tikzcd}
  \]
  where $I(A)$ is an $\X$-injective object. By
  \th\ref{props-of-n-pushout-diagrams} this gives rise to an
  $\X$-admissible exact sequence
  \[
  \begin{tikzcd}[column sep=large,ampersand replacement=\&]
    A\rar[tail]{
      \begin{bmatrix}
	u'\\u
      \end{bmatrix}}\&I(A)\oplus B\rar[two heads]{
      \begin{bmatrix}
	v'&v
      \end{bmatrix}}\&C.
  \end{tikzcd}
  \]
  Consider a $(n+2)$-angle $X\in\S(\sM)$
  \[
  X\colon\begin{tikzcd}[column sep=tiny, row sep=small]
    {}&X^1\drar\ar{rr}{\overline{\alpha^1}}&&X^2\drar&&\cdots&&X^n\drar{\overline{\alpha^n}}\\
    X^0\urar{\overline{\alpha^0}}&&X^{1.5}\ar{ll}[description]{|}\urar&&X^{2.5}\ar{ll}[description]{|}&\cdots&X^{n-1.5}\urar&&X^{n+1}\ar{ll}[description]{|}
  \end{tikzcd}
\]
such that each of the involved triangles is a standard triangle in
$(\sE,\S(\X))$ with $\overline{\alpha^{n+1}}$ the composition along
the lower edge of the diagram. By gluing the associated
$\X$-admissible exact sequences associated to each of these triangles
we obtain a $\Y$-admissible $n$-exact sequence
  \[
  \begin{tikzcd}[column sep=small, row sep=small]
    X^0\ar[tail]{r}&I(X^0)\oplus
    X^1\ar{r}&\cdots\ar{r}&I(X^{n-1.5})\oplus X^n\ar[two
    heads]{r}&X^{n+1}.
  \end{tikzcd}
  \]
  \th\ref{n-exact-sequence-induce-n+2-angle} implies that this
  $n$-exact sequence induces a standard $(n+2)$-angle $X'\in\S(\Y)$
  \[
  X'\colon\begin{tikzcd}
    X^0\rar{\overline{\alpha^0}}&X^1\rar{\overline{\alpha^1}}&\cdots\rar{\overline{\alpha^n}}&X^{n+1}\rar&\Sigma_{\sM}
    X^0.
  \end{tikzcd}
  \]
  Finally, a straightforward verification shows that one can take
  $X^{n+1}\to\Sigma_{\sM}X^0$ in $X'$ equal to
  $\overline{\alpha^{n+1}}$ showing that $X=X'$ and the result
  follows. For example, for $n=2$ the last claim follows from
  \th\ref{n-exact-sequence-induce-n+2-angle} and the existence of a
  commutative diagram
  \[
  \begin{tikzcd}[column sep=small, row sep=small]
    X^0\ar[tail]{rr}\ar[equals]{dd}&&I(X^0)\oplus X^1\ar{rr}\ar[two heads]{dr}\ar[equals]{dd}&&I(X^{1.5})\oplus X^2\ar[two heads]{rr}\ar{dd}&&X^3\ar{dd}{-\beta^{3}}\\
    &&&X^{1.5}\ar[tail]{ur}\\
    X^0\ar[tail]{rr}\ar[equals]{dd}&&I(X^0)\oplus X^1\ar{rr}\ar{dd}\ar[two heads]{dr}&&I(X^{1.5})\ar[two heads]{rr}\ar{dd}&&\Sigma X^{1.5}\ar{dd}{-\Sigma_{\sM}\beta^{1.5}}\\
    &&&X^{1.5}\ar[tail]{ur}\ar[crossing over, equals]{uu}\\
    X^0\ar[tail]{rr}&&I(X^0)\ar{rr}\drar[two heads]&&I(\Sigma_{\sM} X^{0})\ar[two heads]{rr}&&\Sigma_{\sM}^2X^0\\
    &&&\Sigma_{\sM}X^0\ar[tail]{ur}\ar[leftarrow, crossing
    over]{uu}[swap, near end]{-\beta^{1.5}}
  \end{tikzcd}
  \]
  The diagram needed for the general case can be easily inferred from
  the diagram above.
\end{proof}

\section{Examples}
\label{sec:examples}

We conclude this article with a collection of examples of $n$-abelian,
$n$-exact categories and algebraic $(n+2)$-angulated categories. Most
of the examples we present are known, and all of them arise as
$n$-cluster-tilting subcategories in different contexts. Our main
tools in this section are
\th\ref{recognition-thm-abelian,recognition-thm-exact,standard-construction-n-exact}.
In the remainder, $K$ denotes an algebraically closed field and all
algebras are finite dimensional over $K$.

\subsection{$n$-representation finite algebras}

The class of $n$-representation finite algebras was introduced by
Iyama-Oppermann in \cite{iyama_n-representation-finite_2011} in the
context of higher Auslander-Reiten theory as higher analogs of
representation-finite algebras.

\begin{definition}
  \cite[Def. 2.2]{iyama_n-representation-finite_2011} Let $\Lambda$ be
  a finite dimensional algebra over a field $K$.
  \begin{enumerate}
  \item A $\Lambda$-module $M\in\mod \Lambda$ is an
    \emph{$n$-cluster-tilting module} if $\add M$ is an $n$-cluster-tilting
    subcategory of $\mod\Lambda$. Note that
    \th\ref{recognition-thm-abelian} implies that $\add M$ is an
    $n$-abelian category.
  \item We say that $\Lambda$ is \emph{$n$-representation-finite} if
    $\gldim\Lambda=n$ and there exists an $n$-cluster-tilting
    $\Lambda$-module.
  \end{enumerate}
\end{definition}

The following result, observed jointly with Martin Herschend, gives examples of $n$-abelian categories for
every positive integer $n$.

\begin{proposition}
  Let $n\geq1$ and $m\geq0$. Also, let $\vec{A}_{nm+1}$ be the linearly oriented
  quiver of Dynkin type $A$ with $nm+1$ vertices, $J$ be the
  Jacobson radical of the path algebra $K\vec{A}_{nm+1}$, and
  $\Lambda:=K\vec{A}_{nm+1}/J^2$. Then, the following statements hold:
  \begin{enumerate}
  \item There exists a unique basic $n$-cluster-tilting
    $\Lambda$-module $M$.
  \item The category $\add M\subseteq\mod\Lambda$ is $n$-abelian.
  \end{enumerate}
\end{proposition}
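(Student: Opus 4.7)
Write $N := nm+1$. Since $J^2=0$, $\Lambda$ is a Nakayama algebra of Loewy length at most $2$; the indecomposable $\Lambda$-modules are the simples $S_1,\dots,S_N$ and the length-$2$ modules $U_i$ (top $S_i$, socle $S_{i+1}$) for $i\in\{1,\dots,N-1\}$. The indecomposable projectives are $P_i=U_i$ for $i<N$ and $P_N=S_N$; dually, the indecomposable injectives are $I_1=S_1$ and $I_j=U_{j-1}$ for $j>1$, so every $U_i$ is projective-injective. The case $m=0$ is immediate: $\Lambda=K$, $M=K$, and $\add M=\mod K$ is semisimple and hence $n$-abelian for every $n$. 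Assume henceforth $m\geq 1$.

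My plan is to exhibit
\[
M:=\bigoplus_{j=0}^{m} S_{jn+1}\ \oplus\ \bigoplus_{i=1}^{N-1} U_i,
\]
verify that $\add M$ is $n$-cluster-tilting in $\mod\Lambda$, and invoke \th\ref{recognition-thm-abelian} to deduce (ii). The key input is the computation of higher $\Ext$-groups between indecomposables. Iterating $\Omega S_i = S_{i+1}$ (valid for $i<N$) yields
\[
\Ext^k_\Lambda(S_i,S_j)=\begin{cases} K&\text{if }1\leq i<N\text{ and }j=i+k,\\ 0&\text{otherwise,}\end{cases}\qquad k\geq 1.
\]
The long exact sequence associated to $0\to S_{i+1}\to U_i\to S_i\to 0$, together with the observation that the relevant connecting homomorphism is Yoneda multiplication by the class of this extension and so is an isomorphism whenever its source and target are both $K$, shows $\Ext^k_\Lambda(X,U_i)=0$ for every indecomposable $X$ and every $k\geq 1$; and $\Ext^k_\Lambda(U_i,-)=0$ for $k\geq 1$ holds by projectivity. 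Consequently the only non-vanishing higher $\Ext$ between indecomposables is $\Ext^k_\Lambda(S_i,S_{i+k})\cong K$ for $i<N$ and $i+k\leq N$.

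Verifying that $\add M$ is $n$-cluster-tilting in the sense of \th\ref{def:n-cluster-tilting-abelian} is now immediate: any two distinct elements of $\{1,n+1,\dots,nm+1\}$ differ by a multiple of $n$, so both $\Ext^k_\Lambda(X,M)$ and $\Ext^k_\Lambda(M,X)$ vanish for $X\in\add M$ and $1\leq k\leq n-1$; while for any $i\in[1,N]\setminus\{1,n+1,\dots,nm+1\}$ one can find $k\in[1,n-1]$ such that $i+k$ (resp.\ $i-k$) lies in the progression and inside $[1,N]$, making both $\Ext^k_\Lambda(S_i,M)$ and $\Ext^k_\Lambda(M,S_i)$ nonzero. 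Functorial finiteness of $\add M$ is automatic. For uniqueness, let $M'$ be a basic $n$-cluster-tilting $\Lambda$-module. Since $\add M'$ is generating-cogenerating, it contains all $U_i$ and both $S_1,S_N$; write $\Delta := \{i\in[1,N]:S_i\in\add M'\}$, so $\{1,N\}\subseteq\Delta$. The Ext formula forces consecutive elements of $\Delta$ to differ by at least $n$; conversely, for each $i\in[2,N-1]\setminus\Delta$ the conditions that $\Ext^k(S_i,M')\neq 0$ and $\Ext^k(M',S_i)\neq 0$ for some $k\in[1,n-1]$ demand elements of $\Delta$ strictly within distance $n-1$ on \emph{both} sides of $i$. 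Inspecting the leftmost, rightmost, and interior gaps of $\Delta$ separately shows that each consecutive gap must have length exactly $n$, forcing $\Delta=\{1,n+1,\dots,nm+1\}$ and hence $M'\cong M$.

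Part (ii) follows at once from \th\ref{recognition-thm-abelian} applied to $\mod\Lambda$. The main obstacle is the combinatorial squeeze in the uniqueness step; the Ext computation itself is routine because $J^2 = 0$ trivialises all syzygies.
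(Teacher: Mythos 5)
Your proposal is correct and follows essentially the same route as the paper: the paper writes down the Auslander--Reiten quiver of $\mod\Lambda$ and declares the identification of the unique basic $n$-cluster-tilting module ``straightforward to verify,'' and your argument is precisely that verification carried out in detail (via $\Omega S_i=S_{i+1}$, the resulting formula $\Ext^k(S_i,S_j)\neq0$ iff $j=i+k$, and the arithmetic-progression squeeze for uniqueness), up to an immaterial shift in the labelling of the vertices. The only cosmetic remark is that your long-exact-sequence argument for $\Ext^k(X,U_i)=0$ is redundant, since you have already observed that every $U_i$ is injective.
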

\begin{proof}
  We assume that $\vec{A}_{nm+1}$ has vertices $0,1,\dots,nm$. Let $S_i$
  be the simple module concentrated at the vertex $i$, and $P_i$ the
  indecomposable projective $\Lambda$-module with top $S_i$. The
  Auslander-Reiten quiver of $\mod\Lambda$ is given by 
  \begin{center}
	\includegraphics[width=0.75\textwidth]{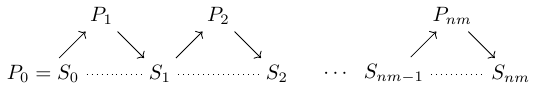}
  \end{center}
  It is straightforward to verify that the module
  \[
  M:=\Lambda\oplus S_{n}\oplus
  S_{2n}\oplus\cdots\oplus S_{(m-1)n}\oplus S_{nm}
  \]
  is the unique basic $n$-cluster-tilting $\Lambda$-module.
\end{proof}

Let $\Lambda$ be a representation-finite algebra, \ie such that the
set of isomorphism classes of indecomposable $\Lambda$-modules is
finite.  We remind the reader that the \emph{Auslander algebra
  associated to $\Lambda$} is the endomorphism algebra of a basic
$\Lambda$-module $M$ such that $\add M=\mod\Lambda$.

\begin{example}
  Typical examples of 2-representation-finite algebras, hence sources
  of 2-abelian categories, are the Auslander algebras associated to
  $KQ$ where $Q$ is a Dynkin quiver $\vec{A}_m$, see
  \cite[Sec. 9.2]{herschend_selfinjective_2011}.
\end{example}

\subsection{$n$-representation infinite algebras}

The class of $n$-representation-infinite algebras was introduced by
Herschend-Iyama-Oppermann in \cite{herschend_n-representation_2014} as
a higher analog of representation-infinite hereditary algebras from
the viewpoint of higher Auslander-Reiten theory. This class of
algebras complements that of $n$-representation-finite algebras.

Let $\Lambda$ be a finite dimensional algebra with finite global
dimension. Then, $\Db(\mod\Lambda)$, the bounded derived category of
$\mod\Lambda$, has a Serre functor
\[
\begin{tikzcd}
  \nu:=-\otimes_\Lambda^L
  D\Lambda\colon\Db(\mod\Lambda)\rar&\Db(\mod\Lambda).
\end{tikzcd}
\]
We define $\nu_n:=\nu[-n]$.

\begin{definition}
  \cite[Def. 2.7]{herschend_n-representation_2014} Let $\Lambda$ be a
  finite dimensional algebra such that $\gldim\Lambda=n$. We say that
  $\Lambda$ is \emph{$n$-representation-infinite} if for all $i\geq0$
  we have $\nu_n^{-i}(\Lambda)\in\mod\Lambda$.
\end{definition}
Let $\T$ be a triangulated category. Following
\cite{beilinson_faisceaux_1982}, a \emph{$t$-structure on $\T$} is a
pair $(\T^{\leq0},\T^{\geq0})$ of strictly full (\ie full and closed
under isomorphisms) subcategories of $\T$ which satisfies the
following properties:
\begin{enumerate}
\item We have $\Sigma\T^{\leq0}\subseteq\T^{\leq0}$ and
  $\Sigma^{-1}\T^{\geq0}\subseteq\T^{\geq0}$.
\item For all $X\in\T^{\leq0}$ and for all $Y\in\T^{\geq0}$ we have
  $\T(X,\Sigma^{-1}Y)=0$.
\item For each $X\in\T$ there exists a triangle $X'\to X\to X''\to
  \Sigma X'$ with $X'\in\T^{\leq0}$ and $X''\in\Sigma^{-1}\T^{\geq0}$.
\end{enumerate}
The \emph{heart} of $(\T^{\leq0},\T^{\geq0})$ is by definition
$\T^{\leq0}\cap\T^{\geq0}$. The heart of a $t$-structure is always an
abelian category \cite{beilinson_faisceaux_1982}.

Note that $\Db(\mod\Lambda)$ has a standard $t$-structure
$(\D^{\leq0},\D^{\geq0})$ defined by
\[
\D^{\leq0}:=\setP{X\in\Db(\mod\Lambda)}{\text{for all } i>0\ H^i(X)=0},
\]
\[
\D^{\geq0}:=\setP{X\in\Db(\mod\Lambda)}{\text{for all } i<0\ H^i(X)=0}.
\]
The heart of $(\D^{\leq0},\D^{\geq0})$ is precisely $\mod\Lambda$.

\begin{theorem}
  \th\label{non-standard_t-structure_n-RI}
  \cite[Thm. 3.7]{minamoto_ampleness_2012} Let $\Lambda$ be an
  $n$-representation infinite algebra such that $\Pi_{n+1}(\Lambda)$
  is noetherian. Let $(\D^{\leq0},\D^{\geq0})$ be the standard
  $t$-structure of $\Db(\mod\Lambda)$ and define
  \[
  \X^{\leq0}:=\setP{X\in\Db(\mod\Lambda)}{\nu_n^{-i}(X)\in \D^{\leq0}\,
    \forall i\gg0}
  \]
  \[
  \X^{\geq0}:=\setP{X\in\Db(\mod\Lambda)}{\nu_n^{-i}(X)\in \D^{\geq0}\
    \forall i\gg0}.
  \]
  Then, the pair $(\X^{\leq0},\X^{\geq0})$ is a $t$-structure in
  $\Db(\mod\Lambda)$. Moreover, the heart of this $t$-structure is
  equivalent to the non-commutative projective scheme
  $\qgr\Pi_{n+1}(\Lambda)$, see \cite{artin_noncommutative_1994} for
  the definition.
\end{theorem}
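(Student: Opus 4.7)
The plan is to follow Minamoto's strategy in \cite{minamoto_ampleness_2012}: establish an equivalence of triangulated categories $\Db(\mod\Lambda) \simeq \Db(\qgr\Pi)$ where $\Pi := \Pi_{n+1}(\Lambda)$, and then transport the standard $t$-structure from the right-hand side to obtain the desired structure on the left. In this way the truncation triangles and the identification of the heart are proved in one stroke, while the formal axioms can be checked directly.

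First I would verify the formal $t$-structure axioms for $(\X^{\leq 0}, \X^{\geq 0})$. Closure under shifts is immediate from the corresponding property of $(\D^{\leq 0}, \D^{\geq 0})$ combined with the commutation $\nu_n \Sigma = \Sigma \nu_n$. Semi-orthogonality is similarly formal: given $X \in \X^{\leq 0}$ and $Y \in \X^{\geq 0}$, choose $i \gg 0$ so that $\nu_n^{-i}X \in \D^{\leq 0}$ and $\nu_n^{-i}Y \in \D^{\geq 0}$; since $\nu_n$ is an autoequivalence, one has $\Hom(X, Y[-1]) \cong \Hom(\nu_n^{-i}X, \nu_n^{-i}Y[-1]) = 0$ by semi-orthogonality of the standard $t$-structure.

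The substantive part is the construction of a functor
\[
\Gamma \colon \Db(\mod\Lambda) \to \Db(\qgr\Pi), \quad X \mapsto \bigoplus_{i \geq 0} \Hom_{\Db(\mod\Lambda)}(\Lambda, \nu_n^{-i} X),
\]
where the target is viewed as a graded right $\Pi$-module via the natural identification $\Pi \cong \bigoplus_{i \geq 0} \Hom(\Lambda, \nu_n^{-i}\Lambda)$, and one passes to $\qgr\Pi$ by quotienting out torsion. The $n$-representation-infinite hypothesis ensures $\nu_n^{-i}\Lambda \in \mod\Lambda$ for all $i \geq 0$, so each graded piece is finite-dimensional, while noetherianity of $\Pi$ guarantees that after killing torsion the image is in $\qgr\Pi$ and that the construction extends to derived functors.

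The main obstacle is proving that $\Gamma$ is an equivalence. This is a noncommutative analog of Beilinson's derived equivalence for projective space, and its proof is the core of Minamoto's paper: it requires exhibiting the sequence $\{\nu_n^{-i}\Lambda\}_{i \geq 0}$ as an \emph{ample sequence} in the sense of Artin--Zhang, and invoking their cohomological criteria for recognizing when the section functor induces a derived equivalence between a noncommutative projective scheme and the module category over a tilting-type endomorphism algebra. Once the equivalence is in place, the truncation triangles for $(\X^{\leq 0}, \X^{\geq 0})$ are obtained by pulling back those of the standard $t$-structure on $\Db(\qgr\Pi)$---noting that the condition $\nu_n^{-i}X \in \D^{\leq 0}$ (resp.\ $\D^{\geq 0}$) for $i \gg 0$ corresponds under $\Gamma$ precisely to the cohomology of the associated graded $\Pi$-module being concentrated in non-positive (resp.\ non-negative) degrees up to a bounded shift---and the heart is identified with $\qgr\Pi$ via the restriction of $\Gamma$.
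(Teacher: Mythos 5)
The paper does not prove this statement at all: it is quoted verbatim as \cite[Thm.~3.7]{minamoto_ampleness_2012}, so there is no internal argument to compare yours against. Judged on its own, your outline correctly reproduces the architecture of Minamoto's proof. The formal checks are sound: closure under shift follows from $\nu_n\Sigma=\Sigma\nu_n$, and your semi-orthogonality argument is correct, since $\nu_n^{-i}$ is an autoequivalence and hence $\Hom(X,\Sigma^{-1}Y)\cong\Hom(\nu_n^{-i}X,\Sigma^{-1}\nu_n^{-i}Y)$, which vanishes once $i$ is large enough that both objects land in the standard aisles. The identification $\Pi_{n+1}(\Lambda)\cong\bigoplus_{i\geq0}\Hom(\Lambda,\nu_n^{-i}\Lambda)$ and the section-functor construction are also the right ingredients.

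The gap is that everything difficult is named rather than proved: the existence of truncation triangles and the identification of the heart both rest entirely on the claim that $\{\nu_n^{-i}\Lambda\}_{i\geq0}$ is an ample sequence and that the section functor induces an equivalence $\Db(\mod\Lambda)\simeq\Db(\qgr\Pi_{n+1}(\Lambda))$. That equivalence is the theorem of Minamoto (building on Artin--Zhang), not a routine verification, and your sketch gives no indication of how the ampleness criteria would actually be checked for an $n$-representation-infinite algebra (this is where the hypotheses $\nu_n^{-i}(\Lambda)\in\mod\Lambda$ and noetherianity of the preprojective algebra really enter). A further small point: the correspondence you assert in the last sentence, between the condition ``$\nu_n^{-i}X\in\D^{\leq0}$ for $i\gg0$'' and concentration of the associated graded module in appropriate cohomological degrees, is itself a nontrivial step of Minamoto's argument and would need proof, not just assertion. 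Since the paper under review likewise defers the entire theorem to the literature, your proposal is no less complete than the source text, but it should be presented as a citation with a sketch of the cited proof rather than as a proof.
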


The following result gives examples of $n$-exact categories.

\begin{theorem}
  \th\label{n-exact-cat-U} \cite[Thm. 2.11]{herschend_representation_2014} Let $\Lambda$ be
  an $n$-representation infinite algebra such that
  $\Pi_{n+1}(\Lambda)$ is noetherian. Let $(\X^{\leq0},\X^{\geq0})$ be
  the $t$-structure defined in \th\ref{non-standard_t-structure_n-RI}
  and $\H$ be its heart.  Then, the following statements hold:
  \begin{enumerate}
  \item The category \[
    \E:=\setP{X\in\H}{\nu_n^i(X)\in(\mod\Lambda)[-n]\ \forall i\gg0}.
    \]
    is an extension closed subcategory of $\H$. 
  \item The category
    \[
    \U:=\add\setP{\nu_n^{-i}(\Lambda)}{i\in\ZZ}
    \]
    is an $n$-cluster-tilting subcategory of $\E$.
  \end{enumerate}
\end{theorem}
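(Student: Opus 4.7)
\medskip

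The plan is to treat (i) and (ii) in turn, leveraging that $\nu_n$ is a triangulated autoequivalence of $\Db(\mod\Lambda)$ and that the heart $\H$ is equivalent to $\qgr\Pi_{n+1}(\Lambda)$ by \th\ref{non-standard_t-structure_n-RI}. For (i), any short exact sequence $0\to X'\to X\to X''\to 0$ in the abelian category $\H$ corresponds to a distinguished triangle $X'\to X\to X''\to X'[1]$ in $\Db(\mod\Lambda)$. Applying $\nu_n^i$ preserves triangles, and for $i\gg 0$ both $\nu_n^i X'$ and $\nu_n^i X''$ lie in $(\mod\Lambda)[-n]$. Since this subcategory is the shift of the standard heart, it is closed under extensions inside $\Db(\mod\Lambda)$; hence $\nu_n^i X\in(\mod\Lambda)[-n]$ for $i\gg 0$, proving $X\in\E$.

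For (ii), I would first verify $\U\subseteq\E$: if $M=\nu_n^{-i}\Lambda$, then $\nu_n^j M=\nu_n^{j-i}\Lambda$. The $n$-representation-infinite hypothesis, combined with Serre duality applied to $\Lambda$, forces $\nu_n^k\Lambda\in(\mod\Lambda)[-n]$ for all $k\ge 1$, whence $\nu_n^j M\in(\mod\Lambda)[-n]$ for $j\gg 0$. Next I would verify that $\Ext^k_\H(U,U')=0$ for $U,U'\in\U$ and $1\le k\le n-1$. Writing $U=\nu_n^{-a}\Lambda$ and $U'=\nu_n^{-b}\Lambda$, the question reduces to
\[
\Hom_{\Db(\mod\Lambda)}\bigl(\Lambda,\nu_n^{a-b}\Lambda[k]\bigr)=0
\qquad(1\le k\le n-1).
\]
When $a\le b$, $\nu_n^{-(b-a)}\Lambda$ sits in degree $0$ as a module and the Hom-space vanishes because $\Lambda$ is projective. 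When $a>b$, Serre duality rewrites it as the $K$-dual of a Hom into $(\mod\Lambda)[-n]$, forcing vanishing in the intermediate degrees.

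The remaining ingredients of $n$-cluster-tiltingness, namely the functorial finiteness of $\U$ in $\E$ and the maximality
\[
\U=\setP{X\in\E}{\Ext^k_\H(X,\U)=\Ext^k_\H(\U,X)=0 \text{ for } 1\le k\le n-1},
\]
I would establish after transporting the situation via the equivalence $\H\simeq\qgr\Pi_{n+1}(\Lambda)$ of \th\ref{non-standard_t-structure_n-RI}. Under this identification, the objects $\nu_n^{-i}\Lambda$ become the Serre twists of the structure module, so $\U$ is the analog of the category of direct sums of twists of the structure sheaf on a non-commutative projective scheme. Functorial finiteness, and the required existence of admissible $\U$-approximations within $\E$, then follows from the fact that every coherent object surjects from (respectively embeds in) a direct sum of Serre twists, a consequence of $\Pi_{n+1}(\Lambda)$ being noetherian and generated in degree one.

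The principal obstacle is the maximality characterization. It amounts to a non-commutative analog of Serre vanishing on $\qgr\Pi_{n+1}(\Lambda)$: an object whose intermediate cohomology of all Serre twists vanishes must itself be a direct sum of twists of the structure module. This is the genuine technical core of the theorem and the step where the noetherian assumption on $\Pi_{n+1}(\Lambda)$ is essential.
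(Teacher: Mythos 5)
You should first be aware that the paper does not prove this statement: \th\ref{n-exact-cat-U} is quoted from \cite{herschend_representation_2014}, and the only thing the surrounding text adds is the observation that, granting the theorem, \th\ref{recognition-thm-exact} makes $\U$ an $n$-exact category. So there is no in-paper argument to compare yours against, and your proposal has to be judged as a reconstruction of the cited proof.

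Parts of it are sound: the extension-closedness of $\E$ in (i) is correctly reduced to the extension-closedness of the shifted heart $(\mod\Lambda)[-n]$ inside $\Db(\mod\Lambda)$; the verification that $\nu_n^k\Lambda\in(\mod\Lambda)[-n]$ for all $k\geq1$ (equivalently $\nu_n^{k-1}(D\Lambda)\in\mod\Lambda$, the dual form of $n$-representation-infiniteness) is right; and the two-case computation of $\Hom_{\Db(\mod\Lambda)}(\Lambda,\nu_n^{a-b}\Lambda[k])$ is correct in outline. But there are two genuine gaps. First, the Ext groups entering the definition of an $n$-cluster-tilting subcategory of the exact category $\E$ are the groups $\Ext_\X^k$ computed from the exact structure (via the derived category of $(\E,\X)$ as in Section \ref{sec:n-exact-categories}), not $\Hom_{\Db(\mod\Lambda)}(-,-[k])$; for $k\geq2$ the comparison map between Yoneda Ext in the heart of a $t$-structure and Hom into the $k$-th shift is in general neither injective nor surjective, so identifying the two requires an argument (for instance that every object of $\E$ admits suitable resolutions by objects of $\U$, which is itself part of what is being proved). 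Second, and more seriously, the approximation properties (every $E\in\E$ has a left $\U$-approximation by an admissible monomorphism and a right one by an admissible epimorphism) together with the maximality characterization of $\U$ \emph{are} the content of the theorem, and your proposal only names them and defers them to an unproved Serre-vanishing-type statement on $\qgr\Pi_{n+1}(\Lambda)$. That step is where the noetherianity of $\Pi_{n+1}(\Lambda)$ and the ampleness machinery of non-commutative projective geometry are actually used; without it the proposal establishes only that $\U$ is a self-orthogonal subcategory of $\E$, not that it is $n$-cluster-tilting.
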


With the notation of \th\ref{n-exact-cat-U}, note that $\E$ is an
exact category hence
\th\ref{recognition-thm-exact} implies that $\U$ is an $n$-exact
category.

We now give a concrete example of an $n$-exact category constructed
using \th\ref{n-exact-cat-U}.

\begin{example}
  Let $\coh\PP_K^n$ be the category of coherent sheaves over the
  projective $n$-space over $K$, and let $\Lambda$ be the endomorphism
  algebra of the tilting bundle
  $\OO\oplus\OO(1)\oplus\cdots\oplus\OO(n)$, see \cite{beilinson_coherent_1978}.  It is known that
  $\Lambda$ is an $n$-representation-infinite algebra and that
  $\Pi_{n+1}(\Lambda)$ is noetherian, see
  \cite[Ex. 2.15]{herschend_n-representation_2014}.  Moreover, there
  is an equivalence of triangulated categories
  \[
  \Db(\mod\Lambda) \cong \Db(\coh\PP_K^n).
  \]
  This equivalence induces an equivalence of exact categories between
  the category $\E$ given in \th\ref{n-exact-cat-U} and
  $\vect\PP_K^n$, the category of vector bundles over $\PP_K^n$. Also,
  it induces an equivalence of additive categories
  \[
  \U\cong\add\setP{\OO(i)}{i\in\ZZ}.
  \]
  Finally, \th\ref{recognition-thm-exact} implies that $\U$ is an
  $n$-exact category with respect to the class of all exact sequences
  \[
  \begin{tikzcd}
    0\rar&X^0\rar&X^1\rar&\cdots\rar&X^n\rar&X^{n+1}\rar&0
  \end{tikzcd}
  \]
  with all terms in $\U$.
\end{example}

\subsection{Relative $n$-cluster-tilting subcategories}

Let $\Lambda$ be a finite-dimensional algebra and $T$ a
$\Lambda$-module. It is easy to see that the perpendicular category
\[ {T}^{\perp_{>0}}:=\setP{M\in\mod\Lambda}{\forall k>0,\
  \Ext_\Lambda^k(T,M)=0}
\]
is exact for it is an extension closed subcategory of $\mod\Lambda$.
We have the following result.

\begin{proposition}
  \th\label{relative-2-ct} \cite[Cor. 1.15]{iyama_cluster_2011} Let
  $Q$ be a Dynkin quiver. Then, there exists a tilting $KQ$-module $T$ of
  projective dimension 1 such that ${T}^{\perp_{>0}}$ contains a
  2-cluster-tilting subcategory $\M$.
\end{proposition}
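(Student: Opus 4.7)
The plan is to apply the construction of Iyama from \cite[Cor.~1.15]{iyama_cluster_2011}, which produces tilting modules over representation-finite hereditary algebras whose perpendicular categories host relative $2$-cluster-tilting subcategories. Since $Q$ is Dynkin, the hereditary algebra $KQ$ is representation-finite, so $\mod KQ$ contains only finitely many indecomposables (up to isomorphism) and every $KQ$-module has projective dimension at most $1$; in particular, every tilting $KQ$-module has projective dimension $\leq 1$.

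First, I would construct the tilting module $T$ explicitly via the Auslander-Platzeck-Reiten (APR) tilting construction: starting from $KQ$, apply a carefully chosen sequence of APR-tilts at sources of the AR-quiver to produce a tilting $KQ$-module $T$. Because $KQ$ is hereditary, tiltingness of $T$ reduces to $\Ext^1_{KQ}(T,T)=0$ together with the numerical condition that the number of indecomposable summands of $T$ equals the number of vertices of $Q$, both of which are immediate for iterated APR tilts. The sequence of APR tilts is chosen so that $T^{\perp_{>0}}$ is large enough to contain a nontrivial additive subcategory closed under the appropriate $\tau$-operations.

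Second, I would identify the candidate $2$-cluster-tilting subcategory $\M \subseteq T^{\perp_{>0}}$. Taking $\M = \add N$ for a specific module $N$ built from $\tau$-orbits of certain indecomposable summands of $T$, the functorial finiteness of $\M$ in $T^{\perp_{>0}}$ is automatic (as $\M$ is generated by a single object). The main work is to verify the sharp Ext-orthogonality
\[
\M = \setP{X \in T^{\perp_{>0}}}{\Ext^1_{KQ}(X,\M) = 0} = \setP{X \in T^{\perp_{>0}}}{\Ext^1_{KQ}(\M,X)=0}.
\]
The inclusions $\M \subseteq (\text{both right-hand sides})$ are immediate from $\Ext^1_{KQ}(N,N)=0$. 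Once this is verified, $T^{\perp_{>0}}$ is extension-closed in $\mod KQ$ and therefore inherits an exact structure, and \th\ref{recognition-thm-exact} then endows $\M$ with a canonical $2$-exact structure.

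The main obstacle is establishing the reverse inclusions above: any $X \in T^{\perp_{>0}}$ that is Ext-orthogonal to $\M$ on one side must be shown to lie in $\add N$. This is handled in Iyama's paper by a careful analysis of the AR-quiver of $KQ$, exploiting Auslander-Reiten duality and the finite representation type of $KQ$ to force the relevant combinatorics to terminate. The choice of APR tilts defining $T$ is precisely engineered to make this combinatorial argument go through.
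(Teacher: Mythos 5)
The paper does not actually prove this proposition: it is imported verbatim from \cite[Cor.~1.15]{iyama_cluster_2011}, stated with a citation and no proof environment, and the only thing the paper adds is the subsequent remark that $\M$ is then $2$-exact by \th\ref{recognition-thm-exact}. So there is no in-paper argument to compare yours against line by line.

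Judged on its own terms as a blind proof attempt, your proposal has a genuine gap: it never produces the objects whose existence is being asserted. The tilting module $T$ is described only as the outcome of ``a carefully chosen sequence of APR-tilts'' and the module $N$ with $\M=\add N$ only as ``built from $\tau$-orbits of certain indecomposable summands of $T$''; neither is specified, no candidate is written down even for a single small quiver, and the decisive step --- the reverse inclusions in the Ext-orthogonality characterizing $2$-cluster-tilting --- is explicitly handed back to Iyama's paper, i.e.\ to the very reference the proposition is quoting. The parts you do verify are the trivial ones: rigidity of $N$ gives $\M\subseteq{}$ both orthogonals, and $\add N$ of a single object is functorially finite. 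Two further points. First, for the notion of $n$-cluster-tilting subcategory of an exact category used in this paper you must also check that every object of $T^{\perp_{>0}}$ admits a left $\M$-approximation which is an admissible monomorphism and a right $\M$-approximation which is an admissible epimorphism; this is strictly more than functorial finiteness and is not addressed. Second, your closing paragraph (that $\M$ then carries a $2$-exact structure via \th\ref{recognition-thm-exact}) is not part of the statement to be proved --- it is exactly the remark the paper makes \emph{after} citing the result. In short, what you have written is a plausible plan for locating the proof in the reference, not a proof; since the paper itself only cites the reference, the honest conclusion is that neither text contains an argument, and yours should either carry out the construction explicitly or be reduced to the citation.
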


\begin{remark}
  With the notation of \th\ref{relative-2-ct}, the category $\M$ is
  2-exact by \th\ref{recognition-thm-exact}.
\end{remark}

More generally, in \cite[Cor. 1.16]{iyama_cluster_2011} for each $n$
an algebra $\Lambda$ of global dimension at most $n$ such that there
exists a tilting $\Lambda$-module $T$ of finite projective dimension
and ${T}^{\perp_{>0}}$ has an $n$-cluster-titling subcategory was
constructed.

\subsection{Isolated singularities}
\label{sec:isolated-singularities}

Let $R$ be a commutative complete Gorenstein ring of dimension $n$
with residue field $K$. The category of \emph{Cohen-Macaulay
  $R$-modules} is by definition
\[
\CM R:=\setP{M\in\mod R}{\operatorname{depth}M=n}.
\]
Note that $\CM R$ is a Frobenius exact category, see
\cite[Rmk. 4.8]{buchweitz_maximal_1986}.

We remind the reader that $R$ is an \emph{isolated singularity} if $R$
is not regular and for all non-maximal prime ideals
$\mathfrak{p}\subset R$ we have that $R_{\mathfrak{p}}$ is a regular
ring. In this case $\CM R$ has almost-split sequences, see \cite[page
200]{auslander_isolated_1986} and
\cite[Thm. 3.2]{auslander_isolated_1986,yoshino_cohen-macaulay_1990}.

\begin{theorem}
  \th\label{CM-SG} \cite[Thm. 2.5]{iyama_higher-dimensional_2007} and
  \cite[Cor. 8.2]{iyama_mutation_2008} Let $K$ be an algebraically
  closed field of characteristic 0 and set
  $S:=K\llbracket x_0,x_1,\dots,x_n\rrbracket$. Also, let $G$ be a
  finite subgroup of $\SL_{n+1}(K)$ such that no element $\sigma\neq1$
  of $G$ has eigenvalue 1. Then $G$ acts on $S$ in a natural way and
  we define
  \[
  S^G:=\setP{s\in S}{\forall g\in G,\ g\cdot s = s}.
  \]
  Then, the following statements hold:
  \begin{enumerate}
  \item The ring $S^G$ is an isolated singularity.
  \item We have $S\in\CM S^G$.
  \item The category $\add S$ is an $n$-cluster-tilting subcategory of
    $\CM S^G$.
  \end{enumerate}
\end{theorem}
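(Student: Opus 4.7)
The approach is to treat (1) and (2) directly and then derive (3) from the classical theory of the skew group algebra. For (1), the plan is to use freeness of the action on the punctured space. Given a non-maximal prime $\mathfrak{p} \subset S^G$, lift it to a prime $\mathfrak{q}$ of $S$; since $S$ is integral over $S^G$ and $\mathfrak{p}$ is not the maximal ideal $\mathfrak{m}_{S^G}$, $\mathfrak{q}$ is not the maximal ideal $\mathfrak{m}_S$ of $S$. The associated geometric point lies in $\operatorname{Spec}(S) \setminus \set{\mathfrak{m}_S}$, so its stabilizer in $G$ is trivial by the freeness hypothesis. This makes $S^G_{\mathfrak{p}} \hookrightarrow S_{\mathfrak{q}}$ étale, and regularity of $S_{\mathfrak{q}}$ (a localization of the regular ring $S$ at a non-maximal prime) transfers to $S^G_{\mathfrak{p}}$. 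Since the closed point of $S^G$ is singular, $S^G$ is an isolated singularity. For (2), the extension $S^G \hookrightarrow S$ is finite and both rings are complete local domains of equal Krull dimension; any system of parameters of $S^G$ generates an $\mathfrak{m}_S$-primary ideal in $S$ and therefore is a regular sequence on $S$ (using that $S$ is regular, hence Cohen--Macaulay). Consequently $\operatorname{depth}_{\mathfrak{m}_{S^G}}(S) = \dim S^G$, so $S \in \CM S^G$.

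For (3), I would work through the skew group algebra $\Lambda := S\#G$. In characteristic zero with $G$ acting faithfully, there is a classical Morita equivalence $\End_{S^G}(S)^{\op} \cong \Lambda$ identifying the projective $\Lambda$-modules with $\add_{S^G}(S)$. Because $S$ is regular and $|G|$ is invertible, $\Lambda$ has finite global dimension equal to $\dim S$: generic semisimplicity follows from Maschke, and the freeness hypothesis (as in (1)) together with a local computation at the origin gives the sharp bound. The functor $F := \Hom_{S^G}(S,-) \colon \CM S^G \to \mod \Lambda$ is fully faithful and identifies $\CM S^G$ with the subcategory of $\Lambda$-modules that are maximal Cohen--Macaulay over $S^G$ via restriction. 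Under $F$, the natural isomorphisms
\[
\Ext_{S^G}^i(S, M) \cong \Ext_{\Lambda}^i(\Lambda, FM)
\]
together with the dual pairing obtained from the Gorenstein property of $S^G$ (which holds by Watanabe's theorem, since $G$ is a subgroup of the appropriate $\SL$) reduce the $n$-cluster-tilting Ext-vanishing to a statement about projectivity of $FM$ over $\Lambda$.

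The main obstacle is converting the Ext-vanishing into the precise identification
\[
\add S = \setP{M \in \CM S^G}{\Ext_{S^G}^i(S, M) = 0 \text{ for all } 1 \le i \le n-1}
\]
and its dual. The plan is to combine: (a) if $M \in \add S$, then $FM$ is projective, so all higher Ext's from $\Lambda$ vanish; (b) conversely, if the Ext's vanish in degrees $1$ through $n-1 = \dim S^G - 2$ and $M \in \CM S^G$, then depth considerations for $FM$ over $\Lambda$, combined with the Auslander--Buchsbaum formula in $\mod \Lambda$, force $FM$ to have projective dimension zero, so $FM$ is projective and $M \in \add S$. The dual Ext condition is then obtained via Cohen--Macaulay duality $\Hom_{S^G}(-, S^G)$, which swaps the two Ext-vanishing conditions on $\CM S^G$. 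Functorial finiteness of $\add S$ in $\CM S^G$ is automatic since $\add S$ is generated by the single module $S$; right (resp.\ left) $\add S$-approximations of any $M \in \CM S^G$ are produced by taking projective covers of $FM$ (resp.\ of $\Hom_{S^G}(M, S)$) in $\mod \Lambda$ (resp.\ $\mod \Lambda^{\op}$) and applying the inverse of $F$. Assembling (1), (2), and this analysis yields the theorem.
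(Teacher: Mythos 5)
The paper does not actually prove this theorem: it is imported from Iyama's work via the two citations, so there is no internal proof to compare against. Your parts (1) and (2) are correct (modulo one word in (1): the assertion that the closed point of $S^G$ is singular needs $G\neq 1$ and Chevalley--Shephard--Todd, since freeness off the origin rules out pseudo-reflections), and your architecture for (3) --- pass to $\Lambda=S\#G\cong\End_{S^G}(S)^{\op}$, use $\operatorname{gl.dim.}\Lambda=\dim S$ and a noncommutative Auslander--Buchsbaum formula to convert maximal Cohen--Macaulayness of $\Hom_{S^G}(S,M)$ into projectivity over $\Lambda$, hence into $M\in\add S$ --- is indeed the strategy of the cited source. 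The Gorenstein duality reducing one orthogonality condition to the other, and the functorial finiteness of $\add S$, are also handled correctly.

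There is, however, a genuine gap at the displayed isomorphism $\Ext_{S^G}^i(S,M)\cong\Ext_\Lambda^i(\Lambda,FM)$: the right-hand side vanishes identically for $i\geq 1$ because $\Lambda$ is projective over itself, so if this formula held, every $M\in\CM S^G$ would satisfy the orthogonality conditions and the theorem would assert $\CM S^G=\add S$, which is false once $\dim S^G\geq 3$. The functor $F=\Hom_{S^G}(S,-)$ is fully faithful but not exact and does not preserve $\Ext$; for the same reason your claim that $F$ identifies $\CM S^G$ with the $\Lambda$-modules that are MCM over $S^G$ is false (that $FM$ is MCM is \emph{equivalent} to the orthogonality condition on $M$, not a property of $F$). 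The concrete casualty is your step (a): the inclusion $\add S\subseteq\setP{M\in\CM S^G}{\Ext_{S^G}^i(S,M)=0,\ 1\leq i\leq n-1}$, i.e.\ the rigidity $\Ext_{S^G}^i(S,S)=0$ for $1\leq i\leq n-1$, is a substantive part of the theorem and is nowhere established. It requires its own argument, for instance: these $\Ext$ modules have finite length because the extension is \'etale off the closed point (your part (1)), $\Hom_{S^G}(S,S)\cong S^{|G|}$ is maximal Cohen--Macaulay, and a depth count along a free $S^G$-resolution of $S$ then forces $\Ext_{S^G}^i(S,S)=0$ for $1\leq i\leq\dim S^G-2=n-1$. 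The same depth count, applied to $\Hom_{S^G}(-,M)$, is also what must replace the phrase ``depth considerations'' in your converse direction: the hypothesis $\Ext_{S^G}^i(S,M)=0$ for $1\leq i\leq n-1$ yields $\operatorname{depth}\Hom_{S^G}(S,M)\geq\min(\dim S^G,\ n+1)=\dim S^G$, after which your Auslander--Buchsbaum argument does apply. With these two lemmas supplied, the proposal becomes a correct proof along the lines of the cited source.
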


With the notation of \th\ref{CM-SG}, note that
\th\ref{recognition-thm-exact} implies that $\add S$ is an $n$-exact
category with respect to the class of all exact sequences
\[
\begin{tikzcd}
  0\rar&X^0\rar&X^1\rar&\cdots\rar&X^n\rar&X^{n+1}\rar&0
\end{tikzcd}
\]
in $\mod S^G$ with terms in $\add S$.

\subsection{Algebraic $(n+2)$-angulated categories}

In this subsection we revisit the examples of
\cite[Sec. 6.3]{geiss_n-angulated_2013} from the viewpoint of
algebraic $(n+2)$-angulated categories.  We remind the reader that we say
that an object $T$ in a triangulated category $\C$ is
\emph{$n$-cluster-tilting} if $\add T$ is an $n$-cluster-tilting
subcategory of $\C$.

Let $\C$ be a algebraic triangulated category. Hence, there exists a
Frobenius exact category $(\E,\X)$ such that $\sE$ is equivalent to
$\C$ as triangulated categories. It is easy to see that each
$n$-cluster-tilting subcategory of $\C$ lifts to an
$n$-cluster-tilting subcategory of $\E$ by including all the
$\X$-injective objects in $\E$. By
\th\ref{standard-construction-n-exact}, every $(n+2)$-angulated
category constructed using \th\ref{standard-construction-n-angulated}
from an algebraic triangulated category is in turn an algebraic
$(n+2)$-angulated category. Known examples of algebraic
$(n+2)$-angulated categories arising in this way are the following:

\begin{itemize}
\item Let $\C=\C_Q$ be the cluster category associated with an acyclic
  quiver $Q$, see \cite{buan_tilting_2006} for details. It is known
  that $\C$ is an algebraic triangulated category
  \cite{keller_acyclic_2008}. Moreover, a basic 2-cluster-tilting object
  $T\in\C$ satisfies $\Sigma^2 T\cong T$ if and only if $\C(T,T)$ is a
  selfinjective algebra, see \cite[Cor. 3.8]{iyama_stable_2013}. All
  such algebras were classified by Ringel in
  \cite{ringel_self-injective_2008}. In particular, such algebras
  exist only if $Q$ is a Dynkin quiver of type $D$ (including
  $D_3=A_3$). Hence, if $T$ is a 2-cluster-tilting object in $\C$ such
  that $\Sigma^2 T\cong T$, then $\add T\subseteq\C$ is an algebraic
  4-angulated category.
\item Let $\C=\C_\XX$ be the cluster category associated with a
  weighted projective line, see \cite{barot_cluster_2010} for
  details. As in the previous case, a 2-cluster-tilting object
  $T\in\C$ satisfies $\Sigma^2 T\cong T$ if and only if $\C(T,T)$ is a
  selfinjective algebra. All such algebras are classified in
  \cite[Thm. 1.3]{jasso_tau2-stable_2014}. Such algebras exist if and
  only if $\XX$ has tubular weight type $(2,2,2,2)$, $(2,4,4)$, or
  $(2,3,6)$. If $T$ is a 2-cluster-tilting object in $\C$ such that
  $\Sigma^2 T\cong T$, then $\add T\subseteq\C$ is an algebraic
  4-angulated category.
\item Let $\Lambda$ be the preprojective algebra of type $A_n$. Recall
  that $\mod\Lambda$ is a Frobenius abelian category hence the stable
  category $\underline{\mod}\Lambda$ is triangulated. It is known that
  the standard 2-cluster-tilting $\Lambda$-module $T$ corresponding to
  the linear orientation of $A_n$ satisfies $\mho^2(T)\cong T$, see
  \cite{geiss_auslander_2007}. It follows that $\add
  T\subseteq\mod\Lambda$ is a Frobenius 2-exact category and thus
  $\underline{\add}\,T\subseteq\underline{\mod}\Lambda$ is an
  algebraic 4-angulated category.
\item Let $\Lambda$ be an $n$-representation-finite algebra. Then,
  \cite[Cor. 3.7]{iyama_stable_2013} implies that the canonical
  $n$-cluster-tilting object $\pi\Lambda$ in the associated Amiot
  $n$-cluster category $\C$ is stable under the Serre functor
  $\Sigma^n$. It is known that $\C$ is an algebraic triangulated
  category, see \cite[Thm. 4.15]{iyama_stable_2013}, hence
  $\add\pi\Lambda\subseteq\C$ is an algebraic $(n+2)$-angulated
  category.
\end{itemize}
We refer the reader to \cite[Sec. 6]{geiss_n-angulated_2013} for more
details.


\bibliographystyle{abbrv} \bibliography{zotero}

\begin{thebibliography}{10}

\bibitem{amiot_stable_2011}
C.~Amiot, O.~Iyama, and I.~Reiten.
\newblock Stable categories of {Cohen-Macaulay} modules and cluster categories.
\newblock {\em {arXiv}:1104.3658}, Apr. 2011.

\bibitem{artin_noncommutative_1994}
M.~Artin and J.~Zhang.
\newblock Noncommutative projective schemes.
\newblock {\em Adv. Math.}, 109(2):228--287, Dec. 1994.

\bibitem{auslander_coherent_1966}
M.~Auslander.
\newblock Coherent functors.
\newblock In {\em Proc. Conf. Categorical Algebra (La Jolla, Calif., 1965)},
  pages 189--231. Springer, New York, 1966.

\bibitem{auslander_representation_1971}
M.~Auslander.
\newblock {\em Representation dimension of {Artin} algebras}.
\newblock Lecture {Notes}. Queen Mary College, London, 1971.

\bibitem{auslander_stable_1974}
M.~Auslander and I.~Reiten.
\newblock Stable equivalence of dualizing r-varieties.
\newblock {\em Adv. Math.}, 12(3):306--366, Mar. 1974.

\bibitem{auslander_applications_1991}
M.~Auslander and I.~Reiten.
\newblock Applications of contravariantly finite subcategories.
\newblock {\em Adv. Math.}, 86(1):111--152, Mar. 1991.

\bibitem{auslander_almost_1981}
M.~Auslander and S.~O. Smal\o.
\newblock Almost split sequences in subcategories.
\newblock {\em J. Algebra}, 69(2):426--454, Apr. 1981.

\bibitem{auslander_isolated_1986}
M.~Auslander and L.~Unger.
\newblock Isolated singularities and existence of almost split sequences.
\newblock In V.~Dlab, P.~Gabriel, and G.~Michler, editors, {\em Representation
  Theory {II} Groups and Orders}, number 1178 in Lecture Notes in Mathematics,
  pages 194--242. Springer Berlin Heidelberg, Jan. 1986.

\bibitem{barot_cluster_2010}
M.~Barot, D.~Kussin, and H.~Lenzing.
\newblock The cluster category of a canonical algebra.
\newblock {\em Trans. Amer. Math. Soc.}, 362(08):4313--4330, Mar. 2010.

\bibitem{beilinson_coherent_1978}
A.~Beilinson.
\newblock Coherent sheaves on $\mathrm{P}^d$ and problems in linear algebra.
\newblock {\em Funktsional. Anal. i Prilozhen}, 12(3):68--69, 1978.

\bibitem{beilinson_faisceaux_1982}
A.~A. Beilinson, J.~Bernstein, and P.~Deligne.
\newblock Faisceaux pervers.
\newblock In {\em Analysis and topology on singular spaces, I (Luminy, 1981)},
  volume 100 of {\em Astérisque}, pages 5--171. Soc. Math. France, Paris,
  1982.

\bibitem{bergh_axioms_2013}
P.~A. Bergh and M.~Thaule.
\newblock The axioms for $n$-angulated categories.
\newblock {\em Algebr. Geom. Topol.}, 13(4):2405--2428, July 2013.

\bibitem{bergh_higher_2013}
P.~A. Bergh and M.~Thaule.
\newblock Higher $n$-angulations from local algebras.
\newblock {\em {arXiv}:1311.2089 [math]}, Nov. 2013.

\bibitem{bergh_grothendieck_2014}
P.~A. Bergh and M.~Thaule.
\newblock The {Grothendieck} group of an $n$-angulated category.
\newblock {\em J. Pure Appl. Algebra}, 218(2):354--366, Feb. 2014.

\bibitem{bondal_framed_1990}
A.~I. Bondal and M.~M. Kapranov.
\newblock Framed triangulated categories.
\newblock {\em Math. Sb.}, 181(5):669--683, 1990.

\bibitem{buan_tilting_2006}
A.~B. Buan, R.~Marsh, M.~Reineke, I.~Reiten, and G.~Todorov.
\newblock Tilting theory and cluster combinatorics.
\newblock {\em Adv. Math.}, 204(2):572--618, Aug. 2006.

\bibitem{buchweitz_maximal_1986}
R.-O. Buchweitz.
\newblock Maximal {Cohen}-{Macaulay} {Modules} and {Tate}-{Cohomology} {Over}
  {Gorenstein} {Rings}.
\newblock 1986.

\bibitem{buhler_exact_2010}
T.~B\"uhler.
\newblock Exact categories.
\newblock {\em Expo. Math.}, 28(1):1--69, 2010.

\bibitem{fomin_cluster_2001}
S.~Fomin and A.~Zelevinsky.
\newblock Cluster algebras i: Foundations.
\newblock {\em J. Amer. Math. Soc.}, 15(02):497--529, Dec. 2001.

\bibitem{frerick_exact_2010}
L.~Frerick and D.~Sieg.
\newblock Exact categories in functional analysis, 2010.

\bibitem{geiss_n-angulated_2013}
C.~Gei\ss, B.~Keller, and S.~Oppermann.
\newblock $n$-angulated categories.
\newblock {\em J. Reine Angew. Math.}, 675:101--120, 2013.

\bibitem{geiss_auslander_2007}
C.~Gei\ss, B.~Leclerc, and J.~Schr\"oer.
\newblock Auslander algebras and initial seeds for cluster algebras.
\newblock {\em J. London Math. Soc.}, 75(3):718--740, June 2007.

\bibitem{geiss_preprojective_2008}
C.~Gei\ss, B.~Leclerc, and J.~Schröer.
\newblock Preprojective algebras and cluster algebras.
\newblock In {\em Trends in representation theory of algebras and related
  topics}, {EMS} Ser. Congr. Rep., pages 253--283. Eur. Math. Soc., Zürich,
  2008.

\bibitem{grothendieck_sur_1957}
A.~Grothendieck.
\newblock Sur quelques points d'algèbre homologique, i.
\newblock {\em Tohoku Math. J. (2)}, 9(2):119--221, 1957.

\bibitem{happel_triangulated_1988}
D.~Happel.
\newblock {\em Triangulated Categories in the Representation Theory of Finite
  Dimensional Algebras}.
\newblock Number 119 in London Mathematical Society Lecture Note Series.
  Cambridge University Press, Cambridge, 1988.

\bibitem{herschend_selfinjective_2011}
M.~Herschend and O.~Iyama.
\newblock Selfinjective quivers with potential and 2-representation-finite
  algebras.
\newblock {\em Compos. Math.}, 147(06):1885--1920, 2011.

\bibitem{herschend_representation_2014}
M.~Herschend, O.~Iyama, H.~Minamoto, and S.~Oppermann.
\newblock Representation theory of {Geigle-Lenzing} complete intersections.
\newblock {\em {arXiv}:1409.0668}, Sept. 2014.

\bibitem{herschend_n-representation_2014}
M.~Herschend, O.~Iyama, and S.~Oppermann.
\newblock $n$-representation infinite algebras.
\newblock {\em Adv. Math.}, 252:292--342, Feb. 2014.

\bibitem{iyama_auslander_2007}
O.~Iyama.
\newblock Auslander correspondence.
\newblock {\em Adv. Math.}, 210(1):51--82, Mar. 2007.

\bibitem{iyama_higher-dimensional_2007}
O.~Iyama.
\newblock Higher-dimensional {Auslander-Reiten} theory on maximal orthogonal
  subcategories.
\newblock {\em Adv. Math.}, 210(1):22--50, Mar. 2007.

\bibitem{iyama_cluster_2011}
O.~Iyama.
\newblock Cluster tilting for higher auslander algebras.
\newblock {\em Adv. Math.}, 226(1):1--61, Jan. 2011.

\bibitem{iyama_n-representation-finite_2011}
O.~Iyama and S.~Oppermann.
\newblock $n$-representation-finite algebras and $n$-{APR} tilting.
\newblock {\em Trans. Amer. Math. Soc.}, 363(12):6575--6614, July 2011.

\bibitem{iyama_stable_2013}
O.~Iyama and S.~Oppermann.
\newblock Stable categories of higher preprojective algebras.
\newblock {\em Adv. Math.}, 244:23--68, Sept. 2013.

\bibitem{iyama_mutation_2008}
O.~Iyama and Y.~Yoshino.
\newblock Mutation in triangulated categories and rigid {Cohen-Macaulay}
  modules.
\newblock {\em Invent. Math.}, 172(1):117--168, Apr. 2008.

\bibitem{jasso_tau2-stable_2014}
G.~Jasso.
\newblock $\tau^2$-stable tilting complexes over weighted projective lines.
\newblock {\em {arXiv}:1402.6036}, Feb. 2014.

\bibitem{keller_chain_1990}
B.~Keller.
\newblock Chain complexes and stable categories.
\newblock {\em Manuscripta Math.}, 67(1):379--417, Dec. 1990.

\bibitem{keller_differential_2006}
B.~Keller.
\newblock On differential graded categories.
\newblock In {\em International Congress of Mathematicians. Vol. {II}}, pages
  151--190. Eur. Math. Soc., Zürich, 2006.

\bibitem{keller_acyclic_2008}
B.~Keller and I.~Reiten.
\newblock Acyclic {Calabi-Yau} categories.
\newblock {\em Compos. Math.}, 144(05):1332--1348, 2008.

\bibitem{keller_sous_1987}
B.~Keller and D.~Vossieck.
\newblock Sous les catégories dérivées.
\newblock {\em C. R. Acad. Sci. Paris S\'er. I Math.}, 305(6):22--228, 1987.

\bibitem{minamoto_ampleness_2012}
H.~Minamoto.
\newblock Ampleness of two-sided tilting complexes.
\newblock {\em Int. Math. Res. Not.}, 2012(1):67--101, Jan. 2012.

\bibitem{neeman_derived_1990}
A.~Neeman.
\newblock The derived category of an exact category.
\newblock {\em J. Algebra}, 135(2):388--394, Dec. 1990.

\bibitem{quillen_higher_1973}
D.~Quillen.
\newblock Higher algebraic $k$-theory. i.
\newblock In {\em Algebraic $K$-theory, I: Higher $K$-theories (Proc. Conf.,
  Battelle Memorial Inst., Seattle, Wash., 1972)}, number 341 in Lecture Notes
  in Math., pages 85--147. Springer, Berlin, 1973.

\bibitem{ringel_self-injective_2008}
C.~M. Ringel.
\newblock The self-injective cluster-tilted algebras.
\newblock {\em Arch. Math.}, 91(3):218--225, Sept. 2008.

\bibitem{verdier_categories_1996}
J.-L. Verdier.
\newblock Des catégories dérivées des catégories abéliennes.
\newblock {\em Astérisque}, (239):xii+253 pp., 1996.
\newblock With a preface by Luc Illusie, Edited and with a note by Georges
  Maltsiniotis.

\bibitem{weibel_introduction_1994}
C.~A. Weibel.
\newblock {\em An introduction to homological algebra}, volume~38 of {\em
  Cambridge Studies in Advanced Mathematics}.
\newblock Cambridge University Press, Cambridge, 1994.

\bibitem{yoshino_cohen-macaulay_1990}
Y.~Yoshino.
\newblock {\em {Cohen-Macaulay} Modules over {Cohen-Macaulay} Rings}.
\newblock Cambridge University Press, Cambridge, 1990.

\end{thebibliography}

\end{document}